\providecommand{\algorithmname}{Algorithm}
\newcommand{\bbR}{\mathbb R}
\newtheorem{theorem}{Theorem}[section]
\newtheorem{lem}{Lemma}[section]
\newtheorem{rem}{Remark}[section]
\newtheorem{prop}{Proposition}[section]
\newtheorem{cor}{Corollary}[section]
\newcounter{hypA}
\newenvironment{hypA}{\refstepcounter{hypA}\begin{itemize}
  \item[({\bf A\arabic{hypA}})]}{\end{itemize}}
\newcounter{hypB}
\newenvironment{hypB}{\refstepcounter{hypB}\begin{itemize}
 \item[({\bf H\arabic{hypB}})]}{\end{itemize}}
\date{}
\begin{document}

\begin{center}

{\Large \textbf{Central Limit Theorems for Coupled Particle Filters}}

\vspace{0.5cm}

BY AJAY JASRA \& FANGYUAN YU

{Department of Statistics \& Applied Probability,
National University of Singapore, Singapore, 117546, SG.}
{\footnotesize E-Mail:\,}\texttt{\emph{\footnotesize staja@nus.edu.sg}}, \texttt{\emph{\footnotesize e0204835@u.nus.edu}}

\end{center}

\begin{abstract}
In this article we prove new central limit theorems (CLT) for several coupled particle filters (CPFs).
CPFs are used for the sequential estimation of the difference of expectations w.r.t.~filters which are in 
some sense close. Examples include the estimation of the filtering distribution associated to
different parameters (finite difference estimation) and filters associated to partially observed discretized diffusion processes (PODDP) and the implementation of the multilevel Monte Carlo (MLMC) identity. We develop new theory for CPFs and based upon several results, we propose a new CPF which approximates the maximal coupling (MCPF) of a pair of predictor distributions.
In the context of ML estimation associated to PODDP with discretization $\Delta_l$
we show that the MCPF and
the approach in \cite{mlpf_new} have, under assumptions, an asymptotic variance that is upper-bounded by an expression that is (almost) $\mathcal{O}(\Delta_l)$, uniformly in time. The $\mathcal{O}(\Delta_l)$ rate preserves the so-called forward rate of
the diffusion in some scenarios which is not the case for the CPF in \cite{mlpf}. 
\\
\noindent \textbf{Key words}: Coupled Particle Filter, Central Limit Theorem, Multilevel Monte Carlo.
\end{abstract}

\section{Introduction}

The filtering problem is ubiquitous in statistics, applied probability and applied mathematics, with
far reaching applications in weather prediction, finance and engineering; see \cite{cappe,crisan} for example.
In most cases of practical interest, the filter must be numerically approximated and
a popular method for doing so is the particle filter (see e.g.~\cite{cappe,delm:04} and the references therein). The PF generates
$N\geq 1$ samples in parallel and uses a combination of sampling, importance sampling and resampling
to approximate the filter.  There is a substantial literature on the convergence of PFs (e.g.~\cite{delm:04})
and in particular there are CLTs which allow one to understand the errors associated to estimation.
Under assumptions, the associated asymptotic variance is bounded uniformly in time; see e.g.~\cite{chopin2}.

In this article, we are concerned with the filtering problem where one seeks to estimate the difference
of expectations of two different but `close' filters. As an example, if one observes data in discrete and
regular time points, associated to an unobserved diffusion process. In many cases, one must
time-discretize the diffusion process, if the transition density is not available up-to a non-negative
unbiased estimator. In such scenarios it is well known that the cost of estimating the filter using a PF
can be significantly reduced using a collapsing sum representation of the expectation associated
to the filter with the most precise discretization and estimating each difference independently using
a coupled particle filter.
In other applications, one can approximate differences of expectations of filters with different parameter values as a type of finite difference approximations; see for instance \cite{jacob1,sen}.

The CPF developed in \cite{chopin3} (see also \cite{jacob1,jacob2,mlpf,lee,sen}) is used in several applications as discussed above and various
other contexts. It consists of a particle filter which runs on the product space of the two filters. The
sampling operation often consists of simulating from a coupling of the Markov transitions of the hidden dynamics, which are often
available in applications. Resampling proceeds by sampling the maximal coupling
associated to the probability distributions of particle indices. The use of correlating the PFs is vital, for
instance in ML applications (see \cite{giles,hein,mlpf}), as it is this property which allows a variance reduction relative to a single PF. As has been noted by several authors, e.g.~\cite{sen}, unless the coupling of the Markov transitions is particularly strong, one expects that the maximally coupled resampling operation to ultimately
decorrelate the pairs of particles exponentially fast in time. As a result, the benefits of running such
algorithms may have a minimal effect for long time intervals.

In this article we consider four CPFs. The first is the case where the resampling is independent for each pair (independent resampliing CPF (IRCPF)), the second with the maximally coupled resampling 
(the maximally coupled resampling PF (MCRPF)).
The third algorithm, which to our knowledge is new, is based upon a weak law of large numbers 
for the MCRPF on the product space. This result shows that the limiting coupling on product space
does not correspond to the maximal coupling of the filter (or predictor). This coupling does not seem
to have any optimality properties, so we suggest a new CPF, the MCPF which approximates the maximal coupling of the predictors. This algorithm requires that the Markov transition of the filter
to have a density which is known pointwise and essentially samples from the maximal coupling of
particle filters. The fourth algorithm in \cite{mlpf_new} is based on multinomial resampling which uses the same
uniform random variable for each particle pair; we call this the Wasserstein CPF (WCPF). In general, all four algorithms can be used in each of the examples, with the constraint for the MCPF mentioned above.
We remark that there are CPFs in \cite{gregory,sen}, but they are not considered here
as they require even more mathematical complexity for their analysis.

We prove a CLT for the first three algorithms (the CLT for the WCPF, when the state-dynamics are one dimensional is in \cite{mlpf_new}) associated to the difference of estimates of expectations of the same function w.r.t.~the predictors, which is extended to multiple dimensions. The asymptotic variance expression is directly related to the properties of the limiting
coupling one approximates. Under assumptions (of the type in \cite{whiteley}), for the PODDP with (Euler) discretization $\Delta_l$, we show that the MCPF (resp.~WCPF) has an asymptotic variance that is upper-bounded by an expression that is $\mathcal{O}(\Delta_l)$ (resp.~$\mathcal{O}(\Delta_l^{1-\lambda})$, $\lambda$ arbitrarily close to, but not equal to, zero), uniformly in time, which preserves the so-called forward rate of the diffusion in some scenarios. This is reassuring as it shows that filtering for difference estimation
in the PODDP case can be effectively performed. This time and rate stability is associated to the fact 
that the limiting coupling on product spaces are associated to the optimal $L_0$ and $L_2$ Wasserstein couplings of the predictor/filter. For the IRCPF one does not recover the coupling rate
of the diffusion process and this poor performance is well-known in the literature. In the case
of the MCRPF we show even in a favourable case, that it can have an asymptotic variance, at time $n$, that is $\mathcal{O}(e^n\Delta_l)$
and identify when one can expect the algorithm to work well.  As was seen in the empirical results of \cite{mlpf_new,mlpf} the time and rate behaviour of the MCRPF in general does not seem to be as good as for the MCPF and WCPF.
The assumptions used for our asymptotic variance results are also verified in a real example.
Our CLTS are, to the best of our knowledge, the first results of these types for CPFs and require non-standard proofs. To summarize, the main results of the article are:
\begin{itemize}
\item{Theorem \ref{theo:wlln_max} gives a WLLN for the MCRPF.}
\item{Theorem \ref{theo:clt_ind} gives a CLT for the IRCPF, MRCPF \& MCPF.}
\item{Theorem \ref{theo:av_thm} gives a general bound on the asymptotic variance for each of the methods, IRCPF, MRCPF, MCPF \& WCPF.}
\item{Propositions \ref{prop:diff_max_coup} and \ref{prop:diff_was} give the time uniform bounds on the asymptotic variance for the MCPF and WCPF noted above (i.e.~for PODDPs).}
\end{itemize}

This paper is structured as follows. In Section \ref{sec:mod} we give our notations, models and the motivating example of a PODDP with MLMC is given.
In Section \ref{sec:algo} the algorithms are presented. In Section
\ref{sec:theory} our theoretical results are stated. Our CLTs are given and a general bound on the asymptotic
variance is provided.
In Section \ref{sec:appl} our results
are applied to a pratical model in the context of using coupled particle filters for PODDP with MLMC. 
The article is summarized in Section \ref{sec:summary}.
The appendix contains the proofs of our theoretical results.

\section{Notation and Models}\label{sec:mod}

\subsection{Notations}

Let $(\mathsf{X},\mathcal{X})$ be a measurable space. 
For a given function $v:\mathsf{X}\rightarrow[1,\infty)$ we denote by $\mathcal{L}_v(\mathsf{X})$
the class of functions $\varphi:\mathsf{X}\rightarrow\bbR$ for which
$$
\|\varphi\|_v := \sup_{x\in \mathsf{X}}\frac{|\varphi(x)|}{v(x)} < +\infty\ .
$$
When $v\equiv 1$ we write $\|\varphi\| := \sup_{x\in \mathsf{X}}|\varphi(x)|$.
If $v=1$ we write $\mathcal{B}_b(\mathsf{X})$, $\mathcal{C}_b(\mathsf{X})$ for the bounded measurable and continuous, bounded measurable functions respectively.
$\mathcal{C}^2(\mathsf{X})$ are the collection of twice continuously differentiable real-valued functions on $\mathsf{X}$. Let $\mathsf{d}$ be a metric on $\mathsf{X}$ then
for $\varphi\in\mathcal{L}_v(\mathsf{X})$, we say that $\varphi\in\textrm{Lip}_{v,\mathsf{d}}(\mathsf{X})$ if there exist a $C<+\infty$ such that for every $(x,y)\in\mathsf{X}\times\mathsf{X}$:
$$
|\varphi(x)-\varphi(y)| \leq C\mathsf{d}(x,y)v(x)v(y).
$$
If $v=1$, we write $\varphi\in\textrm{Lip}_{\mathsf{d}}(\mathsf{X})$ and write $\|\varphi\|_{\textrm{Lip}}$ for the Lipschitz constant.
$\mathscr{P}(\mathsf{X})$  denotes the collection of probability measures on $(\mathsf{X},\mathcal{X})$.
We also denote, for $\mu\in\mathscr{P}(\mathsf{X})$, $\|\mu\|_{v}:=\sup_{|\varphi|\leq v}|\mu(\varphi)|$. 
%
For a measure $\mu$ on $(\mathsf{X},\mathcal{X})$
and a $\mu-$integrable, $\varphi:\mathsf{X}\rightarrow\mathbb{R}$, the notation $\mu(\varphi)=\int_{\mathsf{X}}\varphi(x)\mu(dx)$ is used.
Let $K:\mathsf{X}\times\mathcal{X}\rightarrow(0,\infty)$ be a non-negative kernel and $\mu$ be a measure then we use the notations
$
\mu K(dy) = \int_{\mathsf{X}}\mu(dx) K(x,dy)
$
and for $K(x,\cdot)-$integrable, $\varphi:\mathsf{X}\rightarrow\mathbb{R}$
$
K(\varphi)(x) = \int_{\mathsf{X}} \varphi(y) K(x,dy).
$
For $\mu,\nu\in\mathscr{P}(\mathsf{X})$, the total variation distance 
is denoted $\|\mu-\nu\|_{\textrm{tv}}=\sup_{B\in\mathcal{X}}|\mu(B)-\nu(B)|$.
For $B\in\mathcal{X}$ the indicator is written $\mathbb{I}_B(x)$ and the dirac measure $\delta_B(dx)$.
For two measures $\mu,\nu$ on $(\mathsf{X},\mathcal{X})$, the product measure is $\mu\otimes \nu$.
For two measurable functions $\varphi$, $\psi$ on  $(\mathsf{X},\mathcal{X})$, the tensor product of functions is $\varphi\otimes\psi$.
$\mathcal{U}_A$ denotes the uniform distribution on the set~$A$. $\mathcal{N}_t(a,b)$ is the $t-$dimensional Gaussian distribution of mean $a$ and covariance $b$ (if $t=1$ the subscript is dropped from $\mathcal{N}$).
$\mathbb{P}$ and $\mathbb{E}$ are used to denote probability and expectation w.r.t.~the law of the specified algorithm - the context will be clear in each instant.
$\Rightarrow$ and $\rightarrow_{\mathbb{P}}$ are used to denote convergence in distribution and probability respectively. In the context of the article, this is as $N\rightarrow\infty$.

\subsection{Models}

Let $(\mathsf{X},\mathcal{X})$ be a measurable space and $\{G_n\}_{n\geq 0}$ a sequence of non-negative, bounded and measurable functions such that $G_n:\mathsf{X}\rightarrow\mathbb{R}_+$.
Let $\eta_0^{f},\eta_0^c\in\mathscr{P}(\mathsf{X})$ and $\{M_n^f\}_{n\geq 1}$,   $\{M_n^c\}_{n\geq 1}$ be two sequences of Markov kernels, i.e.~$M_n^f:\mathsf{X}\rightarrow\mathscr{P}(E)$,
$M_n^c:\mathsf{X}\rightarrow\mathscr{P}(E)$. Define, for $s\in\{f,c\}$, $B\in\mathcal{X}$
$$
\gamma_{n}^s(B) = \int_{\mathsf{X}^{n+1}}\mathbb{I}_B(x_n) \Big(\prod_{p=0}^{n-1} G_p^s(x_p)\Big) \eta_0^s(dx_0)\prod_{p=1}^n M_p^s(x_{p-1},dx_p)
$$
and
$$
\eta_n^s(B) = \frac{\gamma_{n}^s(B)}{\gamma_{n}^s(1)}.
$$

The objective is to consider Monte Carlo type algorithms which will approximate quantities, for $\varphi\in\mathcal{B}_b(\mathsf{X})$ any $n\geq 0$, such as 
\begin{equation}\label{eq:pred}
\eta_n^f(\varphi) - \eta_n^c(\varphi)
\end{equation}
or 
\begin{equation}\label{eq:filt}
\frac{\eta_n^f(G_n\varphi)}{\eta_n^f(G_n)} - \frac{\eta_n^c(G_n\varphi)}{\eta_n^c(G_n)}.
\end{equation}
\eqref{eq:pred} corresponds to a predictor of a state-space model and \eqref{eq:filt} the filter.
We focus explicitly on the predictor from herein.

\begin{rem}
We can make the $G_n$ also depend on $\{f,c\}$, which may be of importance in applictations. In the subsequent development, this is not done, but could be at a cost
of slightly longer mathematical arguments and notational complications. 
\end{rem}

The major point is that one
would like to approximate couplings of $(\eta_n^f,\eta_n^c)$, say $\check{\eta}_n\in\mathscr{P}(\mathsf{X}\times\mathsf{X})$ i.e.~that for any $B\in\mathcal{X}$ and every $n\geq 0$
$$
\check{\eta}_n(B\times\mathsf{X}) = \eta_n^f(B) \qquad \check{\eta}_n(\mathsf{X}\times B) = \eta_n^c(B)
$$
and consider
approximating 
$$
\check{\eta}_n(\varphi\otimes 1) - \check{\eta}_n(1\otimes \varphi)
$$
An explanation of why coupling the pairs is of interest has been given in the introduction and will be further illuminated in Section \ref{sec:diff_ex}.

Throughout the article, it is assumed that there exists $\check{\eta}_0\in\mathscr{P}(\mathsf{X}\times\mathsf{X})$ such that
for any $B\in\mathcal{X}$
$$
\check{\eta}_0(B\times \mathsf{X}) = \eta_0^f(B) \qquad \check{\eta}_0(\mathsf{X}\times B) = \eta_0^c(B)
$$
and moreover for any $n\geq 1$ there exists Markov kernels $\{\check{M}_n\}$, $\check{M}_n:\mathsf{X}\times\mathsf{X}\rightarrow\mathscr{P}(\mathsf{X}\times\mathsf{X})$
such that for any $B\in\mathcal{X}$, $(x,x')\in\mathsf{X}\times\mathsf{X}$:
$$
\check{M}_n(B\times \mathsf{X})(x,x') = M_n^f(B)(x) \qquad \check{M}_n(\mathsf{X}\times B)(x,x') = M_n^c(B)(x').
$$

\subsection{Example}\label{sec:diff_ex}

The following example is from \cite{mlpf} and there is some overlap with the presentation in that article.
We start with a diffusion process:
\begin{eqnarray}
dZ_t & = & a(Z_t)dt + b(Z_t)dW_t
\label{eq:sde}
\end{eqnarray}
with $Z_t\in\mathbb{R}^d=\mathsf{X}$, 
$a:\mathbb{R}^d\rightarrow\mathbb{R}^d$ ($j$th element denoted $a^j$), $b:\mathbb{R}^d\rightarrow\mathbb{R}^{d\times d}$ ($(j,k)$th element denoted $b^{j,k}$), 
$t\geq 0$ and $\{W_t\}_{t\geq 0}$ a $d-$dimensional Brownian motion. 
The following assumptions are made and if referred to as (D). (D) is assumed explicitly. We set $Z_0=x^*\in\mathsf{X}$.

\begin{quote} 
The coefficients $a^j, b^{j,k} \in \mathcal{C}^2(\mathsf{X})$, for $j,k= 1,\ldots, d$. 
Also, $a$ and $b$ satisfy 
\begin{itemize}
\item[(i)] {\bf uniform ellipticity}: $b(z)b(z)^T$ is uniformly positive definite;
\item[(ii)] {\bf globally Lipschitz}:
there is a $C>0$ such that 
$|a^j(z)-a^j(y)|+|b^{j,k}(z)-b^{j,k}(y)| \leq C |z-y|$ 
for all $(z,y)\in \mathsf{X}\times\mathsf{X}$, $(j,k)\in\{1,\dots,d\}^2$. 
\end{itemize}
\end{quote}

The data are observed at regular unit time-intervals (i.e.~in discrete time) 
$y_1,y_2,\dots$, $y_k \in \mathsf{Y}$.
It is assumed that conditional on $Z_{k}$, 
$Y_k$ is independent of all other random variables with density $G(z_{k},y_k)$. Let $M(z,dy)$  be the transition of the diffusion process (over unit time)
and consider a discrete-time Markov chain $X_0,X_1,\dots$ with initial distribution $M(x^*,\cdot)$ and transition $M(x,dy)$. Here we are
creating a discrete-time Markov chain that corresponds to the discrete-time skeleton of the diffusion process at a time lag of 1.
Now we write $G_{k}(x_{k})$  instead of $G(x_{k},y_{k+1})$.
Then we define, for $B\in\mathcal{X}$
$$
\gamma_n(B) := \int_{\mathsf{X}^{n+1}}\mathbb{I}_B(x_n)\Big(\prod_{p=0}^{n-1} G_{p}(x_{p})\Big)M(x^*,dx_{0})\prod_{p=1}^n M(x_{p-1},dx_{p}).
$$
The predictor is $\eta_n(B)=\gamma_n(B)/\gamma_n(1)$ which corresponds
to the distribution associated to $Z_{n+1}|y_1,\dots,y_n$.

In many applications, one must time discretize the diffusion to use the model in practice. We suppose an Euler discretization
with discretization $\Delta_l=2^{-l}$, $l\geq 0$ and write the associated transition kernel over unit time as $M^l(x,dy)$.
Note that, in practice, one may not be able to compute the density of the kernel as it is a compositon of $\Delta_l^{-1}-1$ Gaussians, however, one can certainly sample from $M^l$ in most cases.
Hence we
are interested in the Feynman-Kac model
for $B\in\mathcal{X}$
$$
\gamma_n^l(B) := \int_{\mathsf{X}^{n+1}}\mathbb{I}_B(x_n)\Big(\prod_{p=0}^{n-1} G_{p}(x_{p})\Big)M^l(x^*,dx_{0})\prod_{p=1}^n M^l(x_{p-1},dx_{p}).
$$
with associated predictor $\eta_n^l(B)=\gamma_n^l(B)/\gamma_n^l(1)$.  

Below, we will explain why one may wish to compute, for $l\geq 1$, $\eta_n^l(\varphi)-\eta_n^{l-1}(\varphi)$, $\varphi\in\mathcal{B}_b(\mathsf{X})$. That is, $f$ as used above, relates to the predictor
associated to the discretization $\Delta_l$ and  $c$, the predictor with discretization $\Delta_{l-1}$. 
A natural coupling of $M_n^f=M^l$ and $M_n^c=M^{l-1}$ exists (e.g.~\cite{giles}) so one also 
has a given $\check{\eta}_0$ and $\check{M}_n$.

Before continuing, we note some results which will help in our discussion below.
 As established in \cite[eq.~(32)]{mlpf} one has  for $C<+\infty$
\begin{equation}\label{eq:markov_cont}
\sup_{\mathcal{A}}\sup_{x\in\mathsf{X}}|M_n^f(\varphi)(x)-M_n^c(\varphi)(x)| \leq C \Delta_l
\end{equation}
where $\mathcal{A}=\{\varphi\in\mathcal{B}_b(\mathsf{X})\cap\textrm{Lip}_{\mathsf{d}_1}(\mathsf{X}): \|\varphi\|\leq 1\}$, $\mathsf{d}_1$ is the $L_1-$norm.
In addition,  \cite[Proposition D.1.]{mlpf} states
that for $p>0$, $C<+\infty$ and any $(x,y)\in\mathsf{X}\times\mathsf{X}$
\begin{equation}\label{eq:coup_h_cont}
\int_{\mathsf{X}\times\mathsf{X}}\|u-v\|^p \check{M}_n((x,y),d(u,v)) \leq C(|x-y| + \Delta_l^{1/2})^p.
\end{equation}
When $p=2$ ($\|u-v\|$ is the $L_2-$norm), the term $\Delta_l$ is the so-called \emph{forward strong error rate}.
In the proof of \cite[Theorem 4.3]{mlpf}, it is shown that for any $n\geq 0$ there exists a $C<+\infty$ such that for $\varphi\in\mathcal{A}$, $l\geq 0$
one has
\begin{equation}\label{eq:pred_bias_euler}
|\eta_n(\varphi)-\eta_n^l(\varphi)| \leq C\Delta_l.
\end{equation}
Note that this bound can be deduced from \eqref{eq:markov_cont}, but that $C$ may depend on $n$; this latter point is ignored for now.

\subsubsection{Multilevel Monte Carlo}\label{sec:mlmc_intro}

Suppose one can exactly sample from $\eta_n^l$ for any 
$l,n\geq 0$. The Monte Carlo estimate of $\eta_n^l(\varphi)$, set $\varphi\in\mathcal{A}$, is then of course $\frac{1}{N}\sum_{i=1}^N\varphi(x_n^i)$,
$X_n^i$ are i.i.d.~from $\eta_n^l$. One has the mean square error (MSE)
$$
\mathbb{E}\Big[\Big(\frac{1}{N}\sum_{i=1}^N\varphi(x_n^i) - \eta_n(\varphi)\Big)^2\Big] = \frac{\mathbb{V}\textrm{ar}_{\eta_n^l}[\varphi(X)]}{N} + |\eta_n(\varphi)-\eta_n^l(\varphi)|^2
$$
where $\mathbb{V}\textrm{ar}_{\eta_n^l}[\varphi(X)]$ is the variance of $\varphi(X)$ w.r.t.~$\eta_n^l$. 
Then, for $\varepsilon>0$ given, to target a mean square error of $\mathcal{O}(\varepsilon^2)$, by \eqref{eq:pred_bias_euler},
one chooses $l=\mathcal{O}(|\log(\varepsilon)|)$ (as one sets $\Delta_l^2=\varepsilon^2$). Then one must choose $N=\mathcal{O}(\varepsilon^{-2})$ and we suppose the cost
of simulating one sample is $\mathcal{O}(\Delta_l^{-1})$ (see \cite{mlpf} for a justification), again ignoring $n$. Then the cost of
achieving a MSE of $\mathcal{O}(\varepsilon^2)$ is hence $\mathcal{O}(\varepsilon^{-3})$.

Now, one has that for any $L\geq 1$, the multilevel identity \cite{giles,hein}
$$
\eta_n^L(\varphi) = \eta_n^0(\varphi) + \sum_{l=1}^{L}\{[\eta_n^l-\eta_n^{l-1}](\varphi)\}.
$$
Suppose that, for $l\geq 1$, it is possible to exactly sample a coupling of $(\eta_n^l,\eta_n^{l-1})$, denote it $\check{\eta}_n^l$, so that one has 
$$
\int_{\mathsf{X}\times\mathsf{X}} \|x-y\|^2 \check{\eta}_n^l(d(x,y)) \leq C\Delta_l
$$
and the cost of such a simulation is $\mathcal{O}(\Delta_l^{-1})$. The rate $\Delta_l$ has been taken from the strong error rate in \eqref{eq:coup_h_cont}.
Now to estimate $[\eta_n^l-\eta_n^{l-1}](\varphi)$ one simulates $((X_n^{1,l},X_n^{1,l-1}),\dots,$ $(X_n^{N_l,l},X_n^{N_l,l-1}))$ i.i.d.~from $\check{\eta}_n^l$ for some $N_l\geq 1$ and the approximation
is $\frac{1}{N_l}\sum_{l=1}^{N_l}\{\varphi(x_n^{i,l})-\varphi(x_n^{i,l-1})\}$.
For $1\leq l\leq L$ this is repeated independently for estimating $[\eta_n^l-\eta_n^{l-1}](\varphi)$ and the case $l=0$ is performed, independently, using the Monte Carlo method above. One can
then easily show that the MSE of the estimate of $\eta_n^L$ is upper-bounded by (recall $\varphi\in\mathcal{A}$) by
$$
\frac{\mathbb{V}\textrm{ar}_{\eta_n^l}[\varphi(X)]}{N_0} + C \sum_{l=1}^L\frac{\Delta_l}{N_l} + |\eta_n(\varphi)-\eta_n^l(\varphi)|^2.
$$
Then, for $\varepsilon>0$ given, to target a mean square error of $\mathcal{O}(\varepsilon^2)$, set $L=\mathcal{O}(|\log(\varepsilon)|)$, and $N_l=\mathcal{O}(\varepsilon^{-2}|\log(\varepsilon)|\Delta_l)$ (see \cite{giles}),
then the MSE is $\mathcal{O}(\varepsilon^2)$. The overall computational effort is $\mathcal{O}(\sum_{l=0}^L N_l \Delta_l^{-1})=\mathcal{O}(\varepsilon^{-2}|\log(\varepsilon)|^2)$; if $\varepsilon$ is suitably small this is a significant reduction in computational effort relative to the MC method above.

The main point here is that, at present, there are no computational methods to perform the exact simulation mentioned and thus we focus on particle filters, developed in the literature.
The estimates (variance/cost) above typically depend on $n$ and our objective is to consider if the variance of PF estimates can be $\mathcal{O}(\Delta_l)$ uniformly in time and hence that the ML gain is retained uniformly in time.
We focus on the asymptotic variance in the CLT (versus the finite sample variance) as this can be more straightforward to deal with.

\section{Algorithms}\label{sec:algo}

\subsection{Independent Pair Resampling}

The first procedure we consider is as follows. Let $n\geq 1$, $B\in\mathcal{X}\vee\mathcal{X}$ and $\mu\in\mathscr{P}(\mathsf{X}\times\mathsf{X})$ and define the probability measure:
$$
\check{\Phi}_n^I(\mu)(B)  = \frac{\mu([G_{n-1}\otimes G_{n-1}]\check{M}_n(B)) }{\mu(G_{n-1}\otimes G_{n-1})}.
$$
Set $u_n=(x_n^f,x_n^c)\in\mathsf{X}\times\mathsf{X}$
then consider the joint probability measure on $(\mathsf{X}\times\mathsf{X})^{n+1}$
$$
\mathbb{P}(d(u_0,\dots,u_n)) = \check{\eta}_0(du_0) \prod_{p=1}^n \check{\Phi}_p^I(\eta_{p-1}^f\otimes\eta_{p-1}^c)(du_p).
$$
It is easily checked that the marginal of $x_n^f$ (resp.~$x_n^c$) is $\eta_n^f$ (resp.~$\eta_n^c$). Denote by $\check{\eta}_n^I$ the marginal of $u_n$ induced by this joint probability measure.
Thus, if one could sample a trajectory of $(u_0,\dots,u_n)$ from $\mathbb{P}(d(u_0,\dots,u_n))$ one could easily approximate quantities such as \eqref{eq:pred} or \eqref{eq:filt} using Monte Carlo
methods. In most practical applications of interest, this is not possible. 

The particle approximation is taken as:
$$
\mathbb{P}(d(u_0^{1:N},\dots,u_n^{1:N})) = \Big(\prod_{i=1}^N \check{\eta}_0(du_0^i)\Big)\Big(\prod_{p=1}^n\prod_{i=1}^N\check{\Phi}_p^I(\eta_{p-1}^{N,f}\otimes\eta_{p-1}^{N,c})(du_p^i)\Big)
$$
where for $p\geq 1$, $s\in\{f,c\}$
$$
\eta_{p-1}^{N,s}(dx) = \frac{1}{N}\sum_{i=1}^N \delta_{x_{p-1}^{i,s}}(dx).
$$
The key point is that in this algorithm, the resampled indices for a pair of particles are generated (conditionally) independently.
Set
$$
\check{\eta}_{p}^{N,I}(du) = \frac{1}{N}\sum_{i=1}^N \delta_{u_{p}^{i}}(du).
$$
As has been mentioned by many authors (e.g.~\cite{sen}) one does not expect this procedure to effective, in the sense that $\check{\eta}_n^I$ would
not provide an appropriate dependence between $(\eta_n^f,\eta_n^c)$.

\subsection{Maximally Coupled Resampling}

Let $n\geq 1$, $B\in\mathcal{X}\vee\mathcal{X}$ and $\mu\in\mathscr{P}(\mathsf{X}\times\mathsf{X})$ and define the probability measure:
\begin{eqnarray*}
\check{\Phi}_n^M(\mu)(B) & = & \mu\Big(\{F_{n-1,\mu,f} \wedge F_{n-1,\mu,c}\} \check{M}_n(B)\Big)
+ \Big(1-
\mu\Big(\{F_{n-1,\mu,f} \wedge F_{n-1,\mu,c}\}\Big)
\Big) \times \\ & & 
(\mu\otimes\mu)\Big(\Big\{\overline{F}_{n-1,\mu,f}\otimes \overline{F}_{n-1,\mu,c}\Big\}\bar{M}_n(B)\Big)
\end{eqnarray*}
where for $(x,y)\in\mathsf{X}\times\mathsf{X}$
\begin{eqnarray*}
\overline{F}_{n-1,\mu,f}(x,y) &  = & \frac{F_{n-1,\mu,f}(x,y)-\{F_{n-1,\mu,f}(x,y)\wedge F_{n-1,\mu,c}(x,y)\}}{
\mu(F_{n-1,\mu,f}-\{F_{n-1,\mu,f}\wedge F_{n-1,\mu,c}\})} \\
\overline{F}_{n-1,\mu,c}(x,y) &  = & \frac{F_{n-1,\mu,c}(x,y)-\{F_{n-1,\mu,f}(x,y)\wedge F_{n-1,\mu,c}(x,y)\}}{
\mu(F_{n-1,\mu,c}-\{F_{n-1,\mu,f}\wedge F_{n-1,\mu,c}\})} \\
F_{n-1,\mu,f}(x,y) &  = & \check{G}_{n-1,\mu,f}(x)\otimes1 
\end{eqnarray*}
\begin{eqnarray*}
F_{n-1,\mu,c}(x,y) &  = & 1\otimes \check{G}_{n-1,\mu,c}(y) \\
\check{G}_{n-1,\mu,f}(x) & = & \frac{G_{n-1}(x)}{\mu(G_{n-1}\otimes 1)} \\
\check{G}_{n-1,\mu,c}(y) & = & \frac{G_{n-1}(y)}{\mu(1\otimes G_{n-1})} 
\end{eqnarray*}
and for $((x,y),(u,v))\in\mathsf{X}^2\times \mathsf{X}^2$ and $B\in\mathcal{X}\vee\mathcal{X}$
$$
\bar{M}_n(B)((x,y),(u,v)) = \check{M}_n(B)(x,v).
$$
Now set $\check{\eta}_0^M=\check{\eta}_0$ and we define, recursively, for any $n\geq 1$, $B\in\mathcal{X}\vee\mathcal{X}$
$$
\check{\eta}_n^M(B) = \check{\Phi}_n^M(\check{\eta}_{n-1}^M)(B).
$$
Note that by \cite[Proposition A.1.]{mlpf} we have for $B\in\mathcal{X}$
$$
\check{\eta}_n^M(B\times\mathsf{X}) = \eta_n^f(B)\quad\textrm{and}\quad\check{\eta}_n^M(\mathsf{X}\times B) = \eta_n^c(B).
$$
The algorithm used here is:
$$
\mathbb{P}(d(u_0^{1:N},\dots,u_n^{1:N})) = \Big(\prod_{i=1}^N \check{\eta}_0(du_0^i)\Big)\Big(\prod_{p=1}^n\prod_{i=1}^N\check{\Phi}_p^M(\check{\eta}_{n-1}^{N,M})(du_p^i)\Big)
$$
where for $p\geq 1$, 
$$
\check{\eta}_{p-1}^{N,M}(du) = \frac{1}{N}\sum_{i=1}^N \delta_{u_{p}^{i}}(du).
$$
and we set for  $s\in\{f,c\}$
$$
\eta_{p-1}^{N,s}(dx) = \frac{1}{N}\sum_{i=1}^N \delta_{x_{p-1}^{i,s}}(dx).
$$
This procedure is as in \cite{chopin3} and adopted in, for instance, \cite{jacob2,mlpf}. The idea is to provide a local optimality procedure
w.r.t.~the resampling operation. This is in the sense that for any fixed $N$ and conditional on the information generated so far, one will
maximize the probability that the resampled indices are equal.

For the MCRPF, we present a preliminary result, which will prove to be of interest. The following assumptions are used and note that in (A\ref{hyp:3}) there is a metric $\mathsf{d}$ on $\mathsf{X}\times\mathsf{X}$ implicit in the assumption.
\begin{hypA}\label{hyp:1}
For every $n\geq 0$, $G_n\in \mathcal{C}_b(\mathsf{X})$.
\end{hypA}
\begin{hypA}\label{hyp:2}
For every $n\geq 1$, $\check{M}_n$ is Feller.
\end{hypA}
\begin{hypA}\label{hyp:3}
$\mathsf{X}\times\mathsf{X}$ is a locally compact and separable metric space.
\end{hypA}
The assumptions are adopted due to the complexity of the operator $\check{\Phi}_n^M$. As can be seen in 
Appendix \ref{app:max_wlln}, where the proofs for the following result are given, it is non-trivial to work with $\check{\Phi}_n^M$. To weaken these assumptions would
lead to further calculations, which would essentially confirm the same result.


\begin{theorem}\label{theo:wlln_max}
Assume (A\ref{hyp:1}-\ref{hyp:3}). Then for any $\varphi\in\mathcal{C}_b(\mathsf{X}\times\mathsf{X})$, $n\geq 0$ we have
$$
\check{\eta}_{n}^{N,M}(\varphi) \rightarrow_{\mathbb{P}} \check{\eta}_{n}^{M}(\varphi).
$$
\end{theorem}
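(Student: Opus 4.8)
The plan is to argue by induction on $n$, using the standard decomposition of the particle error into a conditional sampling error and a propagation error, but with the nonlinear operator $\check{\Phi}_n^M$ in place of the usual Feynman--Kac update. For the base case $n=0$ the particles $u_0^i$ are i.i.d.\ from $\check{\eta}_0$, so $\check{\eta}_0^{N,M}(\varphi)\to_{\mathbb{P}}\check{\eta}_0(\varphi)=\check{\eta}_0^M(\varphi)$ is the classical weak law for bounded $\varphi$. For the inductive step, assuming $\check{\eta}_{n-1}^{N,M}(f)\to_{\mathbb{P}}\check{\eta}_{n-1}^{M}(f)$ for every $f\in\mathcal{C}_b(\mathsf{X}\times\mathsf{X})$, I would write
$$
\check{\eta}_n^{N,M}(\varphi) - \check{\eta}_n^M(\varphi) = \underbrace{\big[\check{\eta}_n^{N,M}(\varphi) - \check{\Phi}_n^M(\check{\eta}_{n-1}^{N,M})(\varphi)\big]}_{(\mathrm{I})} + \underbrace{\big[\check{\Phi}_n^M(\check{\eta}_{n-1}^{N,M})(\varphi) - \check{\Phi}_n^M(\check{\eta}_{n-1}^{M})(\varphi)\big]}_{(\mathrm{II})}.
$$
Conditional on the history generating $\check{\eta}_{n-1}^{N,M}$, the $u_n^i$ are i.i.d.\ draws from $\check{\Phi}_n^M(\check{\eta}_{n-1}^{N,M})$ with conditional mean exactly $\check{\Phi}_n^M(\check{\eta}_{n-1}^{N,M})(\varphi)$; since $\varphi$ is bounded, a conditional variance bound of order $1/N$ gives $\mathbb{E}[(\mathrm{I})^2]\to 0$ and hence $(\mathrm{I})\to_{\mathbb{P}}0$. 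The whole difficulty is therefore concentrated in $(\mathrm{II})$, i.e.\ in showing that $\mu\mapsto\check{\Phi}_n^M(\mu)(\varphi)$ is continuous along the random weak convergence $\check{\eta}_{n-1}^{N,M}\to\check{\eta}_{n-1}^M$ supplied by the inductive hypothesis.

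To treat $(\mathrm{II})$ I would first isolate the scalar quantities through which $\check{\Phi}_n^M(\mu)$ depends on $\mu$. By (A1) the functions $G_{n-1}\otimes1$ and $1\otimes G_{n-1}$ are bounded and continuous, so the normalizers $\alpha_N=\check{\eta}_{n-1}^{N,M}(G_{n-1}\otimes1)$ and $\beta_N=\check{\eta}_{n-1}^{N,M}(1\otimes G_{n-1})$ converge in probability to their limits $\alpha,\beta$ at $\check{\eta}_{n-1}^M$, which are strictly positive by non-degeneracy of the model. Writing $h_{a,b}(x,y)=\big(\tfrac{G_{n-1}(x)}{a}\wedge\tfrac{G_{n-1}(y)}{b}\big)\check{M}_n(\varphi)(x,y)$, the first term of $\check{\Phi}_n^M(\mu)(\varphi)$ is $\mu(h_{\alpha,\beta})$; by (A2) $\check{M}_n(\varphi)\in\mathcal{C}_b(\mathsf{X}\times\mathsf{X})$ and the minimum of continuous functions is continuous, so $h_{a,b}\in\mathcal{C}_b$ for each fixed $(a,b)$. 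I would then split
$$
\check{\eta}_{n-1}^{N,M}(h_{\alpha_N,\beta_N}) - \check{\eta}_{n-1}^{M}(h_{\alpha,\beta}) = \check{\eta}_{n-1}^{N,M}(h_{\alpha_N,\beta_N}-h_{\alpha,\beta}) + \big[\check{\eta}_{n-1}^{N,M}(h_{\alpha,\beta})-\check{\eta}_{n-1}^{M}(h_{\alpha,\beta})\big],
$$
bounding the first summand by $\|h_{\alpha_N,\beta_N}-h_{\alpha,\beta}\|\to_{\mathbb{P}}0$ (a sup-norm estimate using only boundedness of $G_{n-1}$, local Lipschitzness of $t\mapsto 1/t$ away from zero, and the $1$-Lipschitzness of $\wedge$) and treating the second as the inductive hypothesis applied to the fixed test function $h_{\alpha,\beta}\in\mathcal{C}_b$.

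For the residual (second) term of $\check{\Phi}_n^M$ I would proceed analogously but as a double integral against $\mu\otimes\mu$. Abbreviating $F_f:=F_{n-1,\mu,f}$, $F_c:=F_{n-1,\mu,c}$ and observing that the normalizers of $\overline{F}_{n-1,\mu,f}$ and $\overline{F}_{n-1,\mu,c}$ both equal $1-\mu(F_f\wedge F_c)$, the prefactor partly cancels and the term reduces to a ratio whose numerator is the integral of the bounded continuous function $[F_f-\{F_f\wedge F_c\}](x,y)\,[F_c-\{F_f\wedge F_c\}](u,v)\,\check{M}_n(\varphi)(x,v)$ (with the normalizing scalars frozen at their limits) against the product measure, divided by $1-\mu(F_f\wedge F_c)$. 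Convergence of the denominator follows from the $h_{a,b}$ argument with $\varphi\equiv1$, while convergence of the numerator requires $\check{\eta}_{n-1}^{N,M}\otimes\check{\eta}_{n-1}^{N,M}(g)\to_{\mathbb{P}}\check{\eta}_{n-1}^{M}\otimes\check{\eta}_{n-1}^{M}(g)$ for $g\in\mathcal{C}_b((\mathsf{X}\times\mathsf{X})^2)$; since the integrand above is not a tensor product (the factor $\check{M}_n(\varphi)(x,v)$ couples the two blocks), I would obtain this by Stone--Weierstrass approximation by finite sums of tensor products, each reduced to the marginal hypothesis, with the local compactness and separability of (A3) furnishing the function algebra needed for the approximation. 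The main obstacle, and the reason for the strength of (A1)--(A3), is exactly this continuity analysis of $\check{\Phi}_n^M$: controlling the $\mu$-dependence that sits simultaneously inside the minimum, inside the normalizations, and inside the integrating measures, and in particular guarding against the degeneracy $1-\mu(F_f\wedge F_c)\to0$, where the residual term must be shown to vanish continuously—consistently with its vanishing prefactor—rather than to blow up.
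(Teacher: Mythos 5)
Your proposal is correct and follows essentially the same route as the paper's proof: induction on $n$, the same split into a conditional sampling error (killed by a conditional second-moment bound; the paper invokes the conditional Marcinkiewicz--Zygmund inequality) and the propagation error $\check{\Phi}_n^M(\check{\eta}_{n-1}^{N,M})(\varphi)-\check{\Phi}_n^M(\check{\eta}_{n-1}^{M})(\varphi)$, with the latter handled by freezing the normalizing scalars at their limits and applying the inductive hypothesis to fixed continuous bounded test functions, and with the non-tensor integrand in the residual term treated via Stone--Weierstrass on $(\mathsf{X}\times\mathsf{X})^2$ under (A\ref{hyp:3}) (the paper's Lemma \ref{lem:max_marginal_product}). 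The only differences are cosmetic --- e.g.\ you control the replacement of the normalizers by a sup-norm estimate where the paper uses the identity $a\wedge b=\tfrac12(a+b-|a-b|)$ and quotes \cite[Proposition C.6]{mlpf} for the normalizers' convergence --- and you correctly flag the implicit non-degeneracy $1-\mu(F_{n-1,\mu,f}\wedge F_{n-1,\mu,c})>0$ on which the paper's bounds also rely without comment.
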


What is interesting here, is that on the \emph{product space} $\mathsf{X}\times\mathsf{X}$ the target $\check{\eta}_{n}^{M}$ is a coupling
of $(\eta_n^f,\eta_n^c)$, but the actual maximal coupling of $(\eta_n^f,\eta_n^c)$ is not sampled from. As is well known, the maximal coupling will maximize
the probability that two random variables are equal, with specified marginals and is the optimal coupling of two probability measures w.r.t.~the $L_0-$Wasserstein distance.
If this former property is desirable from a practical perspective, then the above algorithm should not be used. The maximially coupled resampling operation is, for a finite
number of samples (particles), the optimal (in the above sense) way to couple the resampling operation, but, may not lead to large sample `good' couplings. This is manifested in
\cite{mlpf} where the forward error rate \eqref{eq:coup_h_cont} is lost for the diffusion problem in Section \ref{sec:diff_ex}. As mentioned above, the limit is a coupling of $(\eta_n^f,\eta_n^c)$, but in general there is no reason to suspect that it is optimal
in any sense.

\subsection{Maximal Coupling}

We now present an algorithm which can sample (in the limit) from the maximal coupling of $(\eta_n^f,\eta_n^c)$. We will assume that for $s\in\{f,c\}$ the Markov kernels $M_n^s$, as
well as $\eta_0^s,$ admit a density w.r.t.~a $\sigma-$finite measure $dx$. The densities are denoted $M_n^s$ and $\eta_0^s$, $s\in\{f,c\}$ and we assume
that the densities can be evaluated numerically. To remove this latter requirement is left to future work.

Let $n\geq 1$, $B\in\mathcal{X}\vee\mathcal{X}$ and $(\mu,\nu)\in\mathscr{P}(\mathsf{X})\times\mathscr{P}(\mathsf{X})$ and define the probability measure:
\begin{eqnarray*}
\check{\Phi}_n^{C}(\mu,\nu)(\varphi) & = & \int_{\mathsf{X}\times\mathsf{X}}\mathbb{I}_B(x,y) \Big[\int_{\mathsf{X}} 
F_{n-1,\mu,f}(u)\wedge F_{n-1,\nu,c}(u) \delta_{\{u,u\}}(d(x,y)) du + \\ & & \frac{1}{1-\int_{\mathsf{X}} 
F_{n-1,\mu,f}(u)\wedge F_{n-1,\nu,c}(u)du}
\overline{F}_{n-1,\mu,\nu,f}(x)\overline{F}_{n-1,\nu,\mu,c}(y) dxdy\Big]
\end{eqnarray*}
where, for $x\in\mathsf{X}$
\begin{eqnarray*}
\overline{F}_{n-1,\mu,\nu,f}(x) & = & F_{n-1,\mu,f}(x) -  F_{n-1,\mu,f}(x)\wedge F_{n-1,\nu,c}(x)\\
\overline{F}_{n-1,\nu,\mu,c}(x) & = &  F_{n-1,\nu,c}(x) -  F_{n-1,\mu,f}(x)\wedge F_{n-1,\nu,c}(x)\\
F_{n-1,\mu,f}(x) & = & \frac{\mu(G_{n-1}M_n^f(\cdot,x))}{\mu(G_{n-1})}\\
 F_{n-1,\nu,c}(x) & = & \frac{\nu(G_{n-1}M_n^c(\cdot,x))}{\nu(G_{n-1})}.
\end{eqnarray*}
Now set $\check{\eta}_0^C$ as the maximal coupling of $(\eta_0^f,\eta_0^c)$ and  set for $B\in\mathcal{X}\vee\mathcal{X}$
$$
\check{\eta}_n^C(B) = \check{\Phi}_n^{C}(\eta_{n-1}^f,\eta_{n-1}^c)(B).
$$
We have for $B\in\mathcal{X}$
$$
\check{\eta}_n^C(B\times\mathsf{X}) = \eta_n^f(B)\quad\textrm{and}\quad\check{\eta}_n^C(\mathsf{X}\times B) = \eta_n^c(B).
$$
Moreover $\check{\eta}_n^C$ is the maximal coupling of $(\eta_n^f,\eta_n^c)$.

The particle approximation used is:
$$
\mathbb{P}(d(u_0^{1:N},\dots,u_n^{1:N})) = \Big(\prod_{i=1}^N \check{\eta}_0^C(du_0^i)\Big)\Big(\prod_{p=1}^n\prod_{i=1}^N\check{\Phi}_p^C(\eta_{p-1}^{N,f},
\eta_{p-1}^{N,c})(du_p^i)\Big)
$$
where for $p\geq 1$, $s\in\{f,c\}$
$$
\eta_{p-1}^{N,s}(dx) = \frac{1}{N}\sum_{i=1}^N \delta_{x_{p-1}^{i,s}}(dx).
$$
We set for $\mu\in\mathscr{P}(\mathsf{X})$, $s\in\{f,c\}$, $B\in\mathcal{X}$, $n\geq 1$
$$
\Phi_n^s(\mu)(B) = \frac{\mu(G_{n-1}M_n^s(B))}{\mu(G_{n-1})}.
$$

We remark that as $M_n^s$, $n\geq 1$ and $\eta_0^s$, $s\in\{f,c\}$, can be evaluated numerically we can sample from $\check{\eta}_0^C$ and $\check{\Phi}_n^{C}(\eta_{n-1}^{N,f},\eta_{n-1}^{N,c})$, the latter of which is 
the maximal coupling of $\Phi_n^f(\eta_{n-1}^{N,f})$ and $\Phi_n^c(\eta_{n-1}^{N,c})$, using the algorithm in \cite{thor}. This is as follows:
\begin{enumerate}
\item{Sample $X_n^f\sim\Phi_n^f(\eta_{n-1}^{N,f})$ and $W|x_n^f\sim\mathcal{U}_{[0,F_{n-1,\eta_{n-1}^{N,f},f}(x_n^f)]}$. If $W<F_{n-1,\eta_{n-1}^{N,c},c}(x_n^f)$, output $(x_n^f,x_n^f)$, otherwise go-to 2..}
\item{Sample $X_n^c\sim\Phi_n^c(\eta_{n-1}^{N,c})$ and $\tilde{W}|x_n^c\sim\mathcal{U}_{[0,F_{n-1,\eta_{n-1}^{N,c},c}(x_n^c)]}$, if $\tilde{W}>F_{n-1,\eta_{n-1}^{N,f},f}(x_n^c)$, output $(x_n^f,x_n^c)$, otherwise start 2.~again.}
\end{enumerate}
This algorithm is a rejection sampler, which
would add a random running time element per time-step, which may not be desirable for some applications. In the limiting case (i.e.~sampling the maximal coupling of $(\eta_n^f,\eta_n^c)$) the
expected number of steps to acceptance is 
$$
1 + \frac{\int_{\mathsf{X}}|\eta_n^c(x)-\eta_n^f(x)|dx}{2\int_{\mathsf{X}}[\eta_n^c(x)-\eta_n^f(x)]\mathbb{I}_{\{x\in\mathsf{X}:\eta_n^c(x)\geq\eta_n^f(x)\}}(x)dx} = 1 + R_n.
$$
One might expect that $R_n$ above is $\mathcal{O}(1)$ in applications. We expect $(\eta_n^f,\eta_n^c)$ to be close and possibly that $\eta_n^c$ is more diffuse than $\eta_n^f$, for instance
that $\eta_n^c(x)/\eta_n^f(x)\geq C$, for some constant $C\leq 1$, hence one may find $R_n=\mathcal{O}(1)$. In general, one might want to confirm this conjecture before implementing this approach.


\subsection{Wasserstein Coupled Resampling}

We describe the WCPF used in \cite{mlpf_new}.
For this case we restrict our attention to the case that $\mathsf{X}=\mathbb{R}$. It is explicitly assumed that the cumulative distribution function (CDF) and its inverse associated to the probability, $s\in\{f,c\}$, $n\geq 0$, $B\in\mathcal{X}$
$$
\overline{\eta}_n^s(B) = \frac{\eta_n^s(G_n\mathbb{I}_B)}{\eta_n^s(G_n)}
$$
exists and are continuous functions. We denote the CDF (resp.~inverse) of $\overline{\eta}_n^s$ as $F_{\overline{\eta}_n^s}$ (resp.~$F_{\overline{\eta}_n^s}^{-1}$ ).
In general we write probability measures  on $\mathsf{X}$ for which the CDF and inverse are well-defined as $\mathscr{P}_F(\mathsf{X})$ with the 
associated CDF $F_{\mu}$.
 Let $n\geq 1$, $B\in\mathcal{X}\vee\mathcal{X}$ and $\mu,\nu\in\mathscr{P}_F(\mathsf{X})$ and define the probability measure:
$$
\check{\Phi}_n^W(\mu,\nu)(B)  = \int_{\mathsf{X}\times\mathsf{X}}  \Big(\int_{0}^1 \delta_{\{F_{\mu}^{-1}(w),F_{\nu}^{-1}(w)\}}(du) dw \Big) \check{M}_n(B)(u).
$$
Consider the joint probability measure on $(\mathsf{X}\times\mathsf{X})^{n+1}$
$$
\mathbb{P}(d(u_0,\dots,u_n)) = \check{\eta}_0(du_0) \prod_{p=1}^n \check{\Phi}_p^W(\overline{\eta}_{p-1}^f,\overline{\eta}_{p-1}^c)(du_p).
$$
It is easily checked that the marginal of $x_n^f$ (resp.~$x_n^c$) is $\eta_n^f$ (resp.~$\eta_n^c$). Denote by $\check{\eta}_n^W$ the marginal of $u_n$ induced by this joint probability measure.

The algorithm used here is:
$$
\mathbb{P}(d(u_0^{1:N},\dots,u_n^{1:N})) = \Big(\prod_{i=1}^N \check{\eta}_0(du_0^i)\Big)\Big(\prod_{p=1}^n\prod_{i=1}^N\check{\Phi}_p^W(\overline{\eta}_{p-1}^{N,f},\overline{\eta}_{p-1}^{N,c})(du_p^i)\Big)
$$
where  for $p\geq 1$, $s\in\{f,c\}$
$$
\overline{\eta}_{p-1}^{N,s}(dx) = \sum_{i=1}^N \frac{G_{p-1}(x_{p-1}^{i,s})}{\sum_{j=1}^N G_{p-1}(x_{p-1}^{j,s})}\delta_{x_{p-1}^{i,s}}(dx).
$$
As before for $p\geq 1$, $s\in\{f,c\}$
$$
\eta_{p-1}^{N,s}(dx) = \frac{1}{N}\sum_{i=1}^N \delta_{x_{p-1}^{i,s}}(dx)
$$
and for $p\geq 0$
$$
\check{\eta}_{p}^{N,W}(du) = \frac{1}{N}\sum_{i=1}^N \delta_{u_{p}^{i}}(du).
$$

\section{Theoretical Results}\label{sec:theory}

This section is split into two. We give our CLTs in Section \ref{sec:clt_state} and bounds on the asymptotic variance in Section \ref{sec:av_bound_disc}.

\subsection{Central Limit Theorems}\label{sec:clt_state}

Denote the sequence of non-negative kernels $\{Q_n^s\}_{n\geq 1}$, $s\in\{f,c\}$, $Q_n^s(x,dy) = G_{n-1}(x) M_n^s(x,dy)$ and for $B\in\mathcal{X}$, $x_p\in\mathsf{X}$
$$
Q_{p,n}^s(B)(x_p) = \int_{\mathsf{X}^{n-p}} \mathbb{I}_B(x_n) \prod_{q=p}^{n-1} Q_{q+1}^s(x_q,dx_{q+1})
$$
$0\leq p<n$ and in the case $p=n$, $Q_{p,n}^s$ is the identity operator.
Now denote for $0\leq p<n$, $s\in\{f,c\}$, $B\in\mathcal{X}$, $x_p\in\mathsf{X}$
$$
D_{p,n}^s(B)(x_p) = \frac{Q_{p,n}^s(\mathbb{I}_B - \eta_n^s(B))}{\eta_p^s(Q_{p,n}^s(1))}
$$
in the case $p=n$, $D_{p,n}^s(B)(x)=\mathbb{I}_B-\eta_n^s(B)$.

We then have our CLTs, where the proofs are given in Appendix \ref{app:clt_prf}. Recall that for the MCPF $M_n^s$ is used as a notation for the kernel and density of $M_n^s$.

\begin{theorem}\label{theo:clt_ind}
We have
\begin{enumerate}
\item{For the IRCPF:
For any $\varphi\in\mathcal{B}_b(\mathsf{X})$, $n\geq 0$ we have
$$
\sqrt{N}[\check{\eta}_{n}^{N,I}-\check{\eta}_{n}^{I}](\varphi\otimes 1 - 1 \otimes \varphi) \Rightarrow \mathcal{N}(0,\sigma_n^{2,I}(\varphi))
$$
where
$$
\sigma_n^{2,I}(\varphi) = \sum_{p=0}^n \check{\eta}_p^I(\{(D_{p,n}^f(\varphi)\otimes 1- 1\otimes D_{p,n}^c(\varphi))\}^2).
$$
}
\item{For the MCRPF: Assume (A\ref{hyp:1}-\ref{hyp:3}), then
for any $\varphi\in\mathcal{C}_b(\mathsf{X})$, $n\geq 0$ we have
$$
\sqrt{N}[\check{\eta}_{n}^{N,M}-\check{\eta}_{n}^{M}](\varphi\otimes 1 - 1 \otimes \varphi) \Rightarrow \mathcal{N}(0,\sigma_n^{2,M}(\varphi))
$$
where
$$
\sigma_n^{2,M}(\varphi) = \sum_{p=0}^n \check{\eta}_p^M(\{(D_{p,n}^f(\varphi)\otimes 1- 1\otimes D_{p,n}^c(\varphi))\}^2).
$$
}
\item{For the MCPF: Suppose that for $s\in\{f,c\}$, 
$n\geq 1$, $M_n^s\in\mathcal{B}_b(\mathsf{X}\times\mathsf{X})$. 
Then for any $\varphi\in\mathcal{B}_b(\mathsf{X})$, $n\geq 0$ we have
$$
\sqrt{N}[\check{\eta}_{n}^{N,C}-\check{\eta}_{n}^{C}](\varphi\otimes 1 - 1 \otimes \varphi) \Rightarrow \mathcal{N}(0,\sigma_n^{2,C}(\varphi))
$$
where
$$
\sigma_n^{2,C}(\varphi) = \sum_{p=0}^n \check{\eta}_p^C(\{(D_{p,n}^f(\varphi)\otimes 1- 1\otimes D_{p,n}^c(\varphi))\}^2).
$$
}
\end{enumerate}
\end{theorem}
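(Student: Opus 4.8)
The plan is to reduce the product-space fluctuation to a \emph{coupled} sum of two marginal particle-filter fluctuations and then invoke a martingale central limit theorem for triangular arrays. The starting observation, valid for all three algorithms, is that each one-step operator preserves the marginals: by \cite[Proposition A.1.]{mlpf} for the MCRPF, and by the marginal identities recorded when each operator was introduced, one has $\check{\Phi}_p^{\bullet}(\mu)(B\times\mathsf{X})=\Phi_p^f(\mu^f)(B)$ and $\check{\Phi}_p^{\bullet}(\mu)(\mathsf{X}\times B)=\Phi_p^c(\mu^c)(B)$ for $\bullet\in\{I,M,C\}$, with $\mu^f,\mu^c$ the marginals of $\mu$. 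Consequently the $f$- and $c$-marginals of the coupled particle system, $\eta_p^{N,f}(\cdot)=\check{\eta}_p^{N}(\cdot\otimes 1)$ and $\eta_p^{N,c}(\cdot)=\check{\eta}_p^{N}(1\otimes\cdot)$, are each \emph{ordinary} particle-filter approximations of $\eta_p^f$ and $\eta_p^c$. Writing $\psi=\varphi\otimes 1-1\otimes\varphi$, this yields the exact splitting
$$
\sqrt{N}[\check{\eta}_n^{N}-\check{\eta}_n^{\bullet}](\psi)=\sqrt{N}[\eta_n^{N,f}-\eta_n^{f}](\varphi)-\sqrt{N}[\eta_n^{N,c}-\eta_n^{c}](\varphi),
$$
so that all algorithm-specific information is carried solely by the \emph{joint} law of the per-step sampling errors.

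To each marginal I would apply the standard telescoping decomposition for Feynman--Kac particle systems \cite{delm:04}. Letting $\mathcal{F}_p^N$ denote the $\sigma$-field generated by the particles up to time $p$, and using that conditional on $\mathcal{F}_{p-1}^N$ the $N$ pairs $u_p^{1:N}$ are i.i.d.\ with law $\check{\Phi}_p^{\bullet}(\check{\eta}_{p-1}^{N})$, one obtains for $s\in\{f,c\}$
$$
\sqrt{N}[\eta_n^{N,s}-\eta_n^{s}](\varphi)=\sum_{p=0}^{n}\sqrt{N}\,[\eta_p^{N,s}-\Phi_p^{s}(\eta_{p-1}^{N,s})]\big(D_{p,n}^{s}(\varphi)\big)+o_{\mathbb{P}}(1),
$$
the remainder coming only from replacing the empirical normaliser $\eta_p^{N,s}(Q_{p,n}^s(1))$ by its limit $\eta_p^{s}(Q_{p,n}^s(1))$ inside $D_{p,n}^s$, which is $o_{\mathbb{P}}(1)$ by the weak law of large numbers and Slutsky. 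Subtracting the two marginal expansions and using the marginal identities above, the time-$p$ contributions combine into a single product-space fluctuation of $g_{p,n}:=D_{p,n}^{f}(\varphi)\otimes 1-1\otimes D_{p,n}^{c}(\varphi)$:
$$
m_p^{N}:=\sqrt{N}\,[\check{\eta}_p^{N}-\check{\Phi}_p^{\bullet}(\check{\eta}_{p-1}^{N})](g_{p,n}),\qquad \sqrt{N}[\check{\eta}_n^{N}-\check{\eta}_n^{\bullet}](\psi)=\sum_{p=0}^{n}m_p^{N}+o_{\mathbb{P}}(1).
$$
Since the $p$-th batch is conditionally i.i.d.\ and $m_p^N$ is centred by its conditional mean, $\{m_p^N\}_{0\le p\le n}$ is a martingale-difference array w.r.t.\ $\{\mathcal{F}_p^N\}$; this is precisely where the coupling survives the reduction to marginals, as $g_{p,n}$ is integrated against the \emph{joint} resampling law rather than a product.

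It then remains to verify the hypotheses of the martingale CLT. The conditional Lindeberg condition is immediate because $g_{p,n}$ is bounded: $D_{p,n}^s(\varphi)$ is a bounded function for $\varphi\in\mathcal{B}_b(\mathsf{X})$, and for the MCPF the assumption $M_n^s\in\mathcal{B}_b(\mathsf{X}\times\mathsf{X})$ guarantees boundedness of the relevant weights; hence each summand is $O(N^{-1/2})$ uniformly and the truncated second moments vanish. The substantive step is convergence of the predictable quadratic variation,
$$
\sum_{p=0}^{n}\Big(\check{\Phi}_p^{\bullet}(\check{\eta}_{p-1}^{N})(g_{p,n}^{2})-\big[\check{\Phi}_p^{\bullet}(\check{\eta}_{p-1}^{N})(g_{p,n})\big]^{2}\Big)\;\rightarrow_{\mathbb{P}}\;\sum_{p=0}^{n}\check{\eta}_p^{\bullet}(g_{p,n}^{2})=\sigma_n^{2,\bullet}(\varphi),
$$
where the mean term drops out in the limit because each marginal centring gives $\check{\eta}_p^{\bullet}(g_{p,n})=\eta_p^f(D_{p,n}^f(\varphi))-\eta_p^c(D_{p,n}^c(\varphi))=0$. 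For this one needs $\check{\eta}_{p-1}^{N}\to\check{\eta}_{p-1}^{\bullet}$ together with continuity of $\mu\mapsto\check{\Phi}_p^{\bullet}(\mu)(g_{p,n}^2)$.

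For the IRCPF and MCPF these are routine SMC computations for bounded test functions. The main obstacle is the MCRPF: the operator $\check{\Phi}_p^{M}$ is strongly nonlinear in $\mu$, containing the minimum $F_{\cdot,\mu,f}\wedge F_{\cdot,\mu,c}$, its renormalisations, and the product-measure correction term, so neither the law of large numbers nor the passage to the limit in the quadratic variation is automatic. This is resolved by invoking Theorem~\ref{theo:wlln_max}, which under (A\ref{hyp:1}--\ref{hyp:3}) gives $\check{\eta}_{p-1}^{N,M}(\phi)\rightarrow_{\mathbb{P}}\check{\eta}_{p-1}^{M}(\phi)$ for $\phi\in\mathcal{C}_b(\mathsf{X}\times\mathsf{X})$; applying it to $\phi=g_{p,n}^2$, which lies in $\mathcal{C}_b(\mathsf{X}\times\mathsf{X})$ precisely because $\varphi\in\mathcal{C}_b(\mathsf{X})$ and, under (A\ref{hyp:1})--(A\ref{hyp:2}), continuity of $G_p$ and the Feller property of $\check{M}_n$ render each $D_{p,n}^s(\varphi)$ bounded and continuous, delivers the required convergence. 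This simultaneously explains the restriction to $\varphi\in\mathcal{C}_b(\mathsf{X})$ and the role of the Feller and local-compactness assumptions in the MCRPF case. The three asymptotic variances then differ only through the limiting coupling $\check{\eta}_p^{\bullet}$ appearing in the quadratic variation, exactly as stated.
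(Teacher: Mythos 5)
Your proposal is correct and follows essentially the same route as the paper: the same telescoping decomposition into local sampling errors plus a remainder (the paper's $V_p^{N,s}$ and $R_{p+1}^{N,s}$ terms in \eqref{eq:master_ind}), a martingale/local-error CLT for the resulting array (the paper cites Del Moral's Theorem 9.3.1 and Corollary 9.3.1 via Proposition \ref{prop:field_ind}), and the WLLN results --- in particular Theorem \ref{theo:wlln_max} for the MCRPF and Theorem \ref{theo:wlln_mcpf} for the MCPF --- to identify the limiting predictable quadratic variation. The only small imprecision is that killing the remainder term requires an $L_2$ rate of order $N^{-1/2}$ for $\eta_p^{N,s}(D_{p,n}^s(\varphi))$ (Propositions \ref{prop:lp_bound_ind} and \ref{prop:lp_bound_mcpf}, via Cauchy--Schwarz as in Lemma \ref{lem:r_cont_ind}) rather than a bare weak law plus Slutsky, since the product must beat the factor $\sqrt{N}$.
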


For Wasserstein resampling, the following result is established in \cite{mlpf_new}.

\begin{theorem}\label{theo:clt_was}
Suppose that $\check{M}_n$, $M_n^s$, $s\in\{f,c\}$  are Feller for every $n\geq 1$ and $G_n\in\mathcal{C}_b(\mathsf{X})$ for every $n\geq 0$.
Then for any $\varphi\in\mathcal{C}_b(\mathsf{X})$, $n\geq 0$ we have
$$
\sqrt{N}[\check{\eta}_{n}^{N,W}-\check{\eta}_{n}^{W}](\varphi\otimes 1 - 1 \otimes \varphi) \Rightarrow \mathcal{N}(0,\sigma_n^{2,W}(\varphi))
$$
where
$$
\sigma_n^{2,W}(\varphi) = \sum_{p=0}^n \check{\eta}_p^W(\{(D_{p,n}^f(\varphi)\otimes 1- 1\otimes D_{p,n}^c(\varphi))\}^2).
$$
\end{theorem}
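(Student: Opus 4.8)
The plan is to follow the now-standard martingale route for particle-filter CLTs (as in \cite{delm:04,chopin2}), carried out on the product space $\mathsf{X}\times\mathsf{X}$ and adapted to the quantile-based (Wasserstein) resampling. Throughout write $\psi=\varphi\otimes 1-1\otimes\varphi$ and let $\mathcal{F}_p^N$ be the $\sigma$-field generated by the particles $u_0^{1:N},\dots,u_p^{1:N}$. The first ingredient I would establish is a WLLN, namely $\check{\eta}_p^{N,W}(\phi)\rightarrow_{\mathbb{P}}\check{\eta}_p^W(\phi)$ for every $\phi\in\mathcal{C}_b(\mathsf{X}\times\mathsf{X})$, together with $\overline{\eta}_p^{N,s}\Rightarrow\overline{\eta}_p^s$, both proved by induction on $p$. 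The inductive step amounts to showing that $\check{\Phi}_p^W(\overline{\eta}_{p-1}^{N,f},\overline{\eta}_{p-1}^{N,c})(\phi)$ is close to $\check{\Phi}_p^W(\overline{\eta}_{p-1}^f,\overline{\eta}_{p-1}^c)(\phi)$; since the coupling kernel is $w\mapsto(F_{\overline{\eta}_{p-1}^{N,f}}^{-1}(w),F_{\overline{\eta}_{p-1}^{N,c}}^{-1}(w))$ fed into $\check{M}_p$, this is exactly where the assumed continuity of the CDFs and their inverses, and the Feller property of $\check{M}_p$, are used: a.e.\ convergence of the empirical quantile functions on $[0,1]$ is passed through the bounded continuous integrand composed with $\check{M}_p$ and dominated convergence closes the step.

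Second, I would produce the telescoping martingale decomposition. Define, for $0\le p\le n$, the conditionally centred increment
$$
\Delta_p^N = \sqrt{N}\,\big[\check{\eta}_p^{N,W} - \mathbb{E}(\check{\eta}_p^{N,W}\mid \mathcal{F}_{p-1}^N)\big]\big(\check{D}_{p,n}(\psi)\big),
$$
where $\check{D}_{p,n}$ is the product-space analogue of $D_{p,n}$ and $\mathbb{E}(\check{\eta}_p^{N,W}\mid \mathcal{F}_{p-1}^N)=\check{\Phi}_p^W(\overline{\eta}_{p-1}^{N,f},\overline{\eta}_{p-1}^{N,c})$. The essential algebraic observation is that, because $\psi$ splits additively and the marginal dynamics on each coordinate are the ordinary Feynman--Kac flows driven by $Q_n^f$ and $Q_n^c$, one has $\check{D}_{p,n}(\psi)=D_{p,n}^f(\varphi)\otimes 1-1\otimes D_{p,n}^c(\varphi)$; the same additive splitting makes the propagated quantities telescope, so that $\sqrt{N}[\check{\eta}_n^{N,W}-\check{\eta}_n^W](\psi)=\sum_{p=0}^n \Delta_p^N + o_{\mathbb{P}}(1)$, the remainder being controlled by the Step-one WLLN (to replace the random normalising constants by their limits), with $\{\Delta_p^N\}_{p=0}^n$ a triangular array of $\mathcal{F}_p^N$-martingale differences. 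Here the conditional independence of the $N$ pairs generated at step $p$ (each obtained from an independent uniform $w$ mapped through the empirical inverse CDFs) is what gives the martingale-difference structure and lets me write each conditional variance as an average of i.i.d.\ terms.

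Finally, I would invoke the martingale CLT in its Lindeberg form. The conditional Lindeberg condition is immediate because $\varphi\in\mathcal{C}_b(\mathsf{X})$ forces $\check{D}_{p,n}(\psi)$ to be bounded, so each increment is $O(N^{-1/2})$ uniformly. For the predictable quadratic variation, $\sum_{p=0}^n \mathbb{E}((\Delta_p^N)^2\mid\mathcal{F}_{p-1}^N)$ is a sum of empirical variances of $\check{D}_{p,n}(\psi)$ under conditional i.i.d.\ sampling; using the WLLN of the first step to replace the empirical measures by their limits identifies the limit as $\sum_{p=0}^n\check{\eta}_p^W(\{D_{p,n}^f(\varphi)\otimes 1-1\otimes D_{p,n}^c(\varphi)\}^2)=\sigma_n^{2,W}(\varphi)$, the mean term in each conditional variance dropping out because $\eta_p^s(D_{p,n}^s(\varphi))=0$ on each marginal. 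The main obstacle is Step one: unlike the multinomial/index resampling of the other CPFs, the Wasserstein operator is a nonlinear functional of the empirical measures through their inverse CDFs, so establishing continuity of $(\mu,\nu)\mapsto\check{\Phi}_p^W(\mu,\nu)$ and the consistency of the sampled coupling requires controlling convergence of empirical quantile functions; this is precisely why the result is restricted to $\mathsf{X}=\mathbb{R}$, where the monotone (quantile) coupling is well defined and its continuity can be exploited.
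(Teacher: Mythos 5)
You should first note that the paper does not prove Theorem \ref{theo:clt_was} at all: it is imported verbatim from \cite{mlpf_new}, and Appendix \ref{app:clt_prf} explicitly excludes the Wasserstein case ("as the result for the Wasserstein method is given in \cite{mlpf_new}, only the other three cases are considered"). The right benchmark is therefore the paper's proof of Theorem \ref{theo:clt_ind}, and your proposal follows essentially the same route: the telescoping decomposition \eqref{eq:master_ind} into local sampling errors plus remainder terms, a WLLN for the empirical couplings to identify the limiting conditional variances, and a martingale/Gaussian-fields CLT in the style of \cite{delm:04}. Your identification of the genuinely new difficulty -- continuity of $(\mu,\nu)\mapsto\check{\Phi}_p^W(\mu,\nu)$ through the empirical inverse CDFs, which is where the Feller assumptions, the continuity of $G_n$, and the restriction to $\mathsf{X}=\mathbb{R}$ enter -- is exactly right and is the analogue of the hard step (Theorem \ref{theo:wlln_max}) for the MCRPF.

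One step is under-specified as written: you claim the remainder in the decomposition is "controlled by the Step-one WLLN". A WLLN alone cannot do this. The remainder terms are of the form $\sqrt{N}\,\eta_p^{N,s}(D_{p,n}^s(\varphi))[\eta_p^{s}(G_p)-\eta_p^{N,s}(G_p)]/\eta_p^{N,s}(G_p)$, and to kill the explicit $\sqrt{N}$ factor you need a quantitative $\mathbb{L}_2$ bound of order $N^{-1/2}$ on the marginal particle approximations (the analogue of Proposition \ref{prop:lp_bound_ind}, obtained by induction with the conditional Marcinkiewicz--Zygmund inequality), combined with Cauchy--Schwarz exactly as in Lemma \ref{lem:r_cont_ind}; convergence in probability without a rate leaves $\sqrt{N}\cdot o_{\mathbb{P}}(1)\cdot o_{\mathbb{P}}(1)$ uncontrolled. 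This is a standard and easily repaired omission rather than a structural flaw, but it should be stated as a separate $\mathbb{L}_p$ estimate rather than folded into the WLLN.
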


\begin{rem}
One can also prove a mutlivariate CLT using the Cramer-Wold device.  
Consider $1\leq t <+\infty$, $(\varphi_1,\dots,\varphi_t,\psi_1,\dots,\psi_t)\in\mathcal{B}_b(\mathsf{X})^{2t}$ (or if required $\mathcal{C}_b(\mathsf{X})^{2t}$).
Consider the $t\times t$ positive definite and symmetric matrix $\Sigma_{n}^{s}(\varphi_{1:t},\psi_{1:t})$, $s\in\{I,M,C,W\}$, with $(i,j)$th entry denoted $\Sigma_{n,(ij)}^{s}(\varphi_{1:t},\psi_{1:t})$.
Using the Cramer-Wold device under the various assumptions of each the algorithms one can easily deduce that for each $s\in\{I,M,C,W\}$
$$
\sqrt{N}([\check{\eta}_{n}^{N,s}-\check{\eta}_{n}^{s}](\varphi_1\otimes 1 - 1 \otimes \psi_1),\dots, 
[\check{\eta}_{n}^{N,s}-\check{\eta}_{n}^{s}](\varphi_t\otimes 1 - 1 \otimes \psi_t) ) \Rightarrow \mathcal{N}_t(0,\Sigma_n^{s}(\varphi_{1:t},\psi_{1:t}))
$$
where
$$
\Sigma_{n,(ij)}^{s}(\varphi_{1:t},\psi_{1:t}) = \sum_{p=0}^n \check{\eta}_p^s([D_{p,n}^f(\varphi_i)\otimes 1- 1\otimes D_{p,n}^c(\psi_i)]
[D_{p,n}^f(\varphi_j)\otimes 1- 1\otimes D_{p,n}^c(\psi_j)]).
$$
\end{rem}

\subsection{Asymptotic Variance}\label{sec:av_bound_disc}

We now consider bounding the asymptotic variance in the general case. The result below (Theorem \ref{theo:av_thm}), will allow one to understand when one can expect time-uniformly `close' errors in approximations of $[\eta_n^f-\eta_n^c](\varphi)$.
The following assumptions are used, which are essentially those in \cite{whiteley} (see also \cite{jasra}) with some additional assumptions ((H\ref{hypav:6}-\ref{hypav:7}) below) as we are treating a more delicate case than in \cite{whiteley}. The assumptions can hold on unbounded state-spaces as will be the case for our applications.

\begin{hypB}\label{hypav:1}
There exists a $\tilde{V}:\mathsf{X}\rightarrow[1,\infty)$ unbounded and constants $\delta\in(0,1)$ and $\underline{d}\geq 1$ with the following properties.
For each $d\in(\underline{d},+\infty)$ there exists a $b_d<+\infty$ such that $\forall x\in\mathsf{X}$ and any $s\in\{f,c\}$,  
$$
\sup_{n\geq 1} Q_n^s(e^{\tilde{V}})(x) \leq e^{(1-\delta)\tilde{V}(x) + b_d\mathbb{I}_{C_d}(x)}
$$
where $C_d=\{x\in\mathsf{X}:\tilde{V}(x)\leq d\}$.
\end{hypB}
\begin{hypB}\label{hypav:2}
\begin{enumerate}
\item{There exists a $C<+\infty$ such that for any $s\in\{f,c\}$, $\eta_0^s(v)\leq C$, with $v=e^{\tilde{V}}$.}
\item{For any $r>1$ there exists a $C<+\infty$ such that for any $s\in\{f,c\}$, $\eta_0^s(C_r)^{-1}\leq C$.}
\end{enumerate}
\end{hypB}
\begin{hypB}\label{hypav:3}
With $\underline{d}$ as in (H\ref{hypav:1}), for each $d\in[\underline{d},\infty)$, $s\in\{f,c\}$, 
$$
\int_{C_d} G_{n-1}(x)M_n^s(x,dy) > 0~\forall x\in\mathsf{X}, n\geq 1
$$
and there exist $\tilde{\epsilon}_d^{-}>0$, $\nu_d\in\mathcal{P}_v$ such that for each $A\in\mathcal{X}$, $s\in\{f,c\}$, 
$$
\inf_{n\geq 1} \int_{C_d\cap A} Q_n^s(x,dy) \geq \tilde{\epsilon}_d^{-}\nu(C_d\cap A),~\forall x\in C_d.
$$
\end{hypB}
\begin{hypB}\label{hypav:4}
With $\underline{d}$ as in (H\ref{hypav:1}),  and $\tilde{\epsilon}_d^-$, $\nu_d$ as in (H\ref{hypav:2}), for each $d\in[\underline{d},\infty)$ there exist $\tilde{\epsilon}_d^+\in[\tilde{\epsilon}_d^-,\infty)$ such that for each $A\in\mathcal{X}$, $s\in\{f,c\}$, 
$$
\sup_{n\geq 1}\int_{C_d\cap A} Q_n^s(x,dy)  \leq \tilde{\epsilon}_d^+\nu(C_d\cap A), ~\forall x\in C_d
$$
\end{hypB}
\begin{hypB}\label{hypav:5}
\begin{enumerate}
\item{
$$
\sup_{n\geq 0}\sup_{x\in\mathsf{X}}G_n(x) <+\infty.
$$
}
\item{For any $r>1$ there exists a $C<+\infty$ such that $\inf_{x\in C_r}G_0(x)\geq C$.}
\end{enumerate}
\end{hypB}
\begin{hypB}\label{hypav:6}
With $\underline{d}$ as in (H\ref{hypav:1}), 
for each $d\in[\underline{d},\infty)$, we have for each $n\geq 0$, $s\in\{f,c\}$
$$
\frac{1}{G_n M_{n+1}^s(\mathbb{I}_{C_d})}\in\mathcal{L}_{v}(\mathsf{X})
$$
and $\sup_{n\geq 0}\max_{s\in\{f,c\}}\|1/G_n M_{n+1}^s(\mathbb{I}_{C_d})\|_{v}<+\infty$.
\end{hypB}
\begin{hypB}\label{hypav:7}
Let $\mathsf{d}$ be a given metric on $\mathsf{X}$.
For any $\xi\in(0,1]$, there exist a $C<+\infty$ such that for $\varphi\in\mathcal{L}_{v^{\xi}}(\mathsf{X})\cap\textrm{Lip}_{v^{\xi},\mathsf{d}}(\mathsf{X})$, $(x,y)\in\mathsf{X}\times\mathsf{X}$, $s\in\{f,c\}$
$$
\sup_{n\geq 1}|Q_{n}^s(\varphi)(x)-Q_{n}^s(\varphi)(y)| \leq C\|\varphi\|_{v^{\xi}}\mathsf{d}(x,y)[v(x)v(y)]^{\xi}.
$$
\end{hypB}

Let $A=\{(x,y)\in\mathsf{X}\times\mathsf{X}:x\neq y\}$. Define for $n\geq 1$, $0\leq p \leq n$, $s\in\{f,c\}$, $x\in\mathsf{X}$
$$
h_{p,n}^s(x) := \frac{Q_{p,n}^s(1)(x)}{\eta_p^s(Q_{p,n}^s(1))}
$$
and, with $B\in\mathcal{X}$,
$$
S_{p,n}^{f,c}(B)(x) := \frac{Q_{p,n}^f(B)(x)}{Q_{p,n}^f(1)(x)} - \frac{Q_{p,n}^c(B)(x)}{Q_{p,n}^c(1)(x)}.
$$
Set
$$
B(n,f,c,\varphi,\xi) = 
\|\varphi\|\sum_{p=0}^{n-1}\rho^{n-p}\Big\{\|h_{p,n}^fS_{p,n}^{f,c}(\varphi)\|_{v^{\xi}} + |[\eta_n^f-\eta_n^c](\varphi)| + \|\varphi\|\|h_{p,n}^f-h_{p,n}^c\|_{v^{\xi}}\rho^{n-p}\Big\}.
$$

Below is our main result, whose proof can be found in Appendix \ref{app:av_proofs}.

\begin{theorem}\label{theo:av_thm}
Assume (H\ref{hypav:1}-\ref{hypav:6}). Then for any $\xi\in(0,1/4)$ there exists a $\rho<1$ and  $C<+\infty$ depending on the constants in  
(H\ref{hypav:1}-\ref{hypav:6}) such that for any $\varphi\in\mathcal{B}_b(\mathsf{X})$, $n\geq 1$, $s\in\{I,M,C,W\}$ we have
$$
\sigma^{2,s}_n(\varphi) \leq \check{\eta}_n^s\Big(\{\varphi\otimes 1-1\otimes\varphi - ([\eta_n^f-\eta_n^c](\varphi))\}^2\Big) + 
$$
$$
C\Big\{B(n,f,c,\varphi,\xi) + \|\varphi\|^2\sum_{p=0}^{n-1}\rho^{n-p}\Big(\check{\eta}_p^{s}(\mathbb{I}_A(v\otimes v)^{2\xi})(\rho^{n-p}+1)\Big)\Big\}.
$$
Additionally assume (H\ref{hypav:7}). Then if $\mathsf{d}^2\in\mathcal{L}_{(v\otimes v)^{\tilde{\xi}}}$, $\tilde{\xi}\in(0,1/2)$, for 
any $\xi\in(0,(1-2\tilde{\xi})/12))$ there exists a $\rho<1$ and  $C<+\infty$ depending on the constants in  
(H\ref{hypav:1}-\ref{hypav:7}) such that for any $\varphi\in\mathcal{B}_b(\mathsf{X})\cap\textrm{\emph{Lip}}_{\mathsf{d}}(\mathsf{X})$, $n\geq 1$, $s\in\{I,M,C,W\}$ we have
$$
\sigma^{2,s}_n(\varphi) \leq \check{\eta}_n^s\Big(\{\varphi\otimes 1-1\otimes\varphi - ([\eta_n^f-\eta_n^c](\varphi))\}^2\Big) + 
$$
$$
C\Big\{B(n,f,c,\varphi,\xi) + \|\varphi\|^2\sum_{p=0}^{n-1}\rho^{n-p}\Big(\check{\eta}_p^{s}(\mathsf{d}^2(v^{4\xi}\otimes v^{8\xi}))(\rho^{n-p}+1)\Big)\Big\}.
$$
\end{theorem}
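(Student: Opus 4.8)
The plan is to begin from the explicit asymptotic-variance formula $\sigma_n^{2,s}(\varphi)=\sum_{p=0}^n\check{\eta}_p^s(\{D_{p,n}^f(\varphi)\otimes 1-1\otimes D_{p,n}^c(\varphi)\}^2)$ provided by Theorems \ref{theo:clt_ind} and \ref{theo:clt_was}, and to isolate the summand $p=n$. Because $D_{n,n}^s(\varphi)=\varphi-\eta_n^s(\varphi)$, this summand equals $\check{\eta}_n^s(\{\varphi\otimes 1-1\otimes\varphi-[\eta_n^f-\eta_n^c](\varphi)\}^2)$, which is exactly the first term on the right-hand side of both asserted bounds. It thus remains to bound $\sum_{p=0}^{n-1}\check{\eta}_p^s(\{D_{p,n}^f(\varphi)\otimes 1-1\otimes D_{p,n}^c(\varphi)\}^2)$. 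A guiding observation is that the integrand depends only on the two Feynman--Kac flows $(\eta^f,\eta^c)$ and not on the resampling rule, so the label $s\in\{I,M,C,W\}$ enters only through the measure $\check{\eta}_p^s$ one integrates against; I would therefore derive one pointwise, $v$-weighted bound on the integrand, valid for all four algorithms, and close each case with a single moment estimate that holds uniformly in $s$ because every $\check{\eta}_p^s$ has marginals $(\eta_p^f,\eta_p^c)$.

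For the pointwise bound I would use the representation $D_{p,n}^s(\varphi)=h_{p,n}^s\,[\,Q_{p,n}^s(\varphi)/Q_{p,n}^s(1)-\eta_n^s(\varphi)\,]$ and the splitting
$$
D_{p,n}^f(\varphi)(x)-D_{p,n}^c(\varphi)(y)=\underbrace{[D_{p,n}^f(\varphi)(x)-D_{p,n}^c(\varphi)(x)]}_{\mathrm{(I)}}+\underbrace{[D_{p,n}^c(\varphi)(x)-D_{p,n}^c(\varphi)(y)]}_{\mathrm{(II)}}.
$$
Expanding (I) algebraically reveals exactly the three pieces that build $B(n,f,c,\varphi,\xi)$: a term $h_{p,n}^fS_{p,n}^{f,c}(\varphi)(x)$, a term $-h_{p,n}^f(x)[\eta_n^f-\eta_n^c](\varphi)$, and a term $(h_{p,n}^f-h_{p,n}^c)(x)[Q_{p,n}^c(\varphi)(x)/Q_{p,n}^c(1)(x)-\eta_n^c(\varphi)]$. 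Part (II) vanishes on the diagonal $\{x=y\}$ and is hence supported on $A$, which is the origin of the indicator $\mathbb{I}_A$.

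The engine of the proof is a package of time-uniform, $v$-weighted stability estimates distilled from the Whiteley-type hypotheses (H\ref{hypav:1}-\ref{hypav:6}): uniform exponential-moment bounds $\sup_p\eta_p^s(e^{r\tilde{V}})<+\infty$ for $r\leq 1$ (which, after reducing product moments to marginals by Cauchy--Schwarz, give $\check{\eta}_p^s((v\otimes v)^{2\xi})\leq C$ precisely when $4\xi<1$, explaining $\xi\in(0,1/4)$); the bound $\sup_{p,n}\|h_{p,n}^s\|_{v^\xi}\leq C$; and the geometric forgetting estimate $|Q_{p,n}^s(\varphi)(x)/Q_{p,n}^s(1)(x)-\eta_n^s(\varphi)|\leq C\|\varphi\|\rho^{n-p}v^\xi(x)$, whence $|D_{p,n}^s(\varphi)|\leq C\|\varphi\|\rho^{n-p}v^\xi$. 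The decisive device for recovering the \emph{linear}-in-smallness form of $B$ from a \emph{squared} integrand is a mixed bound: I would estimate $\mathrm{(I)}^2$ by the product of its crude geometric bound $C\|\varphi\|\rho^{n-p}v^\xi$ and its explicit small-difference expansion, so that integrating the remaining weight (finite by the moment bound) yields $C\|\varphi\|\rho^{n-p}\{\|h_{p,n}^fS_{p,n}^{f,c}(\varphi)\|_{v^\xi}+|[\eta_n^f-\eta_n^c](\varphi)|+\|\varphi\|\rho^{n-p}\|h_{p,n}^f-h_{p,n}^c\|_{v^\xi}\}$, summing to $C\,B(n,f,c,\varphi,\xi)$. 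Part (II) is controlled crudely by $C\|\varphi\|^2\rho^{2(n-p)}(v\otimes v)^{2\xi}\mathbb{I}_A$, while the cross term $\mathrm{(I)}\cdot\mathrm{(II)}$ --- in which the non-decaying $[\eta_n^f-\eta_n^c](\varphi)$ component of (I), bounded by $2\|\varphi\|$, meets the $\mathcal{O}(\rho^{n-p})$ factor (II) --- contributes the degree-one term $C\|\varphi\|^2\rho^{n-p}(v\otimes v)^{2\xi}\mathbb{I}_A$; together these supply the factor $\rho^{n-p}(\rho^{n-p}+1)$. Geometric summability of $\sum_p\rho^{n-p}$ then makes all bounds uniform in $n$.

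For the Lipschitz refinement I would touch only part (II). Instead of discarding it through the crude $\mathbb{I}_A$ estimate, I would invoke (H\ref{hypav:7}) and, for $\varphi\in\textrm{Lip}_{\mathsf{d}}(\mathsf{X})$, upgrade the spatial difference $D_{p,n}^c(\varphi)(x)-D_{p,n}^c(\varphi)(y)$ to a genuine modulus-of-continuity bound of the form $C\|\varphi\|\rho^{n-p}\mathsf{d}(x,y)[v(x)v(y)]^{\xi}\times(\text{weights})$; squaring and using $\mathsf{d}^2\in\mathcal{L}_{(v\otimes v)^{\tilde{\xi}}}$ then replaces $\check{\eta}_p^s(\mathbb{I}_A(v\otimes v)^{2\xi})$ by $\check{\eta}_p^s(\mathsf{d}^2(v^{4\xi}\otimes v^{8\xi}))$, the inflated and asymmetric powers of $v$, together with the admissible range $\xi\in(0,(1-2\tilde{\xi})/12)$, being forced by the requirement that these weighted-metric moments stay finite after the Cauchy--Schwarz reduction to the marginals. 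The main obstacle, I expect, is precisely the establishment of these $v$-weighted geometric estimates, uniformly in time on the unbounded state space --- above all the forgetting bound for $Q_{p,n}^s(\varphi)/Q_{p,n}^s(1)-\eta_n^s(\varphi)$ and the multi-step spatial-Lipschitz bound for $D_{p,n}^c(\varphi)$ whose constant decays like $\rho^{n-p}$ while its $v$-weights do not accumulate across the $n-p$ composed kernels. This demands adapting the multiplicative drift/minorization machinery of \cite{whiteley} and carefully threading in the extra hypotheses (H\ref{hypav:6})-(H\ref{hypav:7}); the parallel bookkeeping of the exact powers of $v$, so that every moment remains finite within the stated $\xi$-ranges, is the other delicate point.
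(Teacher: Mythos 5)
Your proposal follows essentially the same route as the paper's proof: the same isolation of the $p=n$ summand, the same splitting of $D_{p,n}^f(\varphi)(x)-D_{p,n}^c(\varphi)(y)$ into the on-diagonal difference $D_{p,n}^f-D_{p,n}^c$ (yielding the three components of $B$) plus the spatial increment $D_{p,n}^c(x)-D_{p,n}^c(y)$ supported on $A$, the same polarization/mixed-bound device pairing the crude geometric estimate $|D_{p,n}^s(\varphi)|\leq C\|\varphi\|\rho^{n-p}v^{\xi}$ with the small-difference expansion, and the same Whiteley-type time-uniform $v$-weighted lemmas (these are precisely Lemmata \ref{lem:av_1}, \ref{lem:av_2}, \ref{lem:av_3}, \ref{lem:av_5} and \ref{lem:av_6} in the appendix). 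The only cosmetic deviation is that you retain and bound the cross term where the paper removes it via the $C_2$-inequality; both yield the stated bound.
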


\begin{rem}
The range of $\xi$ are such that the upper-bounds are finite. This is because $\check{\eta}_p^s(v)$ is upper-bounded by a constant that does not depend on $p$ (see \cite[Proposition 1]{whiteley}).
\end{rem}

The main conclusion of this Theorem is that one expects that the (variances of) approximations of $[\eta_n^f-\eta_n^c](\varphi)$ are uniformly `close' in time if the following are satisfied:
\begin{enumerate}
\item{That
$$
\check{\eta}_n^s\Big(\{\varphi\otimes 1-1\otimes\varphi - ([\eta_n^f-\eta_n^c](\varphi))\}^2\Big)\quad\textrm{and}\quad|[\eta_n^f-\eta_n^c](\varphi)| 
$$
are small uniformly in time.
}
\item{The differences
$$
 \|h_{p,n}^fS_{p,n}^{f,c}(\varphi)\|_{v^{\xi}}\quad\textrm{and}\quad\|h_{p,n}^f-h_{p,n}^c\|_{v^{\xi}}
$$
are small at a rate which decays more slowly than exponentially in time.
}
\item{That
$$
\check{\eta}_p^{s}(\mathbb{I}_A(v\otimes v)^{2\xi}) \quad\textrm{or}\quad\check{\eta}_p^{s}(\mathsf{d}^2(v^{4\xi}\otimes v^{8\xi}))
$$
are small at a rate which decays more slowly than exponentially in time.}
\end{enumerate}
We note that in 1.~strictly one could have $|[\eta_n^f-\eta_n^c](\varphi)|$ close at a rate which decays more slowly than exponentially in time, but as one requires
the time uniform closeness of $\check{\eta}_n^s(\{\varphi\otimes 1-1\otimes\varphi - ([\eta_n^f-\eta_n^c](\varphi))\}^2)$, one expects this property for the former.
The use of $A$ and $\mathsf{d}$ are linked to
the coupling properties associated to $\check{\eta}_p^{s}$ as we consider in the next section.

\section{Application to Partially Observed Diffusions}\label{sec:appl}

We now return to the PODDP model considered in Section \ref{sec:diff_ex}. 
Throughout $1\leq l\leq L$ is fixed, with $L>1$ fixed.
We will consider the significance of the results in Theorems \ref{theo:clt_ind}, \ref{theo:clt_was} and \ref{theo:av_thm} for each of the four CPFs.
We also present an example in Section \ref{sec:verify} where (H\ref{hypav:1}-\ref{hypav:8}) can be verified.
We begin with a control of the term $B(n,l,l-1,\varphi,\xi)$.

\subsection{A General Result}

Define the set, for $x\in\mathsf{X}$
$$
\mathsf{B}_l(x) := \{y\in\mathsf{X}:\|y-x\|>2\Delta_l\}
$$
where $\|\cdot\|$ is the $L_2-$norm.
We write the transition densities w.r.t.~Lebesgue measure $dy$ of the diffusion as $M(x,y)$ and the Euler approximation as $M^l(x,y)$,
$(x,y)\in\mathsf{X}\times\mathsf{X}$.
We note the following two equations quoted in \cite{delm:01}: there exists a $0<C,C'<+\infty$ (which depend on the drift and diffusion coefficients of \eqref{eq:sde}) such that for any $l\geq 0$
\begin{eqnarray}
M(x,y) + M^l(x,y) & \leq & C\exp\{-C'\|y-x\|^2\}\quad\forall~(x,y)\in\mathsf{X}\times\mathsf{X} \label{eq:diff_1}\\
|M(x,y) -  M^l(x,y)| & \leq & C\Delta_l\exp\{-C'\|y-x\|^2\}\quad\forall~x\in \mathsf{X},y\in \mathsf{B}_l(x). \label{eq:diff_2} 
\end{eqnarray}

We add an additional assumption:
\begin{hypB}\label{hypav:8}
We have, for $C'$ as in \eqref{eq:diff_1}-\eqref{eq:diff_2}:
\begin{enumerate}
\item{For any $\xi\in(0,1/2)$, there exists a $C<+\infty$ such that for any $x\in\mathsf{X}$, $l\geq 0$
$$
\int_{\mathsf{B}_l(x)^c}v(y)^{\xi}dy \leq C v(x)^{2\xi}\int_{\mathsf{B}_l(x)^c}dy.
$$
}
\item{For any $\xi\in(0,1/2)$, there exists a $C<+\infty$ such that for any $x\in\mathsf{X}$, $l\geq 0$
$$
\int_{\mathsf{B}_l(x)}v(y)^{\xi}\exp\{-C'\|y-x\|^2\}dy \leq C v(x)^{2\xi}.
$$
}
\end{enumerate}
\end{hypB}

We have the following result whose proof is in Appendix \ref{app:diff_case}, Section \ref{app:diff_case_b}.

\begin{prop}\label{prop:diff_general_b_bound}
Assume (H\ref{hypav:1}-\ref{hypav:6}), (H\ref{hypav:8}). Then for any $(\xi,\hat{\xi})\in(0,1/8)\times(0,1/2)$  there exists a $C<+\infty$ depending on the constants in (H\ref{hypav:1}-\ref{hypav:6}) (H\ref{hypav:8}) such that for any $\varphi\in\mathcal{B}_b(\mathsf{X})$ $n\geq 1$, $1\leq l \leq L$, we have
$$
B(n,l,l-1,\varphi,\xi) \leq C\|\varphi\|^2\Delta_lv(x^*)^{2\xi+\hat{\xi}}.
$$
\end{prop}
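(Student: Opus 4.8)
The goal is to bound the quantity

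$$
B(n,l,l-1,\varphi,\xi) = \|\varphi\|\sum_{p=0}^{n-1}\rho^{n-p}\Big\{\|h_{p,n}^fS_{p,n}^{f,c}(\varphi)\|_{v^{\xi}} + |[\eta_n^f-\eta_n^c](\varphi)| + \|\varphi\|\|h_{p,n}^f-h_{p,n}^c\|_{v^{\xi}}\rho^{n-p}\Big\}
$$

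by $C\|\varphi\|^2\Delta_l v(x^*)^{2\xi+\hat{\xi}}$. The natural strategy is to control each of the three bracketed terms separately, show that each is $\mathcal{O}(\Delta_l)$ times a weighted norm that can be summed against the geometric factor $\rho^{n-p}$, and then use that the summed weights produce a constant independent of $n$. Since $f=l$, $c=l-1$ here, each discrepancy between the $f$- and $c$-objects should carry a factor $\Delta_l$ coming ultimately from the Euler bias estimates \eqref{eq:diff_1}--\eqref{eq:diff_2}, or their aggregated consequence \eqref{eq:pred_bias_euler}. First I would dispatch the middle term: by \eqref{eq:pred_bias_euler} (applied with the understanding that, under (H\ref{hypav:1}-\ref{hypav:6}), the constant is time-uniform via the stability arguments of \cite{whiteley}), $|[\eta_n^f-\eta_n^c](\varphi)|\leq C\|\varphi\|\Delta_l$, and $\sum_{p=0}^{n-1}\rho^{n-p}$ is bounded by $\rho/(1-\rho)$. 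This immediately gives a contribution of the required order (with no $v(x^*)$ weight needed, so it is absorbed trivially).

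The two genuinely $v^\xi$-weighted terms are where the work lies. For $\|h_{p,n}^fS_{p,n}^{f,c}(\varphi)\|_{v^\xi}$ I would expand $S_{p,n}^{f,c}(\varphi)(x)$ as a difference of normalized kernels and rewrite it so that the numerator differences are governed by $Q_{p,n}^f-Q_{p,n}^c$. The idea is that each one-step kernel difference $Q_q^f-Q_q^c = G_{q-1}(M_q^f - M_q^c)$ contributes a factor $\Delta_l$ via \eqref{eq:diff_2} on the "good" set $\mathsf{B}_l(x)$ and is controlled by the tail bound \eqref{eq:diff_1} and assumption (H\ref{hypav:8}) on the complementary set; telescoping across the $n-p$ steps, together with the multiplicative stability/mixing from (H\ref{hypav:1}-\ref{hypav:6}) that produces the $\rho^{n-p}$ contraction and keeps $h_{p,n}^f$ bounded in $\mathcal{L}_{v^\xi}$, yields a bound of the form $C\|\varphi\|\Delta_l v(\cdot)^{\text{something}}$. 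The integration of $v(y)^\xi$ against the Gaussian-type density is exactly what (H\ref{hypav:8}) is designed to handle, producing the $v(x)^{2\xi}$ factor; evaluating the resulting function in $\mathcal{L}_{v^\xi}$ and tracking the exponents carefully gives the final $v(x^*)^{2\xi+\hat\xi}$ after one more application of the Lyapunov drift (H\ref{hypav:1}) and the initialization at $x^*$. The term $\|h_{p,n}^f - h_{p,n}^c\|_{v^\xi}$ is handled analogously: it is a ratio difference whose numerator is again $Q_{p,n}^f(1)-Q_{p,n}^c(1)$, so the same telescoping-plus-\eqref{eq:diff_2}-plus-(H\ref{hypav:8}) machinery applies, and the extra factor $\rho^{n-p}$ multiplying it only strengthens summability.

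**The main obstacle.** The hard part is making the exponent bookkeeping in the $v$-weighted norms honest and uniform in $p$ and $n$. Specifically, each telescoped kernel application trades a power of $v$ under the drift condition (H\ref{hypav:1}), while each use of (H\ref{hypav:8}) roughly doubles the $\xi$-exponent; one must verify that across $n-p$ steps these never escape the admissible range, which is precisely why the restriction $\xi\in(0,1/8)$ (and a separate $\hat\xi\in(0,1/2)$ for the terminal step) appears, and why the constant $\rho<1$ can be chosen to dominate any polynomial-in-$p$ loss. Concretely, I expect to prove a single-step lemma of the shape "$\|h_{p,n}^f S_{p,n}^{f,c}(\varphi)\|_{v^\xi} \leq C\|\varphi\|\Delta_l \rho^{n-p}$" (time-uniform constant), after which summing $\sum_p \rho^{n-p}(\cdots)$ and collapsing the geometric series is routine; the delicate point is certifying that the $v(x^*)^{2\xi+\hat\xi}$ weight — rather than a higher, $p$-dependent power — genuinely emerges once the recursion is evaluated at the fixed start point $Z_0=x^*$, which relies on the boundedness of $\check\eta_p^s(v)$ noted in the remark after Theorem \ref{theo:av_thm} and on (H\ref{hypav:2}).
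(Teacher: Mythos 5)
Your proposal matches the paper's proof in essentially all respects: the paper bounds the three summands of $B(n,l,l-1,\varphi,\xi)$ separately (Propositions \ref{prop:diff_1}, \ref{prop:diff_2} and \ref{prop:diff_3}), each obtained via telescoping semigroup decompositions whose one-step differences carry a factor $\Delta_l$ from the Euler density bounds \eqref{eq:diff_1}--\eqref{eq:diff_2} together with (H\ref{hypav:8}), with the $\rho^{n-p}$ contraction and the $v$-exponent bookkeeping supplied by the Whiteley-type stability assumptions, after which the geometric series is summed exactly as you describe. The one caveat is that for the middle term you should not lean on \eqref{eq:pred_bias_euler} (whose constant is $n$-dependent and which compares $\eta_n^l$ to the exact filter rather than to $\eta_n^{l-1}$); the paper instead proves the time-uniform bound $|[\eta_n^l-\eta_n^{l-1}](\varphi)|\leq C\|\varphi\|\Delta_l v(x^*)^{2\xi+\hat{\xi}}$ directly, by precisely the telescoping-plus-one-step-difference machinery you invoke for the other two terms.
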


The main point of this result is that now the time-uniform coupling ability of each algorithm will now rely on the properties of the limiting coupling $\check{\eta}_n^s$.

\subsection{IRCPF}

We consider the term
$$
\check{\eta}_n^I\Big(\{\varphi\otimes 1-1\otimes\varphi - ([\eta_n^f-\eta_n^c](\varphi))\}^2\Big)
$$
in the upper-bound of Theorem \ref{theo:av_thm}. 
Let us suppose that $\varphi\in\mathcal{A}$, where $\mathcal{A}$ is defined below \eqref{eq:markov_cont}. 
One has that
$$
\check{\eta}_n^I\Big(\{\varphi\otimes 1-1\otimes\varphi - ([\eta_n^f-\eta_n^c](\varphi))\}^2\Big) = 
$$
$$
\frac{(\eta_{n-1}^f\otimes\eta_{n-1}^c)([G_{n-1}\otimes G_{n-1}]\check{M}_n([\varphi\otimes1-1\otimes\varphi]^2)) }{(\eta_{n-1}^f\otimes\eta_{n-1}^c)(G_{n-1}\otimes G_{n-1})}
- [\eta_n^f-\eta_n^c](\varphi)^2.
$$
Then using $\varphi\in\mathcal{A}$ (along with the $C_2-$inequality) and \cite[Lemma D.2.]{mlpf},
there is a finite constant $C<+\infty$ that depends on $n$ such that
$$
\check{\eta}_n^I\Big(\{\varphi\otimes 1-1\otimes\varphi - ([\eta_n^f-\eta_n^c](\varphi))\}^2\Big) \leq C\Big(\frac{(\eta_{n-1}^f\otimes\eta_{n-1}^c)([G_{n-1}\otimes G_{n-1}]\check{M}_n(\tilde{\varphi})) }{(\eta_{n-1}^f\otimes\eta_{n-1}^c)(G_{n-1}\otimes G_{n-1})}+\Delta_l^2\Big)
$$
where $\tilde{\varphi}(x,y) = \|x-y\|^2$.
Now applying \eqref{eq:coup_h_cont} one has the upper-bound
$$
C\Big(\frac{(\eta_{n-1}^f\otimes\eta_{n-1}^c)([G_{n-1}\otimes G_{n-1}]\tilde{\varphi}) }{(\eta_{n-1}^f\otimes\eta_{n-1}^c)(G_{n-1}\otimes G_{n-1})}+\Delta_l+\Delta_l^2\Big).
$$
In general, there is no reason to expect that $(\eta_{n-1}^f\otimes\eta_{n-1}^c)([G_{n-1}\otimes G_{n-1}]\tilde{\varphi})$ is small as a function of $\Delta_l$. This is unsuprising because one
uses an independent coupling in the resampling operation. As a result, in the sense of Section \ref{sec:mlmc_intro} for ML estimation, the IRCPF is not useful. 

\subsection{MCRPF}

To start our discussion, suppose first that $G_n$ is constant for each $n\geq 0$. This represents the most favourable scenario for the MCRPF, because in the resampling operation, the resampled indices for each pair are equal (of course one would not use resampling
in this case).  For the metric in (H\ref{hypav:7}), we take $\mathsf{d}_1$ the $L_1-$norm and set $\varphi\in\mathsf{B}_b(\mathsf{X})\cap\textrm{Lip}_{\mathsf{d}_1}(\mathsf{X})$ ($\mathsf{X}=\mathbb{R}^d$). Then
setting $\tilde{\varphi}(x,y)=\|x-y\|^2$
$$
\check{\eta}_n^M\Big(\{\varphi\otimes 1-1\otimes\varphi - ([\eta_n^f-\eta_n^c](\varphi))\}^2\Big) \leq C\|\varphi\|_{\textrm{Lip}}^2\check{\eta}_n^M(\tilde{\varphi})
$$
where $C<+\infty$ does not depend on $n$. Now as $G_n$ constant, one can deduce (e.g.~by \cite[Theorem 2.7.3]{mao}, recall (D) is assumed) that
\begin{equation}\label{eq:diff_bad_case_mcrpf}
\check{\eta}_n^M\Big(\{\varphi\otimes 1-1\otimes\varphi - ([\eta_n^f-\eta_n^c](\varphi))\}^2\Big) \leq Cn(n+4)\exp\{C'n^2\}\Delta_l
\end{equation}
where $C,C'<+\infty$ does not depend on $n$. Now \eqref{eq:diff_bad_case_mcrpf} is not necessarily tight in $n$ for every diffusion that satisfies (D), for instance
if one has $dZ_t = a dt + b dW_t$, for $b>0$, $a\in(-1,0)$, then there is no dependence on $n$ on the R.H.S.~of \eqref{eq:diff_bad_case_mcrpf}, however, it is worrying that the coupling
can be exponentially bad in time, in this favourable case. The main point here, is that the principal source of coupling in this algorithm is 
$\check{M}$ and if this coupling deteriorates when iterating $\check{M}$, then for the MRCPF one \emph{cannot hope to obtain time uniform couplings}. 

More generally, if $G_n$ is non-constant, we first remark that the upper-bound \eqref{eq:diff_bad_case_mcrpf} is likely to be $\mathcal{O}(\Delta_l^{1/2})$ as it is the resampling operation
where the forward rate is lost (see \cite{mlpf}). Secondly, one expects to find \emph{examples} where $\sigma^{2,s}_n(\varphi)$ is time-uniform or grows slowly as a function of $n$ (due to the empirical results in \cite{mlpf}). However, 
due to the highly non-linear and complex expression for $\check{\Phi}_n^M$, we expect a general result to be particularly arduous to obtain; this is left to future work.

\subsection{MCPF}

We have the following result which establishes the time-uniform coupling of the MCPF. The proof can be found in Appendix \ref{app:diff_case}, Section \ref{app:diff_max_coup}.

\begin{prop}\label{prop:diff_max_coup}
Assume (H\ref{hypav:1}-\ref{hypav:6}), (H\ref{hypav:8}).  Then for any $(\xi,\hat{\xi})\in(0,1/32)\times(0,1/2)$ there exists a $C<+\infty$ depending on the constants in (H\ref{hypav:1}-\ref{hypav:6}) (H\ref{hypav:8}) such that for any $\varphi\in\mathcal{B}_b(\mathsf{X})$, $n\geq 0$, $1\leq l \leq L$ we have
$$
\sigma^{2,C}_n(\varphi) \leq C\|\varphi\|^2 \Delta_lv(x^*)^{32\xi+2\hat{\xi}}.
$$
\end{prop}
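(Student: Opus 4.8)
The plan is to invoke the general asymptotic variance bound of Theorem~\ref{theo:av_thm} and estimate each of the three groups of terms appearing there, using the crucial fact that for the MCPF the limiting coupling $\check{\eta}_n^C$ is the \emph{maximal} coupling of $(\eta_n^f,\eta_n^c)$. Since $\varphi\in\mathcal{B}_b(\mathsf{X})$ I will apply the first (non-Lipschitz) branch of Theorem~\ref{theo:av_thm}, so the quantity I must control besides the term $B(n,f,c,\varphi,\xi)$ is
$$
\check{\eta}_n^C\big(\{\varphi\otimes1-1\otimes\varphi-([\eta_n^f-\eta_n^c](\varphi))\}^2\big)
\quad\text{and}\quad
\|\varphi\|^2\sum_{p=0}^{n-1}\rho^{n-p}\check{\eta}_p^{C}\big(\mathbb{I}_A(v\otimes v)^{2\xi}\big)(\rho^{n-p}+1).
$$
The term $B(n,l,l-1,\varphi,\xi)$ is already handled: Proposition~\ref{prop:diff_general_b_bound} gives $B(n,l,l-1,\varphi,\xi)\le C\|\varphi\|^2\Delta_l v(x^*)^{2\xi+\hat\xi}$ uniformly in $n$, so this contributes a term of exactly the advertised order. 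Thus the work reduces to bounding the two $\check{\eta}^C$-expectations above.

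First I would treat the leading term. Because $\varphi$ is bounded and the integrand is a function of the discrepancy $\varphi\otimes1-1\otimes\varphi$, it vanishes on the diagonal $\{x=y\}$ and is everywhere bounded by $2\|\varphi\|\,\mathbb{I}_A$ (up to the centering constant, which is itself $O(\Delta_l)$ by \eqref{eq:pred_bias_euler}). Hence it suffices to show $\check{\eta}_n^C(\mathbb{I}_A)=\check{\eta}_n^C(\{x\neq y\})\le C\Delta_l v(x^*)^{c\xi}$, i.e.\ that under the \emph{maximal} coupling the two coordinates coincide with probability $1-O(\Delta_l)$. This is the heart of the argument and is exactly where maximality is used: $\check{\eta}_n^C(A)=\|\Phi_n^f(\eta_{n-1}^f)-\Phi_n^c(\eta_{n-1}^c)\|_{\mathrm{tv}}$ because a maximal coupling realizes the total variation distance as the probability of disagreement. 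I would then bound this total variation distance by the sum of (i) the discrepancy between the two one-step updates driven by the kernel difference $M_n^f-M_n^c$, controlled via \eqref{eq:diff_1}--\eqref{eq:diff_2} restricted to $\mathsf{B}_l(x)$ and its complement together with (H\ref{hypav:8}) to absorb the Lebesgue-measure tails into powers of $v$, and (ii) a contraction contribution from the already-accumulated total variation at time $n-1$. The stability assumptions (H\ref{hypav:1}--\ref{hypav:6}) supply, via the Whiteley-type machinery, a geometric contraction rate $\rho<1$ for the normalized Feynman--Kac semigroup, so that the recursion $\check{\eta}_n^C(A)\le C\Delta_l v(x^*)^{c\xi}+\rho\,\check{\eta}_{n-1}^C(A)$ closes to the time-uniform bound $\check{\eta}_n^C(A)\le C\Delta_l v(x^*)^{c\xi}$.

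The weighted term $\check{\eta}_p^{C}(\mathbb{I}_A(v\otimes v)^{2\xi})$ is handled by the same mechanism but now I must carry the moment weight $(v\otimes v)^{2\xi}$ through the computation; this is why the admissible range shrinks to $\xi\in(0,1/32)$ and an auxiliary exponent $\hat\xi\in(0,1/2)$ appears. Concretely I would apply a Cauchy--Schwarz (or H\"older) split separating the indicator $\mathbb{I}_A$, whose $\check{\eta}_p^C$-mass is $O(\Delta_l)$ by the previous paragraph, from the weight $(v\otimes v)^{2\xi}$, whose $\check{\eta}_p^C$-moments are uniformly bounded in $p$ by the Remark after Theorem~\ref{theo:av_thm} (citing \cite[Proposition~1]{whiteley}) provided the exponent on $v$ stays below the summability threshold; the factor $v(x^*)^{\hat\xi}$ emerges from propagating the initial condition $Z_0=x^*$ through the Lyapunov drift (H\ref{hypav:1}). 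Summing the resulting $\sum_{p}\rho^{n-p}(\rho^{n-p}+1)$ against a per-term bound of order $\Delta_l v(x^*)^{32\xi+2\hat\xi}$ yields a geometric series bounded uniformly in $n$, giving the stated $\sigma_n^{2,C}(\varphi)\le C\|\varphi\|^2\Delta_l v(x^*)^{32\xi+2\hat\xi}$. The main obstacle I anticipate is the first, unweighted diagonal-coincidence estimate: establishing $\check{\eta}_n^C(\{x\neq y\})=O(\Delta_l)$ \emph{uniformly in time} requires carefully combining the maximality identity, the kernel-difference bounds \eqref{eq:diff_1}--\eqref{eq:diff_2} (which only hold on $\mathsf{B}_l(x)$, forcing a delicate tail argument using (H\ref{hypav:8})), and the geometric contraction from the stability hypotheses, all while keeping the $v$-moment exponents within their admissible windows.
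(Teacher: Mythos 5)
Your overall architecture matches the paper's: invoke the first bound of Theorem \ref{theo:av_thm}, dispose of $B(n,l,l-1,\varphi,\xi)$ via Proposition \ref{prop:diff_general_b_bound}, bound the leading term by $C\|\varphi\|^2\check{\eta}_n^C(\mathbb{I}_A)$, and use maximality to identify $\check{\eta}_n^C(\mathbb{I}_A)=\|\eta_n^{l}-\eta_n^{l-1}\|_{\textrm{tv}}$, which is $\mathcal{O}(\Delta_l)$ uniformly in $n$. (The paper obtains this last fact directly from Proposition \ref{prop:diff_1} with Remark \ref{rem:diff_1}, by taking a supremum over bounded test functions, rather than from your one-step total-variation recursion; note that a one-step contraction of the nonlinear Feynman--Kac map in unweighted total variation is not literally what the Whiteley-type hypotheses deliver, but the telescoping version of the same idea is already packaged in Proposition \ref{prop:diff_1}, so this part is fine in substance.)

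The genuine gap is your treatment of $\check{\eta}_p^{C}(\mathbb{I}_A(v\otimes v)^{2\xi})$. A Cauchy--Schwarz split separating the indicator from the weight gives $\check{\eta}_p^C(\mathbb{I}_A)^{1/2}\,\check{\eta}_p^C((v\otimes v)^{4\xi})^{1/2}=\mathcal{O}(\Delta_l^{1/2})$, and a H\"older split with exponent $q$ close to $1$ gives only $\mathcal{O}(\Delta_l^{1/q})$ --- precisely the loss of rate that the paper cannot avoid for the WCPF in Proposition \ref{prop:diff_was}, and which the MCPF is supposed to beat. To recover the full $\Delta_l$ you must exploit the explicit form of the maximal coupling: its off-diagonal component is the \emph{normalized product} of the positive part $\eta_n^l-\eta_n^l\wedge\eta_n^{l-1}$ and the negative part $\eta_n^{l-1}-\eta_n^l\wedge\eta_n^{l-1}$, each of total mass $\|\eta_n^l-\eta_n^{l-1}\|_{\textrm{tv}}=\mathcal{O}(\Delta_l)$. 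Bounding the indicator by $1$ and applying Cauchy--Schwarz to each marginal factor \emph{separately}, as in Proposition \ref{prop:diff_4}, yields $\check{\eta}_n^C(\mathbb{I}_A(v\otimes v)^{2\xi})\leq T_1T_2$ with $T_i\leq C\Delta_l^{1/2}v(x^*)^{16\xi+\hat{\xi}}$ (each $T_i^2$ being controlled by $|[\eta_n^l-\eta_n^{l-1}](\psi)|$ for suitable $\psi\in\mathcal{L}_{v^{\kappa}}(\mathsf{X})$ via Proposition \ref{prop:diff_1}), so the two half-rates multiply to give $\Delta_l$. Without this product-structure argument your bound degrades to $\Delta_l^{1-\lambda}$ and the proposition as stated is not obtained.
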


\subsection{WCPF}

Note that $\mathsf{X}=\mathbb{R}$ and, for the metric in (H\ref{hypav:7}), we take $\mathsf{d}_1$ the $L_1-$norm. Let $\tilde{\varphi}(x,y)=(x-y)^2$.
The proof of the following result, which establishes the time-uniform coupling of the WCPF, can be found in Appendix \ref{app:diff_case}, Section \ref{app:diff_was}.

\begin{prop}\label{prop:diff_was}
Assume (H\ref{hypav:1}-\ref{hypav:8}). Then, let $\lambda\in(0,1)$ be given,
if $\tilde{\varphi}\in\mathcal{L}_{(v\otimes v)^{\tilde{\xi}}}(\mathsf{X})$, for any $\tilde{\xi}\in(0,1/(16(1+\lambda)))$ and set $(\xi,\hat{\xi})\in(0,\min\{1/32,\lambda/(16(1+\lambda)),(1-2\tilde{\xi})/12\})\times (0,1/2)$ then there exists a $C<+\infty$ depending on the constants in (H\ref{hypav:1}-\ref{hypav:8}) such that for any 
$\varphi\in\mathcal{B}_b(\mathsf{X})\cap\textrm{\emph{Lip}}_{\mathsf{d}_1}(\mathsf{X})$, $n\geq 0$, $1\leq l \leq L$ we have
$$
\sigma^{2,W}_n(\varphi)  \leq C\|\varphi\|_{\textrm{\emph{Lip}}}^2\|\tilde{\varphi}\|_{v^{\tilde{\xi}}}(\Delta_l)^{1/(1+\lambda)}v(x^*)^{20\xi + (16(\lambda+1)\tilde{\xi}+2\hat{\xi})/(1+\lambda)}.
$$
\end{prop}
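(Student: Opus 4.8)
The plan is to derive everything from the second inequality of Theorem~\ref{theo:av_thm}. Its hypotheses hold in the present setting: $\mathsf{X}=\mathbb{R}$, the metric is $\mathsf{d}_1$ with $\mathsf{d}_1^2=\tilde\varphi$, the assumption $\tilde\varphi\in\mathcal{L}_{(v\otimes v)^{\tilde\xi}}(\mathsf{X})$ furnishes the required $\mathsf{d}^2\in\mathcal{L}_{(v\otimes v)^{\tilde\xi}}$ with $\tilde\xi\in(0,1/2)$, and $\varphi\in\mathcal{B}_b(\mathsf{X})\cap\textrm{Lip}_{\mathsf{d}_1}(\mathsf{X})$. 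This splits $\sigma_n^{2,W}(\varphi)$ into three contributions: the centred term $\check\eta_n^W(\{\varphi\otimes1-1\otimes\varphi-[\eta_n^f-\eta_n^c](\varphi)\}^2)$, the quantity $B(n,l,l-1,\varphi,\xi)$, and the geometric sum $\|\varphi\|^2\sum_{p=0}^{n-1}\rho^{n-p}\check\eta_p^W(\tilde\varphi\,(v^{4\xi}\otimes v^{8\xi}))(\rho^{n-p}+1)$. The term $B(n,l,l-1,\varphi,\xi)$ is immediately bounded by Proposition~\ref{prop:diff_general_b_bound}, and since $\varphi$ is $\mathsf{d}_1$-Lipschitz the centred term is at most $\|\varphi\|_{\textrm{Lip}}^2\,\check\eta_n^W(\tilde\varphi)$. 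Hence the whole problem reduces to a single time-uniform estimate of the weighted coupling cost $\check\eta_p^W(\tilde\varphi\,g)$, for $g\in\{1,\,v^{4\xi}\otimes v^{8\xi}\}$, of the form $C\Delta_l^{1/(1+\lambda)}v(x^*)^{\cdots}$; the factor $\rho<1$ then absorbs the summation over $p$.

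For this core estimate I would expand the Wasserstein recursion $\check\eta_p^W=\check\Phi_p^W(\overline\eta_{p-1}^f,\overline\eta_{p-1}^c)$, giving
$$
\check\eta_p^W(\tilde\varphi\,g)=\int_0^1 \check M_p(\tilde\varphi\,g)\big(F_{\overline\eta_{p-1}^f}^{-1}(w),F_{\overline\eta_{p-1}^c}^{-1}(w)\big)\,dw .
$$
The inner kernel average is controlled by separating, via H\"older's inequality, the forward-rate factor $\tilde\varphi=\|u-v\|^2$ from the moment weight $g$: applying \eqref{eq:coup_h_cont} (with exponent slightly larger than $2$) to the $\tilde\varphi$-factor yields a bound of order $(|x-y|^2+\Delta_l)$, while the $\check M_p$-average of a power of $g$ is controlled by the Lyapunov drift (H\ref{hypav:1}) together with the density estimates \eqref{eq:diff_1}--\eqref{eq:diff_2} and (H\ref{hypav:8}), producing a factor of the shape $v(x)^{a}v(y)^{b}$. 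Integrating over $w$ turns the pair $(F_{\overline\eta_{p-1}^f}^{-1}(w),F_{\overline\eta_{p-1}^c}^{-1}(w))$ into the optimal quantile coupling, so the $\Delta_l$ part is already of the right order, whereas the $|x-y|^2$ part becomes the ($v$-weighted) squared $L_2$-Wasserstein transport cost between the one-dimensional filters $\overline\eta_{p-1}^f$ and $\overline\eta_{p-1}^c$.

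The heart of the argument, and the only genuinely new ingredient relative to Proposition~\ref{prop:diff_max_coup}, is the time-uniform control of this weighted $W_2$-transport cost, which is where the loss $\Delta_l^{1/(1+\lambda)}$ arises. Unlike for the maximal coupling of Proposition~\ref{prop:diff_max_coup}, where the off-diagonal mass is directly of order $\Delta_l$ through the total-variation gap, the quantile coupling produces a genuine $L_2$ transport cost that does not contract cleanly in $W_2$ under the filter recursion. I would instead interpolate: on one hand a first-order, $W_1$-type discrepancy between $\overline\eta_{p-1}^f$ and $\overline\eta_{p-1}^c$ is of order $\Delta_l$, uniformly in $p$, by $W_1$--Lipschitz duality, the one-step Markov gap \eqref{eq:markov_cont}, and the time-uniform forgetting guaranteed by (H\ref{hypav:1}--\ref{hypav:6}) --- essentially the same estimates underlying Proposition~\ref{prop:diff_general_b_bound}; on the other hand the higher $v$-weighted moments of the quantile differences are uniformly bounded in $p$ through the Lyapunov structure of (H\ref{hypav:1}) and \cite[Proposition~1]{whiteley}, together with the growth bound $\tilde\varphi\le\|\tilde\varphi\|_{(v\otimes v)^{\tilde\xi}}(v\otimes v)^{\tilde\xi}$. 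Choosing the interpolation/H\"older exponent equal to $1+\lambda$ trades the clean $\Delta_l$ for $\Delta_l^{1/(1+\lambda)}$, at the price of raising a weighted moment to the power $\lambda/(1+\lambda)$; the restriction $\tilde\xi\in(0,1/(16(1+\lambda)))$ is precisely what keeps the resulting power of $v$ at most one, so that the uniform-in-time moment bounds apply and contribute the stated exponent of $v(x^*)$.

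Finally I would reassemble the pieces: substituting the $\Delta_l^{1/(1+\lambda)}$ bound for the weighted $W_2$-cost into the expansion of $\check\eta_p^W(\tilde\varphi\,g)$, summing the resulting geometric series against $\rho^{n-p}(\rho^{n-p}+1)$, and combining with the centred term and the $B$-term. The remaining work is bookkeeping: tracking the accumulated powers of $v$ coming from $g=v^{4\xi}\otimes v^{8\xi}$, from peeling the weight off $\check M_p$, and from the interpolation, and checking that the constraints $(\xi,\hat\xi)\in(0,\min\{1/32,\lambda/(16(1+\lambda)),(1-2\tilde\xi)/12\})\times(0,1/2)$ and $\tilde\xi\in(0,1/(16(1+\lambda)))$ force every exponent of $v$ to stay below one (so that $\check\eta_p^s(v)\le C$ uniformly, by the remark following Theorem~\ref{theo:av_thm}) and reproduce the final power $v(x^*)^{20\xi+(16(\lambda+1)\tilde\xi+2\hat\xi)/(1+\lambda)}$. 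I expect the main obstacle to be exactly the interpolation step of the previous paragraph: establishing the time-uniform $W_1$-type bound of order $\Delta_l$ for the one-dimensional weighted filters, and carrying the weights $v$ through the H\"older and interpolation estimates without exceeding the $v$-integrability allowed by the Lyapunov condition, since everything else follows from a direct application of Theorem~\ref{theo:av_thm} and Proposition~\ref{prop:diff_general_b_bound}.
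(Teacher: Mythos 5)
Your skeleton coincides with the paper's: the second bound of Theorem \ref{theo:av_thm}, Proposition \ref{prop:diff_general_b_bound} for the $B$-term, a Lipschitz reduction of the centred term, and a time-uniform bound of order $\Delta_l^{1/(1+\lambda)}$ on the weighted coupling cost $\check{\overline{\eta}}_p^W(\tilde{\varphi}(v^{4\xi}\otimes v^{8\xi}))$ (the paper's Lemmata \ref{lem:diff_7} and \ref{lem:diff_8}). The gap is in the mechanism you propose for that last, decisive bound. You interpolate against the first moment: with $\mu=\check{\overline{\eta}}_{p}^W$ and $g=v^{4\xi}\otimes v^{8\xi}$, H\"older with exponents $(1+\lambda,(1+\lambda)/\lambda)$ applied to $|x-y|^{1/(1+\lambda)}\cdot |x-y|^{2-1/(1+\lambda)}g$ gives $\mu(\tilde{\varphi}g)\leq \mu(|x-y|)^{1/(1+\lambda)}\mu\big(|x-y|^{2+1/\lambda}g^{(1+\lambda)/\lambda}\big)^{\lambda/(1+\lambda)}$. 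The first factor is indeed $\mathcal{O}(\Delta_l)$ by $W_1$-duality and Proposition \ref{prop:diff_5}. But the second factor contains $|x-y|^{2+1/\lambda}$, and the only available growth control is $\tilde{\varphi}\leq\|\tilde{\varphi}\|_{v^{\tilde{\xi}}}(v\otimes v)^{\tilde{\xi}}$, which turns this into a weight $(v\otimes v)^{\tilde{\xi}(1+1/(2\lambda))}$. The stated hypothesis only gives $\tilde{\xi}<1/(16(1+\lambda))$, which does \emph{not} shrink as $\lambda\to 0$, so the exponent $\tilde{\xi}(1+1/(2\lambda))$ exceeds $1$ for small $\lambda$ and the uniform-in-time moment bounds of \cite[Proposition 1]{whiteley} no longer apply. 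Your claim that the restriction on $\tilde{\xi}$ is "precisely what keeps the resulting power of $v$ at most one" is therefore not correct for this split; it is the restriction $\xi<\lambda/(16(1+\lambda))$ that is designed to absorb a $1/\lambda$ blow-up, and only in the weight $g$, never in the cost $\tilde{\varphi}$.

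The paper closes this step differently (Lemma \ref{lem:diff_8}): it puts the H\"older exponent $1+\lambda$ on the \emph{pure cost}, $\mu(\tilde{\varphi}h)\leq\mu(\tilde{\varphi}^{1+\lambda})^{1/(1+\lambda)}\mu(h^{(1+\lambda)/(2\lambda)\cdot 2})^{\lambda/(1+\lambda)}$, and obtains $\Delta_l$ in the first factor not from $W_1$-duality but from the one-dimensional optimality of the quantile coupling for the convex cost $|x-y|^{2(1+\lambda)}$, which permits the transfer $\check{\overline{\eta}}_{n-1}^W(\tilde{\varphi}^{1+\lambda})\leq\check{\overline{\eta}}_{n-1}^C(\tilde{\varphi}^{1+\lambda})\leq\|\tilde{\varphi}\|_{v^{\tilde{\xi}}}^{1+\lambda}\,\check{\overline{\eta}}_{n-1}^C(\mathbb{I}_A(v\otimes v)^{\tilde{\xi}(1+\lambda)})=\mathcal{O}(\Delta_l)$ by Corollary \ref{cor:diff_1} — this is exactly where the constraint $\tilde{\xi}(1+\lambda)<1/16$ comes from. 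The complementary factor is then a pure weight moment with exponent $\mathcal{O}(\xi/\lambda)$, which is controlled because $\xi<\lambda/(16(1+\lambda))$ scales with $\lambda$. You correctly observe that the quantile coupling is not optimal for the \emph{weighted} functional (cf.~Remark \ref{rem:improve_rate}), but the resolution is to use its optimality for the unweighted convex cost after separating the weight by H\"older, not to fall back on a $W_1$ estimate; as written, your argument does not close under the stated ranges of $\tilde{\xi}$ and $\lambda$.
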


As $\lambda > 0$ in Proposition \ref{prop:diff_was} (and can be made close to zero) one almost has the (time-uniform) forward error rate for the WCPF. We believe that $\lambda=0$ is the case and discuss strategies to establish this in Remark \ref{rem:improve_rate} in 
Appendix \ref{app:diff_case}, Section \ref{app:diff_was}.

\subsection{Example}\label{sec:verify}

We consider $\mathsf{X}=\mathbb{R}$, the diffusion $dZ_t = -\frac{3}{2}Z_tdt+dW_t$ and $\mathsf{Y}=\{0,1\}$ and $G(x,y) = \Big(\frac{be^x+a}{(1+e^x)}\Big)^y\Big(1-(\frac{be^x+a}{(1+e^x)})\Big)^{1-y}$, $y\in\mathsf{Y}$ and any $0<a<b<1$.
The metric in (H\ref{hypav:7}) is the $L_1-$norm.
In practice the CPFs can only be run (with currently available computational power) with $L\leq 20$. So we will assume that there is a $L^*>L$ for one which one cannot run the algorithm (say $L^*=50$). This will reduce the complexity of the forthcoming discussion.
For the Euler discretization (although of course it is not required here) one can determine that the transition kernel $M^l(x,y)$ is the density of a $\mathcal{N}(\alpha_lx,\beta_l)$ distribution with $\alpha_l=(1-(3/2)\Delta_l)^{\Delta_l^{-1}}$, $\beta_l=\Delta_l(1-(1-(3/2)\Delta_l)^{2\Delta_l^{-1}})/(1-(1-(3/2)\Delta_l)^2)$.
Due to our assumption concerning $L^*$, one can easily find constants $0<\underline{\alpha}<\overline{\alpha}<1$,  $0<\underline{\beta}<\overline{\beta}\leq 1$, that do not depend on $l$, with $\underline{\alpha}\leq |\alpha_l|\leq \overline{\alpha}$, $\underline{\beta}\leq\beta_l\leq\overline{\beta}$ for each $l\in\{0,\dots,L\}$.

To verify (H\ref{hypav:1}) we use, as in \cite[Example 4.2.1]{whiteley1}, the Lyapunov function $\tilde{V}(x) = \frac{1}{2(1+\delta_0)}x^2 +1$ for some $\delta_0>0$. One can show that for any $1\leq l \leq L$, $n\geq 1$, $y\in\mathsf{X}$
$$
Q_n^l(e^v)(y) \leq \exp\Big\{\frac{\alpha_l^2(1+\delta_0)}{1+\delta_0-\beta_l}(\tilde{V}(y)-1) + C\Big\}
$$
where $[\alpha_l^2(1+\delta_0)]/[1+\delta_0-\beta_l]<1$ for every $0\leq l\leq L$, $\delta_0>0$ and $0<C<+\infty$ is a constant that does not depend on $l,n,\delta_0$ and can be made arbitrarily large. One can easily verify (H\ref{hypav:1}) for any $\delta_0>0$ with $\underline{d}>1$ large enough.  (H\ref{hypav:2}-\ref{hypav:5}) follow by elementary calculations associated to the choice of $\tilde{V}$, $M^l$ and $G_n$ and are omitted.

To verify (H\ref{hypav:6}) as $G_n$ is lower-bounded, we consider $M^l(C_d)(x)$ and suppose $x>0$, $l\geq 1$, $d>1$ (the case $x\leq 0$, $l=0$ can be verified in a similar manner). We have for any $x>0$:
$$
v(x) M^l(C_d)(x) \geq C \exp\Big\{\frac{x^2}{2}\Big(\frac{1}{\delta_0+1}-\frac{\alpha_l^2}{\beta_l}\Big)-\frac{\alpha_l}{\beta_l}x\tilde{d}\Big\}
$$
where $\tilde{d}=\sqrt{(d-1)2(1+\delta_0)}$. As $\delta_0$ is arbitrary one can choose $\delta_0>0$ so that $\frac{1}{\delta_0+1}-\frac{\alpha_l^2}{\beta_l}\geq 0$ for any $1\leq l \leq L$. Checking calculations on a computer for $L^*=50$, yields that $0<\delta_0
\leq 5$ suffices.

For (H\ref{hypav:7}) one has the following result, where the proof is in Appendix \ref{app:diff_case}, Section \ref{app:diff_verify}.

\begin{lem}\label{lem:verify}
(H\ref{hypav:7}) is satisfied in this example.
\end{lem}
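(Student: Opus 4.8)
The plan is to exploit the fact that in this example the kernel factorizes as $Q_n^s(\varphi)(x) = G_{n-1}(x)\,M^s(\varphi)(x)$, where for $s\in\{l,l-1\}$ the transition is Gaussian, $M^s(\varphi)(x) = \int_{\mathsf{X}}\varphi(z)\,\mathcal{N}(z;\alpha_s x,\beta_s)\,dz$, and $G_{n-1}=G(\cdot,y_n)$ equals, for either $y_n\in\{0,1\}$, one of the two functions $g(x)=\tfrac{be^x+a}{1+e^x}$ or $1-g(x)$. Both are bounded in $(\min\{a,1-b\},\max\{b,1-a\})$ and Lipschitz with $\sup_x|g'(x)|\le(b-a)/4$, uniformly in $n$. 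First I would write the product-rule decomposition
$$
Q_n^s(\varphi)(x)-Q_n^s(\varphi)(y) = G_{n-1}(x)\big[M^s(\varphi)(x)-M^s(\varphi)(y)\big] + M^s(\varphi)(y)\big[G_{n-1}(x)-G_{n-1}(y)\big],
$$
which reduces everything to (i) a pointwise bound on $|M^s(\varphi)(y)|$ and (ii) a bound on the increment $|M^s(\varphi)(x)-M^s(\varphi)(y)|$, with the uniform control of $G$ handling the prefactors and $\mathsf{d}(x,y)=|x-y|$ (the $L_1$-norm on $\mathsf{X}=\mathbb{R}$).

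For term (ii) the key idea is that, the transition being Gaussian, $M^s(\varphi)$ is differentiable and I can transfer the derivative onto the smooth kernel rather than onto $\varphi$; this is essential, because the right-hand side of (H\ref{hypav:7}) carries only $\|\varphi\|_{v^{\xi}}$ and no Lipschitz constant of $\varphi$. Differentiating under the integral (justified once $\delta_0$ is fixed as below, since $v^{\xi}$ grows more slowly than the Gaussian decays) gives $\tfrac{d}{dx}M^s(\varphi)(x)=\tfrac{\alpha_s}{\beta_s}\int\varphi(z)(z-\alpha_s x)\mathcal{N}(z;\alpha_s x,\beta_s)\,dz$, whence $|\tfrac{d}{dx}M^s(\varphi)(x)|\le\tfrac{|\alpha_s|}{\beta_s}\|\varphi\|_{v^{\xi}}\int v(z)^{\xi}|z-\alpha_s x|\mathcal{N}(z;\alpha_s x,\beta_s)\,dz$. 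Both this integral and the one for term (i), namely $\int v(z)^{\xi}\mathcal{N}(z;\alpha_s y,\beta_s)\,dz$, are explicit Gaussian moment integrals after the substitution $z=\alpha_s\cdot+\sqrt{\beta_s}\,w$ (the extra factor $|z-\alpha_s x|$ affecting only constants). The mean value theorem then turns the derivative bound into $|M^s(\varphi)(x)-M^s(\varphi)(y)|\le|x-y|\sup_{\theta}|\tfrac{d}{d\theta}M^s(\varphi)(\theta)|$ with $\theta$ between $x$ and $y$; since $\theta^2\le x^2+y^2$ on that interval one has $v(\theta)\le v(x)v(y)$, so this term is $\le C\|\varphi\|_{v^{\xi}}|x-y|\,[v(x)v(y)]^{\xi}$. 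Using $v\ge 1$ and the Lipschitz bound on $G$, term (i) likewise contributes $\le C\|\varphi\|_{v^{\xi}}\,v(y)^{\xi}\,|x-y|\le C\|\varphi\|_{v^{\xi}}|x-y|\,[v(x)v(y)]^{\xi}$.

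The crux, and the main obstacle, is the exponent bookkeeping in these Gaussian integrals: one must check that exactly the power $\xi$ of $v$ survives and not a larger power. Carrying out the computation, each integral has the form $C\exp\big(\tfrac{\xi\alpha_s^2(\cdot)^2}{2(1+\delta_0)}[1+\text{(bounded correction)}]\big)$, so I need $\alpha_s^2\,[1+\text{correction}]\le1$ together with convergence of the moment-generating integral, which requires $\beta_s/(1+\delta_0)<1/2$. This is precisely where two quantitative features of the model enter: the strong mean-contraction $\alpha_s^2\le\alpha_0^2=1/4$, and the freedom to fix $\delta_0$ in the admissible range of the earlier verifications of (H\ref{hypav:1}) and (H\ref{hypav:6}) with $\delta_0>1$ (say $\delta_0=2$). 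With $\beta_s\le\beta_0=1$ this choice makes the moment integrals converge and keeps the correction small enough that $\tfrac14[1+\text{correction}]<1$. Finally, since $G$ takes only two uniformly bounded, uniformly Lipschitz forms and $M^s$ is time-homogeneous, all constants are uniform in $n$ and in $s\in\{l,l-1\}$, which yields the claimed $\sup_{n\ge1}$ bound and hence (H\ref{hypav:7}).
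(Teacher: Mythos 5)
Your argument is correct, and it reaches the conclusion by a genuinely different route from the paper for the key step. Both proofs start from the same product-rule decomposition into a $G$-increment term and an $M$-increment term, and both handle the $G$-increment the same way (uniform boundedness and Lipschitz continuity of $G(\cdot,y)$ plus $M^l(v^{\xi})(x)\leq Cv(x)^{\xi}$, which is already the content of the (H\ref{hypav:1}) verification). Where you diverge is the $M$-increment: the paper bounds the weighted total-variation quantity $\int_{\mathsf{X}} v(u)\,|M^l(x,u)-M^l(y,u)|\,du$ by splitting $\mathsf{X}$ according to the sign of $M^l(x,\cdot)-M^l(y,\cdot)$, writing everything in closed form through Gaussian CDFs $\Theta(\overline{c}_{l,i}(x,y))$ and ratios of the form $\tilde{v}(x)/(v(x)v(y))$, and then checking that each resulting composite function is Lipschitz with constants uniform in $l$; you instead differentiate $x\mapsto M^s(\varphi)(x)$ under the integral sign (pushing the derivative onto the Gaussian kernel, which is exactly what lets you avoid invoking the Lipschitz constant of $\varphi$, as (H\ref{hypav:7}) demands) and apply the mean value theorem, reducing the problem to the same Gaussian moment integrals and the same exponent condition $\alpha_l^2(1+\delta_0)/(1+\delta_0-\xi\beta_l)<1$ that drive the (H\ref{hypav:1}) verification, with $v(\theta)^{\xi}\leq [v(x)v(y)]^{\xi}$ handling the intermediate point. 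Your route is shorter and sidesteps the explicit CDF bookkeeping; the paper's route yields the slightly stronger uniform-over-$\|\varphi\|_{v}\leq 1$ (weighted TV) estimate and is carried out only for $\xi=1$, whereas you treat general $\xi$ directly. Two harmless imprecisions: your convergence condition $\beta_s/(1+\delta_0)<1/2$ is more restrictive than the $\xi\beta_s<1+\delta_0$ actually needed (it is satisfied by $\delta_0=2$ anyway), and the strict gap in the exponent inequality should be invoked explicitly to absorb the polynomial factor $1+|\theta|$ produced by the first-moment integral — you have that gap ($3/8<1$), so nothing breaks.
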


To verify (H\ref{hypav:8}), we shall suppose that $C'$ in \eqref{eq:diff_2}  is at least $1+1/(2(1+\delta_0))$ (i.e.~$C'>1/12$). It is noted that it is non-trivial to check the value of $C'$ in \cite{bally}, but such a choice seems reasonable. In the given sitution, it is then straightforward to verify (H\ref{hypav:8}).

\section{Summary}\label{sec:summary}

We have considered CLTs for coupled particle filters and the associated asymptotic variance for applications. 
The main message is that it can be non-trivial to construct CPFs for which one inherits the appropriate `closeness' uniformly in time.
The MCPF and WCPF seem to be the best options, but suffer from the fact that (at least as considered in this paper) one requires
either the density of the transition and have a random running time per time step (MCPF) or is constrained to the case that $\mathsf{X}$ is
one-dimensional (WCPF). None-the-less these are still useful algorithms when they can be implemented. 

There are a number of extensions to this work. First one may extend
these results to the context of normalization constant estimation (e.g.~\cite{mlpf1}) and the associated asymptotic variance. Secondly, is a more in depth analysis and implementation of the MCPF,
which to our knowledge has not been performed in the literature.

\subsubsection*{Acknowledgements}
AJ \& FY were supported by an AcRF tier 2 grant: R-155-000-161-112.
AJ was supported under the
KAUST Competitive Research Grants Program-Round 4 (CRG4) project, Advanced Multi-Level sampling techniques for Bayesian Inverse Problems with applications to subsurface,
ref: 2584. We would like to thank Alexandros Beskos, Sumeetpal Singh and Xin Tong for useful conversations associated to this work.

\appendix

\section{Common Proofs for the CLT}\label{app:clt_prf}

The structure of all the appendices are to give the proof of the main result at the beginning. The associated technical results are given in such a way that they should be read in order to understand the overall proof.

For $p\geq 0$, $\varphi\in\mathcal{B}_b(\mathsf{X})$, $s\in\{f,c\}$
$$
V_p^{N,s}(\varphi) = \sqrt{N}[\eta_p^{N,s}-\Phi_p^s(\eta_{p-1}^{N,s})](\varphi)
$$
with the convention that $V_0^{N,s}(\varphi) = \sqrt{N}[\eta_0^{N,s}-\eta_{0})](\varphi)$.
For $p\geq 0$, $p\leq n$, $n>0$, $\varphi\in\mathcal{B}_b(\mathsf{X})$, $s\in\{f,c\}$,
$$
R_{p+1}^{N,s}(\varphi) = \frac{\eta_p^{N,s}(D_{p,n}^s(\varphi))}{\eta_p^{N,s}(G_p)}[\eta_p^{s}(G_p)-\eta_p^{N,s}(G_p)].
$$

Now we note that, using the calculations in \cite{mlsmc,ddj2012}, we have for $t\in\{I,M,C,W\}$
\begin{eqnarray}
\sqrt{N}[\check{\eta}_{n}^{N,t}-\check{\eta}_{n}^{t}](\varphi\otimes 1 - 1 \otimes \varphi) & = &  \sum_{p=0}^n \{V_p^{N,f}(D_{p,n}^f(\varphi)) + \sqrt{N}R_{p+1}^{N,f}(D_{p,n}^f(\varphi))\} - \nonumber\\ & &
\sum_{p=0}^n \{V_p^{N,c}(D_{p,n}^c(\varphi)) + \sqrt{N}R_{p+1}^{N,c}(D_{p,n}^c(\varphi))\}\label{eq:master_ind}.
\end{eqnarray}

\begin{proof}[Proof of Theorem \ref{theo:clt_ind}]
The proof follows immediately from \eqref{eq:master_ind}, Lemma \ref{lem:r_cont_ind}, and Proposition \ref{prop:field_ind}.
\end{proof}

\subsection{Technical Results}

The following results can be established for all four algorithms, but as the result for the Wasserstein method is given in \cite{mlpf_new}, only the other three cases are considered. When required we specify particular conditions required for a given algorithm. By default proofs are specified for the IRCPF case and the 
MRCPF \& MCPF is mentioned where required.

\begin{lem}\label{lem:r_cont_ind}
For $\varphi\in\mathcal{B}_b(\mathsf{X})$,  $n>0$, $0\leq p \leq n$, $s\in\{f,c\}$ we have
$$
\sqrt{N}R_{p+1}^{N,s}(\varphi) \rightarrow_{\mathbb{P}} 0.
$$
\end{lem}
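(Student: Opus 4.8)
The plan is to display $\sqrt{N}R_{p+1}^{N,s}(\varphi)$ as a product of three factors, show that one of them tends to zero in probability while the product of the other two stays bounded in probability, and then conclude by a Slutsky-type argument. From the definition,
$$
\sqrt{N}R_{p+1}^{N,s}(\varphi) = \eta_p^{N,s}(D_{p,n}^s(\varphi))\cdot\frac{1}{\eta_p^{N,s}(G_p)}\cdot\sqrt{N}\big[\eta_p^{s}(G_p)-\eta_p^{N,s}(G_p)\big],
$$
and I would treat the three factors in turn.

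The crucial observation is that the deterministic limit of the first factor is exactly zero. By the semigroup property $\gamma_n^s(\psi)=\gamma_p^s(Q_{p,n}^s(\psi))$ one has the Feynman--Kac consistency relation $\eta_n^s(\varphi)=\eta_p^s(Q_{p,n}^s(\varphi))/\eta_p^s(Q_{p,n}^s(1))$, whence
$$
\eta_p^s(D_{p,n}^s(\varphi)) = \frac{\eta_p^s(Q_{p,n}^s(\varphi))-\eta_n^s(\varphi)\,\eta_p^s(Q_{p,n}^s(1))}{\eta_p^s(Q_{p,n}^s(1))} = 0 .
$$
Since $\varphi$ and the $G$'s are bounded, $D_{p,n}^s(\varphi)\in\mathcal{B}_b(\mathsf{X})$, so a weak law of large numbers gives $\eta_p^{N,s}(D_{p,n}^s(\varphi))\rightarrow_{\mathbb{P}}\eta_p^s(D_{p,n}^s(\varphi))=0$. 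For the IRCPF and MCPF this WLLN is the standard one for the marginal particle filter, since each marginal is a bootstrap filter for $\eta^s$; for the MCRPF I would instead invoke Theorem \ref{theo:wlln_max} applied to $D_{p,n}^s(\varphi)\otimes 1$, which is legitimate because under (A\ref{hyp:1}--\ref{hyp:3}) the Feller property of $\check{M}_n$ forces $M_n^s$ to be Feller, so $Q_{p,n}^s$ preserves $\mathcal{C}_b(\mathsf{X})$ and hence $D_{p,n}^s(\varphi)\in\mathcal{C}_b(\mathsf{X})$.

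For the middle factor, the same WLLN (using $G_p\in\mathcal{C}_b(\mathsf{X})$ from (A\ref{hyp:1}) in the MCRPF case) yields $\eta_p^{N,s}(G_p)\rightarrow_{\mathbb{P}}\eta_p^s(G_p)>0$, so by the continuous mapping theorem $1/\eta_p^{N,s}(G_p)\rightarrow_{\mathbb{P}}1/\eta_p^s(G_p)<+\infty$. For the last factor I would appeal to the classical central limit theorem for the marginal particle filter (each marginal being a standard particle filter), giving $\sqrt{N}[\eta_p^{N,s}-\eta_p^s](G_p)\Rightarrow\mathcal{N}(0,\cdot)$ and hence $\sqrt{N}[\eta_p^s(G_p)-\eta_p^{N,s}(G_p)]=O_{\mathbb{P}}(1)$. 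Combining the three pieces, the middle and last factors multiply to a sequence bounded in probability while the first tends to $0$ in probability, so a standard Slutsky argument gives $\sqrt{N}R_{p+1}^{N,s}(\varphi)\rightarrow_{\mathbb{P}}0$ (the boundary case $p=n$, where $D_{n,n}^s(\varphi)=\varphi-\eta_n^s(\varphi)$, is covered by the same identity).

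The only genuinely delicate point is the vanishing identity $\eta_p^s(D_{p,n}^s(\varphi))=0$, which is what actually drives the result, together with securing the regularity needed to apply the WLLN in the MCRPF case: there one cannot simply quote classical bounded-measurable results for the joint flow and must route the argument through Theorem \ref{theo:wlln_max} and the continuity of $D_{p,n}^s(\varphi)$. Everything else is routine.
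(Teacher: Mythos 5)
Your argument is correct in substance and rests on the same key fact as the paper's proof -- the vanishing identity $\eta_p^s(D_{p,n}^s(\varphi))=0$, which you make explicit via the semigroup relation $\eta_n^s(\varphi)=\eta_p^s(Q_{p,n}^s(\varphi))/\eta_p^s(Q_{p,n}^s(1))$ -- but the route is genuinely different. The paper does not argue factor-by-factor via Slutsky: after disposing of $1/\eta_p^{N,s}(G_p)$ by its convergence in probability, it bounds $\sqrt{N}\,\mathbb{E}\big[|\eta_p^{N,s}(D_{p,n}^s(\varphi))|\,|[\eta_p^{s}-\eta_p^{N,s}](G_p)|\big]$ by Cauchy--Schwarz and the $L^2$ bounds of Proposition \ref{prop:lp_bound_ind} (resp.\ \cite[Proposition C.6]{mlpf}, Proposition \ref{prop:lp_bound_mcpf}), each factor contributing $C/\sqrt{N}$ precisely because $\eta_p^s(D_{p,n}^s(\varphi))=0$, so the product is $O(N^{-1/2})$ in $L^1$ and Markov's inequality finishes. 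That argument is shorter and quantitative: the same moment bounds simultaneously deliver the $o_{\mathbb{P}}(1)$ you extract from a WLLN and the $O_{\mathbb{P}}(1)$ tightness you extract from a marginal CLT, with an explicit rate, and they apply to all $\varphi\in\mathcal{B}_b(\mathsf{X})$ for all three algorithms.

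The one point you should repair is the MCRPF case. Routing the first factor through Theorem \ref{theo:wlln_max} applied to $D_{p,n}^s(\varphi)\otimes 1$ forces $D_{p,n}^s(\varphi)\in\mathcal{C}_b(\mathsf{X})$, and the Feller property only propagates continuity of a continuous $\varphi$ through $Q_{p,n}^s$; for $\varphi$ merely in $\mathcal{B}_b(\mathsf{X})$, as in the statement of the lemma, $D_{p,n}^s(\varphi)$ need not be continuous, so your argument silently narrows the class of test functions for that algorithm. The detour is also unnecessary: exactly as for the IRCPF and MCPF, the $f$- (resp.\ $c$-) marginal of the MCRPF particle system is a standard bootstrap filter for $\eta^s$ (the maximal coupling of the resampling indices has the correct marginals), which is why the paper can invoke the marginal $L^p$ bound \cite[Proposition C.6]{mlpf} for bounded measurable functions there as well. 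With that substitution your proof covers the stated generality; as written it only covers the MCRPF for continuous $\varphi$, which happens to suffice for Theorem \ref{theo:clt_ind} but is weaker than the lemma claims.
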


\begin{proof}
By Proposition \ref{prop:lp_bound_ind} (for MCRPF it is \cite[Proposition C.6]{mlpf}, for MCPF it is Proposition \ref{prop:lp_bound_mcpf}) $\eta_p^{N,s}(G_p)$ converges in probability to a well-defined limit. Hence we need only show that
$$
\sqrt{N}\eta_p^{N,s}(D_{p,n}^s(\varphi))[\eta_p^{s}(G_p)-\eta_p^{N,s}(G_p)]
$$
will converge in probability to zero. By Cauchy-Schwarz:
$$
\sqrt{N}\mathbb{E}[|\eta_p^{N,s}(D_{p,n}^s(\varphi))[\eta_p^{s}(G_p)-\eta_p^{N,s}(G_p)]|] \leq 
$$
$$
\sqrt{N}\mathbb{E}[|\eta_p^{N,s}(D_{p,n}^s(\varphi))|^2]^{1/2}\mathbb{E}[|[\eta_p^{s}(G_p)-\eta_p^{N,s}(G_p)]|^2]^{1/2}.
$$
Applying Proposition \ref{prop:lp_bound_ind} (for MCRPF \cite[Proposition C.6]{mlpf}, for MCPF it is Proposition \ref{prop:lp_bound_mcpf}) it easily follows that there is a finite constant $C<+\infty$ that does not depend upon $N$ such that
$$
\sqrt{N}\mathbb{E}[|\eta_p^{N,s}(D_{p,n}^s(\varphi))[\eta_p^{s}(G_p)-\eta_p^{N,s}(G_p)]|] \leq \frac{C}{\sqrt{N}}.
$$
This bound allows one to easily conclude.
\end{proof}

\begin{lem}\label{lem:ind1}
For $\varphi\in\mathcal{B}_b(\mathsf{X})$, $p\geq 0$, $s\in\{f,c\}$ we have
\begin{eqnarray*}
\mathbb{E}[V_p^{N,s}(\varphi)] & = &  0 \\
\lim_{N\rightarrow+\infty}\mathbb{E}[V_p^{N,s}(\varphi)^2] & = & \eta_p^s((\varphi-\eta_p^s(\varphi))^2).
\end{eqnarray*}
\end{lem}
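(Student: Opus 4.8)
The plan is to exploit the conditional independence of the particles across the label $i$. Write $\mathcal{F}_{p-1}^{N}$ for the $\sigma$-field generated by the particles $(u_0^{1:N},\dots,u_{p-1}^{1:N})$ up to time $p-1$. For each of the three algorithms the particles $u_p^{i}=(x_p^{i,f},x_p^{i,c})$ are, conditionally upon $\mathcal{F}_{p-1}^{N}$, independent and identically distributed, and the marginal law of $x_p^{i,s}$ is $\Phi_p^s(\eta_{p-1}^{N,s})$: this is immediate from the definition of $\check{\Phi}_p^I$ by integrating out the unused coordinate, while for the MCRPF it follows from the marginal-preservation property established in \cite[Proposition A.1.]{mlpf}, and for the MCPF from the fact that $\check{\Phi}_p^C(\eta_{p-1}^{N,f},\eta_{p-1}^{N,c})$ is a coupling of $\Phi_p^f(\eta_{p-1}^{N,f})$ and $\Phi_p^c(\eta_{p-1}^{N,c})$. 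Consequently
$$
V_p^{N,s}(\varphi) = \frac{1}{\sqrt{N}}\sum_{i=1}^N \Big\{\varphi(x_p^{i,s}) - \Phi_p^s(\eta_{p-1}^{N,s})(\varphi)\Big\},
$$
a normalized sum of terms that are, conditional on $\mathcal{F}_{p-1}^{N}$, i.i.d.~and centred. The case $p=0$ is treated directly and separately: there the $x_0^{i,s}$ are i.i.d.~from $\eta_0^s$, being the $s$-marginals of i.i.d.~draws from the coupling $\check{\eta}_0$ (resp.~$\check{\eta}_0^C$), and both claims hold exactly with no limiting argument.

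First I would take conditional expectations. Since $\mathbb{E}[\varphi(x_p^{i,s})\mid\mathcal{F}_{p-1}^{N}] = \Phi_p^s(\eta_{p-1}^{N,s})(\varphi)$, each summand is conditionally mean zero, whence $\mathbb{E}[V_p^{N,s}(\varphi)\mid\mathcal{F}_{p-1}^{N}]=0$ and, by the tower property, $\mathbb{E}[V_p^{N,s}(\varphi)]=0$. For the second moment, conditional independence annihilates the cross terms, so
$$
\mathbb{E}\big[V_p^{N,s}(\varphi)^2\mid\mathcal{F}_{p-1}^{N}\big] = \Phi_p^s(\eta_{p-1}^{N,s})\Big(\big(\varphi-\Phi_p^s(\eta_{p-1}^{N,s})(\varphi)\big)^2\Big),
$$
and taking expectations gives
$$
\mathbb{E}\big[V_p^{N,s}(\varphi)^2\big] = \mathbb{E}\Big[\Phi_p^s(\eta_{p-1}^{N,s})\Big(\big(\varphi-\Phi_p^s(\eta_{p-1}^{N,s})(\varphi)\big)^2\Big)\Big].
$$

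It then remains to pass to the limit in this last expectation. Using the moment bounds of Proposition \ref{prop:lp_bound_ind} (for the MCRPF \cite[Proposition C.6]{mlpf}, for the MCPF Proposition \ref{prop:lp_bound_mcpf}) one obtains the weak law $\eta_{p-1}^{N,s}(\psi)\rightarrow_{\mathbb{P}}\eta_{p-1}^s(\psi)$ for every $\psi\in\mathcal{B}_b(\mathsf{X})$; since $\eta_{p-1}^s(G_{p-1})>0$, the continuous mapping theorem applied to the ratio defining $\Phi_p^s$ yields $\Phi_p^s(\eta_{p-1}^{N,s})(\psi)\rightarrow_{\mathbb{P}}\Phi_p^s(\eta_{p-1}^s)(\psi)=\eta_p^s(\psi)$. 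Applying this with $\psi=\varphi$ and $\psi=\varphi^2$ and combining shows that the integrand, equal to $\Phi_p^s(\eta_{p-1}^{N,s})(\varphi^2)-\Phi_p^s(\eta_{p-1}^{N,s})(\varphi)^2$, converges in probability to $\eta_p^s((\varphi-\eta_p^s(\varphi))^2)$. As $\varphi$ is bounded the integrand is uniformly bounded by $\|\varphi\|^2$, so bounded convergence upgrades convergence in probability to convergence of the expectations, giving the claimed limit. The only delicate point, and the step I expect to require the most care, is precisely this interchange of limit and expectation through the nonlinear ratio normalization in $\Phi_p^s$; it is handled by the boundedness of $\varphi$ together with the cited moment convergence of $\eta_{p-1}^{N,s}$, which is exactly why the self-normalized structure causes no difficulty here.
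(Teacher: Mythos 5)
Your proposal is correct and follows essentially the same route as the paper: the first claim is immediate from the conditional centring, and the second moment reduces, by the conditionally i.i.d.\ structure, to $\mathbb{E}[\Phi_p^s(\eta_{p-1}^{N,s})(\varphi^2)]-\mathbb{E}[\Phi_p^s(\eta_{p-1}^{N,s})(\varphi)^2]$, whose limit is obtained from the $L_p$ bounds (Proposition \ref{prop:lp_bound_ind}, resp.\ the MCRPF/MCPF analogues) together with boundedness to interchange limit and expectation. The only cosmetic difference is that the paper handles the $\varphi^2$ term by comparing $\mathbb{E}[\varphi(X_p^{1,s})^2]$ with the empirical average $\tfrac{1}{N}\sum_i\varphi(X_p^{i,s})^2$ and applying Jensen, whereas you condition first and invoke bounded convergence; these are equivalent.
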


\begin{proof}
$\mathbb{E}[V_p^{N,s}(\varphi)] =  0$ follows immediately from the expression, so we focus on the second property.
\begin{eqnarray*}
\mathbb{E}[V_p^{N,s}(\varphi)^2] & = &  \frac{1}{N}\sum_{i=1}^N \mathbb{E}[(\varphi(X_p^{i,s})-\Phi^s_p(\eta_{p-1}^{N,s})(\varphi))^2] \\
& = & \mathbb{E}[\varphi(X_p^{1,s})^2] - \mathbb{E}[\Phi^s_p(\eta_{p-1}^{N,s})(\varphi)^2]
\end{eqnarray*}
$\Phi^s_p(\eta_{p-1}^{N,s})(\varphi)$ is a bounded random quantity and moreover by Proposition \ref{prop:lp_bound_ind} (MCRPF \cite[Proposition C.6]{mlpf}, MCPF Proposition \ref{prop:lp_bound_mcpf}) it converges in probability
to $\eta_p^s(\varphi)$. Hence by \cite[Theorem 25.12]{bill}, $\lim_{N\rightarrow+\infty}\mathbb{E}[\Phi^s_p(\eta_{p-1}^{N,s})(\varphi)^2] = \eta_p^s(\varphi)^2$.
Hence we consider 
$$
\mathbb{E}[\varphi(X_p^{1,s})^2] = \mathbb{E}[\frac{1}{N}\sum_{i=1}^N \varphi(X_p^{i,s})^2 - \eta_p^s(\varphi^2)] + \eta_p^s(\varphi^2).
$$
By Jensen
$$
\mathbb{E}[\frac{1}{N}\sum_{i=1}^N \varphi(X_p^{i,s})^2 - \eta_p^s(\varphi^2)] \leq \mathbb{E}[|\frac{1}{N}\sum_{i=1}^N \varphi(X_p^{i,s})^2 - \eta_p^s(\varphi^2)|^2]^{1/2}
$$
and hence we conclude via Proposition \ref{prop:lp_bound_ind} (MCRPF \cite[Proposition C.6]{mlpf}, MCPF Proposition \ref{prop:lp_bound_mcpf}) that 
$$
\lim_{N\rightarrow+\infty}\mathbb{E}[\varphi(X_p^{1,s})^2] = \eta_p^s(\varphi^2)
$$
and the result thus follows.
\end{proof}

\begin{lem}\label{lem:ind2}
We have:
\begin{enumerate}
\item{For the IRCPF: For $(\varphi,\psi)\in\mathcal{B}_b(\mathsf{X})^2$, $p\geq 0$  we have
$$
\lim_{N\rightarrow+\infty}\mathbb{E}[V_p^{N,f}(\varphi)V_p^{N,c}(\psi)] = \check{\eta}_p^I(\varphi\otimes \psi) - \eta_p^f(\varphi)\eta_p^c(\psi).
$$
}
\item{For the MCRPF: Assume (A\ref{hyp:1}-\ref{hyp:3}), 
$(\varphi,\psi)\in\mathcal{C}_b(\mathsf{X})^2$, $p\geq 0$ then we have
$$
\lim_{N\rightarrow+\infty}\mathbb{E}[V_p^{N,f}(\varphi)V_p^{N,c}(\psi)] = \check{\eta}_p^M(\varphi\otimes \psi) - \eta_p^f(\varphi)\eta_p^c(\psi).
$$
}
\item{For the MCPF:  Suppose that for $s\in\{f,c\}$, 
$n\geq 1$, $M_n^s\in\mathcal{B}_b(\mathsf{X}\times\mathsf{X})$.
 For $(\varphi,\psi)\in\mathcal{B}_b(\mathsf{X})^2$, $p\geq 0$ we have
$$
\lim_{N\rightarrow+\infty}\mathbb{E}[V_p^{N,f}(\varphi)V_p^{N,c}(\psi)] = \check{\eta}_p^C(\varphi\otimes \psi) - \eta_p^f(\varphi)\eta_p^c(\psi).
$$
}
\end{enumerate}
\end{lem}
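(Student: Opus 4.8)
The plan is to work conditionally on the $\sigma$-algebra $\mathcal{F}_{p-1}^N$ generated by the particle system up to time $p-1$. For each of the three algorithms the $N$ pairs $u_p^i=(X_p^{i,f},X_p^{i,c})$ are, conditionally on $\mathcal{F}_{p-1}^N$, independent and identically distributed from a single mutation--selection kernel: $\check{\Phi}_p^I(\eta_{p-1}^{N,f}\otimes\eta_{p-1}^{N,c})$ for the IRCPF, $\check{\Phi}_p^M(\check{\eta}_{p-1}^{N,M})$ for the MCRPF, and $\check{\Phi}_p^C(\eta_{p-1}^{N,f},\eta_{p-1}^{N,c})$ for the MCPF; write $\check{\Phi}_p^{t}(\cdots)$ for the common notation. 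The marginal property of each kernel (established when the algorithms were introduced, and which persists at the empirical level since the $f$-marginal of $\check{\eta}_{p-1}^{N,M}$ is $\eta_{p-1}^{N,f}$) ensures that the centering terms $\Phi_p^f(\eta_{p-1}^{N,f})(\varphi)$ and $\Phi_p^c(\eta_{p-1}^{N,c})(\psi)$ appearing in $V_p^{N,f}(\varphi)$ and $V_p^{N,c}(\psi)$ are exactly the conditional means of $\varphi(X_p^{i,f})$ and $\psi(X_p^{i,c})$, so that each summand is conditionally centered.

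First I would compute the conditional covariance. Expanding
$$
V_p^{N,f}(\varphi)V_p^{N,c}(\psi)=\frac{1}{N}\sum_{i=1}^N\sum_{j=1}^N\big(\varphi(X_p^{i,f})-\bar\varphi\big)\big(\psi(X_p^{j,c})-\bar\psi\big),
$$
with $\bar\varphi=\Phi_p^f(\eta_{p-1}^{N,f})(\varphi)$ and $\bar\psi=\Phi_p^c(\eta_{p-1}^{N,c})(\psi)$, the off-diagonal terms $i\neq j$ have vanishing conditional expectation by conditional independence across distinct pairs together with conditional centering, while each of the $N$ diagonal terms equals $\check{\Phi}_p^{t}(\cdots)(\varphi\otimes\psi)-\bar\varphi\bar\psi$ by the identical conditional law and the marginal property. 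Hence
$$
\mathbb{E}\big[V_p^{N,f}(\varphi)V_p^{N,c}(\psi)\mid\mathcal{F}_{p-1}^N\big]=\check{\Phi}_p^{t}(\cdots)(\varphi\otimes\psi)-\Phi_p^f(\eta_{p-1}^{N,f})(\varphi)\,\Phi_p^c(\eta_{p-1}^{N,c})(\psi).
$$

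Next I would take the full expectation by the tower property and pass to the limit term by term. For the first term, $\mathbb{E}[\check{\Phi}_p^{t}(\cdots)(\varphi\otimes\psi)]=\mathbb{E}[\check{\eta}_p^{N,t}(\varphi\otimes\psi)]$, which converges to $\check{\eta}_p^{t}(\varphi\otimes\psi)$: for the IRCPF and MCPF this is the product-space weak law furnished by the $L_q$ bounds of Proposition \ref{prop:lp_bound_ind} (resp.\ Proposition \ref{prop:lp_bound_mcpf}), and for the MCRPF it is precisely Theorem \ref{theo:wlln_max} applied to $\varphi\otimes\psi\in\mathcal{C}_b(\mathsf{X}\times\mathsf{X})$---this is why the MCRPF case restricts to $\varphi,\psi\in\mathcal{C}_b(\mathsf{X})$ and imposes (A\ref{hyp:1}--\ref{hyp:3}). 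For the second term, $\Phi_p^s(\eta_{p-1}^{N,s})\rightarrow_{\mathbb{P}}\eta_p^s$ for $s\in\{f,c\}$, since $\eta_{p-1}^{N,s}(G_{p-1}M_p^s(\varphi))$ and $\eta_{p-1}^{N,s}(G_{p-1})$ converge in probability to their $\eta_{p-1}^s$ counterparts (by the same $L_q$ bounds, and for the MCRPF via Theorem \ref{theo:wlln_max} applied to the bounded continuous functions $[G_{p-1}M_p^s(\varphi)]\otimes 1$ and $G_{p-1}\otimes 1$, using (A\ref{hyp:1}--\ref{hyp:2}) and the Feller property), the denominators being bounded away from zero. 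As all these quantities are uniformly bounded (because $\varphi,\psi$ and $G_{p-1}$ are bounded), convergence in probability upgrades to convergence of expectations by bounded convergence, and the product term converges to $\eta_p^f(\varphi)\eta_p^c(\psi)$. The case $p=0$ is immediate and in fact exact, since the pairs are then i.i.d.\ from $\check{\eta}_0=\check{\eta}_0^t$.

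I expect the MCRPF to be the only genuine obstacle: the operator $\check{\Phi}_p^M$ is strongly non-linear, involving the minima $F_{p-1,\mu,f}\wedge F_{p-1,\mu,c}$ and the residual kernel $\bar{M}_p$, so a direct stability or continuity argument for $\check{\Phi}_p^M(\check{\eta}_{p-1}^{N,M})$ would be painful. The device that circumvents this is to never push the limit through $\check{\Phi}_p^M$ directly: the tower property collapses the limit of $\mathbb{E}[\check{\Phi}_p^M(\check{\eta}_{p-1}^{N,M})(\varphi\otimes\psi)]$ into that of $\mathbb{E}[\check{\eta}_p^{N,M}(\varphi\otimes\psi)]$, which Theorem \ref{theo:wlln_max} supplies wholesale. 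The only remaining care is to verify that every test function fed into the $L_q$ bounds or into Theorem \ref{theo:wlln_max} is bounded and, in the MCRPF case, continuous, which is guaranteed by (A\ref{hyp:1}--\ref{hyp:3}) and the Feller property.
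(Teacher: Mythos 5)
Your proposal is correct and follows essentially the same route as the paper: the same identity $\mathbb{E}[V_p^{N,f}(\varphi)V_p^{N,c}(\psi)] = \mathbb{E}[\check{\Phi}_p^{t}(\cdots)(\varphi\otimes\psi)] - \mathbb{E}[\Phi_p^f(\eta_{p-1}^{N,f})(\varphi)\Phi_p^c(\eta_{p-1}^{N,c})(\psi)]$ (the off-diagonal terms vanishing by conditional independence and centering), followed by convergence in probability of each bounded term and the bounded convergence theorem. The one imprecision is a citation: the convergence of the pair term for the IRCPF needs Proposition \ref{prop:prop_ind} (the product-measure WLLN) combined with the structure of $\check{\Phi}_p^I$, and for the MCPF it needs Theorem \ref{theo:wlln_mcpf}, not merely the marginal $L_q$ bounds of Propositions \ref{prop:lp_bound_ind} and \ref{prop:lp_bound_mcpf}; your tower-property detour through $\mathbb{E}[\check{\eta}_p^{N,t}(\varphi\otimes\psi)]$ is otherwise equivalent to the paper's direct argument that $\check{\Phi}_p^{t}(\cdots)(\varphi\otimes\psi)\rightarrow_{\mathbb{P}}\check{\eta}_p^{t}(\varphi\otimes\psi)$.
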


\begin{proof}
We have for all methods:
\begin{eqnarray}
\mathbb{E}[V_p^{N,f}(\varphi)V_p^{N,c}(\psi)] & = & N\Big(\mathbb{E}[\eta_p^{N,f}(\varphi)\eta_p^{N,c}(\psi)]-\mathbb{E}[\eta_p^{N,f}(\varphi)\Phi_p^c(\eta_{p-1}^{N,c})(\psi)]-\nonumber\\ & & 
\mathbb{E}[\Phi_p^f(\eta_{p-1}^{N,f})(\varphi)\eta_p^{N,c}(\psi)] + \mathbb{E}[\Phi_p^f(\eta_{p-1}^{N,f})(\varphi)\Phi_p^c(\eta_{p-1}^{N,c})(\psi)]
\Big) \nonumber\\
& = &  N\Big(\mathbb{E}[\eta_p^{N,f}(\varphi)\eta_p^{N,c}(\psi)] - \mathbb{E}[\Phi_p^f(\eta_{p-1}^{N,f})(\varphi)\Phi_p^c(\eta_{p-1}^{N,c})(\psi)]
\Big).\label{eq:max_ind_1}
\end{eqnarray}
We consider calculations for (i)-(iii) associated to \eqref{eq:max_ind_1}.

\textbf{Proof for (i):} Now
\begin{eqnarray*}
N\mathbb{E}[\eta_p^{N,f}(\varphi)\eta_p^{N,c}(\psi)] & = & \frac{1}{N}\sum_{i=1}^N\sum_{j=1}^N\mathbb{E}[\varphi(X_p^{i,f})\psi(X_p^{i,c})] \\
& = & \mathbb{E}[\check{\Phi}_p^I(\eta_{p-1}^{N,f}\otimes \eta_{p-1}^{N,c})(\varphi\otimes\psi)] + (N-1)\mathbb{E}[\Phi_p^f(\eta_{p-1}^{N,f})(\varphi)\Phi_p^c(\eta_{p-1}^{N,c})(\psi)].
\end{eqnarray*}
Thus,
$$
\mathbb{E}[V_p^{N,f}(\varphi)V_p^{N,c}(\psi)] = \mathbb{E}[\check{\Phi}_p^I(\eta_{p-1}^{N,f}\otimes \eta_{p-1}^{N,c})(\varphi\otimes\psi)] - \mathbb{E}[\Phi_p^f(\eta_{p-1}^{N,f})(\varphi)\Phi_p^c(\eta_{p-1}^{N,c})(\psi)].
$$
$\check{\Phi}_p^I(\eta_{p-1}^{N,f}\otimes \eta_{p-1}^{N,c})(\varphi\otimes\psi)$ is a bounded random quantity and by Propositions \ref{prop:lp_bound_ind} and \ref{prop:prop_ind} it converges in probability to
$\check{\eta}_p^I(\varphi\otimes \psi)$ hence  by \cite[Theorem 25.12]{bill}, 
$$
\lim_{N\rightarrow+\infty}\mathbb{E}[\check{\Phi}_p^I(\eta_{p-1}^{N,f}\otimes \eta_{p-1}^{N,c})(\varphi\otimes\psi)] = \check{\eta}_p^I(\varphi\otimes \psi).
$$
Similarly via Proposition \ref{prop:lp_bound_ind} and \cite[Theorem 25.12]{bill}
$$
\lim_{N\rightarrow+\infty}\mathbb{E}[\Phi_p^f(\eta_{p-1}^{N,f})(\varphi)\Phi_p^c(\eta_{p-1}^{N,c})(\psi)] = \eta_p^f(\varphi)\eta_p^c(\psi)
$$
and hence we can conclude the proof of (i).

\textbf{Proof for (ii):}  Using similar calculations as for (i) we have
$$
\mathbb{E}[V_p^{N,f}(\varphi)V_p^{N,c}(\psi)] = \mathbb{E}[\check{\Phi}_p^M(\check{\eta}_{p-1}^{N,M})(\varphi\otimes\psi)] - \mathbb{E}[\Phi_p^f(\eta_{p-1}^{N,f})(\varphi)\Phi_p^c(\eta_{p-1}^{N,c})(\psi)].
$$
The proof of Theorem \ref{theo:wlln_max} establishes that
$$
\check{\Phi}_p^M(\check{\eta}_{p-1}^{N,M})(\varphi\otimes\psi) \rightarrow_{\mathbb{P}} \check{\eta}_p^M(\varphi\otimes \psi).
$$
The proof can then be completed in much the same way as for (i).

\textbf{Proof for (iii):}  Using similar calculations as for (i) we have
$$
\mathbb{E}[V_p^{N,f}(\varphi)V_p^{N,c}(\psi)] = \mathbb{E}[\check{\Phi}_p^C(\eta_{p-1}^{N,f},\eta_{p-1}^{N,c})(\varphi\otimes\psi)] - \mathbb{E}[\Phi_p^f(\eta_{p-1}^{N,f})(\varphi)\Phi_p^c(\eta_{p-1}^{N,c})(\psi)].
$$
The proof of Theorem \ref{theo:wlln_mcpf} establishes that
$$
\check{\Phi}_p^C(\eta_{p-1}^{N,f},\eta_{p-1}^{N,c})(\varphi\otimes\psi) \rightarrow_{\mathbb{P}} \check{\eta}_p^C(\varphi\otimes \psi).
$$
The proof can then be completed in much the same way as for (i).
\end{proof}

Define for $p\geq 0$, $\varphi,\psi\in\mathcal{B}_b(\mathsf{X})$.
$$
V_p^N(\varphi,\psi) = V_p^{N,f}(\varphi) - V_p^{N,c}(\psi).
$$

\begin{prop}\label{prop:field_ind}
We have:
\begin{enumerate}
\item{For the IRCPF:
Let $n\geq 0$, then for any $(\varphi_0,\dots,\varphi_n) \in\mathcal{B}_b(\mathsf{X})^{n+1}$, $(\psi_0,\dots,\psi_n) \in\mathcal{B}_b(\mathsf{X})^{n+1}$, 
 $(V_0^N(\varphi_0,\psi_0),\dots,$ $V_n^N(\varphi_n,\psi_n))$ converges in distribution to a $(n+1)-$dimensional Gaussian random variable with zero mean and
diagonal covariance matrix, with the $p\in\{0,\dots,n\}$ diagonal entry
$$
\check{\eta}_p^I(\{(\varphi_p\otimes 1- 1\otimes \psi_p) - \check{\eta}_p^I(\varphi_p\otimes 1- 1\otimes \psi_p)\}^2).
$$
}
\item{For the MCRPF: Assume (A\ref{hyp:1}-\ref{hyp:3}), then for
$n\geq 0$ and any $(\varphi_0,\dots,\varphi_n) \in\mathcal{C}_b(\mathsf{X})^{n+1}$, $(\psi_0,\dots,\psi_n) \in\mathcal{C}_b(\mathsf{X})^{n+1}$, 
 $(V_0^N(\varphi_0,\psi_0),\dots,V_n^N(\varphi_n,\psi_n))$ converges in distribution to a $(n+1)-$dimensional Gaussian random variable with zero mean and
diagonal covariance matrix, with the $p\in\{0,\dots,n\}$ diagonal entry
$$
\check{\eta}_p^M(\{(\varphi_p\otimes 1- 1\otimes \psi_p) - \check{\eta}_p^M(\varphi_p\otimes 1- 1\otimes \psi_p)\}^2).
$$
}
\item{For the MCPF: Suppose that for $s\in\{f,c\}$, 
$n\geq 1$, $M_n^s\in\mathcal{B}_b(\mathsf{X}\times\mathsf{X})$, then for
$n\geq 0$ and any $(\varphi_0,\dots,\varphi_n) \in\mathcal{B}_b(\mathsf{X})^{n+1}$, $(\psi_0,\dots,\psi_n) \in\mathcal{B}_b(\mathsf{X})^{n+1}$, 
 $(V_0^N(\varphi_0,\psi_0),\dots,V_n^N(\varphi_n,\psi_n))$ converges in distribution to a $(n+1)-$ dimensional Gaussian random variable with zero mean and
diagonal covariance matrix, with the $p\in\{0,\dots,n\}$ diagonal entry
$$
\check{\eta}_p^C(\{(\varphi_p\otimes 1- 1\otimes \psi_p) - \check{\eta}_p^C(\varphi_p\otimes 1- 1\otimes \psi_p)\}^2).
$$
}

\end{enumerate}
\end{prop}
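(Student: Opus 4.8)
The plan is to establish the joint convergence through the Cram\'er--Wold device together with a single martingale central limit theorem, exploiting the structural fact, valid for all three algorithms, that conditional on the particles up to time $p-1$ the pairs $u_p^1,\dots,u_p^N$ are i.i.d.\ from the relevant selection--mutation operator applied to the current empirical measures: $\check\Phi_p^I(\eta_{p-1}^{N,f}\otimes\eta_{p-1}^{N,c})$ for the IRCPF, $\check\Phi_p^M(\check\eta_{p-1}^{N,M})$ for the MCRPF, and $\check\Phi_p^C(\eta_{p-1}^{N,f},\eta_{p-1}^{N,c})$ for the MCPF. Writing $\check\Phi_p$ for the operator relevant to the algorithm at hand (its empirical-measure argument suppressed), $h_p:=\varphi_p\otimes 1-1\otimes\psi_p$, and $\mathcal{F}_{p-1}^N$ for the $\sigma$-field generated by all particles up to time $p-1$, the marginal property of each operator gives $\mathbb{E}[h_p(u_p^i)\mid\mathcal{F}_{p-1}^N]=\check\Phi_p(h_p)=\Phi_p^f(\eta_{p-1}^{N,f})(\varphi_p)-\Phi_p^c(\eta_{p-1}^{N,c})(\psi_p)$.

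Fix $(a_0,\dots,a_n)\in\mathbb{R}^{n+1}$; by Cram\'er--Wold it suffices to prove $\sum_{p=0}^n a_p V_p^N(\varphi_p,\psi_p)\Rightarrow\mathcal{N}(0,\sum_{p=0}^n a_p^2\sigma_p^2)$, where $\sigma_p^2$ is the $p$th stated diagonal entry. Setting
$$
\xi_p^i=\{\varphi_p(X_p^{i,f})-\Phi_p^f(\eta_{p-1}^{N,f})(\varphi_p)\}-\{\psi_p(X_p^{i,c})-\Phi_p^c(\eta_{p-1}^{N,c})(\psi_p)\}=h_p(u_p^i)-\check\Phi_p(h_p),
$$
one has $\sum_{p=0}^n a_p V_p^N(\varphi_p,\psi_p)=\sum_{p=0}^n\sum_{i=1}^N (a_p/\sqrt N)\xi_p^i$. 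Ordering the indices $(p,i)$ lexicographically and letting $\mathcal{G}_{p,i}^N$ be generated by $\mathcal{F}_{p-1}^N$ together with $u_p^1,\dots,u_p^i$, the conditional i.i.d.\ structure gives $\mathbb{E}[\xi_p^i\mid\mathcal{G}_{p,i-1}^N]=0$, so $\{(a_p/\sqrt N)\xi_p^i\}$ is a martingale difference array.

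I would then verify the two hypotheses of a martingale CLT. As the summands at a fixed $p$ are conditionally i.i.d., the predictable quadratic variation collapses to
$$
\sum_{p=0}^n\sum_{i=1}^N\mathbb{E}\Big[\big(a_p\xi_p^i/\sqrt N\big)^2\,\Big|\,\mathcal{G}_{p,i-1}^N\Big]=\sum_{p=0}^n a_p^2\Big(\check\Phi_p(h_p^2)-[\check\Phi_p(h_p)]^2\Big).
$$
Expanding $h_p^2=\varphi_p^2\otimes1-2\,\varphi_p\otimes\psi_p+1\otimes\psi_p^2$ as a finite sum of tensor products, the weak law of large numbers applies term by term: Theorem \ref{theo:wlln_max} (via the convergence $\check\Phi_p^M(\check\eta_{p-1}^{N,M})(\varphi\otimes\psi)\rightarrow_{\mathbb{P}}\check\eta_p^M(\varphi\otimes\psi)$ established in its proof) for the MCRPF, where $\varphi_p,\psi_p\in\mathcal{C}_b(\mathsf{X})$ ensures every tensor factor lies in $\mathcal{C}_b(\mathsf{X})$; Theorem \ref{theo:wlln_mcpf} for the MCPF; and Propositions \ref{prop:lp_bound_ind} and \ref{prop:prop_ind} for the IRCPF. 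With the continuous mapping theorem for the squared term, each summand converges in probability to $a_p^2(\check\eta_p(h_p^2)-[\check\eta_p(h_p)]^2)=a_p^2\sigma_p^2$, with $\check\eta_p$ the relevant limiting coupling. The conditional Lindeberg condition is immediate from boundedness of $\varphi_p,\psi_p$, since $|a_p\xi_p^i/\sqrt N|\le C/\sqrt N$ forces the truncation indicators to vanish for $N$ large. The martingale CLT then delivers the scalar limit and Cram\'er--Wold the joint Gaussian; the diagonal covariance is automatic, since the quadratic variation contains no cross terms $a_pa_q$ with $p\ne q$.

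The main obstacle is the convergence of the predictable quadratic variation, which is the only place where the three algorithms genuinely differ and where one must push the (squared, centred) test function through the nonlinear operator $\check\Phi_p$. For the MCRPF this is the delicate step: $\check\Phi_p^M$ is highly nonlinear and the required convergence rests on the Feller and local-compactness assumptions (A\ref{hyp:1}--\ref{hyp:3}) through Theorem \ref{theo:wlln_max}, which in turn forces the restriction to $\varphi_p,\psi_p\in\mathcal{C}_b(\mathsf{X})$ so that $h_p^2$ is an admissible integrand. For the IRCPF and MCPF the corresponding convergence follows more directly from the $L_r$-bounds and the boundedness of the test functions, so no continuity is needed there.
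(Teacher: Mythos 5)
Your proof is correct and is essentially the paper's own route: the paper proves this proposition by invoking the martingale array CLT of Del Moral (2004), Theorem 9.3.1 and Corollary 9.3.1, combined with Lemmata \ref{lem:ind1}--\ref{lem:ind2}, which is exactly the Cram\'er--Wold plus conditional-i.i.d.\ martingale-difference argument you lay out, with the predictable quadratic variation controlled by the same WLLN results (Propositions \ref{prop:lp_bound_ind} and \ref{prop:prop_ind}, Theorem \ref{theo:wlln_max}, Theorem \ref{theo:wlln_mcpf}). The only cosmetic difference is that the paper packages the limiting variances through the unconditional second-moment computations of Lemmata \ref{lem:ind1}--\ref{lem:ind2} rather than writing out the quadratic variation directly.
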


\begin{proof}
This follows by using almost the same exposition and proofs as \cite{delm:04}, pp.~293-294, Theorem 9.3.1 and Corollary 9.3.1 and the results (of this paper) Lemmata \ref{lem:ind1}-\ref{lem:ind2}. The proof is thus omitted.
\end{proof}

\section{Technical Results for the IRCPF}

\begin{prop}\label{prop:lp_bound_ind}
For any $n\geq 0$, $s\in\{f,c\}$, $p\geq 1$ there exists a $C<+\infty$
such that for any $\varphi\in\mathcal{B}_b(\mathsf{X})$, $N\geq 1$ we have
$$
\mathbb{E}[|[\eta_n^{N,s}-\eta_n^{s}](\varphi)|^{p}]^{1/p} \leq \frac{C\|\varphi\|}{\sqrt{N}}.
$$
\end{prop}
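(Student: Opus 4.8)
The plan is to prove this by induction on $n$, exploiting the fact that, for the IRCPF, the $s$-marginal particle system is itself an ordinary particle filter targeting $\eta_n^s$. Indeed, taking $B=B'\times\mathsf{X}$ in $\check{\Phi}_p^I(\eta_{p-1}^{N,f}\otimes\eta_{p-1}^{N,c})$ and using $\check{M}_p(B'\times\mathsf{X})(x,x')=M_p^f(B')(x)$, the numerator factorizes and the $\eta_{p-1}^{N,c}(G_{p-1})$ terms cancel, so the $f$-marginal collapses to $\Phi_p^f(\eta_{p-1}^{N,f})$ (and symmetrically for $c$). Since the pairs $u_p^i$ are conditionally independent across $i$ under the particle law, it follows that, conditional on $\mathcal{F}_{p-1}^N:=\sigma(u_0^{1:N},\dots,u_{p-1}^{1:N})$, the states $x_p^{1,s},\dots,x_p^{N,s}$ are i.i.d.\ with common law $\Phi_p^s(\eta_{p-1}^{N,s})$. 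The base case $n=0$ is then immediate: $[\eta_0^{N,s}-\eta_0^s](\varphi)$ is a normalized sum of $N$ centred i.i.d.\ variables bounded by $\|\varphi\|$, so the Marcinkiewicz--Zygmund inequality gives the claim with a constant depending only on $p$.

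For the inductive step, assume the bound holds at time $n-1$ for every test function, and use the decomposition
$$
[\eta_n^{N,s}-\eta_n^s](\varphi) = \underbrace{[\eta_n^{N,s}-\Phi_n^s(\eta_{n-1}^{N,s})](\varphi)}_{=:T_1} + \underbrace{[\Phi_n^s(\eta_{n-1}^{N,s})-\Phi_n^s(\eta_{n-1}^s)](\varphi)}_{=:T_2},
$$
bounding each $\mathbb{L}_p$ norm separately and concluding by Minkowski's inequality. For $T_1$, conditionally on $\mathcal{F}_{n-1}^N$ this is a normalized sum of i.i.d.\ centred variables bounded by $2\|\varphi\|$, so the conditional Marcinkiewicz--Zygmund inequality yields $\mathbb{E}[|T_1|^p\mid\mathcal{F}_{n-1}^N]^{1/p}\leq C_p\|\varphi\|/\sqrt{N}$ with $C_p$ deterministic, and taking expectations preserves this.

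For $T_2$ I would use the standard first-order identity for the normalized selection--mutation operator,
$$
[\Phi_n^s(\mu)-\Phi_n^s(\nu)](\varphi) = \frac{[\mu-\nu]\big(G_{n-1}(M_n^s(\varphi)-\Phi_n^s(\nu)(\varphi))\big)}{\mu(G_{n-1})},
$$
which holds since $\nu(G_{n-1}(M_n^s(\varphi)-\Phi_n^s(\nu)(\varphi)))=0$. With $\mu=\eta_{n-1}^{N,s}$, $\nu=\eta_{n-1}^s$, the numerator is $[\eta_{n-1}^{N,s}-\eta_{n-1}^s](h)$ for the fixed function $h:=G_{n-1}(M_n^s(\varphi)-\Phi_n^s(\eta_{n-1}^s)(\varphi))$ with $\|h\|\leq 2\|G_{n-1}\|\,\|\varphi\|$, so the induction hypothesis controls it by $C\|\varphi\|/\sqrt{N}$. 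The main obstacle is the random normalizer $\eta_{n-1}^{N,s}(G_{n-1})$ in the denominator, which need not be bounded below. I would split on $A_N=\{\eta_{n-1}^{N,s}(G_{n-1})\geq\tfrac12\eta_{n-1}^s(G_{n-1})\}$: on $A_N$ the denominator exceeds the strictly positive deterministic constant $\tfrac12\eta_{n-1}^s(G_{n-1})>0$, so $T_2$ reduces directly to the induction hypothesis, whereas on $A_N^c$ one has $|[\eta_{n-1}^{N,s}-\eta_{n-1}^s](G_{n-1})|\geq\tfrac12\eta_{n-1}^s(G_{n-1})$, an event of probability $\mathcal{O}(N^{-p})$ for every $p$ by Markov's inequality and the induction hypothesis. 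Controlling the contribution on $A_N^c$ is the one genuinely delicate point, as it requires taming negative moments of the normalizer; this is immediate whenever $G_{n-1}$ is uniformly bounded below (as holds in the applications through (H\ref{hypav:5})), and in general is handled by the standard reduction to the unnormalized measures as in \cite[Chapter 7]{delm:04}. Combining the bounds on $T_1$ and $T_2$ closes the induction.
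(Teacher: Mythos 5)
Your argument is the standard induction the paper has in mind: its own proof is omitted with a pointer to the strategy of \cite[Proposition C.6]{mlpf}, which is precisely your decomposition into a conditional Marcinkiewicz--Zygmund term $T_1$ and a propagated term $T_2$ handled via the first-order identity and the induction hypothesis. The one point you flag as delicate closes trivially: $T_2$ is a difference of expectations of $\varphi$ under two probability measures, so $|T_2|\leq 2\|\varphi\|$ deterministically, and on $A_N^c$ this crude bound combined with $\mathbb{P}(A_N^c)\leq C N^{-p/2}$ (Markov applied to the $p$-th moment of $[\eta_{n-1}^{N,s}-\eta_{n-1}^s](G_{n-1})$ from the induction hypothesis, using $\eta_{n-1}^s(G_{n-1})>0$) already yields the $\mathcal{O}(N^{-1/2})$ rate without any control of negative moments of the normalizer.
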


\begin{proof}
This can be proved easily by induction. For instance using the strategy in \cite[Proposition C.6]{mlpf} and is hence omitted.
\end{proof}


\begin{prop}\label{prop:prop_ind}
For any $\varphi\in\mathcal{B}_b(\mathsf{X}\times\mathsf{X})$, $n\geq 0$ we have
$$
(\eta_{n}^{N,f}\otimes\eta_{n}^{N,c})(\varphi) \rightarrow_{\mathbb{P}} (\eta_{n}^{f}\otimes\eta_{n}^{c})(\varphi).
$$
\end{prop}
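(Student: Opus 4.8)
The plan is to prove the statement by induction on $n$, exploiting the fact that, in the IRCPF, the $N$ particle pairs produced at any time are conditionally i.i.d.\ given the previous generation. Write $\mathcal{F}_{n-1}^N$ for the $\sigma$-algebra generated by the system up to time $n-1$; by construction, conditional on $\mathcal{F}_{n-1}^N$ the pairs $(X_n^{i,f},X_n^{i,c})$, $i=1,\dots,N$, are i.i.d.\ from $\check{\Phi}_n^I(\eta_{n-1}^{N,f}\otimes\eta_{n-1}^{N,c})$, whose $f$- and $c$-marginals are $\Phi_n^f(\eta_{n-1}^{N,f})$ and $\Phi_n^c(\eta_{n-1}^{N,c})$. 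First I would split the empirical product measure into its diagonal and off-diagonal parts,
$$
(\eta_n^{N,f}\otimes\eta_n^{N,c})(\varphi) = \frac{1}{N^2}\sum_{i=1}^N\varphi(X_n^{i,f},X_n^{i,c}) + \frac{1}{N^2}\sum_{i\neq j}\varphi(X_n^{i,f},X_n^{j,c}),
$$
and note that the diagonal term is at most $\|\varphi\|/N$ in absolute value, hence negligible; everything rests on the off-diagonal double sum.

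For the off-diagonal sum I would condition on $\mathcal{F}_{n-1}^N$. Since distinct pairs are conditionally independent, for $i\neq j$ the coordinates $X_n^{i,f}$ and $X_n^{j,c}$ are conditionally independent with the two marginals above, so
$$
\mathbb{E}\big[\varphi(X_n^{i,f},X_n^{j,c})\mid\mathcal{F}_{n-1}^N\big] = \big(\Phi_n^f(\eta_{n-1}^{N,f})\otimes\Phi_n^c(\eta_{n-1}^{N,c})\big)(\varphi),
$$
and the conditional mean of the whole off-diagonal sum equals $\tfrac{N-1}{N}$ times this quantity. Rewriting it as
$$
\frac{(\eta_{n-1}^{N,f}\otimes\eta_{n-1}^{N,c})\big([G_{n-1}\otimes G_{n-1}]\,(M_n^f\otimes M_n^c)(\varphi)\big)}{(\eta_{n-1}^{N,f}\otimes\eta_{n-1}^{N,c})(G_{n-1}\otimes G_{n-1})},
$$
where $(M_n^f\otimes M_n^c)(\varphi)(x,y)=\int\int\varphi(u,v)M_n^f(x,du)M_n^c(y,dv)$, I observe that both numerator and denominator are $(\eta_{n-1}^{N,f}\otimes\eta_{n-1}^{N,c})$ applied to bounded measurable functions on $\mathsf{X}\times\mathsf{X}$. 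The induction hypothesis then gives convergence in probability of each, and, since the limiting denominator $\eta_{n-1}^f(G_{n-1})\eta_{n-1}^c(G_{n-1})$ is strictly positive, the continuous mapping theorem yields convergence of the conditional mean to $(\Phi_n^f(\eta_{n-1}^f)\otimes\Phi_n^c(\eta_{n-1}^c))(\varphi)=(\eta_n^f\otimes\eta_n^c)(\varphi)$. The base case $n=0$ is this same argument with the pairs i.i.d.\ from $\check{\eta}_0$, requiring no conditioning.

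The hard part will be showing that the off-diagonal sum concentrates on its conditional mean, i.e.\ that its conditional variance vanishes. Conditional on $\mathcal{F}_{n-1}^N$ this sum is a two-sample $V$-statistic in $N$ conditionally i.i.d.\ pairs, and I would expand the second moment and observe that $\mathrm{Cov}(\varphi(X_n^{i,f},X_n^{j,c}),\varphi(X_n^{k,f},X_n^{l,c})\mid\mathcal{F}_{n-1}^N)$ vanishes unless the index sets $\{i,j\}$ and $\{k,l\}$ overlap. Since only $O(N^3)$ of the $N^4$ index quadruples overlap and each covariance is bounded by a multiple of $\|\varphi\|^2$, the conditional variance is $O(\|\varphi\|^2/N)$, uniformly in the random empirical inputs. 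Conditional Chebyshev then sends the fluctuation to zero in probability, and combining this with the convergence of the conditional mean (and the vanishing diagonal) closes the induction. The point to keep in mind is that this variance bound uses only boundedness of $\varphi$ and the conditional i.i.d.\ structure, so no continuity or tensor-product form of $\varphi$ is needed, which is exactly what is required to obtain the result for the full class $\mathcal{B}_b(\mathsf{X}\times\mathsf{X})$ rather than only for products $\varphi_1\otimes\varphi_2$, where Proposition \ref{prop:lp_bound_ind} alone would suffice.
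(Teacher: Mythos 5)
Your proposal is correct and follows the same overall skeleton as the paper's proof: induction on $n$, conditioning on $\mathcal{F}_{n-1}^N$, splitting into a fluctuation around the conditional mean plus the convergence of that conditional mean, with the latter handled identically (the conditional expectation of the off-diagonal sum is $\tfrac{N-1}{N}(\Phi_n^f(\eta_{n-1}^{N,f})\otimes\Phi_n^c(\eta_{n-1}^{N,c}))(\varphi)$, rewritten as a ratio of empirical product-measure integrals of bounded measurable functions so that the induction hypothesis and positivity of $\eta_{n-1}^f(G_{n-1})\eta_{n-1}^c(G_{n-1})$ apply). The one genuine difference is the concentration step. The paper keeps the diagonal terms inside the statistic, computes the exact conditional mean (which picks up a $\tfrac{1}{N}\check{\Phi}_n^I(\cdot)(\varphi)$ contribution, disposed of via boundedness and Proposition \ref{prop:lp_bound_ind}), and controls the fluctuation with McDiarmid's bounded difference inequality, exploiting that perturbing one conditionally i.i.d.\ pair $u_n^k$ changes the $V$-statistic by at most $4\|\varphi\|/N$; this yields an exponential tail $2\exp\{-\epsilon^2N/(16\|\varphi\|^2)\}$. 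You instead strip off the $O(1/N)$ diagonal first and bound the conditional variance of the off-diagonal sum by counting overlapping index quadruples, getting $O(\|\varphi\|^2/N)$ uniformly in $\mathcal{F}_{n-1}^N$ and concluding by conditional Chebyshev. Both arguments use only boundedness of $\varphi$ and the conditional i.i.d.\ structure of the pairs, so both deliver the result for all of $\mathcal{B}_b(\mathsf{X}\times\mathsf{X})$; the paper's route gives a stronger (exponential) concentration rate than is needed, while yours is more elementary and makes the negligibility of the diagonal explicit rather than folding it into the conditional mean. Either is acceptable.
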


\begin{proof}
Our proof is by induction, consider the case $n=0$ and let $\epsilon > 0$ be arbitrary, then
$$
\mathbb{P}(|(\eta_{n}^{N,f}\otimes\eta_{n}^{N,c})-(\eta_{n}^{f}\otimes\eta_{n}^{c})](\varphi)|>\epsilon) \leq 
$$
$$
\mathbb{P}(|[(\eta_{n}^{N,f}\otimes\eta_{n}^{N,c})(\varphi) - \mathbb{E}[(\eta_{n}^{N,f}\otimes\eta_{n}^{N,c})(\varphi)]|>\epsilon/2) + 
\mathbb{P}(|\mathbb{E}[(\eta_{n}^{N,f}\otimes\eta_{n}^{N,c})(\varphi)] - (\eta_{n}^{f}\otimes\eta_{n}^{c})(\varphi)|>\epsilon/2) \leq 
$$
$$
\mathbb{P}(|[(\eta_{n}^{N,f}\otimes\eta_{n}^{N,c})(\varphi) - \mathbb{E}[(\eta_{n}^{N,f}\otimes\eta_{n}^{N,c})(\varphi)]|>\epsilon/2)  
+ \mathcal{O}(N^{-1})
$$
where the last line follows as 
$$
\mathbb{E}[(\eta_{n}^{N,f}\otimes\eta_{n}^{N,c})(\varphi)] = \frac{N-1}{N}(\eta_{n}^{f}\otimes\eta_{n}^{c})(\varphi) + \frac{1}{N}\check{\eta}_n^I(\varphi).
$$
To deal with the term $\mathbb{P}(|[(\eta_{n}^{N,f}\otimes\eta_{n}^{N,c})(\varphi) - \mathbb{E}[(\eta_{n}^{N,f}\otimes\eta_{n}^{N,c})(\varphi)]|>\epsilon/2) $
we will apply the bounded difference inequality (see \cite{bound_diff}). To this end, first note that $(u_0^1,\dots,u_0^N)$ are i.i.d.~and setting 
$f(u_0^1,\dots,u_0^N) = (\eta_{n}^{N,f}\otimes\eta_{n}^{N,c})(\varphi)$ we note that for any $1\leq k \leq N$ (the notational convention is clear if $k=1$ or $k=N$)
and any $(u_0^1,\dots,u_0^N)\in(\mathsf{X}\times\mathsf{X})^{n+1}$, $\tilde{u}_0^k\in\mathsf{X}\times\mathsf{X}$
$$
|f(u_0^1,\dots,u_0^N)-f(u_0^1,\dots,u_0^{k-1},\tilde{u}_0^k,u_0^{k+1},\dots,u_0^N)| = \frac{1}{N^2}|[\varphi(u_0^k)-\varphi(\tilde{u}_0^k)]
+
$$
$$
\sum_{i\in\{1,\dots,N\}\setminus\{k\}}[\varphi(x_{0}^{i,f},x_{0}^{k,c})-\varphi(x_{0}^{i,f},\tilde{x}_{0}^{k,c})] +
\sum_{i\in\{1,\dots,N\}\setminus\{k\}}[\varphi(x_{0}^{k,f},x_{0}^{i,c})-\varphi(\tilde{x}_{0}^{k,f},x_{0}^{i,c})]
| \leq 
$$
\begin{equation}\label{eq:prf1}
\frac{4\|\varphi\|}{N}.
\end{equation}
Thus, by the bounded difference inequality:
$$
\mathbb{P}(|[(\eta_{n}^{N,f}\otimes\eta_{n}^{N,c})(\varphi) - \mathbb{E}[(\eta_{n}^{N,f}\otimes\eta_{n}^{N,c})(\varphi)]|>\epsilon/2)  \leq 2\exp\Big\{-\frac{\epsilon^2 N}{16\|\varphi\|^2}\Big\}.
$$
Hence we can easily conclude the result when $n=0$ as $\epsilon>0$ was arbitrary.

Now we assume the result for $n-1$ and consider $n$, we have
$$
\mathbb{P}(|[(\eta_{n}^{N,f}\otimes\eta_{n}^{N,c})-(\eta_{n}^{f}\otimes\eta_{n}^{c})](\varphi)|>\epsilon) \leq 
\mathbb{P}(|[(\eta_{n}^{N,f}\otimes\eta_{n}^{N,c})(\varphi) - \mathbb{E}[(\eta_{n}^{N,f}\otimes\eta_{n}^{N,c})(\varphi)|\mathcal{F}_{n-1}^N]|>\epsilon/2) +
$$
$$
\mathbb{P}(|\mathbb{E}[(\eta_{n}^{N,f}\otimes\eta_{n}^{N,c})(\varphi)|\mathcal{F}_{n-1}^N] - (\eta_{n}^{f}\otimes\eta_{n}^{c})(\varphi)|>\epsilon/2) 
$$
where $\mathcal{F}_{n-1}^N$ is the $\sigma-$algebra generated by the particle system up-to time $n-1$. For the term
$\mathbb{P}(|[(\eta_{n}^{N,f}\otimes\eta_{n}^{N,c})(\varphi) - \mathbb{E}[(\eta_{n}^{N,f}\otimes\eta_{n}^{N,c})(\varphi)|\mathcal{F}_{n-1}^N]|>\epsilon/2)$, on conditioning upon $\mathcal{F}_{n-1}^N$
one can apply the above bounded difference inequality, as $(u_n^1,\dots,u_n^N)$ are (conditionally) i.i.d.~and that the bound in \eqref{eq:prf1} can also be obtained for any $n$. Hence one has
$$
\mathbb{P}(|[(\eta_{n}^{N,f}\otimes\eta_{n}^{N,c})(\varphi) - \mathbb{E}[(\eta_{n}^{N,f}\otimes\eta_{n}^{N,c})(\varphi)|\mathcal{F}_{n-1}^N]|>\epsilon/2|\mathcal{F}_{n-1}) \leq 2\exp\Big\{-\frac{\epsilon^2 N}{16\|\varphi\|^2}\Big\}
$$
and thus one can conclude the result  if 
$$
\mathbb{E}[(\eta_{n}^{N,f}\otimes\eta_{n}^{N,c})(\varphi)|\mathcal{F}_{n-1}^N] \rightarrow_{\mathbb{P}} (\eta_{n}^{f}\otimes\eta_{n}^{c})(\varphi).
$$
Now
\begin{eqnarray*}
\mathbb{E}[(\eta_{n}^{N,f}\otimes\eta_{n}^{N,c})(\varphi)|\mathcal{F}_{n-1}^N] & = & \frac{1}{N^2}\Big(
\sum_{i=1}^N \mathbb{E}[\varphi(U_n^i)|\mathcal{F}_{n-1}^N] + 
\sum_{i\neq j}\mathbb{E}[\varphi(X_n^{i,f},X_n^{i,c})|\mathcal{F}_{n-1}^N]
\Big) \\
& = & \frac{1}{N}\check{\Phi}_n^I(\eta_{n-1}^{N,f}\otimes\eta_{n-1}^{N,c})(\varphi) + \frac{(N-1)}{N}(\Phi_n^f(\eta_{n-1}^{N,f})\otimes\Phi_n^f(\eta_{n-1}^{N,f}))(\varphi).
\end{eqnarray*}
Now by the induction hypothesis and Proposition \ref{prop:lp_bound_ind} $\check{\Phi}_n^I(\eta_{n-1}^{N,f}\otimes\eta_{n-1}^{N,c})(\varphi)$ converges in probability to
$$
\frac{(\eta_{n-1}^{f}\otimes\eta_{n-1}^{c})((G_{n-1}\otimes G_{n-1})\check{M}_n(\varphi))}{\eta_{n-1}^{f}(G_{n-1})\eta_{n-1}^{c}(G_{n-1})}
$$
hence the term $\frac{1}{N}\check{\Phi}_n^I(\eta_{n-1}^{N,f}\otimes\eta_{n-1}^{N,c})(\varphi)$ goes to zero. Then, again by the induction hypothesis and Proposition \ref{prop:lp_bound_ind}
$(\Phi_n^f(\eta_{n-1}^{N,f})\otimes\Phi_n^f(\eta_{n-1}^{N,f}))(\varphi)$ converges in probability to 
$$
\frac{(\eta_{n-1}^{f}\otimes\eta_{n-1}^{c})((G_{n-1}\otimes G_{n-1})(M_n^f\otimes M_n^c)(\varphi))}{\eta_{n-1}^{f}(G_{n-1})\eta_{n-1}^{c}(G_{n-1})} = (\eta_{n}^{f}\otimes\eta_{n}^{c})(\varphi)
$$
and hence we conclude the result.
\end{proof}

\section{Technical Results for the MCRPF}\label{app:max_wlln}

\begin{proof}[Proof of Theorem \ref{theo:wlln_max}]
The proof is by induction. The case $n=0$ follows by the weak law of large numbers for i.i.d.~random variables, so we assume the result at time $n-1$. We have
$$
\check{\eta}_{n}^{N,M}(\varphi) - \check{\eta}_{n}^{M}(\varphi) = 
\check{\eta}_{n}^{N,M}(\varphi) - \check{\Phi}_n^M(\check{\eta}_{n-1}^{N,M})(\varphi) + 
\check{\Phi}_n^M(\check{\eta}_{n-1}^{N,M})(\varphi) - \check{\Phi}_n^M(\check{\eta}_{n-1}^{M})(\varphi).
$$
One can easily prove that
$$
|\check{\eta}_{n}^{N,M}(\varphi) - \check{\Phi}_n^M(\check{\eta}_{n-1}^{N,M})(\varphi)| \rightarrow_{\mathbb{P}} 0
$$
by using the (conditional) Marcinkiewicz-Zygmund inequality, so we focus on the latter term.
Define
\begin{eqnarray*}
T_1^N & :=& \check{\eta}_{n-1}^{N,M}\Big(\{F_{n-1,\check{\eta}_{n-1}^{N,M},f} \wedge F_{n-1,\check{\eta}_{n-1}^{N,M},c}\}\check{M}_n(\varphi)\Big) - 
\check{\eta}_{n-1}^{M}\Big(\{F_{n-1,\check{\eta}_{n-1}^{M},f} \wedge F_{n-1,\check{\eta}_{n-1}^{M},c}\}\check{M}_n(\varphi)\Big) \\
T_2^N & :=& \Big(1-
\check{\eta}_{n-1}^{N,M}\Big(\{F_{n-1,\check{\eta}_{n-1}^{N,M},f} \wedge F_{n-1,\check{\eta}_{n-1}^{N,M},c}\}\Big)
\Big)(\check{\eta}_{n-1}^{N,M}\otimes\check{\eta}_{n-1}^{N,M})\Big(\Big\{\overline{F}_{n-1,\check{\eta}_{n-1}^{N,M},f}\otimes \\ & & \overline{F}_{n-1,\check{\eta}_{n-1}^{N,M},c}\Big\}\bar{M}_{n}(\varphi)\Big) \\
T_3 & :=& \Big(1-
\check{\eta}_{n-1}^{M}\Big(\{F_{n-1,\check{\eta}_{n-1}^{M},f} \wedge F_{n-1,\check{\eta}_{n-1}^{M},c}\}\Big)
\Big)(\check{\eta}_{n-1}^{M}\otimes\check{\eta}_{n-1}^{M})\Big(\Big\{\overline{F}_{n-1,\check{\eta}_{n-1}^{M},f}\otimes \\ & & \overline{F}_{n-1,\check{\eta}_{n-1}^{M},c}\Big\}\bar{M}_{n}(\varphi)\Big)
\end{eqnarray*}
Then
$$
T_1^N + T_2^N - T_3 = \check{\Phi}_n^M(\check{\eta}_{n-1}^{N,M})(\varphi) - \check{\Phi}_n^M(\check{\eta}_{n-1}^{M})(\varphi).
$$
By Lemma \ref{lem:max_bayes_conv} $T_1^N\rightarrow_{\mathbb{P}}0$ and by Lemma \ref{lem:max_ci_hard_part}, $ T_2^N - T_3\rightarrow_{\mathbb{P}}0$. This
concludes the proof.
\end{proof}

\begin{lem}\label{lem:max_bayes_conv}
Assume (A\ref{hyp:1}). Then if  
for any $\varphi\in\mathcal{C}_b(\mathsf{X}\times\mathsf{X})$, $n\geq 0$
$$
\check{\eta}_{n}^{N,M}(\varphi) \rightarrow_{\mathbb{P}} \check{\eta}_{n}^{M}(\varphi)
$$
we have for any $\psi\in\mathcal{C}_b(\mathsf{X}\times\mathsf{X})$, $n\geq 0$
$$
\check{\eta}_{n}^{N,M}\Big(\{F_{n,\check{\eta}_{n}^{N,M},f} \wedge F_{n,\check{\eta}_{n}^{N,M},c}\}\psi\Big)
\rightarrow_{\mathbb{P}}
\check{\eta}_{n}^{M}\Big(\{F_{n,\check{\eta}_{n}^{M},f} \wedge F_{n,\check{\eta}_{n}^{M},c}\}\psi\Big).
$$
\end{lem}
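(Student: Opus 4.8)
The plan is to exploit the fact that the random measure $\check{\eta}_n^{N,M}$ enters the integrand only through two scalar normalizers, so that its self-referential appearance can be decoupled. Write $a_N = \check{\eta}_n^{N,M}(G_n\otimes 1)$, $b_N = \check{\eta}_n^{N,M}(1\otimes G_n)$ and $a = \check{\eta}_n^{M}(G_n\otimes 1) = \eta_n^f(G_n)$, $b = \check{\eta}_n^{M}(1\otimes G_n) = \eta_n^c(G_n)$. Since $G_n\in\mathcal{C}_b(\mathsf{X})$ by (A\ref{hyp:1}), both $G_n\otimes 1$ and $1\otimes G_n$ lie in $\mathcal{C}_b(\mathsf{X}\times\mathsf{X})$, so the standing hypothesis gives $a_N\rightarrow_{\mathbb{P}}a$ and $b_N\rightarrow_{\mathbb{P}}b$; these limits are strictly positive because the Feynman--Kac model is assumed well-defined. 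With this notation $F_{n,\check{\eta}_n^{N,M},f}(x,y)=G_n(x)/a_N$ and $F_{n,\check{\eta}_n^{N,M},c}(x,y)=G_n(y)/b_N$, and since $a_N\rightarrow_{\mathbb{P}}a>0$, $b_N\rightarrow_{\mathbb{P}}b>0$ the event $\{a_N>a/2,\,b_N>b/2\}$ has probability tending to one, so we may work throughout on this event.

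I would then insert the deterministic target integrand and split via the triangle inequality. Define $g_N(x,y) = \{G_n(x)/a_N \wedge G_n(y)/b_N\}\psi(x,y)$ and $g(x,y) = \{G_n(x)/a \wedge G_n(y)/b\}\psi(x,y)$. The first step is to control $\check{\eta}_n^{N,M}(g_N - g)$. Using the elementary $1$-Lipschitz bound $|\{p\wedge q\}-\{p'\wedge q'\}|\leq |p-p'|+|q-q'|$ together with $|\psi|\leq\|\psi\|$, one has
$$
|\check{\eta}_n^{N,M}(g_N-g)| \leq \|\psi\|\Big(\Big|\tfrac{1}{a_N}-\tfrac{1}{a}\Big|\,\check{\eta}_n^{N,M}(G_n\otimes 1) + \Big|\tfrac{1}{b_N}-\tfrac{1}{b}\Big|\,\check{\eta}_n^{N,M}(1\otimes G_n)\Big) = \|\psi\|\Big(\Big|\tfrac{1}{a_N}-\tfrac{1}{a}\Big|a_N + \Big|\tfrac{1}{b_N}-\tfrac{1}{b}\Big|b_N\Big).
$$
Since $a_N\rightarrow_{\mathbb{P}}a>0$ (resp.\ $b_N\rightarrow_{\mathbb{P}}b>0$) the continuous mapping theorem gives $|1/a_N-1/a|\rightarrow_{\mathbb{P}}0$ while $a_N$ remains bounded in probability, so each product vanishes in probability and the entire upper bound is $o_{\mathbb{P}}(1)$.

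For the second step, observe that $a$ and $b$ are deterministic constants, so $g$ is a fixed function; since $G_n$ is continuous and bounded by (A\ref{hyp:1}), the pointwise minimum of the two continuous maps $x\mapsto G_n(x)/a$ and $y\mapsto G_n(y)/b$ is continuous, and multiplying by $\psi\in\mathcal{C}_b(\mathsf{X}\times\mathsf{X})$ keeps it in $\mathcal{C}_b(\mathsf{X}\times\mathsf{X})$. The assumed weak law of large numbers therefore applies directly to the admissible test function $g$, yielding $\check{\eta}_n^{N,M}(g)\rightarrow_{\mathbb{P}}\check{\eta}_n^{M}(g)$, and $\check{\eta}_n^{M}(g)$ is precisely the claimed limit. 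Combining the two steps through the decomposition $\check{\eta}_n^{N,M}(g_N) - \check{\eta}_n^{M}(g) = \check{\eta}_n^{N,M}(g_N - g) + [\check{\eta}_n^{N,M}(g) - \check{\eta}_n^{M}(g)]$ finishes the argument.

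The main obstacle is exactly the self-referential appearance of $\check{\eta}_n^{N,M}$ both inside the minimum (through $a_N,b_N$) and as the integrating measure; the decoupling above---freezing the normalizers at their deterministic limits before invoking weak convergence, and using the $1$-Lipschitz continuity of $p\wedge q$ to absorb the freezing error---is the crux. It is also where the continuity assumption (A\ref{hyp:1}) on $G_n$ is essential, since it is what guarantees that the frozen integrand $g$ is an admissible $\mathcal{C}_b$ test function to which the inductive hypothesis may be applied.
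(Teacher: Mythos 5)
Your proof is correct and follows essentially the same route as the paper: the same two-term decomposition (first replace the empirical normalizers $a_N,b_N$ by their deterministic limits inside the empirical measure, then apply the assumed WLLN to the frozen test function $g\in\mathcal{C}_b(\mathsf{X}\times\mathsf{X})$). The only cosmetic differences are that you use the $1$-Lipschitz bound $|p\wedge q - p'\wedge q'|\leq|p-p'|+|q-q'|$ where the paper expands $p\wedge q=\tfrac{1}{2}(p+q-|p-q|)$, and you obtain $a_N\rightarrow_{\mathbb{P}}a$, $b_N\rightarrow_{\mathbb{P}}b$ from the standing hypothesis rather than citing the external $L_p$ bound of \cite[Proposition C.6]{mlpf}.
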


\begin{proof}
Set
\begin{eqnarray}
T_1^N & := & \check{\eta}_{n}^{N,M}\Big(\{F_{n,\check{\eta}_{n}^{N,M},f} \wedge F_{n,\check{\eta}_{n}^{N,M},c}\}\psi-
\{F_{n,\check{\eta}_{n}^{M},f} \wedge F_{n,\check{\eta}_{n}^{M},c}\}\psi\Big) \label{eq:max_new1}\\
T_2^N & := & \Big(\check{\eta}_{n}^{N,M} - \check{\eta}_{n}^{N}\Big)\Big(\{F_{n,\check{\eta}_{n}^{M},f} \wedge F_{n,\check{\eta}_{n}^{M},c}\}\psi\Big).
\label{eq:max_new2}
\end{eqnarray}
Then we have the decomposition
$$
T_1^N + T_2^N = 
\check{\eta}_{n}^{N,M}\Big(\{F_{n,\check{\eta}_{n}^{N,M},f} \wedge F_{n,\check{\eta}_{n}^{N,M},c}\}\psi\Big) - 
\check{\eta}_{n}^{M}\Big(\{F_{n,\check{\eta}_{n}^{M},f} \wedge F_{n,\check{\eta}_{n}^{M},c}\}\psi\Big) = 
$$
We will show that \eqref{eq:max_new1}-\eqref{eq:max_new2} converge in probability to zero.

\textbf{Term \eqref{eq:max_new1}:} Define
\begin{eqnarray}
T_3^N & := &  \frac{1}{2}\check{\eta}_{n}^{N,M}\Big(\{F_{n,\check{\eta}_{n}^{N,M},f}-F_{n,\check{\eta}_{n}^{M},f}\}\psi\Big) \label{eq:max_new3}\\
T_4^N & := & \frac{1}{2}  \check{\eta}_{n}^{N,M}\Big(\{F_{n,\check{\eta}_{n}^{N,M},c}-F_{n,\check{\eta}_{n}^{M},c}\}\psi\Big) \label{eq:max_new4}\\
T_5^N & :=&  \frac{1}{2}\check{\eta}_{n}^{N,M}\Big(\{|F_{n,\check{\eta}_{n}^{M},f}-F_{n,\check{\eta}_{n}^{M},c}| - |F_{n,\check{\eta}_{n}^{N,M},f}-F_{n,\check{\eta}_{n}^{N,M},c}|\}\psi\Big)\Big) \label{eq:max_new5}
\end{eqnarray}
Then we have
$$
T_1^N = T_3^N + T_4^N + T_5^N.
$$
To show that $T_1^N$ converges in probability to zero we will show that \eqref{eq:max_new3}-\eqref{eq:max_new5} each converge in probability to zero.
For \eqref{eq:max_new3} we have
$$
\check{\eta}_{n}^{N,M}\Big(\{F_{n,\check{\eta}_{n}^{N,M},f}-F_{n,\check{\eta}_{n}^{M},f}\}\psi\Big) = 
\frac{\eta_n^{f}(G_n)-\eta_n^{N,f}(G_n)}{\eta_n^{N,f}(G_n)\eta_n^{f}(G_n)}\check{\eta}_{n}^{N,M}((G_n\otimes 1)\psi).
$$ 
By \cite[Proposition C.6]{mlpf}
$$
\frac{\eta_n^{f}(G_n)-\eta_n^{N,f}(G_n)}{\eta_n^{N,f}(G_n)\eta_n^{f}(G_n)} \rightarrow_{\mathbb{P}} 0
$$
and by hypothesis $\check{\eta}_{n}^{N,M}((G_n\otimes 1)\psi)$ converges in probability, hence \eqref{eq:max_new3} converges in probability to zero.
For \eqref{eq:max_new4} this term converges in probability to zero by an almost identical argument to \eqref{eq:max_new3} and is hence omitted.
For \eqref{eq:max_new5}
$$
\check{\eta}_{n}^{N,M}\Big(\{|F_{n,\check{\eta}_{n}^{M},f}-F_{n,\check{\eta}_{n}^{M},c}| - |F_{n,\check{\eta}_{n}^{N,M},f}-F_{n,\check{\eta}_{n}^{N,M},c}|\}\psi\Big) = 
$$
$$
\frac{\eta_n^{N,f}(G_n)\eta_n^{N,c}(G_n)-\eta_n^{f}(G_n)\eta_n^{c}(G_n)}{\eta_n^{f}(G_n)\eta_n^{c}(G_n)\eta_n^{N,f}(G_n)\eta_n^{N,c}(G_n)}
\check{\eta}_{n}^{N,M}(\{|\eta_n^{c}(G_n)(G_n\otimes 1)-\eta_n^{f}(G_n)(1\otimes G_n)|\}\psi) - 
$$
$$
\frac{1}{\eta_n^{N,f}(G_n)\eta_n^{N,c}(G_n)}
\check{\eta}_{n}^{N,M}\Big(\{
|\eta_n^{N,c}(G_n)(G_n\otimes 1)-\eta_n^{N,f}(G_n)(1\otimes G_n)| -
$$
\begin{equation}\label{eq:max_aux2}
|\eta_n^{c}(G_n)(G_n\otimes 1)-\eta_n^{f}(G_n)(1\otimes G_n)|
\}\psi\Big).
\end{equation}
The first term on the R.H.S.~converges in probability to zero by \cite[Proposition C.6]{mlpf} and hypothesis. Hence as
$$
\frac{1}{\eta_n^{N,f}(G_n)\eta_n^{N,c}(G_n)}
$$
converges in probability, we need only show that 
$$
\check{\eta}_{n}^{N,M}\Big(\{
|\eta_n^{N,c}(G_n)(G_n\otimes 1)-\eta_n^{N,f}(G_n)(1\otimes G_n)| -
|\eta_n^{c}(G_n)(G_n\otimes 1)-\eta_n^{f}(G_n)(1\otimes G_n)|
\}\psi\Big)
$$
converges in probability to zero to conclude. Using standard algebra, we have almost surely, that
$$
\Big|\eta_n^{N,c}(G_n)(G_n\otimes 1)-\eta_n^{N,f}(G_n)(1\otimes G_n)| -
|\eta_n^{c}(G_n)(G_n\otimes 1)-\eta_n^{f}(G_n)(1\otimes G_n)|\Big| \leq
$$
\begin{equation}\label{eq:max_aux_3}
|[\eta_n^{N,c}-\eta_n^{c}](G_n)(G_n\otimes 1)| + 
|[\eta_n^{N,f}-\eta_n^{f}](G_n)(1\otimes G_n)|.
\end{equation}
Hence
$$
\check{\eta}_{n}^{N,M}\Big(\{
|\eta_n^{N,c}(G_n)(G_n\otimes 1)-\eta_n^{N,f}(G_n)(1\otimes G_n)| -
|\eta_n^{c}(G_n)(G_n\otimes 1)-\eta_n^{f}(G_n)(1\otimes G_n)|
\}\psi\Big) \leq
$$
$$
|[\eta_n^{N,c}-\eta_n^{c}](G_n)|\check{\eta}_{n}^{N,M}((G_n\otimes 1)|\psi|) + 
|[\eta_n^{N,f}-\eta_n^{f}](G_n)|\check{\eta}_{n}^{N,M}((1\otimes G_n)|\psi|)
$$
and one can easily conclude by the above arguments.

\textbf{Term \eqref{eq:max_new2}:} As
$$
\{F_{n,\check{\eta}_{n}^{M},f} \wedge F_{n,\check{\eta}_{n}^{M},c}\}\psi = 
\frac{1}{2}\Big(F_{n,\check{\eta}_{n}^{M},f} + F_{n,\check{\eta}_{n}^{M},c} - |F_{n,\check{\eta}_{n}^{M},f}-F_{n,\check{\eta}_{n}^{M},c}|\Big)\psi
$$
we have $\{F_{n,\check{\eta}_{n}^{M},f} \wedge F_{n,\check{\eta}_{n}^{M},c}\}\psi\in\mathcal{C}_b(\mathsf{X}\times\mathsf{X})$, so \eqref{eq:max_new2} converges in probability to zero.
\end{proof}

\begin{lem}\label{lem:max_cond_exp_1}
Assume (A\ref{hyp:1}). Then if  
for any $\varphi\in\mathcal{C}_b(\mathsf{X}\times\mathsf{X})$, $n\geq 0$
$$
\check{\eta}_{n}^{N,M}(\varphi) \rightarrow_{\mathbb{P}} \check{\eta}_{n}^{M}(\varphi)
$$
we have for any $\psi\in\mathcal{C}_b(\mathsf{X}\times\mathsf{X})$, $n\geq 0$
$$
(\check{\eta}_{n}^{N,M}\otimes\check{\eta}_{n}^{N,M})\Big(
\Big\{
\frac{F_{n,\check{\eta}_{n}^{N,M},f}}{\check{\eta}_{n}^{N,M}(F_{n,\check{\eta}_{n}^{N,M},f}-F_{n,\check{\eta}_{n}^{N,M},f}\wedge F_{n,\check{\eta}_{n}^{N,M},c})}
\otimes
\overline{F}_{n,\check{\eta}_{n}^{N,M},c}\Big\}
\bar{M}_{n+1}(\psi)
\Big)
$$
$$
-(\check{\eta}_{n}^{N,M}\otimes\check{\eta}_{n}^{N,M})\Big(
\Big\{
\frac{F_{n,\check{\eta}_{n}^{M},f}}{\check{\eta}_{n}^{M}(F_{n,\check{\eta}_{n}^{M},f}-F_{n,\check{\eta}_{n}^{M},f}\wedge F_{n,\check{\eta}_{n}^{M},c})}
\otimes
\overline{F}_{n,\check{\eta}_{n}^{N,M},c}\Big\}
\bar{M}_{n+1}(\psi)
\Big) 
\rightarrow_{\mathbb{P}} 0.
$$
\end{lem}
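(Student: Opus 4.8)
The plan is to exploit that the two differenced terms share the same second tensor factor $\overline{F}_{n,\check{\eta}_{n}^{N,M},c}$ and differ in the first factor only through \emph{scalars}. By definition $F_{n,\mu,f}(x,y)=G_n(x)/\mu(G_n\otimes 1)$ depends on $(x,y)$ only through $x$ and on $\mu$ only through the scalar $\mu(G_n\otimes 1)$; writing
$$
\beta^N:=\check{\eta}_{n}^{N,M}\big(F_{n,\check{\eta}_{n}^{N,M},f}-F_{n,\check{\eta}_{n}^{N,M},f}\wedge F_{n,\check{\eta}_{n}^{N,M},c}\big),\quad
\beta:=\check{\eta}_{n}^{M}\big(F_{n,\check{\eta}_{n}^{M},f}-F_{n,\check{\eta}_{n}^{M},f}\wedge F_{n,\check{\eta}_{n}^{M},c}\big)
$$
for the two (scalar) denominators, and using $\check{\eta}_{n}^{N,M}(G_n\otimes 1)=\eta_n^{N,f}(G_n)$, the two first factors differ by $(G_n\otimes 1)\,c^N$ with $c^N:=(\eta_n^{N,f}(G_n)\beta^N)^{-1}-(\eta_n^{f}(G_n)\beta)^{-1}$ a random scalar. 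Hence the quantity in the lemma equals $c^N\,(\check{\eta}_{n}^{N,M}\otimes\check{\eta}_{n}^{N,M})(\{(G_n\otimes 1)\otimes\overline{F}_{n,\check{\eta}_{n}^{N,M},c}\}\bar{M}_{n+1}(\psi))$.

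First I would bound the product integral by a deterministic constant. Using $|\check{M}_{n+1}(\psi)(x,v)|\leq\|\psi\|$ and factorising the product measure, it is at most $\|\psi\|\,\check{\eta}_{n}^{N,M}(G_n\otimes 1)\,\check{\eta}_{n}^{N,M}(\overline{F}_{n,\check{\eta}_{n}^{N,M},c})$; since $\overline{F}_{n,\check{\eta}_{n}^{N,M},c}\geq 0$ is normalised to integrate to one against $\check{\eta}_{n}^{N,M}$ and $\check{\eta}_{n}^{N,M}(G_n\otimes 1)=\eta_n^{N,f}(G_n)\leq\|G_n\|$, the integral is at most $\|\psi\|\,\|G_n\|$ for every $N$. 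By Slutsky it then suffices to prove $c^N\rightarrow_{\mathbb{P}}0$.

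The convergence $c^N\rightarrow_{\mathbb{P}}0$ reduces to $\eta_n^{N,s}(G_n)\rightarrow_{\mathbb{P}}\eta_n^{s}(G_n)>0$, $s\in\{f,c\}$, and $\beta^N\rightarrow_{\mathbb{P}}\beta>0$; the former is \cite[Proposition C.6]{mlpf} and the hypothesis. For the latter I would mimic the treatment of \eqref{eq:max_new5} in Lemma \ref{lem:max_bayes_conv}, decomposing $\beta^N-\beta$ into $\check{\eta}_{n}^{N,M}((F_{n,\check{\eta}_{n}^{N,M},f}-F_{n,\check{\eta}_{n}^{N,M},c})_+-(F_{n,\check{\eta}_{n}^{M},f}-F_{n,\check{\eta}_{n}^{M},c})_+)$ plus $[\check{\eta}_{n}^{N,M}-\check{\eta}_{n}^{M}]((F_{n,\check{\eta}_{n}^{M},f}-F_{n,\check{\eta}_{n}^{M},c})_+)$, where $(\cdot)_+$ denotes the positive part. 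The second summand vanishes in probability by the hypothesis, as $(F_{n,\check{\eta}_{n}^{M},f}-F_{n,\check{\eta}_{n}^{M},c})_+\in\mathcal{C}_b(\mathsf{X}\times\mathsf{X})$ under (A\ref{hyp:1}). For the first, the $1$-Lipschitz property of $t\mapsto t_+$ bounds it by $\check{\eta}_{n}^{N,M}(|F_{n,\check{\eta}_{n}^{N,M},f}-F_{n,\check{\eta}_{n}^{M},f}|)+\check{\eta}_{n}^{N,M}(|F_{n,\check{\eta}_{n}^{N,M},c}-F_{n,\check{\eta}_{n}^{M},c}|)$, exactly as in \eqref{eq:max_aux_3}, and each difference $F_{n,\check{\eta}_{n}^{N,M},s}-F_{n,\check{\eta}_{n}^{M},s}$ equals $G_n$ in the relevant coordinate times the scalar $\eta_n^{N,s}(G_n)^{-1}-\eta_n^{s}(G_n)^{-1}\rightarrow_{\mathbb{P}}0$, so these integrals tend to zero in probability.

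The hard part will be the joint dependence on $N$ of both the integrating measure and the integrand defining $\beta^N$, compounded by the nonlinearity of the minimum/positive-part operation; this is precisely what forces the two-step split above and the Lipschitz linearisation of $t\mapsto t_+$. A secondary point is the well-definedness of the normalisation $\beta>0$: in the degenerate case $\eta_n^f=\eta_n^c$ one has $\beta=0$, but there the factor $\overline{F}_{n,\cdot,c}$ (and the first factor) enter the recursion $\check{\Phi}_n^M$ only through the coefficient $1-\mu(F_{\cdot,f}\wedge F_{\cdot,c})$, which then vanishes, so the claim is trivial.
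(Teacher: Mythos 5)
Your argument is correct and, at its core, rests on the same two facts as the paper's proof: convergence in probability of the scalar normalizers (via \cite[Proposition C.6]{mlpf} and the induction hypothesis) together with a bound on the residual integral that is uniform in $N$. The route differs in the bookkeeping. The paper splits the difference into the two terms \eqref{eq:max_6} and \eqref{eq:max_7} --- one for the change $F_{n,\check{\eta}_{n}^{N,M},f}-F_{n,\check{\eta}_{n}^{M},f}$ in the numerator and one for the change in the denominator --- and controls each separately; you instead observe that both first tensor factors are scalar multiples of $G_n\otimes 1$, so the entire difference collapses to a single random scalar $c^N$ multiplying one integral. This buys a shorter proof, and it lets you replace the paper's bound \eqref{eq:max_8} on $\check{\eta}_{n}^{N,M}(|\overline{F}_{n,\check{\eta}_{n}^{N,M},c}|)$ (a ratio of quantities that are only shown to converge in probability) by the exact normalization $\check{\eta}_{n}^{N,M}(\overline{F}_{n,\check{\eta}_{n}^{N,M},c})=1$, yielding the deterministic bound $\|\psi\|\,\|G_n\|$ and a clean application of Slutsky. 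Two small remarks: your convergence $\beta^N\rightarrow_{\mathbb{P}}\beta$ is precisely Lemma \ref{lem:max_bayes_conv} applied with $\psi\equiv 1$ (note $\check{\eta}_{n}^{N,M}(F_{n,\check{\eta}_{n}^{N,M},f})=1$, so your $\beta^N$ equals $1-\check{\eta}_{n}^{N,M}(F_{n,\check{\eta}_{n}^{N,M},f}\wedge F_{n,\check{\eta}_{n}^{N,M},c})$), hence the positive-part decomposition you carry out by hand can simply be cited --- it repeats the computation for the term \eqref{eq:max_new5} there; and both your proof and the paper's implicitly require the limiting denominator to be positive, which your closing observation on the degenerate case addresses adequately.
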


\begin{proof}
Define
\begin{eqnarray}
T_1^N & := & 
(\check{\eta}_{n}^{N,M}\otimes\check{\eta}_{n}^{N,M})\Big(
\Big\{
\frac{F_{n,\check{\eta}_{n}^{N,M},f}-F_{n,\check{\eta}_{n}^{M},f}}{\check{\eta}_{n}^{N,M}(F_{n,\check{\eta}_{n}^{N,M},f}-F_{n,\check{\eta}_{n}^{N,M},f}\wedge F_{n,\check{\eta}_{n}^{N,M},c})}
\otimes \nonumber \\ & & 
\overline{F}_{n,\check{\eta}_{n}^{N,M},c}\Big\}
\bar{M}_{n+1}(\psi)
\Big)\label{eq:max_6}\\
T_2^N & := & \frac{\check{\eta}_{n}^{M}(F_{n,\check{\eta}_{n}^{M},f}-F_{n,\check{\eta}_{n}^{M},f}\wedge F_{n,\check{\eta}_{n}^{M},c})-\check{\eta}_{n}^{N,M}(F_{n,\check{\eta}_{n}^{N,M},f}-F_{n,\check{\eta}_{n}^{N,M},f}\wedge F_{n,\check{\eta}_{n}^{N,M},c})}
{\check{\eta}_{n}^{M}(F_{n,\check{\eta}_{n}^{M},f}-F_{n,\check{\eta}_{n}^{M},f}\wedge F_{n,\check{\eta}_{n}^{M},c})\check{\eta}_{n}^{N,M}(F_{n,\check{\eta}_{n}^{N,M},f}-F_{n,\check{\eta}_{n}^{N,M},f}\wedge F_{n,\check{\eta}_{n}^{N,M},c})}\times \nonumber \\ & &
(\check{\eta}_{n}^{N,M}\otimes\check{\eta}_{n}^{N,M})\Big(\Big\{F_{n,\check{\eta}_{n}^{M},f}
\otimes
\overline{F}_{n,\check{\eta}_{n}^{N,M},c}\Big\}
\bar{M}_{n+1}(\psi)\Big)\label{eq:max_7}
\end{eqnarray}
then we have
$$
T_1^N + T_2^N = (\check{\eta}_{n}^{N,M}\otimes\check{\eta}_{n}^{N,M})\Big(
\Big\{
\frac{F_{n,\check{\eta}_{n}^{N,M},f}}{\check{\eta}_{n}^{N,M}(F_{n,\check{\eta}_{n}^{N,M},f}-F_{n,\check{\eta}_{n}^{N,M},f}\wedge F_{n,\check{\eta}_{n}^{N,M},c})}
\otimes
\overline{F}_{n,\check{\eta}_{n}^{N,M},c}\Big\}
\bar{M}_{n+1}(\psi)
\Big) -
$$
$$
-(\check{\eta}_{n}^{N,M}\otimes\check{\eta}_{n}^{N,M})\Big(
\Big\{
\frac{F_{n,\check{\eta}_{n}^{M},f}}{\check{\eta}_{n}^{M}(F_{n,\check{\eta}_{n}^{M},f}-F_{n,\check{\eta}_{n}^{M},f}\wedge F_{n,\check{\eta}_{n}^{M},c})}
\otimes
\overline{F}_{n,\check{\eta}_{n}^{N,M},c}\Big\}
\bar{M}_{n+1}(\psi)
\Big).
$$
We will show that \eqref{eq:max_6} and \eqref{eq:max_7} will converge in probability to zero.

\textbf{Term \eqref{eq:max_6}}:
For \eqref{eq:max_6} we note that
$$
\check{\eta}_{n}^{N,M}(F_{n,\check{\eta}_{n}^{N,M},f}-F_{n,\check{\eta}_{n}^{N,M},f}\wedge F_{n,\check{\eta}_{n}^{N,M},c}) =
1 - \check{\eta}_{n}^{N,M}(F_{n,\check{\eta}_{n}^{N,M},f}\wedge F_{n,\check{\eta}_{n}^{N,M},c}).
$$
By Lemma \ref{lem:max_bayes_conv} this converges in probability to a constant, so we only consider the convergence to zero of
$$
(\check{\eta}_{n}^{N,M}\otimes\check{\eta}_{n}^{N,M})\Big(
\{(F_{n,\check{\eta}_{n}^{N,M},f}-F_{n,\check{\eta}_{n}^{M},f})\otimes \overline{F}_{n,\check{\eta}_{n}^{N,M},c}\}
\bar{M}_{n+1}(\psi)
\Big) = 
$$
$$
\Big(\frac{1}{\eta_n^{N,f}(G_n)}-\frac{1}{\eta_n^{f}(G_n)}\Big)
(\check{\eta}_{n}^{N,M}\otimes\check{\eta}_{n}^{N,M})\Big(\{
(G_n\otimes 1)
\otimes \overline{F}_{n,\check{\eta}_{n}^{N,M},c}\}
\bar{M}_{n+1}(\psi)
\Big).
$$
By \cite[Proposition C.6]{mlpf}
$$
\Big(\frac{1}{\eta_n^{N,f}(G_n)}-\frac{1}{\eta_n^{f}(G_n)}\Big) \rightarrow_{\mathbb{P}} 0
$$
so if we can show that the remaining term on the R.H.S.~is, in absolute value, (almost surely) upper-bounded by a convergent (in probability) random variable, we have shown that
\eqref{eq:max_6} converges in probability to zero. We have
\begin{equation}\label{eq:max_aux1}
|(\check{\eta}_{n}^{N,M}\otimes\check{\eta}_{n}^{N,M})\Big(\{
(G_n\otimes 1)
\otimes \overline{F}_{n,\check{\eta}_{n}^{N,M},c}\}
\bar{M}_{n+1}(\psi)
\Big)| \leq \|\psi\|\|G_n\|\check{\eta}_{n}^{N,M}(|\overline{F}_{n,\check{\eta}_{n}^{N,M},c}|).
\end{equation}
Then almost surely
\begin{equation}\label{eq:max_8}
\check{\eta}_{n}^{N,M}(|\overline{F}_{n,\check{\eta}_{n}^{N,M},c}|) \leq 
\frac{\check{\eta}_{n}^{N,M}(F_{n,\check{\eta}_{n}^{N,M},c})+\check{\eta}_{n}^{N,M}(\{F_{n,\check{\eta}_{n}^{N,M},f}\wedge F_{n,\check{\eta}_{n}^{N,M},c}\})}
{|\check{\eta}_{n}^{N,M}(F_{n,\check{\eta}_{n}^{N,M},c}-\{F_{n,\check{\eta}_{n}^{N,M},f}\wedge F_{n,\check{\eta}_{n}^{N,M},c}\})|}
\end{equation}
By the above arguments, both the denominator and numerator will converge in probability to a finite constant and hence we have shown that \eqref{eq:max_6} converges in probability to zero.

\textbf{Term \eqref{eq:max_7}}: We note
$$
\frac{\check{\eta}_{n}^{M}(F_{n,\check{\eta}_{n}^{M},f}-F_{n,\check{\eta}_{n}^{M},f}\wedge F_{n,\check{\eta}_{n}^{M},c})-\check{\eta}_{n}^{N,M}(F_{n,\check{\eta}_{n}^{N,M},f}-F_{n,\check{\eta}_{n}^{N,M},f}\wedge F_{n,\check{\eta}_{n}^{N,M},c})}
{\check{\eta}_{n}^{M}(F_{n,\check{\eta}_{n}^{M},f}-F_{n,\check{\eta}_{n}^{M},f}\wedge F_{n,\check{\eta}_{n}^{M},c})\check{\eta}_{n}^{N,M}(F_{n,\check{\eta}_{n}^{N,M},f}-F_{n,\check{\eta}_{n}^{N,M},f}\wedge F_{n,\check{\eta}_{n}^{N,M},c})} =
$$
$$
\frac{\check{\eta}_{n}^{N,M}(F_{n,\check{\eta}_{n}^{N,M},f}\wedge F_{n,\check{\eta}_{n}^{N,M},c})-
\check{\eta}_{n}^{M}(F_{n,\check{\eta}_{n}^{M},f}\wedge F_{n,\check{\eta}_{n}^{M},c})
}
{(1-\check{\eta}_{n}^{N,M}(F_{n,\check{\eta}_{n}^{N,M},f}\wedge F_{n,\check{\eta}_{n}^{N,M},c}))(1-\check{\eta}_{n}^{M}(F_{n,\check{\eta}_{n}^{M},f}\wedge F_{n,\check{\eta}_{n}^{M},c}))}.
$$
By Lemma \ref{lem:max_bayes_conv} this converges in probability to zero. Thus, we need only show that $|(\check{\eta}_{n}^{N,M}\otimes\check{\eta}_{n}^{N,M})\Big(\Big\{F_{n,\check{\eta}_{n}^{M},f} \otimes \overline{F}_{n,\check{\eta}_{n}^{N,M},c}\Big\} \bar{M}_{n+1}(\psi)\Big)|$ is (almost surely) upper-bounded by a convergent (in probability) random variable.
Almost surely,
$$
|(\check{\eta}_{n}^{N,M}\otimes\check{\eta}_{n}^{N,M})\Big(\Big\{F_{n,\check{\eta}_{n}^{M},f} \otimes \overline{F}_{n,\check{\eta}_{n}^{N,M},c}\Big\} \bar{M}_{n+1}(\psi)\Big)| \leq
\|\psi\|\check{\eta}_{n}^{N,M}(F_{n,\check{\eta}_{n}^{M},f})\check{\eta}_{n}^{N,M}(|\overline{F}_{n,\check{\eta}_{n}^{N,M},c}|).
$$
As $F_{n,\check{\eta}_{n}^{M},f}\in\mathcal{C}_b(\mathsf{X}\times\mathsf{X})$, $\check{\eta}_{n}^{N,M}(F_{n,\check{\eta}_{n}^{M},f})$ converges in probability to a finite constant
and our proof is concluded by the argument associated to \eqref{eq:max_8}.
\end{proof}

\begin{lem}\label{lem:max_cond_exp_2}
Assume (A\ref{hyp:1}). Then if  
for any $\varphi\in\mathcal{C}_b(\mathsf{X}\times\mathsf{X})$, $n\geq 0$
$$
\check{\eta}_{n}^{N,M}(\varphi) \rightarrow_{\mathbb{P}} \check{\eta}_{n}^{M}(\varphi)
$$
we have for any $\psi\in\mathcal{C}_b(\mathsf{X}\times\mathsf{X})$, $n\geq 0$
$$
(\check{\eta}_{n}^{N,M}\otimes\check{\eta}_{n}^{N,M})\Big(\Big(\Big\{
\frac{F_{n,\check{\eta}_{n}^{M},f}\wedge F_{n,\check{\eta}_{n}^{M},c}-
F_{n,\check{\eta}_{n}^{N,M},f}\wedge F_{n,\check{\eta}_{n}^{N,M},c}}
{\check{\eta}_{n}^{M}(F_{n,\check{\eta}_{n}^{M},f}-F_{n,\check{\eta}_{n}^{M},f}\wedge F_{n,\check{\eta}_{n}^{M},c})}
\Big\}\otimes
\overline{F}_{n,\check{\eta}_{n}^{N,M},c}\Big)
\bar{M}_{n+1}(\psi)
\Big)\rightarrow_{\mathbb{P}}0.
$$
\end{lem}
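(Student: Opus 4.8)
The plan is to reduce the displayed integral to a product of a deterministic constant, an empirical average of the difference of the two minima, and the mass of $|\overline{F}_{n,\check{\eta}_{n}^{N,M},c}|$, and then to show that the first is finite, the second vanishes in probability, and the third is bounded in probability. First I would observe that the integrand factorizes over the two copies of $\mathsf{X}\times\mathsf{X}$: the bracketed term is a function of the first pair, $\overline{F}_{n,\check{\eta}_{n}^{N,M},c}$ is a function of the second pair, and $|\bar{M}_{n+1}(\psi)|\leq\|\psi\|$ since $\check{M}_{n+1}$ is Markov. Moreover the denominator $\check{\eta}_{n}^{M}(F_{n,\check{\eta}_{n}^{M},f}-F_{n,\check{\eta}_{n}^{M},f}\wedge F_{n,\check{\eta}_{n}^{M},c})$ is a strictly positive deterministic constant, since $\check{\eta}_{n}^{M}(F_{n,\check{\eta}_{n}^{M},f})=1$. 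Hence, writing $W^N:=F_{n,\check{\eta}_{n}^{M},f}\wedge F_{n,\check{\eta}_{n}^{M},c}-F_{n,\check{\eta}_{n}^{N,M},f}\wedge F_{n,\check{\eta}_{n}^{N,M},c}$, bounding $|\bar{M}_{n+1}(\psi)|$ and factoring the product measure shows that the modulus of the target is at most
$$
\frac{\|\psi\|}{\check{\eta}_{n}^{M}(F_{n,\check{\eta}_{n}^{M},f}-F_{n,\check{\eta}_{n}^{M},f}\wedge F_{n,\check{\eta}_{n}^{M},c})}\,\check{\eta}_{n}^{N,M}(|W^N|)\,\check{\eta}_{n}^{N,M}(|\overline{F}_{n,\check{\eta}_{n}^{N,M},c}|).
$$

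The main step is to control $\check{\eta}_{n}^{N,M}(|W^N|)$. Recalling $F_{n,\check{\eta}_{n}^{N,M},f}(x,y)=G_n(x)/\eta_n^{N,f}(G_n)$ and $F_{n,\check{\eta}_{n}^{N,M},c}(x,y)=G_n(y)/\eta_n^{N,c}(G_n)$, with the analogous expressions for $\check{\eta}_{n}^{M}$ in terms of $\eta_n^f(G_n)$ and $\eta_n^c(G_n)$, I would apply the identity $a\wedge b=\tfrac{1}{2}(a+b-|a-b|)$ to each minimum. This splits $W^N$ into two linear terms weighted by the differences of reciprocal normalizing constants and a difference of two absolute values, the latter handled by the reverse triangle inequality exactly as in the derivation of \eqref{eq:max_aux_3}. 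The outcome is the pointwise, almost-sure bound
$$
|W^N|\leq\|G_n\|\Big(\Big|\tfrac{1}{\eta_n^{N,f}(G_n)}-\tfrac{1}{\eta_n^{f}(G_n)}\Big|+\Big|\tfrac{1}{\eta_n^{N,c}(G_n)}-\tfrac{1}{\eta_n^{c}(G_n)}\Big|\Big),
$$
whose right-hand side does not depend on $(x,y)$ and converges to $0$ in probability by \cite[Proposition C.6]{mlpf}. As $\check{\eta}_{n}^{N,M}$ is a probability measure, $\check{\eta}_{n}^{N,M}(|W^N|)$ is dominated by this same vanishing quantity.

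It then remains to argue that $\check{\eta}_{n}^{N,M}(|\overline{F}_{n,\check{\eta}_{n}^{N,M},c}|)$ is bounded by a random variable convergent in probability, which is precisely the bound \eqref{eq:max_8} established within the proof of Lemma \ref{lem:max_cond_exp_1}: its numerator and denominator each converge in probability to finite constants, the limiting denominator being the strictly positive constant above. Combining the three factors, the upper bound is a product of a finite constant, a term tending to $0$ in probability, and a term bounded in probability, and therefore converges to $0$ in probability, which is the claim. The only genuine difficulty is the non-smoothness of the minimum: it is precisely in order to obtain a bound on $W^N$ that is uniform in $(x,y)$ — and hence survives integration against the random measure $\check{\eta}_{n}^{N,M}$ — that the $\tfrac{1}{2}(a+b-|a-b|)$ identity together with the reverse triangle inequality are required, reducing everything to the already-available convergence of the normalizing constants.
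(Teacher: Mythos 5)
Your proof is correct and follows essentially the same route as the paper's: the identity $a\wedge b=\tfrac{1}{2}(a+b-|a-b|)$, the reverse triangle inequality (as in \eqref{eq:max_aux_3}), convergence of the normalizing constants via \cite[Proposition C.6]{mlpf}, and the bound \eqref{eq:max_8} on $\check{\eta}_{n}^{N,M}(|\overline{F}_{n,\check{\eta}_{n}^{N,M},c}|)$. The only difference is organizational --- you collapse the paper's term-by-term decomposition $T_1^N,\dots,T_5^N$ into a single pointwise bound on $|W^N|$ that is uniform in $(x,y)$, which is a slightly cleaner packaging of the same estimates.
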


\begin{proof}
Define
\begin{eqnarray}
C & := & \frac{1}{\check{\eta}_{n}^{M}(F_{n,\check{\eta}_{n}^{M},f}-F_{n,\check{\eta}_{n}^{M},f}\wedge F_{n,\check{\eta}_{n}^{M},c})} \nonumber\\
T_1^N & := & -\frac{C}{2}(\check{\eta}_{n}^{N,M}\otimes\check{\eta}_{n}^{N,M})\Big(\Big(\{F_{n,\check{\eta}_{n}^{N,M},f}-F_{n,\check{\eta}_{n}^{M},f}\}\otimes \overline{F}_{n,\check{\eta}_{n}^{N,M},c}\Big)
\bar{M}_{n+1}(\psi)\Big) \label{eq:max_9}\\
T_2^N & := & -\frac{C}{2}(\check{\eta}_{n}^{N,M}\otimes\check{\eta}_{n}^{N,M})\Big(\Big(\{F_{n,\check{\eta}_{n}^{N,M},c}-F_{n,\check{\eta}_{n}^{M},c}\}\otimes \overline{F}_{n,\check{\eta}_{n}^{N,M},c}\Big)
\bar{M}_{n+1}(\psi)\Big) \label{eq:max_10}\\
T_3^N & := & -\frac{C}{2}(\check{\eta}_{n}^{N,M}\otimes\check{\eta}_{n}^{N,M})\Big(\Big(\{
|F_{n,\check{\eta}_{n}^{M},f}-F_{n,\check{\eta}_{n}^{M},c}| -
|F_{n,\check{\eta}_{n}^{N,M},f}-F_{n,\check{\eta}_{n}^{N,M},c}|
\}
\otimes \nonumber \\ & &  \overline{F}_{n,\check{\eta}_{n}^{N,M},c}\Big)
\bar{M}_{n+1}(\psi)\Big). \label{eq:max_11}
\end{eqnarray}
Then we have
$$
T_1^N + T_2^N + T_3^N = 
$$
$$
(\check{\eta}_{n}^{N,M}\otimes\check{\eta}_{n}^{N,M})\Big(\Big(\Big\{
\frac{F_{n,\check{\eta}_{n}^{M},f}\wedge F_{n,\check{\eta}_{n}^{M},c}-
F_{n,\check{\eta}_{n}^{N,M},f}\wedge F_{n,\check{\eta}_{n}^{N,M},c}}
{\check{\eta}_{n}^{M}(F_{n,\check{\eta}_{n}^{M},f}-F_{n,\check{\eta}_{n}^{M},f}\wedge F_{n,\check{\eta}_{n}^{M},c})}
\Big\}\otimes
\overline{F}_{n,\check{\eta}_{n}^{N,M},c}\Big)
\bar{M}_{n+1}(\psi)
\Big)
$$
We will show that \eqref{eq:max_9},  \eqref{eq:max_10} and \eqref{eq:max_11} will converge in probability to zero to conclude the proof.

\textbf{Term \eqref{eq:max_9}}: We have that \eqref{eq:max_9} is equal to
$$
-\frac{C}{2} \frac{\eta_n^{N,f}(G_n)-\eta_n^{f}(G_n)}{\eta_n^{N,f}(G_n)\eta_n^{f}(G_n)}
(\check{\eta}_{n}^{N,M}\otimes\check{\eta}_{n}^{N,M})\Big(
\Big(\{G_n\otimes 1\}
\otimes \overline{F}_{n,\check{\eta}_{n}^{N,M},c}\Big)\bar{M}_{n+1}(\psi)
\Big).
$$
By \cite[Proposition C.6]{mlpf}
$$
\frac{\eta_n^{N,f}(G_n)-\eta_n^{f}(G_n)}{\eta_n^{N,f}(G_n)\eta_n^{f}(G_n)} \rightarrow_{\mathbb{P}} 0
$$
so if we can show that the remaining term on the R.H.S.~is, in absolute value, (almost surely) upper-bounded by a convergent (in probability) random variable, we have shown that
\eqref{eq:max_9} converges in probability to zero. This can be achieved using the argument for \eqref{eq:max_aux1} in the proof of Lemma \ref{lem:max_cond_exp_1} and hence we have
shown that \eqref{eq:max_9} converges in probability to zero. 

\textbf{Term \eqref{eq:max_10}}:  The argument is almost identical to \eqref{eq:max_9} and is omitted.

\textbf{Term \eqref{eq:max_11}}: Using a decomposition similar to \eqref{eq:max_aux2}, we define:
\begin{eqnarray}
T_4^N & := & -\frac{C}{2}
\frac{\eta_n^{f}(G_n)\eta_n^{c}(G_n)-\eta_n^{N,f}(G_n)\eta_n^{N,c}(G_n)}{\eta_n^{f}(G_n)\eta_n^{c}(G_n)\eta_n^{N,f}(G_n)\eta_n^{N,c}(G_n)}
(\check{\eta}_{n}^{N,M}\otimes\check{\eta}_{n}^{N,M})\Big(
\nonumber \\ & &
\Big(\{
|\eta_n^{c}(G_n)(G_n\otimes 1)-\eta_n^{f}(G_n)(1\otimes G_n)|
\}\otimes\overline{F}_{n,\check{\eta}_{n}^{N,M},c}\Big)
\bar{M}_{n+1}(\psi)\Big) \label{eq:max_12}\\
T_5^N & := & -\frac{C}{2}\frac{1}{\eta_n^{N,f}(G_n)\eta_n^{N,c}(G_n)}
(\check{\eta}_{n}^{N,M}\otimes\check{\eta}_{n}^{N,M})\Big(
\Big(\{
|\eta_n^{N,c}(G_n)(G_n\otimes 1)-\eta_n^{N,f}(G_n)(1\otimes G_n)| - \nonumber\\& &
|\eta_n^{c}(G_n)(G_n\otimes 1)-\eta_n^{f}(G_n)(1\otimes G_n)|
\}\otimes\overline{F}_{n,\check{\eta}_{n}^{N,M},c}\Big)
\bar{M}_{n+1}(\psi)\Big)\label{eq:max_13}
\end{eqnarray}
then we have
$$
T_3^N = -T_4^N - T_5^N.
$$
We will show that \eqref{eq:max_12} and \eqref{eq:max_13} will converge to zero in probability. For
\eqref{eq:max_12}, by \cite[Proposition C.6]{mlpf}
$$
\frac{\eta_n^{f}(G_n)\eta_n^{c}(G_n)-\eta_n^{N,f}(G_n)\eta_n^{N,c}(G_n)}{\eta_n^{f}(G_n)\eta_n^{c}(G_n)\eta_n^{N,f}(G_n)\eta_n^{N,c}(G_n)} \rightarrow_{\mathbb{P}} 0
$$
and 
$$
\Big|(\check{\eta}_{n}^{N,M}\otimes\check{\eta}_{n}^{N,M})\Big(
\Big(\{
|\eta_n^{c}(G_n)(G_n\otimes 1)-\eta_n^{f}(G_n)(1\otimes G_n)|
\}\otimes\overline{F}_{n,\check{\eta}_{n}^{N,M},c}\Big)
\bar{M}_{n+1}(\psi)\Big)
\Big| \leq
$$
$$
\|G_n\|\|\psi\|(\eta_n^{c}(G_n)+\eta_n^{f}(G_n))\check{\eta}_{n}^{N,M}(|\overline{F}_{n,\check{\eta}_{n}^{N,M},c}|)
$$
thus, using the argument for \eqref{eq:max_aux1} in the proof of Lemma \ref{lem:max_cond_exp_1} we have
shown that \eqref{eq:max_12} converges in probability to zero. For  \eqref{eq:max_13} as 
$(\eta_n^{N,f}(G_n)\eta_n^{N,c}(G_n))^{-1}$ converges in probability to finite constant, we need only show that the remaining
term converges in probability to zero. Using \eqref{eq:max_aux_3} we have that
$$
\Big|
(\check{\eta}_{n}^{N,M}\otimes\check{\eta}_{n}^{N,M})\Big(
\Big(\{
|\eta_n^{N,c}(G_n)(G_n\otimes 1)-\eta_n^{N,f}(G_n)(1\otimes G_n)| -
$$
$$
|\eta_n^{c}(G_n)(G_n\otimes 1)-\eta_n^{f}(G_n)(1\otimes G_n)|
\}\otimes\overline{F}_{n,\check{\eta}_{n}^{N,M},c}\Big)
\bar{M}_{n+1}(\psi)\Big)\Big| \leq
$$
$$
\|\psi\|\check{\eta}_{n}^{N,M}\Big(
|[\eta_n^{N,c}-\eta_n^{c}](G_n)(G_n\otimes 1)| + 
|[\eta_n^{N,f}-\eta_n^{f}](G_n)(1\otimes G_n)|
\Big)
\check{\eta}_{n}^{N,M}(|\overline{F}_{n,\check{\eta}_{n}^{N,M},c}|).
$$
Then \eqref{eq:max_13} converges in probability to zero by using the argument for \eqref{eq:max_aux1} in the proof of Lemma \ref{lem:max_cond_exp_1} 
for $\check{\eta}_{n}^{N,M}($ $|\overline{F}_{n,\check{\eta}_{n}^{N,M},c}|)$. For the remaining term one can apply the (last) argument for \eqref{eq:max_new5}
in Lemma \ref{lem:max_bayes_conv}. Hence \eqref{eq:max_11} converges in probability to zero and the proof is concluded.
\end{proof}

\begin{lem}\label{lem:max_cond_exp_3}
Assume (A\ref{hyp:1}). Then if  
for any $\varphi\in\mathcal{C}_b(\mathsf{X}\times\mathsf{X})$, $n\geq 0$
$$
\check{\eta}_{n}^{N,M}(\varphi) \rightarrow_{\mathbb{P}} \check{\eta}_{n}^{M}(\varphi)
$$
we have for any $\psi\in\mathcal{C}_b(\mathsf{X}\times\mathsf{X})$, $n\geq 0$
$$
\frac{\check{\eta}_{n}^{N,M}(F_{n,\check{\eta}_{n}^{N,M},f}\wedge F_{n,\check{\eta}_{n}^{N,M},c})-
\check{\eta}_{n}^{M}(F_{n,\check{\eta}_{n}^{M},f}\wedge F_{n,\check{\eta}_{n}^{M},c})
}
{(1-\check{\eta}_{n}^{N,M}(F_{n,\check{\eta}_{n}^{N,M},f}\wedge F_{n,\check{\eta}_{n}^{N,M},c}))(1-\check{\eta}_{n}^{M}(F_{n,\check{\eta}_{n}^{M},f}\wedge F_{n,\check{\eta}_{n}^{M},c}))}\times
$$
$$
(\check{\eta}_{n}^{N,M}\otimes\check{\eta}_{n}^{N,M})\Big(\Big(\Big\{
F_{n,\check{\eta}_{n}^{N,M},f}\wedge F_{n,\check{\eta}_{n}^{N,M},c}
\Big\}\otimes
\overline{F}_{n,\check{\eta}_{n}^{N,M},c}\Big)
\bar{M}_{n+1}(\psi)
\Big)
\rightarrow_{\mathbb{P}} 0.
$$
\end{lem}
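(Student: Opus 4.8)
The plan is to exploit the product structure of the expression, writing it as a scalar prefactor
$$
\Lambda^N := \frac{\check{\eta}_{n}^{N,M}(F_{n,\check{\eta}_{n}^{N,M},f}\wedge F_{n,\check{\eta}_{n}^{N,M},c})-
\check{\eta}_{n}^{M}(F_{n,\check{\eta}_{n}^{M},f}\wedge F_{n,\check{\eta}_{n}^{M},c})}
{(1-\check{\eta}_{n}^{N,M}(F_{n,\check{\eta}_{n}^{N,M},f}\wedge F_{n,\check{\eta}_{n}^{N,M},c}))(1-\check{\eta}_{n}^{M}(F_{n,\check{\eta}_{n}^{M},f}\wedge F_{n,\check{\eta}_{n}^{M},c}))}
$$
times the integral
$$
I^N := (\check{\eta}_{n}^{N,M}\otimes\check{\eta}_{n}^{N,M})\Big(\Big(\Big\{
F_{n,\check{\eta}_{n}^{N,M},f}\wedge F_{n,\check{\eta}_{n}^{N,M},c}
\Big\}\otimes
\overline{F}_{n,\check{\eta}_{n}^{N,M},c}\Big)
\bar{M}_{n+1}(\psi)
\Big).
$$
I would show that $\Lambda^N\rightarrow_{\mathbb{P}}0$ while $I^N$ is dominated in absolute value by a random variable converging in probability to a finite constant; the claim then follows because a product of a factor vanishing in probability and a factor bounded in probability vanishes in probability.

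For the prefactor, the numerator is exactly the difference handled by Lemma \ref{lem:max_bayes_conv} with the test function $\psi\equiv 1$, hence it converges in probability to zero. For the denominator I would observe that $1-\check{\eta}_{n}^{M}(F_{n,\check{\eta}_{n}^{M},f}\wedge F_{n,\check{\eta}_{n}^{M},c})=\check{\eta}_n^M(F_{n,\check{\eta}_n^M,f}-\{F_{n,\check{\eta}_n^M,f}\wedge F_{n,\check{\eta}_n^M,c}\})$, which is precisely the strictly positive normalising constant appearing in the definition of $\overline{F}_{n,\check{\eta}_n^M,f}$; its empirical analogue converges to it in probability, again by Lemma \ref{lem:max_bayes_conv}. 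Thus the denominator converges in probability to a strictly positive constant and $\Lambda^N\rightarrow_{\mathbb{P}}0$.

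For $I^N$, since $F_{n,\check{\eta}_n^{N,M},f}\wedge F_{n,\check{\eta}_n^{N,M},c}$ depends only on the first coordinate, $\overline{F}_{n,\check{\eta}_n^{N,M},c}$ only on the second, and $|\bar{M}_{n+1}(\psi)|\leq\|\psi\|$, I would bound
$$
|I^N| \leq \|\psi\|\,\check{\eta}_{n}^{N,M}\big(F_{n,\check{\eta}_{n}^{N,M},f}\wedge F_{n,\check{\eta}_{n}^{N,M},c}\big)\,\check{\eta}_{n}^{N,M}\big(|\overline{F}_{n,\check{\eta}_{n}^{N,M},c}|\big).
$$
The first factor converges in probability by Lemma \ref{lem:max_bayes_conv}, and the second is controlled exactly as in the treatment of \eqref{eq:max_8} in the proof of Lemma \ref{lem:max_cond_exp_1}, where it is dominated almost surely by a ratio whose numerator and denominator both converge in probability to finite constants. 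Hence $|I^N|=O_{\mathbb{P}}(1)$, and combining with $\Lambda^N\rightarrow_{\mathbb{P}}0$ concludes.

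The hard part is not any single estimate but ensuring that every normalising constant sitting in a denominator---both the explicit one in $\Lambda^N$ and the one hidden inside $\overline{F}_{n,\check{\eta}_n^{N,M},c}$---stays bounded away from zero in the limit. This rests on the strict positivity of $\check{\eta}_n^M(F_{n,\check{\eta}_n^M,c}-\{F_{n,\check{\eta}_n^M,f}\wedge F_{n,\check{\eta}_n^M,c}\})$ and its $f$-counterpart, i.e.\ on the same non-degeneracy that makes $\check{\Phi}_n^M$ well-defined; granting this, all the convergences reduce to routine applications of \cite[Proposition C.6]{mlpf}, Lemma \ref{lem:max_bayes_conv}, and the domination argument of Lemma \ref{lem:max_cond_exp_1}.
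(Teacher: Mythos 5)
Your proposal is correct and follows essentially the same route as the paper: the paper likewise treats the expression as a scalar prefactor times the product integral, disposes of the prefactor by invoking the argument for the term \eqref{eq:max_7} in Lemma \ref{lem:max_cond_exp_1} (which rests on Lemma \ref{lem:max_bayes_conv}), bounds the integral by $\|\psi\|\check{\eta}_{n}^{N,M}(F_{n,\check{\eta}_{n}^{N,M},f}\wedge F_{n,\check{\eta}_{n}^{N,M},c})\check{\eta}_{n}^{N,M}(|\overline{F}_{n,\check{\eta}_{n}^{N,M},c}|)$, and controls the last factor exactly via the \eqref{eq:max_8}/\eqref{eq:max_aux1} argument you cite. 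No gaps.
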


\begin{proof}
The proof concerning \eqref{eq:max_7} of Lemma \ref{lem:max_cond_exp_1} establishes that
$$
\frac{\check{\eta}_{n}^{N,M}(F_{n,\check{\eta}_{n}^{N,M},f}\wedge F_{n,\check{\eta}_{n}^{N,M},c})-
\check{\eta}_{n}^{M}(F_{n,\check{\eta}_{n}^{M},f}\wedge F_{n,\check{\eta}_{n}^{M},c})
}
{(1-\check{\eta}_{n}^{N,M}(F_{n,\check{\eta}_{n}^{N,M},f}\wedge F_{n,\check{\eta}_{n}^{N,M},c}))(1-\check{\eta}_{n}^{M}(F_{n,\check{\eta}_{n}^{M},f}\wedge F_{n,\check{\eta}_{n}^{M},c}))}  \rightarrow_{\mathbb{P}} 0.
$$
Then, almost surely
$$
\Big|(\check{\eta}_{n}^{N,M}\otimes\check{\eta}_{n}^{N,M})\Big(\Big(\Big\{
F_{n,\check{\eta}_{n}^{N,M},f}\wedge F_{n,\check{\eta}_{n}^{N,M},c}
\Big\}\otimes
\overline{F}_{n,\check{\eta}_{n}^{N,M},c}\Big)
\bar{M}_{n+1}(\psi)
\Big)\Big| \leq
$$
$$
\|\psi\|\check{\eta}_{n}^{N,M}(F_{n,\check{\eta}_{n}^{N,M},f}\wedge F_{n,\check{\eta}_{n}^{N,M},c})
\check{\eta}_{n}^{N,M}(|\overline{F}_{n,\check{\eta}_{n}^{N,M},c}|).
$$
By Lemma \ref{lem:max_bayes_conv} $\check{\eta}_{n}^{N,M}(F_{n,\check{\eta}_{n}^{N,M},f}\wedge F_{n,\check{\eta}_{n}^{N,M},c})$ converges in probability to a constant
and  by using the argument for \eqref{eq:max_aux1} in the proof of Lemma \ref{lem:max_cond_exp_1} 
for $\check{\eta}_{n}^{N,M}(|\overline{F}_{n,\check{\eta}_{n}^{N,M},c}|)$ we conclude the proof.
\end{proof}

\begin{lem}\label{lem:max_marginal_product}
Assume (A\ref{hyp:3}). Then if for any $n\geq 0$, $\varphi\in\mathcal{C}_b(\mathsf{X}\times\mathsf{X})$
$$
\check{\eta}_{n}^{N,M}(\varphi) \rightarrow_{\mathbb{P}} \check{\eta}_{n}^{M}(\varphi)
$$
we have for any $\psi\in\mathcal{C}_b(\mathsf{X}^2\times\mathsf{X}^2)$
$$
\Big(\check{\eta}_{n}^{N,M}\otimes \check{\eta}_{n}^{N,M}\Big)(\psi) \rightarrow_{\mathbb{P}} 
\Big(\check{\eta}_{n}^{M}\otimes \check{\eta}_{n}^{M}\Big)(\psi).
$$
\end{lem}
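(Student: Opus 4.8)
The plan is to upgrade the hypothesised weak convergence in probability of the random measures $\check{\eta}_n^{N,M}$ on $\mathsf{X}\times\mathsf{X}$ to a convergence of their self-products, first on tensor-product test functions and then on a general $\psi$ by approximation. First I would observe that for any $\varphi_1,\varphi_2\in\mathcal{C}_b(\mathsf{X}\times\mathsf{X})$ one has the exact factorisation $(\check{\eta}_n^{N,M}\otimes\check{\eta}_n^{N,M})(\varphi_1\otimes\varphi_2)=\check{\eta}_n^{N,M}(\varphi_1)\,\check{\eta}_n^{N,M}(\varphi_2)$. Since each factor converges in probability to the \emph{deterministic} limit $\check{\eta}_n^{M}(\varphi_i)$ by the hypothesis, the product converges in probability to $\check{\eta}_n^{M}(\varphi_1)\check{\eta}_n^{M}(\varphi_2)=(\check{\eta}_n^{M}\otimes\check{\eta}_n^{M})(\varphi_1\otimes\varphi_2)$, and by linearity the same holds for any finite sum $\hat{\psi}=\sum_k\varphi_1^k\otimes\varphi_2^k$.

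To pass to a general $\psi\in\mathcal{C}_b(\mathsf{X}^2\times\mathsf{X}^2)$ I would invoke (A\ref{hyp:3}): as $\mathsf{X}\times\mathsf{X}$ is a locally compact separable metric space it is Polish, so the probability measure $\check{\eta}_n^{M}$ is tight. Given $\epsilon>0$ I pick a compact $K_0$ with $\check{\eta}_n^{M}(K_0^c)<\epsilon$ and, by Urysohn's lemma, a compactly supported $\chi\in\mathcal{C}_c(\mathsf{X}\times\mathsf{X})$ with $\mathbb{I}_{K_0}\leq\chi\leq 1$; set $K_1=\mathrm{supp}(\chi)$. On the compact set $K_1\times K_1$ the Stone--Weierstrass theorem lets me uniformly approximate $\psi$ within $\epsilon$ by a finite sum of tensor products $\tilde{\psi}=\sum_k f_k\otimes g_k$ with $f_k,g_k\in\mathcal{C}(K_1)$; after extending $f_k,g_k$ to $\mathcal{C}_b(\mathsf{X}\times\mathsf{X})$ by Tietze and putting $\hat{\psi}:=(\chi\otimes\chi)\sum_k f_k\otimes g_k=\sum_k(\chi f_k)\otimes(\chi g_k)$, I obtain a finite sum of tensor products of $\mathcal{C}_b$ functions, to which the first step applies.

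The heart of the argument is the pointwise bound
$$
|\psi-\hat{\psi}|\leq \epsilon+2\,(1-\chi\otimes\chi)\qquad\text{on all of }\mathsf{X}^2\times\mathsf{X}^2,
$$
verified separately on $\{\chi\otimes\chi=1\}\subseteq K_1\times K_1$, on the rest of $K_1\times K_1$, and on its complement where $\hat{\psi}\equiv 0$ (using $\|\psi\|\leq 1$ and $|\tilde\psi|\le 1+\epsilon$ on $K_1\times K_1$ after normalising $\|\psi\|\le1$). Feeding this through the triangle inequality via $\hat{\psi}$, the central term $(\check{\eta}_n^{N,M}\otimes\check{\eta}_n^{N,M})(\hat{\psi})-(\check{\eta}_n^{M}\otimes\check{\eta}_n^{M})(\hat{\psi})\rightarrow_{\mathbb{P}}0$ by the first step, while both remainder terms are dominated by $\epsilon+2\,[1-\check{\eta}(\chi)^2]$ through the crucial identity $(\check{\eta}\otimes\check{\eta})(1-\chi\otimes\chi)=1-\check{\eta}(\chi)^2$. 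Since $\check{\eta}_n^{M}(\chi)\geq\check{\eta}_n^{M}(K_0)>1-\epsilon$, the deterministic remainder is $\mathcal{O}(\epsilon)$, and for the random measure $1-\check{\eta}_n^{N,M}(\chi)^2\rightarrow_{\mathbb{P}}1-\check{\eta}_n^{M}(\chi)^2$ again by hypothesis, so that remainder is $\mathcal{O}(\epsilon)$ in probability. Letting $N\to\infty$ and then $\epsilon\downarrow 0$ concludes.

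The hard part will be precisely this tail control on the non-compact product space: a bare Stone--Weierstrass approximation is uniform only on compacts, and the random prelimit product measures could a priori place non-negligible mass far out, so one cannot simply ignore $|\psi-\hat\psi|$ off $K_1\times K_1$. The device that resolves this is the choice of $\hat{\psi}$ so that the approximation error is dominated by the single continuous tensor function $1-\chi\otimes\chi$, whose integral against any self-product collapses to $1-\check{\eta}(\chi)^2$ and is therefore governed directly by the one-dimensional hypothesis; tightness of the deterministic limit $\check{\eta}_n^{M}$, guaranteed by (A\ref{hyp:3}), then forces this quantity to be small.
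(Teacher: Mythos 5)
Your proof is correct and follows the same basic strategy as the paper's --- approximate $\psi$ by finite linear combinations of tensor products, for which the convergence factorizes through the one-dimensional hypothesis and \cite[Theorem 25.12]{bill}-type arguments, then control the approximation error --- but the two differ materially in how the Stone--Weierstrass step is executed. The paper asserts directly that the class $\mathscr{G}$ of finite sums of tensor products of $\mathcal{C}_b(\mathsf{X}\times\mathsf{X})$ functions is uniformly dense in $\mathcal{C}_b(\mathsf{X}^2\times\mathsf{X}^2)$ and picks $\tilde{\psi}$ with $\|\psi-\tilde{\psi}\|<\epsilon/3$; when $\mathsf{X}$ is non-compact this global uniform density is not literally what Stone--Weierstrass gives (it applies on compacta, and the uniform closure of $\mathscr{G}$ on a non-compact product space need not be all of $\mathcal{C}_b$), so the paper's argument is complete only modulo exactly the localization you supply. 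You instead use (A\ref{hyp:3}) to get tightness of $\check{\eta}_n^{M}$, cut off with a compactly supported $\chi$, apply Stone--Weierstrass only on the compact $K_1\times K_1$, and dominate the global error by $\epsilon+2(1-\chi\otimes\chi)$, whose integral against any self-product collapses to $1-\mu(\chi)^2$ and is therefore governed by the one-dimensional hypothesis (for the random measures) together with tightness (for the deterministic limit). Your pointwise bound checks out on all three regions and the factorization $(\mu\otimes\mu)(1-\chi\otimes\chi)=1-\mu(\chi)^2$ is exactly the right device; the net effect is a slightly longer argument that rigorously covers the non-compact case the paper's shorter version glosses over.
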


\begin{proof}
We will use a density argument. Define
$$
\mathscr{F} = \{f:\mathsf{X}^2\times\mathsf{X}^2\rightarrow\mathbb{R}:f(x,y)= g(x)\otimes h(y), (g,h)\in\mathcal{C}_b(\mathsf{X}\times\mathsf{X})\times\mathcal{C}_b(\mathsf{X}\times\mathsf{X})\}.
$$
Denote $\mathscr{G}$ as the set of functions which are finite, linear combinations of functions in $\mathscr{F}$. As we have that for some $n\geq 0$, $\varphi\in\mathcal{C}_b(\mathsf{X}\times\mathsf{X})$, $\check{\eta}_{n}^{N,M}(\varphi) \rightarrow_{\mathbb{P}} \check{\eta}_{n}^{M}(\varphi)$, it follows that for $\tilde{\psi}\in\mathscr{G}$
$$
\Big(\check{\eta}_{n}^{N,M}\otimes \check{\eta}_{n}^{N,M}\Big)(\tilde{\psi}) \rightarrow_{\mathbb{P}} 
\Big(\check{\eta}_{n}^{M}\otimes \check{\eta}_{n}^{M}\Big)(\tilde{\psi})
$$
and thus, by \cite[Theorem 25.12]{bill}
\begin{equation}\label{eq:max_1}
\lim_{N\rightarrow\infty}
\mathbb{E}\Big[\Big|\Big\{\Big(\check{\eta}_{n}^{N,M}\otimes \check{\eta}_{n}^{N,M}\Big)-\Big(\check{\eta}_{n}^{M}\otimes \check{\eta}_{n}^{M}\Big)\Big\}(\tilde{\psi})\Big|\Big]= 0.
\end{equation}

Let $\epsilon>0$ be arbitrary and $\psi\in\mathcal{C}_b(\mathsf{X}^2\times\mathsf{X}^2)$. By the Stone-Weierstrass theorem, $\mathscr{G}$ is dense in $\mathcal{C}_b(\mathsf{X}^2\times\mathsf{X}^2)$, hence
there exist a $\tilde{\psi}\in\mathscr{G}$ such that
\begin{equation}\label{eq:max_2}
\|\psi-\tilde{\psi}\| < \epsilon/3.
\end{equation}
Then we have for any $N\geq 1$
$$
\mathbb{E}\Big[\Big|\Big\{\Big(\check{\eta}_{n}^{N,M}\otimes \check{\eta}_{n}^{N,M}\Big)-\Big(\check{\eta}_{n}^{M}\otimes \check{\eta}_{n}^{M}\Big)\Big\}(\psi)\Big|\Big]
\leq 
$$
$$
\mathbb{E}\Big[\Big(\check{\eta}_{n}^{N,M}\otimes \check{\eta}_{n}^{N,M}\Big)(|\psi-\tilde{\psi}|)\Big] + 
\Big(\check{\eta}_{n}^{M}\otimes \check{\eta}_{n}^{M}\Big)(|\psi-\tilde{\psi}|) + 
\mathbb{E}\Big[\Big|\Big\{\Big(\check{\eta}_{n}^{N,M}\otimes \check{\eta}_{n}^{N,M}\Big)-\Big(\check{\eta}_{n}^{M}\otimes \check{\eta}_{n}^{M}\Big)\Big\}(\tilde{\psi})\Big|\Big].
$$
By \eqref{eq:max_1} there exist a $N^*\geq 1$ such that for every $N\geq N^*$
$$
\mathbb{E}\Big[\Big|\Big\{\Big(\check{\eta}_{n}^{N,M}\otimes \check{\eta}_{n}^{N,M}\Big)-\Big(\check{\eta}_{n}^{M}\otimes \check{\eta}_{n}^{M}\Big)\Big\}(\tilde{\psi})\Big|\Big] <\epsilon/3.
$$
Hence applying \eqref{eq:max_2} to the terms
$$
\mathbb{E}\Big[\Big(\check{\eta}_{n}^{N,M}\otimes \check{\eta}_{n}^{N,M}\Big)(|\psi-\tilde{\psi}|)\Big] \quad\textrm{and}\quad
\Big(\check{\eta}_{n}^{M}\otimes \check{\eta}_{n}^{M}\Big)(|\psi-\tilde{\psi}|) 
$$
we have for every $N\geq N^*$ 
$$
\mathbb{E}\Big[\Big|\Big\{\Big(\check{\eta}_{n}^{N,M}\otimes \check{\eta}_{n}^{N,M}\Big)-\Big(\check{\eta}_{n}^{M}\otimes \check{\eta}_{n}^{M}\Big)\Big\}(\psi)\Big|\Big]
<\epsilon
$$
and as $\epsilon>0$ is arbitrary we have 
$$
\Big(\check{\eta}_{n}^{N,M}\otimes \check{\eta}_{n}^{N,M}\Big)(\psi) \rightarrow_{\mathbb{P}} 
\Big(\check{\eta}_{n}^{M}\otimes \check{\eta}_{n}^{M}\Big)(\psi)
$$
as was to be proved.
\end{proof}

\begin{lem}\label{lem:max_ci_hard_part}
Assume (A\ref{hyp:1}-\ref{hyp:3}). Then if  
for any $\varphi\in\mathcal{C}_b(\mathsf{X}\times\mathsf{X})$, $n\geq 0$
$$
\check{\eta}_{n}^{N,M}(\varphi) \rightarrow_{\mathbb{P}} \check{\eta}_{n}^{M}(\varphi)
$$
we have for any $\psi\in\mathcal{C}_b(\mathsf{X}\times\mathsf{X})$, $n\geq 0$
$$
\Big(1-
\check{\eta}_{n}^{N,M}\Big(\{F_{n,\check{\eta}_{n}^{N,M},f} \wedge F_{n,\check{\eta}_{n}^{N,M},c}\}\Big)
\Big)(\check{\eta}_{n}^{N,M}\otimes\check{\eta}_{n}^{N,M})\Big(\Big\{\overline{F}_{n,\check{\eta}_{n}^{N,M},f}\otimes \overline{F}_{n,\check{\eta}_{n}^{N,M},c}\Big\}\bar{M}_{n+1}(\psi)\Big) \rightarrow_{\mathbb{P}}
$$
$$
\Big(1-
\check{\eta}_{n}^{M}\Big(\{F_{n,\check{\eta}_{n}^{M},f} \wedge F_{n,\check{\eta}_{n}^{M},c}\}\Big)
\Big)(\check{\eta}_{n}^{M}\otimes\check{\eta}_{n}^{M})\Big(\Big\{\overline{F}_{n,\check{\eta}_{n}^{M},f}\otimes \overline{F}_{n,\check{\eta}_{n}^{M},c}\Big\}\bar{M}_{n+1}(\psi)\Big).
$$
\end{lem}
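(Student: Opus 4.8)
The plan is to show the two displayed quantities are asymptotically equal by a telescoping argument that successively replaces the random normalised weights and then the outer empirical product measure by their deterministic limits, invoking Lemmas \ref{lem:max_bayes_conv}, \ref{lem:max_cond_exp_1}--\ref{lem:max_cond_exp_3} and \ref{lem:max_marginal_product} in turn. Write $\mu_N=\check{\eta}_{n}^{N,M}$, $\mu=\check{\eta}_{n}^{M}$ and abbreviate $F_f^N=F_{n,\mu_N,f}$, $F_c^N=F_{n,\mu_N,c}$, $A_N=\mu_N(F_f^N\wedge F_c^N)$, $D_N=1-A_N$, and likewise $F_f,F_c,A,D$ for the $\mu$-limit; note $\mu_N(F_f^N)=\mu_N(F_c^N)=1$ so that $A_N\in[0,1]$ and $D_N=\mu_N(F_f^N-F_f^N\wedge F_c^N)$ is exactly the normaliser of $\overline{F}_{n,\mu_N,f}$ and of $\overline{F}_{n,\mu_N,c}$. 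Set $B_N=(\mu_N\otimes\mu_N)(\{\overline{F}_{n,\mu_N,f}\otimes\overline{F}_{n,\mu_N,c}\}\bar{M}_{n+1}(\psi))$ and $B=(\mu\otimes\mu)(\{\overline{F}_{n,\mu,f}\otimes\overline{F}_{n,\mu,c}\}\bar{M}_{n+1}(\psi))$. First I would record that $A_N\to_{\mathbb{P}}A$ by Lemma \ref{lem:max_bayes_conv} (take the test function $\equiv 1$), so $(1-A_N)$ is bounded in $[0,1]$ and converges in probability to $(1-A)$. Writing the target difference as
$$(1-A_N)B_N-(1-A)B = (1-A_N)(B_N-B) + (A-A_N)B,$$
the second summand vanishes in probability since $B$ is a finite constant, and the first is handled once $B_N\to_{\mathbb{P}}B$ because $(1-A_N)$ is bounded. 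Hence it suffices to prove $B_N\to_{\mathbb{P}}B$.

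I would establish $B_N\to_{\mathbb{P}}B$ by telescoping across three stages. \textbf{Stage one} replaces $\overline{F}_{n,\mu_N,f}$ by $\overline{F}_{n,\mu,f}$ in the left slot while keeping $\overline{F}_{n,\mu_N,c}$ fixed in the right slot. The point is the algebraic identity
$$\overline{F}_{n,\mu_N,f}-\overline{F}_{n,\mu,f} = \Big(\frac{F_f^N}{D_N}-\frac{F_f}{D}\Big) + \frac{F_f\wedge F_c - F_f^N\wedge F_c^N}{D} + (F_f^N\wedge F_c^N)\Big(\frac{1}{D}-\frac{1}{D_N}\Big),$$
which one checks by expanding both sides. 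Integrating against $\mu_N\otimes\mu_N$ with $\overline{F}_{n,\mu_N,c}\,\bar{M}_{n+1}(\psi)$ in the right slot, the three summands are precisely the quantities shown to converge in probability to zero in Lemmas \ref{lem:max_cond_exp_1}, \ref{lem:max_cond_exp_2} and \ref{lem:max_cond_exp_3} respectively; for the third note $\frac{1}{D}-\frac{1}{D_N}=\frac{\mu(F_f\wedge F_c)-\mu_N(F_f^N\wedge F_c^N)}{DD_N}$, which is (up to sign) the prefactor in Lemma \ref{lem:max_cond_exp_3}. This gives $(\mu_N\otimes\mu_N)(\{(\overline{F}_{n,\mu_N,f}-\overline{F}_{n,\mu,f})\otimes\overline{F}_{n,\mu_N,c}\}\bar{M}_{n+1}(\psi))\to_{\mathbb{P}}0$. \textbf{Stage two}, with $\overline{F}_{n,\mu,f}$ now a fixed function in the left slot, performs the analogous replacement $\overline{F}_{n,\mu_N,c}\to\overline{F}_{n,\mu,c}$ in the right slot; this follows by the symmetric versions of Lemmas \ref{lem:max_cond_exp_1}--\ref{lem:max_cond_exp_3} (interchanging the roles of $f,c$ and of the two coordinates), using the same uniform control of $\mu_N(|\overline{F}_{n,\mu_N,f}|)$ as in \eqref{eq:max_8}.

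\textbf{Stage three} treats the now-fixed integrand $\{\overline{F}_{n,\mu,f}\otimes\overline{F}_{n,\mu,c}\}\bar{M}_{n+1}(\psi)$. Under (A\ref{hyp:1}) each of $F_f,F_c$ is continuous and bounded, hence so are $\overline{F}_{n,\mu,f},\overline{F}_{n,\mu,c}$; under (A\ref{hyp:2}) the kernel $\bar{M}_{n+1}(\psi)((x,y),(u,v))=\check{M}_{n+1}(\psi)(x,v)$ is continuous and bounded on $\mathsf{X}^2\times\mathsf{X}^2$, so the integrand lies in $\mathcal{C}_b(\mathsf{X}^2\times\mathsf{X}^2)$ and Lemma \ref{lem:max_marginal_product} (which uses (A\ref{hyp:3})) gives $(\mu_N\otimes\mu_N)(\{\overline{F}_{n,\mu,f}\otimes\overline{F}_{n,\mu,c}\}\bar{M}_{n+1}(\psi))\to_{\mathbb{P}}B$. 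Summing the three stages yields $B_N\to_{\mathbb{P}}B$, and combined with the first paragraph this proves the claim.

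The main obstacle is Stage one: one must hit upon the exact decomposition above so that the three error pieces match Lemmas \ref{lem:max_cond_exp_1}--\ref{lem:max_cond_exp_3} termwise, and throughout one must keep the random normaliser $D_N$ under control, which is why Lemma \ref{lem:max_bayes_conv} (forcing $A_N\to_{\mathbb{P}}A$, hence $D_N$ bounded away from $0$ with probability tending to one in the nondegenerate case $D>0$) and the uniform bound \eqref{eq:max_8} on $\mu_N(|\overline{F}_{n,\mu_N,c}|)$ are invoked inside those lemmas. In the degenerate case $D=0$ the prefactor $1-\mu(F_f\wedge F_c)$ annihilates the right-hand limit, and the same bound \eqref{eq:max_8} applied to the left-hand side shows it too vanishes in probability, so the identity persists.
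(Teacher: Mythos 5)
Your proof is correct and follows essentially the same route as the paper: the same outer split into a prefactor difference plus a fixed prefactor times $B_N-B$, followed by the three-stage telescoping (replace $\overline{F}_{n,\mu_N,f}$, then $\overline{F}_{n,\mu_N,c}$, then the empirical product measure via Lemma \ref{lem:max_marginal_product}), with your Stage-one identity matching the paper's $T_6^N-T_7^N+T_8^N-T_9^N$ decomposition termwise onto Lemmas \ref{lem:max_cond_exp_1}--\ref{lem:max_cond_exp_3}. The only immaterial differences are that you attach the bounded random factor $(1-A_N)$ to $B_N-B$ and the prefactor difference to the deterministic $B$ (the paper does the reverse, so it must additionally bound $B_N$ by a convergent random variable), and that you make explicit the degenerate case $D=0$, which the paper leaves implicit.
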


\begin{proof}
We make the definitions
\begin{eqnarray}
T_1^N & := & \Big(\check{\eta}_{n}^{M}\Big(\{F_{n,\check{\eta}_{n}^{M},f} \wedge F_{n,\check{\eta}_{n}^{M},c}\}\Big)-
\check{\eta}_{n}^{N,M}\Big(\{F_{n,\check{\eta}_{n}^{N,M},f} \wedge F_{n,\check{\eta}_{n}^{N,M},c}\}\Big)\Big)\times \nonumber \\& &(\check{\eta}_{n}^{N,M}\otimes\check{\eta}_{n}^{N,M})\Big(\Big\{\overline{F}_{n,\check{\eta}_{n}^{N,M},f}\otimes \overline{F}_{n,\check{\eta}_{n}^{N,M},c}\Big\}\bar{M}_{n+1}(\psi)\Big) \label{eq:max_14}\\
T_2^N & := & \Big(1-
\check{\eta}_{n}^{M}\Big(\{F_{n,\check{\eta}_{n}^{M},f} \wedge F_{n,\check{\eta}_{n}^{M},c}\}\Big)\Big)
(\check{\eta}_{n}^{N,M}\otimes\check{\eta}_{n}^{N,M})\Big(\Big\{\overline{F}_{n,\check{\eta}_{n}^{N,M},f}\otimes \overline{F}_{n,\check{\eta}_{n}^{N,M},c}
- \nonumber \\ & &
\overline{F}_{n,\check{\eta}_{n}^{M},f}\otimes \overline{F}_{n,\check{\eta}_{n}^{M},c}
\Big\}\bar{M}_{n+1}(\psi)\Big) \label{eq:max_15}\\
T_3^N & = & \Big[(\check{\eta}_{n}^{N,M}\otimes\check{\eta}_{n}^{N,M})- 
(\check{\eta}_{n}^{M}\otimes\check{\eta}_{n}^{M})
\Big]\Big(\Big\{\overline{F}_{n,\check{\eta}_{n}^{M},f}\otimes \overline{F}_{n,\check{\eta}_{n}^{M},c}\Big\}\bar{M}_{n+1}(\psi)\Big) \label{eq:max_16}
\end{eqnarray}
Then we have
$$
T_1^N + T_2^N + T_3^N = 
\Big(1-
\check{\eta}_{n}^{N,M}\Big(\{F_{n,\check{\eta}_{n}^{N,M},f} \wedge F_{n,\check{\eta}_{n}^{N,M},c}\}\Big)
\Big)(\check{\eta}_{n}^{N,M}\otimes\check{\eta}_{n}^{N,M})\Big(\Big\{\overline{F}_{n,\check{\eta}_{n}^{N,M},f}\otimes \overline{F}_{n,\check{\eta}_{n}^{N,M},c}\Big\}\times
$$
$$
\bar{M}_{n+1}(\psi)\Big) -
\Big(1-
\check{\eta}_{n}^{M}\Big(\{F_{n,\check{\eta}_{n}^{M},f} \wedge F_{n,\check{\eta}_{n}^{M},c}\}\Big)
\Big)(\check{\eta}_{n}^{M}\otimes\check{\eta}_{n}^{M})\Big(\Big\{\overline{F}_{n,\check{\eta}_{n}^{M},f}\otimes \overline{F}_{n,\check{\eta}_{n}^{M},c}\Big\}\bar{M}_{n+1}(\psi)\Big) 
$$
We will show that \eqref{eq:max_14}-\eqref{eq:max_16} will converge in probability to zero.


\textbf{Term \eqref{eq:max_14}:} By Lemma \ref{lem:max_bayes_conv}
$$
\Big|\check{\eta}_{n}^{M}\Big(\{F_{n,\check{\eta}_{n}^{M},f} \wedge F_{n,\check{\eta}_{n}^{M},c}\}\Big)-
\check{\eta}_{n}^{N,M}\Big(\{F_{n,\check{\eta}_{n}^{N,M},f} \wedge F_{n,\check{\eta}_{n}^{N,M},c}\}\Big)\Big|\rightarrow_{\mathbb{P}} 0
$$
so if we can show that
$$
T^N_1 := \Big|(\check{\eta}_{n}^{N,M}\otimes\check{\eta}_{n}^{N,M})\Big(\Big\{\overline{F}_{n,\check{\eta}_{n}^{N,M},f}\otimes \overline{F}_{n,\check{\eta}_{n}^{N,M},c}\Big\}\bar{M}_{n+1}(\psi)\Big)\Big|
$$
is (almost-surely) upper-bounded by a term which converges in probability to a positive constant, we have shown that that \eqref{eq:max_14} converges in probability to zero. Clearly, almost-surely
$$
T^N_1 \leq \|\psi\|\check{\eta}_{n}^{N,M}(|\overline{F}_{n,\check{\eta}_{n}^{N,M},f}|)\check{\eta}_{n}^{N,M}(|\overline{F}_{n,\check{\eta}_{n}^{N,M},c}|)
$$
we focus on showing that $\check{\eta}_{n}^{N,M}(|\overline{F}_{n,\check{\eta}_{n}^{N,M},c}|)$ is upper-bounded by a term which converges in probability to a finite
constant. This can be verified in an almost identical manner to the approach for \eqref{eq:max_8} in the proof of Lemma \ref{lem:max_cond_exp_1}; hence we have verified that  \eqref{eq:max_14} converges in probability to zero.

\textbf{Term \eqref{eq:max_15}:} Set
\begin{eqnarray}
C & := & \check{\eta}_{n}^{M}\Big(\{F_{n,\check{\eta}_{n}^{M},f} \wedge F_{n,\check{\eta}_{n}^{M},c}\}\Big) \nonumber \\
T_4^N & := &  C
(\check{\eta}_{n}^{N,M}\otimes\check{\eta}_{n}^{N,M})\Big(\Big\{[\overline{F}_{n,\check{\eta}_{n}^{N,M},f}-\overline{F}_{n,\check{\eta}_{n}^{M},f}]\otimes \overline{F}_{n,\check{\eta}_{n}^{N,M},c}\Big\}\bar{M}_{n+1}(\psi)\Big) \label{eq:max_17}\\
T_5^N & := & C
(\check{\eta}_{n}^{N,M}\otimes\check{\eta}_{n}^{N,M})\Big(\Big\{
\overline{F}_{n,\check{\eta}_{n}^{M},f}\otimes
[\overline{F}_{n,\check{\eta}_{n}^{N,M},c}-\overline{F}_{n,\check{\eta}_{n}^{M},c}]\Big\}\bar{M}_{n+1}(\psi)\Big) \label{eq:max_18}
\end{eqnarray}
then
$$
T_2^N = T_4^N + T_5^N.
$$
We need to show that \eqref{eq:max_17} and \eqref{eq:max_18} converge in probability to zero. As the proof for \eqref{eq:max_18} is similar and easier than that
for \eqref{eq:max_17}, we focus on the latter. Define
\begin{eqnarray*}
T_6^N & := & C
(\check{\eta}_{n}^{N,M}\otimes\check{\eta}_{n}^{N,M})\Big(\Big\{
\Big(
\frac{F_{n,\check{\eta}_{n}^{N,M},f}}{\check{\eta}_{n}^{N,M}(F_{n,\check{\eta}_{n}^{N,M},f}-F_{n,\check{\eta}_{n}^{N,M},f}\wedge F_{n,\check{\eta}_{n}^{N,M},c})}
\otimes \nonumber \\ & & 
\overline{F}_{n,\check{\eta}_{n}^{N,M},c}\Big)\Big\}
\bar{M}_{n+1}(\psi)
\Big) \\
T_7^N & := & C (\check{\eta}_{n}^{N,M}\otimes\check{\eta}_{n}^{N,M})\Big(
\Big\{\Big(
\frac{F_{n,\check{\eta}_{n}^{M},f}}{\check{\eta}_{n}^{M}(F_{n,\check{\eta}_{n}^{M},f}-F_{n,\check{\eta}_{n}^{M},f}\wedge F_{n,\check{\eta}_{n}^{M},c})}
\Big)\otimes \nonumber \\ & &
\overline{F}_{n,\check{\eta}_{n}^{N,M},c}\Big\}
\bar{M}_{n+1}(\psi)
\Big)  \\
T_8^N & := &  C (\check{\eta}_{n}^{N,M}\otimes\check{\eta}_{n}^{N,M})\Big(\Big\{\Big(
\frac{F_{n,\check{\eta}_{n}^{M},f}\wedge F_{n,\check{\eta}_{n}^{M},c}-
F_{n,\check{\eta}_{n}^{N,M},f}\wedge F_{n,\check{\eta}_{n}^{N,M},c}}
{\check{\eta}_{n}^{M}(F_{n,\check{\eta}_{n}^{M},f}-F_{n,\check{\eta}_{n}^{M},f}\wedge F_{n,\check{\eta}_{n}^{M},c})}
\Big)\otimes \nonumber\\ & & 
\overline{F}_{n,\check{\eta}_{n}^{N,M},c}\Big\}
\bar{M}_{n+1}(\psi)
\Big) \\
T_9^N & := & \frac{\check{\eta}_{n}^{N,M}(F_{n,\check{\eta}_{n}^{N,M},f}\wedge F_{n,\check{\eta}_{n}^{N,M},c})-
\check{\eta}_{n}^{M}(F_{n,\check{\eta}_{n}^{M},f}\wedge F_{n,\check{\eta}_{n}^{M},c})
}
{(1-\check{\eta}_{n}^{N,M}(F_{n,\check{\eta}_{n}^{N,M},f}\wedge F_{n,\check{\eta}_{n}^{N,M},c}))(1-\check{\eta}_{n}^{M}(F_{n,\check{\eta}_{n}^{M},f}\wedge F_{n,\check{\eta}_{n}^{M},c}))}\times \nonumber \\ & &
C (\check{\eta}_{n}^{N,M}\otimes\check{\eta}_{n}^{N,M})\Big(\Big\{
\Big(F_{n,\check{\eta}_{n}^{N,M},f}\wedge F_{n,\check{\eta}_{n}^{N,M},c}\Big)
\otimes
\overline{F}_{n,\check{\eta}_{n}^{N,M},c}\Big\}
\bar{M}_{n+1}(\psi)
\Big)
\end{eqnarray*}
Then we have that
$$
T_4^N = T_6^N - T_7^N + T_8^N - T_9^N.
$$
By Lemma \ref{lem:max_cond_exp_1} $T_6^N - T_7^N\rightarrow_{\mathbb{P}}0$, by Lemma \ref{lem:max_cond_exp_2} $ T_8^N\rightarrow_{\mathbb{P}}0$
and by Lemma \ref{lem:max_cond_exp_3} $ T_9^N\rightarrow_{\mathbb{P}}0$, hence we conclude that $T_4^N\rightarrow_{\mathbb{P}} 0$.

\textbf{Term \eqref{eq:max_16}:} As 
$$
\Big\{\overline{F}_{n,\check{\eta}_{n}^{M},f}\otimes \overline{F}_{n,\check{\eta}_{n}^{M},c}\Big\}\bar{M}_{n+1}(\psi) \in \mathcal{C}_b(\mathsf{X}^2\times\mathsf{X}^2)
$$
it easily follows that by Lemma \ref{lem:max_marginal_product}
$$
\Big[(\check{\eta}_{n}^{N,M}\otimes\check{\eta}_{n}^{N,M})-
(\check{\eta}_{n}^{M}\otimes\check{\eta}_{n}^{M})
\Big]\Big(\Big\{\overline{F}_{n,\check{\eta}_{n}^{M},f}\otimes \overline{F}_{n,\check{\eta}_{n}^{M},c}\Big\}\bar{M}_{n+1}(\psi)\Big)\Big) \rightarrow_{\mathbb{P}} 0.
$$

%
%
%
%
\end{proof}

\section{Technical Results for the MCPF}

\begin{prop}\label{prop:lp_bound_mcpf}
For any $n\geq 0$, $s\in\{f,c\}$, $p\geq 1$ there exists a $C<+\infty$
such that for any $\varphi\in\mathcal{B}_b(\mathsf{X})$, $N\geq 1$ we have
$$
\mathbb{E}[|[\eta_n^{N,s}-\eta_n^{s}](\varphi)|^{p}]^{1/p} \leq \frac{C\|\varphi\|}{\sqrt{N}}.
$$
\end{prop}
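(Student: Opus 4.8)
The plan is to exploit the fact that, although the joint selection--mutation operator $\check{\Phi}_p^C$ is complicated (it samples a maximal coupling), its two marginals are \emph{exactly} the ordinary Feynman--Kac updates $\Phi_p^f$ and $\Phi_p^c$. Consequently the $s$-component of the MCPF particle system is itself a standard particle filter, and the bound reduces to a classical one proved by induction, as in Proposition \ref{prop:lp_bound_ind} and \cite[Proposition C.6]{mlpf}.

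First I would verify the marginal reduction. Fix $s=f$ (the case $s=c$ is symmetric) and write $\mu=\eta_{p-1}^{N,f}$, $\nu=\eta_{p-1}^{N,c}$. Taking $B=A\times\mathsf{X}$ in the definition of $\check{\Phi}_p^C(\mu,\nu)$ and integrating out the second coordinate, the coincidence part of the operator contributes the $x$-density $F_{p-1,\mu,f}(x)\wedge F_{p-1,\nu,c}(x)$, while the rejection part contributes $\overline{F}_{p-1,\mu,\nu,f}(x)$, because $\int_{\mathsf{X}}\overline{F}_{p-1,\nu,\mu,c}(y)\,dy = 1-\int_{\mathsf{X}}F_{p-1,\mu,f}\wedge F_{p-1,\nu,c}\,du$ exactly cancels the normalising constant. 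Adding the two and using $\overline{F}_{p-1,\mu,\nu,f}=F_{p-1,\mu,f}-F_{p-1,\mu,f}\wedge F_{p-1,\nu,c}$ gives the density $F_{p-1,\mu,f}(x)=\mu(G_{p-1}M_p^f(\cdot,x))/\mu(G_{p-1})$, which is precisely that of $\Phi_p^f(\eta_{p-1}^{N,f})$. Hence, conditional on the $\sigma$-algebra $\mathcal{F}_{p-1}^N$ generated by the system up to time $p-1$, the particles $X_p^{1,f},\dots,X_p^{N,f}$ are i.i.d.~with law $\Phi_p^f(\eta_{p-1}^{N,f})$, exactly as in a bootstrap filter.

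With this reduction the proof proceeds by induction on $n$. For $n=0$ the particles are i.i.d.~from $\eta_0^s$ (the marginal of $\check{\eta}_0^C$) and the Marcinkiewicz--Zygmund inequality gives the bound directly. For the inductive step I would split
$$
[\eta_n^{N,s}-\eta_n^{s}](\varphi) = [\eta_n^{N,s}-\Phi_n^s(\eta_{n-1}^{N,s})](\varphi) + [\Phi_n^s(\eta_{n-1}^{N,s})-\eta_n^{s}](\varphi).
$$
Conditional on $\mathcal{F}_{n-1}^N$ the first term is an average of $N$ centred i.i.d.~summands bounded by $2\|\varphi\|$, so the conditional Marcinkiewicz--Zygmund inequality bounds its $L_p$-norm by $C\|\varphi\|/\sqrt{N}$ uniformly in the conditioning. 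For the second term I would use the identity
$$
[\Phi_n^s(\eta_{n-1}^{N,s})-\eta_n^{s}](\varphi) = \frac{1}{\eta_{n-1}^{N,s}(G_{n-1})}[\eta_{n-1}^{N,s}-\eta_{n-1}^{s}]\big(G_{n-1}(M_n^s(\varphi)-\eta_n^s(\varphi))\big),
$$
where the test function is bounded by $2\|G_{n-1}\|\,\|\varphi\|$, so that the induction hypothesis applies to the bracketed increment.

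The main obstacle is the control of the random denominator $1/\eta_{n-1}^{N,s}(G_{n-1})$, which cannot be bounded by a constant since $G_{n-1}$ need not be bounded below. I would handle it as in \cite[Proposition C.6]{mlpf} by truncating on the event $E=\{\eta_{n-1}^{N,s}(G_{n-1})\geq \tfrac12\eta_{n-1}^{s}(G_{n-1})\}$: on $E$ the factor is bounded by $2/\eta_{n-1}^{s}(G_{n-1})$ and the induction hypothesis yields the rate, while on $E^c$ one bounds the difference crudely by $2\|\varphi\|$ and estimates $\mathbb{P}(E^c)$ by Markov's inequality applied to a high moment $q\geq p$ of $[\eta_{n-1}^{N,s}-\eta_{n-1}^{s}](G_{n-1})$, which by induction is $\mathcal{O}(N^{-q/2})$ and hence contributes $\mathcal{O}(N^{-1/2})$ after taking the $1/p$ power. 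Combining the two pieces via the triangle inequality for the $L_p$-norm closes the induction and gives the stated bound $C\|\varphi\|/\sqrt{N}$; every remaining step is a routine application of Minkowski's inequality together with the boundedness of $G_{n-1}$.
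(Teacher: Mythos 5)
Your argument is correct and is essentially the proof the paper intends: the paper disposes of this proposition by pointing to the standard induction of Proposition \ref{prop:lp_bound_ind} and \cite[Proposition C.6]{mlpf}, and the only MCPF-specific ingredient --- that the $s$-marginal of $\check{\Phi}_p^C(\eta_{p-1}^{N,f},\eta_{p-1}^{N,c})$ is exactly $\Phi_p^s(\eta_{p-1}^{N,s})$, so that conditionally on $\mathcal{F}_{p-1}^N$ the $s$-particles are i.i.d.\ from a one-step bootstrap update --- is precisely what you verify before running that induction. The only stylistic difference is your handling of the random denominator: the usual route writes $\Phi_n^s(\mu)(\varphi)-\Phi_n^s(\eta_{n-1}^s)(\varphi)=\eta_{n-1}^s(G_{n-1})^{-1}[\mu-\eta_{n-1}^s](G_{n-1}M_n^s(\varphi))-\Phi_n^s(\mu)(\varphi)\,\eta_{n-1}^s(G_{n-1})^{-1}[\mu-\eta_{n-1}^s](G_{n-1})$, where $|\Phi_n^s(\mu)(\varphi)|\leq\|\varphi\|$ and only the deterministic $\eta_{n-1}^s(G_{n-1})$ appears in denominators, so no truncation event is needed, although your Markov-inequality truncation also closes the induction.
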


\begin{proof}
As for Proposition \ref{prop:lp_bound_ind}.
\end{proof}

Recall that we are assuming that $M_n^s$ has a density and we are denoting the density also as $M_n^s$.

\begin{theorem}\label{theo:wlln_mcpf}
Suppose that for $n\geq 1$, $s\in\{f,c\}$, $M_n^s\in\mathcal{B}_b(\mathsf{X}\times\mathsf{X})$. 
Then for any $\varphi\in\mathcal{B}_b(\mathsf{X}\times\mathsf{X})$, $n\geq 0$ we have
$$
\check{\eta}_{n}^{N,C}(\varphi) \rightarrow_{\mathbb{P}} \check{\eta}_{n}^{C}(\varphi).
$$
\end{theorem}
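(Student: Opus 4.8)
The plan is to follow the same two-step scheme used for Theorem \ref{theo:wlln_max}, but to exploit the fact that $\check{\Phi}_n^{C}$ depends on its arguments only through the one-step predictor densities, which makes the continuity step far more transparent than for $\check{\Phi}_n^{M}$. The case $n=0$ is the i.i.d.\ weak law of large numbers, and for $n\geq 1$ I would write, using $\check{\eta}_n^C=\check{\Phi}_n^C(\eta_{n-1}^f,\eta_{n-1}^c)$,
$$
\check{\eta}_{n}^{N,C}(\varphi) - \check{\eta}_{n}^{C}(\varphi) = \Big(\check{\eta}_{n}^{N,C}(\varphi) - \check{\Phi}_n^{C}(\eta_{n-1}^{N,f},\eta_{n-1}^{N,c})(\varphi)\Big) + \Big(\check{\Phi}_n^{C}(\eta_{n-1}^{N,f},\eta_{n-1}^{N,c})(\varphi) - \check{\Phi}_n^{C}(\eta_{n-1}^{f},\eta_{n-1}^{c})(\varphi)\Big).
$$
Conditional on the history up to time $n-1$, the pairs $u_n^1,\dots,u_n^N$ are i.i.d.\ from $\check{\Phi}_n^{C}(\eta_{n-1}^{N,f},\eta_{n-1}^{N,c})$, so the first bracket is a centred conditional average of the bounded function $\varphi$ and will tend to zero in probability by the conditional Marcinkiewicz--Zygmund inequality, exactly as in Theorem \ref{theo:wlln_max}. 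The remaining work is therefore the operator-continuity claim $\check{\Phi}_n^{C}(\eta_{n-1}^{N,f},\eta_{n-1}^{N,c})(\varphi)\rightarrow_{\mathbb{P}}\check{\Phi}_n^{C}(\eta_{n-1}^{f},\eta_{n-1}^{c})(\varphi)$; note that the marginal convergences $\eta_{n-1}^{N,s}(\psi)\rightarrow_{\mathbb{P}}\eta_{n-1}^{s}(\psi)$ I will use below follow directly from Proposition \ref{prop:lp_bound_mcpf}, so no heavy induction on the product measure is needed.

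For the continuity step I would write $p_N=F_{n-1,\eta_{n-1}^{N,f},f}$, $q_N=F_{n-1,\eta_{n-1}^{N,c},c}$ for the random predictor densities, $p,q$ for their deterministic limits, $\kappa_N=\int_{\mathsf{X}} p_N\wedge q_N\,du$, and $\overline{p}_N=p_N-p_N\wedge q_N$, $\overline{q}_N=q_N-p_N\wedge q_N$ (each integrating to $1-\kappa_N$), with $\overline{p},\overline{q}$ the corresponding limits. Then $\check{\Phi}_n^{C}(\eta_{n-1}^{N,f},\eta_{n-1}^{N,c})(\varphi)$ splits as the diagonal term $\int_{\mathsf{X}}\varphi(u,u)\,p_N(u)\wedge q_N(u)\,du$ plus the off-diagonal term $(1-\kappa_N)^{-1}\int_{\mathsf{X}\times\mathsf{X}}\varphi(x,y)\,\overline{p}_N(x)\overline{q}_N(y)\,dx\,dy$. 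Using $|a\wedge b-c\wedge d|\leq|a-c|+|b-d|$, the identity $\overline{p}_N\overline{q}_N-\overline{p}\,\overline{q}=(\overline{p}_N-\overline{p})\overline{q}_N+\overline{p}(\overline{q}_N-\overline{q})$, and $\int\overline{q}_N\,dy=1-\kappa_N\leq 1$, every piece is dominated by $\|\varphi\|\big(\int|p_N-p|\,du+\int|q_N-q|\,du\big)$ together with the scalar convergence $\kappa_N\rightarrow_{\mathbb{P}}\kappa$; since $\kappa<1$ in the non-degenerate case, $(1-\kappa_N)^{-1}$ is eventually bounded. Thus the whole step reduces to showing $\int_{\mathsf{X}}|p_N-p|\,du\rightarrow_{\mathbb{P}}0$ and the analogous statement for $q_N$.

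This last convergence, a total-variation convergence of the one-step predictor $\Phi_n^f(\eta_{n-1}^{N,f})$, is where I expect the main obstacle, because $\mathsf{X}$ may be unbounded. Writing $p_N=\phi_N/\eta_{n-1}^{N,f}(G_{n-1})$ with $\phi_N(u)=\eta_{n-1}^{N,f}(G_{n-1}M_n^f(\cdot,u))$ and $\phi(u)=\eta_{n-1}^{f}(G_{n-1}M_n^f(\cdot,u))$, and using $\eta_{n-1}^{N,f}(G_{n-1})\rightarrow_{\mathbb{P}}\eta_{n-1}^{f}(G_{n-1})>0$, it suffices to prove $\int_{\mathsf{X}}|\phi_N-\phi|\,du\rightarrow_{\mathbb{P}}0$. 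For each fixed $u$ the function $G_{n-1}M_n^f(\cdot,u)$ is bounded (here the hypothesis $M_n^s\in\mathcal{B}_b(\mathsf{X}\times\mathsf{X})$ enters), so Proposition \ref{prop:lp_bound_mcpf} gives $\mathbb{E}|\phi_N(u)-\phi(u)|\leq C_u/\sqrt{N}\to0$ pointwise; but $C_u$ is not integrable in $u$ over an unbounded space, so one cannot pass the limit under $\int du$ naively. The resolution I would use is a generalized dominated convergence (Pratt) argument: set $h_N(u)=\mathbb{E}|\phi_N(u)-\phi(u)|$ and dominate $h_N(u)\leq\mathbb{E}[\phi_N(u)]+\phi(u)=:D_N(u)$. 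Then $D_N(u)\to2\phi(u)$ pointwise, and, crucially, $\int_{\mathsf{X}}D_N(u)\,du=\mathbb{E}[\eta_{n-1}^{N,f}(G_{n-1})]+\eta_{n-1}^{f}(G_{n-1})\to2\eta_{n-1}^{f}(G_{n-1})=\int_{\mathsf{X}}2\phi(u)\,du$, since $\int_{\mathsf{X}}M_n^f(x,u)\,du=1$ collapses the total masses to scalar $G$-integrals controlled by Proposition \ref{prop:lp_bound_mcpf}. Pratt's lemma then yields $\int_{\mathsf{X}}h_N(u)\,du\to0$, i.e.\ $\mathbb{E}\int_{\mathsf{X}}|\phi_N-\phi|\,du\to0$, whence $\int_{\mathsf{X}}|\phi_N-\phi|\,du\rightarrow_{\mathbb{P}}0$, and identically for $q_N$. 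Combining the two brackets closes the argument; the degenerate case $p=q$ (i.e.\ $\kappa=1$) is immediate, since then the off-diagonal term carries no mass and only the diagonal term, already controlled above, survives.
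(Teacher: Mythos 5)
Your proposal is correct and its skeleton coincides with the paper's: the same two-bracket decomposition into a sampling-error term (killed by the conditional Marcinkiewicz--Zygmund inequality) and an operator-continuity term, the same diagonal/off-diagonal split of $\check{\Phi}_n^C$, and the same reduction to $L^1(dx)$ convergence of the one-step predictor densities $F_{n-1,\eta_{n-1}^{N,s},s}$. Two points of divergence are worth recording. First, the paper organizes the whole argument as an induction on $n$ for the product empirical measure $\check{\eta}_n^{N,C}$, and Lemma \ref{lem:mc_tech_lem} extracts the marginal convergences from that induction hypothesis; you instead pull them directly from Proposition \ref{prop:lp_bound_mcpf}, which flattens the structure (the induction survives only inside that proposition). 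Second, and more substantively, at the step where the pointwise-in-$x$ convergence $\mathbb{E}\,|F_{n-1,\eta_{n-1}^{N,f},f}(x)-F_{n-1,\eta_{n-1}^{f},f}(x)|\to 0$ must be integrated over $\mathsf{X}$, the paper invokes the bounded convergence theorem with the constant dominating bound $\|M_n^f\|\wedge\|M_n^c\|$; on an unbounded state space with an infinite reference measure that constant is not $dx$-integrable, so the interchange as written is not justified. Your Pratt-lemma argument --- dominating by $\mathbb{E}[\phi_N(u)]+\phi(u)$, whose $du$-integrals collapse to scalar $G$-integrals and converge to the integral of the pointwise limit --- is the natural repair and makes the lemma genuinely valid without a finite-measure assumption. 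The handling of the degenerate case $\kappa=1$ and the $(1-\kappa_N)^{-1}$ normalization is also fine, since the off-diagonal mass is $1-\kappa_N$ and cancels the singularity. In short: same route, with one step done more carefully than in the paper.
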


\begin{proof}
The proof is by induction, the initialization following by the WLLN for i.i.d.~random variables. The result is assumed at time $n-1$ and we have the decomposition
$$
\check{\eta}_{n}^{N,C}(\varphi) - \check{\eta}_{n}^{C}(\varphi) = \check{\eta}_{n}^{N,C}(\varphi) - 
\check{\Phi}_p^C(\eta_{p-1}^{N,f},
\eta_{p-1}^{N,c})(\varphi)
+ \check{\Phi}_p^C(\eta_{p-1}^{N,f},
\eta_{p-1}^{N,c})(\varphi) -  \check{\eta}_{n}^{C}(\varphi).
$$
As in the proof of Theorem \ref{theo:wlln_max}, one can easily prove that
$$
|\check{\eta}_{n}^{N,C}(\varphi) - 
\check{\Phi}_p^C(\eta_{p-1}^{N,f},
\eta_{p-1}^{N,c})(\varphi)| \rightarrow_{\mathbb{P}} 0
$$
by using the (conditional) Marcinkiewicz-Zygmund inequality, so we focus on
$$
\check{\Phi}_p^C(\eta_{p-1}^{N,f},\eta_{p-1}^{N,c})(\varphi) -  \check{\eta}_{n}^{C}(\varphi).
$$
Define:
\begin{eqnarray*}
T_1^N & := & \int_{\mathsf{X}} \varphi(x,x) F_{n-1,\eta_{n-1}^{N,f},f}(x)\wedge F_{n,\eta_{n-1}^{N,c},c}(x) dx - 
\int_{\mathsf{X}} \varphi(x,x) F_{n-1,\eta_{n-1}^{f},f}(x)\wedge F_{n-1,\eta_{n-1}^{c},c}(x) dx \\
T_2^N & := & \Big(1-\int_{\mathsf{X}} 
F_{n-1,\eta_{n-1}^{N,f},f}(x)\wedge F_{n-1,\eta_{n-1}^{N,c},c}(x)dx\Big)^{-1}
\int_{\mathsf{X}\times\mathsf{X}} \varphi(x,y)\overline{F}_{n-1,\eta_{n-1}^{N,f},\eta_{n-1}^{N,c},f}(x) \times \\ & &\overline{F}_{n-1,\eta_{n-1}^{N,c},\eta_{n-1}^{N,f},c}(x) dxdy -
\Big(1-\int_{\mathsf{X}} 
F_{n-1,\eta_{n-1}^{f},f}(x)\wedge F_{n-1,\eta_{n-1}^{c},c}(x)dx\Big)^{-1} \times \\ & &
\int_{\mathsf{X}\times\mathsf{X}} \varphi(x,y)\overline{F}_{n-1,\eta_{n-1}^{f},\eta_{n-1}^{c},f}(x) \overline{F}_{n-1,\eta_{n-1}^{c},\eta_{n-1}^{f},c}(x) dxdy
\end{eqnarray*}
Recalling that $\check{\eta}_{n}^{C}(\varphi) = \check{\Phi}_n^C(\eta_{n-1}^{f},\eta_{n-1}^{c})(\varphi)$ we have 
that
$$
T_1^N + T_2^N = \check{\Phi}_p^C(\eta_{p-1}^{N,f},\eta_{p-1}^{N,c})(\varphi) -  \check{\eta}_{n}^{C}(\varphi).
$$
By Lemma \ref{lem:mc_tech_lem} 1.~$T_1^N\rightarrow_{\mathbb{P}}0$ and by Lemma \ref{lem:mc_tech_lem} 2.~$T_2^N\rightarrow_{\mathbb{P}}0$ which concludes the proof.
\end{proof}

\begin{lem}\label{lem:mc_tech_lem}
Suppose that for $n\geq 1$, $s\in\{f,c\}$, 
$M_n^s\in\mathcal{B}_b(\mathsf{X}\times\mathsf{X})$ and that for $\varphi\in\mathcal{B}_b(\mathsf{X}\times\mathsf{X})$, $n\geq 0$
$$
\check{\eta}_{n}^{N,C}(\varphi) \rightarrow_{\mathbb{P}} \check{\eta}_{n}^{C}(\varphi).
$$
Then for any $\psi\in\mathcal{B}_b(\mathsf{X}\times\mathsf{X})$, $n\geq 0$ we have
\begin{enumerate}
\item{
$$
\int_{\mathsf{X}} \psi(x,x) F_{n,\eta_{n}^{N,f},f}(x)\wedge F_{n,\eta_{n}^{N,c},c}(x) dx \rightarrow_\mathbb{P}
\int_{\mathsf{X}} \psi(x,x) F_{n,\eta_{n}^{f},f}(x)\wedge F_{n,\eta_{n}^{c},c}(x) dx.
$$
}
\item{
$$
\Big(1-\int_{\mathsf{X}} 
F_{n,\eta_{n}^{N,f},f}(x)\wedge F_{n,\eta_{n}^{N,c},c}(x)dx\Big)^{-1}
\int_{\mathsf{X}\times\mathsf{X}} \psi(x,y)\overline{F}_{n,\eta_{n}^{N,f},\eta_{n}^{N,c},f}(x) \overline{F}_{n,\eta_{n}^{N,c},\eta_{n}^{N,f},c}(x) dxdy \rightarrow_\mathbb{P}
$$
$$
\Big(1-\int_{\mathsf{X}} 
F_{n,\eta_{n}^{f},f}(x)\wedge F_{n,\eta_{n}^{c},c}(x)dx\Big)^{-1}
\int_{\mathsf{X}\times\mathsf{X}} \psi(x,y)\overline{F}_{n,\eta_{n}^{f},\eta_{n}^{c},f}(x) \overline{F}_{n,\eta_{n}^{c},\eta_{n}^{f},c}(x) dxdy.
$$
}
\end{enumerate}
\end{lem}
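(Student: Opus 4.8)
The plan is to reduce both statements to a single $L^1(dx)$ convergence fact for the one-step predictor densities, and then dispatch the minima and products by elementary Lipschitz-type inequalities. Throughout write $\hat{\psi}(x):=\psi(x,x)\in\mathcal{B}_b(\mathsf{X})$, and abbreviate $F_N^s:=F_{n,\eta_n^{N,s},s}$ and $F^s:=F_{n,\eta_n^s,s}$ for $s\in\{f,c\}$. The structural observation driving everything is that each of $F_N^s$ and $F^s$ is a genuine probability density on $\mathsf{X}$: since $\int_\mathsf{X} M_{n+1}^s(z,x)\,dx=1$, one has $\int_\mathsf{X} F_{n,\mu,s}(x)\,dx=\mu(G_n)/\mu(G_n)=1$ for every $\mu$.

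The key claim I would establish first is that, for $s\in\{f,c\}$,
$$\int_\mathsf{X}|F_N^s(x)-F^s(x)|\,dx\ \rightarrow_{\mathbb{P}}\ 0.$$
For fixed $x$, the function $z\mapsto G_n(z)M_{n+1}^s(z,x)$ is bounded (as $M_{n+1}^s\in\mathcal{B}_b(\mathsf{X}\times\mathsf{X})$ and $G_n$ is bounded), so Proposition \ref{prop:lp_bound_mcpf} gives $\eta_n^{N,s}(G_nM_{n+1}^s(\cdot,x))\rightarrow_{\mathbb{P}}\eta_n^s(G_nM_{n+1}^s(\cdot,x))$ and $\eta_n^{N,s}(G_n)\rightarrow_{\mathbb{P}}\eta_n^s(G_n)>0$, whence $F_N^s(x)\rightarrow_{\mathbb{P}}F^s(x)$ pointwise in $x$. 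Since both densities integrate to one, $\int|F_N^s-F^s|\,dx=2\int(F^s-F_N^s)^+\,dx$, and $(F^s-F_N^s)^+(x)\le F^s(x)$ with $(F^s-F_N^s)^+(x)\rightarrow_{\mathbb{P}}0$. Dominated convergence in $\omega$ (by the deterministic integrable bound $F^s(x)$) gives $\mathbb{E}[(F^s-F_N^s)^+(x)]\to0$ for each $x$, and a second dominated convergence in $x$ (same dominating function, $\int F^s\,dx=1$) yields $\mathbb{E}[\int(F^s-F_N^s)^+\,dx]\to0$ by Tonelli; hence the integral converges to $0$ in $L^1(\mathbb{P})$ and a fortiori in probability. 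This Scheffé-type passage over the (possibly unbounded) state space is the main obstacle, and it is precisely where the boundedness of the transition densities is needed: the nonlinearity of $\wedge$ and $|\cdot|$ forces control in $L^1(dx)$ rather than pointwise.

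Granting the claim, Part 1 follows from $a\wedge b=\tfrac12(a+b-|a-b|)$. The linear pieces satisfy $\int\hat{\psi}\,F_N^s\,dx=\eta_n^{N,s}(G_nM_{n+1}^s(\hat{\psi}))/\eta_n^{N,s}(G_n)$, which converge in probability to their $\eta_n^s$ analogues by Proposition \ref{prop:lp_bound_mcpf}, since $G_nM_{n+1}^s(\hat{\psi})\in\mathcal{B}_b(\mathsf{X})$. For the remaining term, $\big||F_N^f-F_N^c|-|F^f-F^c|\big|\le|F_N^f-F^f|+|F_N^c-F^c|$, so
$$\Big|\int\hat{\psi}\big(|F_N^f-F_N^c|-|F^f-F^c|\big)\,dx\Big|\le\|\hat{\psi}\|\Big(\int|F_N^f-F^f|\,dx+\int|F_N^c-F^c|\,dx\Big)\rightarrow_{\mathbb{P}}0$$
by the claim.

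For Part 2, taking $\psi\equiv1$ in Part 1 gives $I_N:=\int F_N^f\wedge F_N^c\,dx\rightarrow_{\mathbb{P}}I:=\int F^f\wedge F^c\,dx$, so $(1-I_N)^{-1}\rightarrow_{\mathbb{P}}(1-I)^{-1}$ (in the nondegenerate case $I<1$, where the inverse in the definition of $\check{\Phi}_n^C$ is meaningful). Using $|a_1\wedge b_1-a_2\wedge b_2|\le|a_1-a_2|+|b_1-b_2|$, the claim yields $\int|F_N^f\wedge F_N^c-F^f\wedge F^c|\,dx\rightarrow_{\mathbb{P}}0$, and hence $\int|\overline{F}^{N,s}-\overline{F}^{s}|\,dx\rightarrow_{\mathbb{P}}0$ for the residual densities $\overline{F}^{N,s}=(F_N^s-F_N^f\wedge F_N^c)\ge0$ and their limits $\overline{F}^{s}=(F^s-F^f\wedge F^c)$. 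Finally I would split the double integral bilinearly,
$$\overline{F}^{N,f}(x)\overline{F}^{N,c}(y)-\overline{F}^{f}(x)\overline{F}^{c}(y)=(\overline{F}^{N,f}-\overline{F}^{f})(x)\,\overline{F}^{N,c}(y)+\overline{F}^{f}(x)\,(\overline{F}^{N,c}-\overline{F}^{c})(y),$$
and bound the absolute value of $\int\!\int\psi\,(\cdots)\,dx\,dy$ by $\|\psi\|$ times $\int|\overline{F}^{N,f}-\overline{F}^{f}|\,dx\cdot\int\overline{F}^{N,c}\,dy+\int\overline{F}^{f}\,dx\cdot\int|\overline{F}^{N,c}-\overline{F}^{c}|\,dy$. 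The masses $\int\overline{F}^{N,c}\,dy=1-I_N\le1$ and $\int\overline{F}^{f}\,dx=1-I\le1$ are bounded, so this tends to $0$ in probability; multiplying by $(1-I_N)^{-1}$ and using that products of sequences converging in probability converge in probability completes Part 2.
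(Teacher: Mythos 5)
Your proof is correct and follows the same overall reduction as the paper --- establish convergence of the one-step predictor densities $F_{n,\eta_n^{N,s},s}$ to $F_{n,\eta_n^{s},s}$ and then push it through the minimum and the product --- but the device you use to integrate over $\mathsf{X}$ is genuinely different and, in fact, more careful. The paper works pointwise in $x$: it gets $F_{n,\eta_n^{N,s},s}(x)\rightarrow_{\mathbb{P}}F_{n,\eta_n^{s},s}(x)$ from the lemma's hypothesis, upgrades this to convergence of expectations for each fixed $x$ via the bound $F_{n,\eta_{n}^{N,f},f}\wedge F_{n,\eta_{n}^{N,c},c}\leq \|M_{n+1}^f\|\wedge\|M_{n+1}^c\|$ and \cite[Theorem 25.12]{bill}, and then invokes the bounded convergence theorem to pass the limit through $\int_{\mathsf{X}}(\cdot)\,dx$; when $\mathsf{X}$ has infinite reference measure that last step requires a $dx$-integrable dominating function, which a constant is not. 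Your Scheff\'e argument --- both $F_{n,\eta_n^{N,s},s}$ and $F_{n,\eta_n^{s},s}$ integrate to one, so $\int_{\mathsf{X}}|F_{n,\eta_n^{N,s},s}-F_{n,\eta_n^{s},s}|\,dx=2\int_{\mathsf{X}}(F_{n,\eta_n^{s},s}-F_{n,\eta_n^{N,s},s})^+\,dx$ with integrand dominated by the $dx$-integrable function $F_{n,\eta_n^{s},s}$ --- supplies exactly the missing domination and delivers the stronger $L^1(dx)$ convergence in probability, after which parts 1.~and 2.~follow from the elementary inequality $|a\wedge b-a'\wedge b'|\leq|a-a'|+|b-b'|$ and your bilinear splitting of the product of residual densities (which replaces, to the same effect, the paper's $T_1^N+T_2^N$ decomposition, using that the residual masses $1-I_N$ and $1-I$ are bounded by one). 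Two cosmetic remarks: you invoke Proposition \ref{prop:lp_bound_mcpf} for the pointwise convergence where the lemma's stated hypothesis $\check{\eta}_{n}^{N,C}(\varphi)\rightarrow_{\mathbb{P}}\check{\eta}_{n}^{C}(\varphi)$ already suffices (apply it to $\varphi\otimes 1$ and $1\otimes\varphi$), and your caveat that $\int_{\mathsf{X}}F_{n,\eta_n^{f},f}\wedge F_{n,\eta_n^{c},c}\,dx<1$ is an implicit standing assumption of the paper as well, needed for $\check{\Phi}_n^C$ to be defined at all.
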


\begin{proof}
We begin by noting that for any $x\in\mathsf{X}$, by the assumption that $\check{\eta}_{n}^{N,C}(\varphi) \rightarrow_{\mathbb{P}} \check{\eta}_{n}^{C}(\varphi)$, we have
\begin{eqnarray*}
F_{n,\eta_{n}^{N,f},f}(x) & \rightarrow_\mathbb{P} &  F_{n,\eta_{n}^{f},f}(x) \\ 
F_{n,\eta_{n}^{N,c},c}(x) & \rightarrow_\mathbb{P} &  F_{n,\eta_{n}^{c},c}(x)
\end{eqnarray*}
and hence that
\begin{eqnarray}
F_{n,\eta_{n}^{N,f},f}(x)\wedge F_{n,\eta_{n}^{N,c},c}(x) & \rightarrow_\mathbb{P} &
F_{n,\eta_{n}^{f},f}(x)\wedge F_{n,\eta_{n}^{c},c}(x) \label{eq:mc_eq1}\\
\overline{F}_{n,\eta_{n}^{N,f},\eta_{n}^{N,c},f}(x) & \rightarrow_\mathbb{P} &  \overline{F}_{n,\eta_{n}^{f},\eta_{n}^{c},f}(x) \label{eq:mc_eq2}\\ 
\overline{F}_{n,\eta_{n}^{N,c},\eta_{n}^{N,f},c}(x) & \rightarrow_\mathbb{P} &  \overline{F}_{n,\eta_{n}^{c},\eta_{n}^{f},c}(x). \label{eq:mc_eq3}
\end{eqnarray}

For 1.~we have 
$$
\mathbb{E}\Big[\Big|\int_{\mathsf{X}}\psi(x,x)\Big\{F_{n,\eta_{n}^{N,f},f}(x)\wedge F_{n,\eta_{n}^{N,c},c}(x) -
F_{n,\eta_{n}^{f},f}(x)\wedge F_{n,\eta_{n}^{c},c}(x) 
\Big\}dx\Big|\Big] \leq 
$$
$$
\|\psi\|\int_{\mathsf{X}}\Big\{ 
\mathbb{E}\Big[\Big|F_{n,\eta_{n}^{N,f},f}(x)\wedge F_{n,\eta_{n}^{N,c},c}(x) -
F_{n,\eta_{n}^{f},f}(x)\wedge F_{n,\eta_{n-1}^{c},c}(x) 
\Big|\Big]\Big\}dx.
$$
Then as $F_{n,\eta_{n}^{N,f},f}(x)\wedge F_{n,\eta_{n}^{N,c},c}(x)\leq \|M_{n+1}^f\|\wedge\|M_{n+1}^c\|$,
by \eqref{eq:mc_eq1} and \cite[Theorem 25.12]{bill} for any $x\in\mathsf{X}$
$$
\lim_{N\rightarrow\infty}\mathbb{E}\Big[\Big|F_{n,\eta_{n-1}^{N,f},f}(x)\wedge F_{n,\eta_{n}^{N,c},c}(x) -
F_{n,\eta_{n}^{f},f}(x)\wedge F_{n-1,\eta_{n-1}^{c},c}(x) 
\Big|\Big] = 0.
$$
Hence by the bounded convergence theorem
$$
\lim_{N\rightarrow\infty}
\mathbb{E}\Big[\Big|\int_{\mathsf{X}}\psi(x,x)\Big\{F_{n,\eta_{n}^{N,f},f}(x)\wedge F_{n,\eta_{n}^{N,c},c}(x) -
F_{n,\eta_{n}^{f},f}(x)\wedge F_{n-1,\eta_{n}^{c},c}(x) 
\Big\}dx\Big|\Big] = 0
$$
and thus the result follows.

For 2.~using almost the same proof as in 1.~except using \eqref{eq:mc_eq2}-\eqref{eq:mc_eq3} in place of using \eqref{eq:mc_eq1}, we have
$$
\int_{\mathsf{X}\times\mathsf{X}} \psi(x,y)\overline{F}_{n,\eta_{n}^{N,f},\eta_{n}^{N,c},f}(x) \overline{F}_{n,\eta_{n}^{N,c},\eta_{n}^{N,f},c}(x) dxdy\rightarrow_\mathbb{P}
$$
\begin{equation}\label{eq:mc_eq4}
\int_{\mathsf{X}\times\mathsf{X}} \psi(x,y)\overline{F}_{n,\eta_{n}^{f},\eta_{n}^{c},f}(x) \overline{F}_{n,\eta_{n}^{c},\eta_{n}^{f},c}(x) dxdy.
\end{equation}
Then define
\begin{eqnarray*}
T_1^N & := & \Big(1-\int_{\mathsf{X}} 
F_{n,\eta_{n}^{N,f},f}(x)\wedge F_{n,\eta_{n}^{N,c},c}(x)dx\Big)^{-1} - \Big(1-\int_{\mathsf{X}} 
F_{n,\eta_{n}^{f},f}(x)\wedge F_{n,\eta_{n}^{c},c}(x)dx\Big)^{-1}\Big) \times \\ & & 
\int_{\mathsf{X}\times\mathsf{X}} \psi(x,y)\overline{F}_{n,\eta_{n}^{N,f},\eta_{n}^{N,c},f}(x) \overline{F}_{n,\eta_{n}^{N,c},\eta_{n}^{N,f},c}(x) dxdy \\
T_2^N & := & \Big(1-\int_{\mathsf{X}} 
F_{n,\eta_{n}^{f},f}(x)\wedge F_{n,\eta_{n}^{c},c}(x)dx\Big)^{-1}\Big(\int_{\mathsf{X}\times\mathsf{X}} \psi(x,y)\overline{F}_{n,\eta_{n}^{N,f},\eta_{n}^{N,c},f}(x)\times \\ & &
\overline{F}_{n,\eta_{n}^{N,c},\eta_{n}^{N,f},c}(x) dxdy - 
\int_{\mathsf{X}\times\mathsf{X}} \psi(x,y)\overline{F}_{n,\eta_{n}^{f},\eta_{n}^{c},f}(x)\overline{F}_{n,\eta_{n}^{c},\eta_{n}^{f},c}(x) dxdy
\Big).
\end{eqnarray*}
Then we have
$$
T_1^N + T_2^N = 
$$
$$
\Big(1-\int_{\mathsf{X}} 
F_{n,\eta_{n}^{N,f},f}(x)\wedge F_{n,\eta_{n}^{N,c},c}(x)dx\Big)^{-1}
\int_{\mathsf{X}\times\mathsf{X}} \psi(x,y)\overline{F}_{n,\eta_{n}^{N,f},\eta_{n}^{N,c},f}(x) F_{n,\eta_{n}^{N,c},\eta_{n}^{N,f},c}(x) dxdy -
$$
$$
\Big(1-\int_{\mathsf{X}} 
F_{n,\eta_{n}^{f},f}(x)\wedge F_{n,\eta_{n}^{c},c}(x)dx\Big)^{-1}
\int_{\mathsf{X}\times\mathsf{X}} \psi(x,y)\overline{F}_{n,\eta_{n}^{f},\eta_{n}^{c},f}(x) F_{n,\eta_{n}^{c},\eta_{n}^{f},c}(x) dxdy.
$$
By 1.~and \eqref{eq:mc_eq4}, $T_1^N\rightarrow_{\mathbb{P}}0$ and by \eqref{eq:mc_eq4} $T_2^N\rightarrow_{\mathbb{P}}0$ and hence the proof is complete.
\end{proof}

\section{Proofs for the Asymptotic Variance}\label{app:av_proofs}

Throughout $v$ is as in (H\ref{hypav:1}). We will frequently quote results from \cite{whiteley} which are proved when
one considers $v$ not $v^{\xi}$, $\xi\in(0,1]$. The case $\xi\in(0,1)$ for these quoted results apply using \cite[Lemma 3]{whiteley}.
Note that \cite[Lemma 3]{whiteley} implies that (H\ref{hypav:1}-\ref{hypav:4}) apply in the case for $\xi\in(0,1)$, with Lyapunov function $v^{\xi}$,
with constants depending upon $\xi$.
In our proofs $C$ is a finite and positive constant whose value may change from line-to-line and does not depend on $n,p,f,c$. If there
are any important dependencies of $C$ they will be stated.

\begin{proof}[Proof of Theorem \ref{theo:av_thm}]
We have 
$$
\sigma^{2,s}_n(\varphi) = \check{\eta}_n^s\Big((\varphi\otimes 1-1\otimes\varphi - ([\eta_n^f-\eta_n^c](\varphi)))^2\Big) + 
\sum_{p=0}^{n-1} \check{\eta}_p^s(\{(D_{p,n}^f(\varphi)\otimes 1- 1\otimes D_{p,n}^c(\varphi))\}^2).
$$
Define
\begin{eqnarray}
T_1 & := & 2\eta_p^f\Big((D_{p,n}^f(\varphi)-D_{p,n}^c(\varphi))^2\Big)) \label{eq:av_main1}\\
T_2 & := & 2\check{\eta}_p^s\Big(
(D_{p,n}^c(\varphi)\otimes 1 - 1\otimes D_{p,n}^c(\varphi))^2
\Big) \label{eq:av_main2}
\end{eqnarray}

We focus on the summand and note that by the $C_2-$inequality, one has
\begin{equation}\label{eq:av_main3}
T_1 + T_2 \geq  \check{\eta}_p^s(\{(D_{p,n}^f(\varphi)\otimes 1- 1\otimes D_{p,n}^c(\varphi))\}^2) 
\end{equation}
Our proof for the case when only (H\ref{hypav:1}-\ref{hypav:6}) holds, versus when (H\ref{hypav:1}-\ref{hypav:7}) holds is the same when considering
$T_1$ in \eqref{eq:av_main1}, versus $T_2$ in \eqref{eq:av_main2}. Thus our proof is split into three parts of controlling $T_1$ and then $T_2$ first under 
(H\ref{hypav:1}-\ref{hypav:6}) and then under (H\ref{hypav:1}-\ref{hypav:7}). The proof is concluded by adding and then summing the bounds (and then summing over $p$) for the relevant case.

{\bf Term} \eqref{eq:av_main1}: We have for any $x\in\mathsf{X}$
$$
(D_{p,n}^f(\varphi)(x)-D_{p,n}^c(\varphi)(x))^2 = 
$$
$$
D_{p,n}^f(\varphi)(x)[D_{p,n}^f(\varphi)(x)-D_{p,n}^c(\varphi)(x)]
+ D_{p,n}^c(\varphi)(x)[D_{p,n}^c(\varphi)(x)-D_{p,n}^f(\varphi)(x)].
$$
Applying Lemmata \ref{lem:av_1} and \ref{lem:av_3} yields the upper-bound:
$$
(D_{p,n}^f(\varphi)(x)-D_{p,n}^c(\varphi)(x))^2 \leq 
$$
$$
C\|\varphi\|\rho^{n-p}
\Big(
\|h_{p,n}^fS_{p,n}^{f,c}(\varphi)\|_{v^{\xi}} + |[\eta_n^f-\eta_n^c](\varphi)| + \|\varphi\|\|h_{p,n}^f-h_{p,n}^c\|_{v^{\xi}}\rho^{n-p}
\Big)v(x)^{4\xi}.
$$
Hence, as $\eta_p^f(v^{4\xi})\leq C$ \cite[Proposition 1]{whiteley} (the bound in that Proposition does not depend on $\eta_0^f$ by  (H\ref{hypav:2}) and (H\ref{hypav:5}) 2.), it follows that 
\begin{equation}\label{eq:av_main4}
|T_1| \leq C\|\varphi\| \rho^{n-p}\Big\{\|h_{p,n}^fS_{p,n}^{f,c}(\varphi)\|_{v^{\xi}} + |[\eta_n^f-\eta_n^c](\varphi)| + \|\varphi\|\|h_{p,n}^f-h_{p,n}^c\|_{v^{\xi}}\rho^{n-p}\Big\}.
\end{equation}

{\bf Term} \eqref{eq:av_main2}, case only (H\ref{hypav:1}-\ref{hypav:6}) hold: We have for any $(x,y)\in A$ (if $(x,y)\in A^c$ the term is zero)
$$
(D_{p,n}^c(\varphi)(x)-D_{p,n}^c(\varphi)(y))^2 = 
$$
$$
D_{p,n}^c(\varphi)(x)[D_{p,n}^c(\varphi)(x)-D_{p,n}^c(\varphi)(y)]
+ D_{p,n}^c(\varphi)(y)[D_{p,n}^c(\varphi)(y)-D_{p,n}^c(\varphi)(x)].
$$
Applying Lemmata \ref{lem:av_1} and \ref{lem:av_5} yields the upper-bound for any $(x,y)\in A$:
$$
(D_{p,n}^c(\varphi)(x)-D_{p,n}^c(\varphi)(y))^2 \leq 
C\|\varphi\|^2\rho^{n-p}(v(x)v(y))^{2\xi}(\rho^{n-p}+1)
$$
Hence it follows that 
\begin{equation}\label{eq:av_main5}
|T_2| \leq C\|\varphi\|^2 \rho^{n-p}\Big(\check{\eta}_p^{s}(\mathbb{I}_A(v\otimes v)^{2\xi})(\rho^{n-p}+1)\Big).
\end{equation}
Noting \eqref{eq:av_main3}, \eqref{eq:av_main4} and \eqref{eq:av_main5} the proof can be concluded in the case that only (H\ref{hypav:1}-\ref{hypav:6}) hold.

{\bf Term} \eqref{eq:av_main2}, case (H\ref{hypav:1}-\ref{hypav:7}) hold: We have by Lemma \ref{lem:av_6}
$$
(D_{p,n}^c(\varphi)(x)-D_{p,n}^c(\varphi)(y))^2 \leq C\|\varphi\|^2(\rho)^{n-p}\mathsf{d}(x,y)^2v(x)^{4\xi}v(y)^{8\xi}
$$
where $\rho$ is the square of the $\rho$ in Lemma \ref{lem:av_6}.
Then 
\begin{equation}\label{eq:av_main6}
|T_2| \leq C\|\varphi\|^2 \rho^{n-p}\Big(\check{\eta}_p^{s}(\mathsf{d}^2(v^{4\xi}\otimes v^{8\xi}))(\rho^{n-p}+1)\Big).
\end{equation}
Noting \eqref{eq:av_main3}, \eqref{eq:av_main4} and \eqref{eq:av_main6} the proof can be concluded in the case that (H\ref{hypav:1}-\ref{hypav:7}) hold.
\end{proof}

The following result is essentially \cite[Theorem 1]{whiteley}.

\begin{lem}\label{lem:av_1}
Assume (H\ref{hypav:1}-\ref{hypav:5}). Then for any $\xi\in(0,1]$ there exists $\rho<1$ and $C<+\infty$ also depending on the constants in  
(H\ref{hypav:1}-\ref{hypav:5}) such that for any $\varphi\in\mathcal{L}_{v^{\xi}}(\mathsf{X})$, $n\geq 1$ and $0\leq p <n$, $x\in\mathsf{X}$, $s\in\{f,c\}$ we have
$$
|D_{p,n}^s(\varphi)(x)| \leq C\|\varphi\|_{v^{\xi}}\rho^{n-p}v(x)^{\xi}.
$$
\end{lem}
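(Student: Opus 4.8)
The plan is to recognize this as a restatement of the $v$-exponential stability of the (normalized) Feynman--Kac semigroup, which is essentially the content of \cite[Theorem 1]{whiteley}, and to pass from $v$ to $v^{\xi}$ by the standard change-of-Lyapunov-function device. First I would record the factorization that exposes the structure of $D_{p,n}^s$. Writing $\bar{Q}_{p,n}^s(\varphi)(x) := Q_{p,n}^s(\varphi)(x)/Q_{p,n}^s(1)(x)$ for the normalized operator, recalling $h_{p,n}^s(x)=Q_{p,n}^s(1)(x)/\eta_p^s(Q_{p,n}^s(1))$, and using the semigroup identity $\eta_n^s(\varphi)=\eta_p^s(Q_{p,n}^s(\varphi))/\eta_p^s(Q_{p,n}^s(1))$, one obtains
$$
D_{p,n}^s(\varphi)(x) = h_{p,n}^s(x)\big[\bar{Q}_{p,n}^s(\varphi)(x)-\eta_n^s(\varphi)\big].
$$
The bracketed quantity is precisely the difference between propagating $\delta_x$ and propagating $\eta_p^s$ through the normalized flow from time $p$ to time $n$, so an exponentially decaying estimate for it is exactly a stability statement for the Feynman--Kac model associated to the potentials $\{G_n\}$ and kernels $\{M_n^s\}$.

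For the case $\xi=1$ I would verify that (H\ref{hypav:1}--\ref{hypav:5}) are a reformulation of the standing hypotheses under which \cite[Theorem 1]{whiteley} is established: (H\ref{hypav:1}) is the multiplicative drift of $Q_n^s$ toward the sublevel sets $C_d$, (H\ref{hypav:3})--(H\ref{hypav:4}) together furnish the two-sided local minorization of $Q_n^s$ on $C_d$ with reference measure $\nu_d$, while (H\ref{hypav:2}) and (H\ref{hypav:5}) control the initialization and the potentials. Under these, \cite[Theorem 1]{whiteley} delivers $|D_{p,n}^s(\varphi)(x)|\le C\|\varphi\|_{v}\rho^{n-p}v(x)$ for each fixed $s$; since there are only the two models $s\in\{f,c\}$ and the constants appearing in (H\ref{hypav:1}--\ref{hypav:5}) are common to both, the resulting $C$ and $\rho$ may be chosen independent of $s$.

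For general $\xi\in(0,1]$ I would invoke \cite[Lemma 3]{whiteley}, which shows that whenever (H\ref{hypav:1}--\ref{hypav:4}) hold with Lyapunov function $v=e^{\tilde V}$ they also hold with $v^{\xi}=e^{\xi\tilde V}$, with constants depending on $\xi$. Applying the $\xi=1$ conclusion to the model equipped with the Lyapunov function $v^{\xi}$ then yields
$$
|D_{p,n}^s(\varphi)(x)| \le C\|\varphi\|_{v^{\xi}}\rho^{n-p}v(x)^{\xi},
$$
which is the claimed bound. Note that (H\ref{hypav:6})--(H\ref{hypav:7}) play no role here, consistently with the hypotheses of the statement.

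The main obstacle is not the geometric-decay mechanism itself, which is imported wholesale from \cite{whiteley}, but the bookkeeping required to confirm that our assumption set, phrased in terms of $Q_n^s$, $C_d$, $\tilde{\epsilon}_d^{\pm}$ and $\nu_d$, matches exactly the framework in which \cite[Theorem 1]{whiteley} is proved, and that the emerging constants are genuinely uniform over $s\in\{f,c\}$ and over the index range $n\ge 1$, $0\le p<n$. This uniformity in $p$ and $n$ is what makes the factor $\rho^{n-p}$ meaningful, and it is precisely the point at which the interplay between the multiplicative drift and the local minorization is used to produce a contraction rate that does not degrade with the time horizon.
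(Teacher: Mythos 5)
Your proposal matches the paper's proof: the case $\xi=1$ is imported directly from \cite[Theorem 1]{whiteley} (with (H\ref{hypav:2}) and (H\ref{hypav:5}) 2.\ ensuring the constants do not depend on $\eta_0^s$), and the case $\xi\in(0,1)$ follows by \cite[Lemma 3]{whiteley} applied with Lyapunov function $v^{\xi}$. The factorization of $D_{p,n}^s$ and the uniformity bookkeeping you describe are exactly the points the paper relies on.
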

\begin{proof}
The proof for $\xi=1$ is exactly as in \cite[Theorem 1]{whiteley}, except noting that (H\ref{hypav:2}) and (H\ref{hypav:5}) 2.~establish that the constants
in \cite[Theorem 1]{whiteley} do not depend on $\eta_0^s$ ($\mu$ in that paper). 
\end{proof}

\begin{lem}\label{lem:av_2}
Assume (H\ref{hypav:1}-\ref{hypav:6}). Then for any $\xi\in(0,1]$, $s\in\{f,c\}$ we have
$$
\sup_{n\geq 1}\sup_{0\leq p \leq n}\max_{s\in\{f,c\}}\|1/h_{p,n}^s\|_{v^{\xi}} < + \infty.
$$
\end{lem}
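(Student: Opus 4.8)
The plan is to prove the equivalent lower bound: there is $c>0$, independent of $n,p,s$, such that $h_{p,n}^s(x)\ge c\,v(x)^{-\xi}$ for all $x\in\mathsf{X}$, since $\|1/h_{p,n}^s\|_{v^\xi}=\sup_{x}\big(h_{p,n}^s(x)\,v(x)^{\xi}\big)^{-1}$. The case $p=n$ is immediate, because $Q_{n,n}^s$ is the identity gives $h_{n,n}^s\equiv 1$ and $v\ge 1$, so $\|1/h_{n,n}^s\|_{v^\xi}\le 1$; hence I focus on $0\le p<n$. Fixing $d\ge\underline d$ large, I would peel off one step of the semigroup, $Q_{p,n}^s(1)(x)=G_p(x)M_{p+1}^s\big(Q_{p+1,n}^s(1)\big)(x)$, and restrict the inner integral to the small set $C_d$, obtaining $Q_{p,n}^s(1)(x)\ge G_p(x)M_{p+1}^s(\mathbb{I}_{C_d})(x)\,\inf_{w\in C_d}Q_{p+1,n}^s(1)(w)$. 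The numerator of $1/h_{p,n}^s$ is then handled through the multiplicative identity $\eta_p^s(Q_{p,n}^s(1))=\eta_p^s(G_p)\,\eta_{p+1}^s(Q_{p+1,n}^s(1))$, which follows from $\eta_p^s(Q_{p,n}^s(1))=\gamma_n^s(1)/\gamma_p^s(1)$ and $Q_{p,p+1}^s(1)=G_p$.

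Two ingredients then close the argument. First, the one-step reachability of $C_d$: assumption (H\ref{hypav:6}) gives $G_p(x)M_{p+1}^s(\mathbb{I}_{C_d})(x)\ge c_d\,v(x)^{-1}$, uniformly in $p,s$, with $c_d:=\big(\sup_{m}\max_{s}\|1/G_mM_{m+1}^s(\mathbb{I}_{C_d})\|_{v}\big)^{-1}>0$, so that $Q_{p,n}^s(1)(x)\ge c_d\,v(x)^{-1}\inf_{C_d}Q_{p+1,n}^s(1)$. Second, an upper bound on the normaliser: using the identity above together with $\sup_{m}\sup_{x}G_m(x)<\infty$ from (H\ref{hypav:5}), the moment bound $\sup_{m}\eta_m^s(v^\xi)<\infty$ from \cite[Proposition 1]{whiteley} (whose constant is $\eta_0^s$-free by (H\ref{hypav:2}) and (H\ref{hypav:5})), and a uniform multiplicative comparison of $Q_{m,n}^s(1)$ on and off the small set, $Q_{m,n}^s(1)(z)\le C\,v(z)^{\xi}\inf_{w\in C_d}Q_{m,n}^s(1)(w)$ for all $m\le n$, I would deduce $\eta_{p+1}^s(Q_{p+1,n}^s(1))\le C\,\inf_{C_d}Q_{p+1,n}^s(1)$. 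Dividing the lower bound on $Q_{p,n}^s(1)(x)$ by the upper bound on $\eta_p^s(Q_{p,n}^s(1))$ makes the common factor $\inf_{C_d}Q_{p+1,n}^s(1)$ cancel, yielding $h_{p,n}^s(x)\ge c\,v(x)^{-1}$ uniformly in $n,p,s$, which is the claim for $\xi=1$; for $\xi\in(0,1)$ one runs the identical scheme with the Lyapunov function $v^{\xi}$ in place of $v$, the conditions (H\ref{hypav:1})--(H\ref{hypav:4}) transferring through \cite[Lemma 3]{whiteley} and (H\ref{hypav:6}) entering in its $v^{\xi}$-form, so that the $v(x)^{-1}$ above is replaced by $v(x)^{-\xi}$.

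The main obstacle is the uniform-in-time multiplicative comparison $Q_{m,n}^s(1)(z)\le C\,v(z)^{\xi}\inf_{w\in C_d}Q_{m,n}^s(1)(w)$: it must hold with $C$ and $d$ independent of $m,n,s$, and it is exactly the multiplicative-ergodicity estimate underpinning Lemma \ref{lem:av_1} (i.e.\ \cite[Theorem 1]{whiteley}). Its proof combines the drift (H\ref{hypav:1}), which forces $Q_{m,n}^s(1)$ to grow off $C_d$ no faster than $v^{\xi}$ relative to its values on $C_d$, with the two-sided minorisation/majorisation (H\ref{hypav:3})--(H\ref{hypav:4}), which render the values of $Q_{m,n}^s(1)$ mutually comparable across $C_d$ by a Dobrushin/Birkhoff contraction argument. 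Everything else (the one-step peel, the $\gamma$-ratio identity, and the $C_2$-type manipulations) is routine; the delicate point is securing this comparison with constants uniform over the time horizon, which is precisely what the Whiteley machinery already provides and which I would therefore quote rather than re-derive.
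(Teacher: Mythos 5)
The paper does not prove this lemma in-house: it simply cites \cite[Lemma B.2]{delm_sa} ``with the fix of using (H\ref{hypav:5}) 1.''\ . Your reconstruction is, in substance, exactly the argument that citation stands for: peel one step off the semigroup, use (H\ref{hypav:6}) to reach $C_d$ in one step with cost $v(x)^{-1}$, factor the normaliser as $\eta_p^s(Q_{p,n}^s(1))=\eta_p^s(G_p)\,\eta_{p+1}^s(Q_{p+1,n}^s(1))$ (this is where (H\ref{hypav:5}) 1.\ enters, matching the paper's ``fix''), and compare $\eta_{p+1}^s(Q_{p+1,n}^s(1))$ with $\inf_{C_d}Q_{p+1,n}^s(1)$ via the uniform multiplicative-ergodicity estimates of \cite{whiteley} (Proposition 2 (3) together with Lemma 10 there, which gives $\inf_{x\in C_d}h_{p,n}^s(x)\geq c>0$ uniformly in $p,n$). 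For $\xi=1$ this is complete and correct, and the $p=n$ case is handled properly.

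The one genuine gap is your treatment of $\xi\in(0,1)$. You assert that one simply reruns the scheme with Lyapunov function $v^{\xi}$ and that ``(H\ref{hypav:6}) enters in its $v^{\xi}$-form''. But \cite[Lemma 3]{whiteley} (and Jensen) only transfers the drift and minorisation conditions (H\ref{hypav:1}--\ref{hypav:4}) to $v^{\xi}$; it does not upgrade (H\ref{hypav:6}). The hypothesis $1/(G_nM_{n+1}^s(\mathbb{I}_{C_d}))\in\mathcal{L}_{v}(\mathsf{X})$ gives only $G_n(x)M_{n+1}^s(\mathbb{I}_{C_d})(x)\geq c\,v(x)^{-1}$, whereas your argument for $\xi<1$ needs $G_n(x)M_{n+1}^s(\mathbb{I}_{C_d})(x)\geq c\,v(x)^{-\xi}$, i.e.\ $1/(G_nM_{n+1}^s(\mathbb{I}_{C_d}))\in\mathcal{L}_{v^{\xi}}(\mathsf{X})$ --- a strictly stronger statement since $v^{\xi}\leq v$ (and note also that the level sets of $v^{\xi}$ are the $C_{d/\xi}$, not the $C_d$). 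With (H\ref{hypav:6}) as literally stated your construction yields $h_{p,n}^s(x)\geq c\,v(x)^{-1}$ and hence only $\|1/h_{p,n}^s\|_{v}<+\infty$; the bound $1/(h_{p,n}^s(x)v(x)^{\xi})\leq Cv(x)^{1-\xi}$ is not uniform for $\xi<1$ when $v$ is unbounded (as (H\ref{hypav:1}) requires). Since the small-$\xi$ version is what is actually invoked later (e.g.\ in Lemmata \ref{lem:av_3}, \ref{lem:av_4} and in \eqref{eq:diff_need}), this is the step you must either justify from the stated hypotheses or obtain by explicitly strengthening (H\ref{hypav:6}) to its $v^{\xi}$-form; to be fair, the paper's one-line citation glosses over precisely the same point.
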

\begin{proof}
This is \cite[Lemma B.2.]{delm_sa}, except with the fix of using (H\ref{hypav:5}) 1.~to go to the second line of that proof.
\end{proof}

\begin{lem}\label{lem:av_3}
Assume (H\ref{hypav:1}-\ref{hypav:6}). Then for any $\xi\in(0,1]$ there exists a 
$\rho<1$ and $C<+\infty$ also depending on the constants in  
(H\ref{hypav:1}-\ref{hypav:6}) such that for any $\varphi\in\mathcal{B}_b(\mathsf{X})$, 
$n\geq 1$ and $0\leq p <n$ $x\in\mathsf{X}$ we have
$$
|D_{p,n}^f(\varphi)(x)-D_{p,n}^c(\varphi)(x)| \leq 
$$
$$
C \Big(
\|h_{p,n}^fS_{p,n}^{f,c}(\varphi)\|_{v^{\xi}} + |[\eta_n^f-\eta_n^c](\varphi)| + \|\varphi\|\|h_{p,n}^f-h_{p,n}^c\|_{v^{\xi}}\rho^{n-p}
\Big)v(x)^{3\xi}.
$$
\end{lem}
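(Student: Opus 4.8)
The plan is to collapse the difference $D_{p,n}^f(\varphi)-D_{p,n}^c(\varphi)$ onto the two building blocks $S_{p,n}^{f,c}(\varphi)$ and $h_{p,n}^f-h_{p,n}^c$ that appear on the right-hand side, and then to feed in the already-established single-filter estimates (Lemmata \ref{lem:av_1} and \ref{lem:av_2}). First I would record the identity
$$
D_{p,n}^s(\varphi)(x) = h_{p,n}^s(x)\Big(\tfrac{Q_{p,n}^s(\varphi)(x)}{Q_{p,n}^s(1)(x)} - \eta_n^s(\varphi)\Big),
$$
which follows by writing $Q_{p,n}^s(\varphi-\eta_n^s(\varphi)) = Q_{p,n}^s(\varphi)-\eta_n^s(\varphi)Q_{p,n}^s(1)$ and dividing numerator and denominator by $Q_{p,n}^s(1)(x)$, so that the factor $h_{p,n}^s(x)=Q_{p,n}^s(1)(x)/\eta_p^s(Q_{p,n}^s(1))$ is exposed.

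Writing $R_{p,n}^s(x):=Q_{p,n}^s(\varphi)(x)/Q_{p,n}^s(1)(x)$, so that $S_{p,n}^{f,c}(\varphi)=R_{p,n}^f-R_{p,n}^c$, I would add and subtract $h_{p,n}^f(R_{p,n}^c-\eta_n^c(\varphi))$ to obtain the decomposition
$$
D_{p,n}^f(\varphi)(x)-D_{p,n}^c(\varphi)(x) = h_{p,n}^f(x)\big[S_{p,n}^{f,c}(\varphi)(x)-([\eta_n^f-\eta_n^c](\varphi))\big] + \big(h_{p,n}^f(x)-h_{p,n}^c(x)\big)\big(R_{p,n}^c(x)-\eta_n^c(\varphi)\big).
$$
The first bracket is handled term by term: $|h_{p,n}^f(x)S_{p,n}^{f,c}(\varphi)(x)|\leq \|h_{p,n}^fS_{p,n}^{f,c}(\varphi)\|_{v^{\xi}}v(x)^{\xi}$ directly from the defining norm, while $h_{p,n}^f(x)[\eta_n^f-\eta_n^c](\varphi)$ is bounded by $C|[\eta_n^f-\eta_n^c](\varphi)|v(x)^{\xi}$ using the uniform (in $p,n$) upper bound $\|h_{p,n}^f\|_{v^{\xi}}\leq C$, which is the $\xi$-rescaled form of the stability estimate in \cite{whiteley}.

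For the second bracket I would use the identity above to write $R_{p,n}^c(x)-\eta_n^c(\varphi) = D_{p,n}^c(\varphi)(x)/h_{p,n}^c(x)$; then Lemma \ref{lem:av_1} gives $|D_{p,n}^c(\varphi)(x)|\leq C\|\varphi\|\rho^{n-p}v(x)^{\xi}$ (using $\|\varphi\|_{v^{\xi}}\leq\|\varphi\|$ since $v\geq 1$) and Lemma \ref{lem:av_2} gives $|1/h_{p,n}^c(x)|\leq Cv(x)^{\xi}$, whence $|R_{p,n}^c(x)-\eta_n^c(\varphi)|\leq C\|\varphi\|\rho^{n-p}v(x)^{2\xi}$. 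Multiplying by $|h_{p,n}^f(x)-h_{p,n}^c(x)|\leq\|h_{p,n}^f-h_{p,n}^c\|_{v^{\xi}}v(x)^{\xi}$ produces the third term of the claimed bound, carrying the full factor $v(x)^{3\xi}$. Collecting the three contributions and using $v\geq 1$ to absorb the lower powers $v^{\xi},v^{2\xi}$ into $v^{3\xi}$ yields the stated inequality.

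The only genuinely delicate point is the uniform (in both $p$ and $n$) two-sided control of $h_{p,n}^s$ in the $v^{\xi}$-norm. The lower bound $\|1/h_{p,n}^s\|_{v^{\xi}}\leq C$ is precisely Lemma \ref{lem:av_2}, and I expect the matching upper bound $\|h_{p,n}^s\|_{v^{\xi}}\leq C$ to follow from the multiplicative stability of the normalized semigroup under (H\ref{hypav:1}-\ref{hypav:6}) as in \cite{whiteley}, transported to exponent $\xi$ via \cite[Lemma 3]{whiteley}. Everything else is exponent bookkeeping and reuse of the single-filter decay estimate, so the real content lies in the choice of the additive/subtractive term $h_{p,n}^f(R_{p,n}^c-\eta_n^c(\varphi))$, which cleanly separates the ``$S$'' discrepancy from the ``$h$'' discrepancy.
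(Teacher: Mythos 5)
Your proposal is correct and follows essentially the same route as the paper: the identical decomposition into $h_{p,n}^f\big(S_{p,n}^{f,c}(\varphi)\mp[\eta_n^f-\eta_n^c](\varphi)\big)$ plus $(h_{p,n}^f-h_{p,n}^c)D_{p,n}^c(\varphi)/h_{p,n}^c$, with the first piece controlled via $\|h_{p,n}^f\|_{v^{\xi}}\leq C$ (the paper cites \cite[Proposition 2]{whiteley} for exactly this) and the second via Lemmata \ref{lem:av_1} and \ref{lem:av_2}. The exponent bookkeeping matches the paper's bounds ($v(x)^{2\xi}$ for the first piece, $v(x)^{3\xi}$ for the second), so nothing further is needed.
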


\begin{proof}
Define
\begin{eqnarray}
T_1 & := & \Big(S_{p,n}^{f,c}(\varphi)(x) + [\eta_n^f-\eta_n^c](\varphi)\Big)h_{p,n}^f(x)\label{eq_av:1}\\
T_2 & := & D_{p,n}^c(\varphi)(x)\frac{1}{h_{p,n}^c(x)}[h_{p,n}^f(x)-h_{p,n}^c(x)]\label{eq_av:2}
\end{eqnarray}
Then we have 
\begin{equation}\label{eq_av:5}
T_1 + T_2 = D_{p,n}^f(\varphi)(x)-D_{p,n}^c(\varphi)(x).
\end{equation}
We will bound $|T_1|$ and $|T_2|$ and then sum the bounds to conclude.

{\bf Term} \eqref{eq_av:1}: We have
$$
\Big(S_{p,n}^{f,c}(\varphi)(x) + [\eta_n^f-\eta_n^c](\varphi)\Big)h_{p,n}^f(x) = 
\Big(\frac{S_{p,n}^{f,c}(\varphi)(x)}{v(x)^{\xi}}v(x)^{\xi} + [\eta_n^f-\eta_n^c](\varphi)\Big)\frac{h_{p,n}^f(x)}{v(x)^{\xi}}v(x)^{\xi}.
$$
Then by \cite[Proposition 2]{whiteley} $\|h_{p,n}^f\|_{v^{\xi}}$ is upper-bounded by a finite constant that does not depend on $p,n,f$, so we easily have that
\begin{equation}\label{eq_av:3}
|T_1| \leq  C \Big(
\|h_{p,n}^fS_{p,n}^{f,c}(\varphi)\|_{v^{\xi}} + |[\eta_n^f-\eta_n^c](\varphi)| \Big)v(x)^{2\xi}.
\end{equation}

{\bf Term} \eqref{eq_av:2}: We have
$$
 D_{p,n}^c(\varphi)(x)\frac{1}{h_{p,n}^c(x)}[h_{p,n}^f(x)-h_{p,n}^c(x)] = 
$$
$$
\Big(D_{p,n}^c(\varphi)(x)\frac{1}{h_{p,n}^c(x)v(x)^{\xi}}\frac{[h_{p,n}^f(x)-h_{p,n}^c(x)]}{v(x)^{\xi}}\Big)v(x)^{2\xi}.
$$
Applying Lemma \ref{lem:av_1} to $D_{p,n}^c(\varphi)(x)$ and Lemma \ref{lem:av_2} to $1/(h_{p,n}^c(x)v(x)^{\xi})$ yields that
\begin{equation}\label{eq_av:4}
|T_2| \leq C\|\varphi\|\|h_{p,n}^f-h_{p,n}^c\|_{v^{\xi}}\rho^{n-p}v(x)^{3\xi}.
\end{equation}
Noting \eqref{eq_av:5}, \eqref{eq_av:3} and \eqref{eq_av:4} the proof can be concluded.
\end{proof}

Define for $n\geq 1$, $0\leq p < n$, $x\in\mathsf{X}$, $s\in\{f,c\}$
\begin{equation}\label{eq:av_rdef}
R_{p+1}^{(n),s}(x,dy) := \frac{1}{Q_{p,n}^s(1)(x)} Q_{p+1,n}^s(1)(y) Q_{p+1}^s(x,dy).
\end{equation}

\begin{lem}\label{lem:new_av_1}
Assume (H\ref{hypav:1}-\ref{hypav:5}), (H\ref{hypav:7}). Then for any $\xi\in(0,1]$ there exists a $C<+\infty$
depending on the constants in  (H\ref{hypav:1}-\ref{hypav:5}), (H\ref{hypav:7}), such that for any $\varphi\in\mathcal{B}_b(\mathsf{X})\cap\textrm{\emph{Lip}}_{\mathsf{d}}(\mathsf{X})$, 
$(x,y)\in\mathsf{X}\times\mathsf{X}$, $s\in\{f,c\}$ we have
$$
\sup_{n\geq 1}\sup_{0\leq p\leq n}\Big|
\frac{Q_{p,n}^s(\varphi)(x)}{\eta_p^s(Q_{p,n}^s(1))}
-\frac{Q_{p,n}^s(\varphi)(y)}{\eta_p^s(Q_{p,n}^s(1))}\Big| \leq C\|\varphi\|\mathsf{d}(x,y)v(x)^{\xi}v(y)^{\xi}.
$$
\end{lem}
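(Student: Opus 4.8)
The plan is to establish the bound for $0\le p<n$ (this is the only range that enters the summations in Theorem~\ref{theo:av_thm} and Lemma~\ref{lem:av_6}; at $p=n$ the left-hand side is just $|\varphi(x)-\varphi(y)|$, which is controlled only by the Lipschitz constant of $\varphi$, not by $\|\varphi\|$). Since the denominator $\eta_p^s(Q_{p,n}^s(1))$ is a positive scalar independent of $(x,y)$, the quantity to control is $|Q_{p,n}^s(\varphi)(x)-Q_{p,n}^s(\varphi)(y)|/\eta_p^s(Q_{p,n}^s(1))$. Using the semigroup property I would write $Q_{p,n}^s=Q_{p+1}^sQ_{p+1,n}^s$ and set $\psi:=Q_{p+1,n}^s(\varphi)$, so that the numerator equals $Q_{p+1}^s(\psi)(x)-Q_{p+1}^s(\psi)(y)$. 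The whole point is that (H\ref{hypav:7}) regularizes in a single step: it bounds this difference by a constant times $\|\psi\|_{v^{\xi}}\,\mathsf{d}(x,y)[v(x)v(y)]^{\xi}$, and the right-hand side involves only the weighted sup-norm of the input, not its Lipschitz constant. This is precisely why the final estimate can carry $\|\varphi\|$ rather than $\|\varphi\|_{\mathrm{Lip}}$.

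To invoke (H\ref{hypav:7}) legitimately I first need $\psi\in\mathcal{L}_{v^{\xi}}(\mathsf{X})\cap\mathrm{Lip}_{v^{\xi},\mathsf{d}}(\mathsf{X})$. Membership in $\mathcal{L}_{v^{\xi}}(\mathsf{X})$ is immediate from $|\psi|\le\|\varphi\|\,Q_{p+1,n}^s(1)$ and the finiteness of $\|Q_{p+1,n}^s(1)\|_{v^{\xi}}$ (quantified below). For the Lipschitz membership I would run a short downward induction on the time index: $Q_{n,n}^s(\varphi)=\varphi\in\mathrm{Lip}_{\mathsf{d}}(\mathsf{X})\subset\mathrm{Lip}_{v^{\xi},\mathsf{d}}(\mathsf{X})$ because $v\ge1$, and whenever $Q_{q+1,n}^s(\varphi)\in\mathrm{Lip}_{v^{\xi},\mathsf{d}}(\mathsf{X})$, (H\ref{hypav:7}) applied to it shows $Q_{q,n}^s(\varphi)=Q_{q+1}^s(Q_{q+1,n}^s(\varphi))$ is again in $\mathrm{Lip}_{v^{\xi},\mathsf{d}}(\mathsf{X})$. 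This induction is purely qualitative; the single quantitative application of (H\ref{hypav:7}) to $\psi$ then yields
$$
|Q_{p,n}^s(\varphi)(x)-Q_{p,n}^s(\varphi)(y)|\le C\,\|\psi\|_{v^{\xi}}\,\mathsf{d}(x,y)[v(x)v(y)]^{\xi}.
$$

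Next I would control $\|\psi\|_{v^{\xi}}$. With $h_{p+1,n}^s=Q_{p+1,n}^s(1)/\eta_{p+1}^s(Q_{p+1,n}^s(1))$ one has $\|Q_{p+1,n}^s(1)\|_{v^{\xi}}=\eta_{p+1}^s(Q_{p+1,n}^s(1))\,\|h_{p+1,n}^s\|_{v^{\xi}}$, and $\sup_{n}\sup_{q\le n}\max_s\|h_{q,n}^s\|_{v^{\xi}}<\infty$ by \cite[Proposition 2]{whiteley}; hence $\|\psi\|_{v^{\xi}}\le\|\varphi\|\,\|Q_{p+1,n}^s(1)\|_{v^{\xi}}\le C\|\varphi\|\,\eta_{p+1}^s(Q_{p+1,n}^s(1))$. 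I would then use the identity $\eta_p^s(Q_{p,n}^s(1))=\eta_p^s(G_p)\,\eta_{p+1}^s(Q_{p+1,n}^s(1))$, which follows from $\eta_p^s(Q_{p,n}^s(1))=\gamma_n^s(1)/\gamma_p^s(1)$ together with $\gamma_{p+1}^s(1)=\gamma_p^s(G_p)$. Dividing the displayed bound by $\eta_p^s(Q_{p,n}^s(1))$, the factor $\eta_{p+1}^s(Q_{p+1,n}^s(1))$ cancels and leaves
$$
\frac{|Q_{p,n}^s(\varphi)(x)-Q_{p,n}^s(\varphi)(y)|}{\eta_p^s(Q_{p,n}^s(1))}\le\frac{C\|\varphi\|}{\eta_p^s(G_p)}\,\mathsf{d}(x,y)\,v(x)^{\xi}v(y)^{\xi}.
$$

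The remaining, and genuinely delicate, step is to absorb the factor $1/\eta_p^s(G_p)$, i.e.\ to show $\inf_{p\ge0}\min_{s\in\{f,c\}}\eta_p^s(G_p)>0$; I expect this to be the main obstacle, since it is exactly where the stability theory (drift plus minorization) must be invoked to prevent the normalizing constants from degenerating. Here I would argue from the minorization (H\ref{hypav:3}): taking $A=\mathsf{X}$ gives, for every $x\in C_d$ and $p\ge0$, $G_p(x)\ge G_p(x)M_{p+1}^s(\mathbb{I}_{C_d})(x)=Q_{p+1}^s(\mathbb{I}_{C_d})(x)\ge\tilde{\epsilon}_d^{-}\nu_d(C_d)=:c_d>0$, using $M_{p+1}^s(\mathbb{I}_{C_d})\le1$. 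Tightness of the predictors, $\sup_p\max_s\eta_p^s(v)\le C$ from \cite[Proposition 1]{whiteley} (its constant being $\eta_0^s$-free by (H\ref{hypav:2}) and (H\ref{hypav:5})), yields $\eta_p^s(C_d^c)=\eta_p^s(\{v>e^d\})\le Ce^{-d}$, so that $\eta_p^s(C_d)\ge1/2$ for $d$ large; hence $\eta_p^s(G_p)\ge c_d\,\eta_p^s(C_d)\ge c_d/2>0$ uniformly in $p$ and $s$. Substituting this into the previous display completes the estimate for $0\le p<n$, with a constant $C$ depending only on the constants in (H\ref{hypav:1}--\ref{hypav:5}) and (H\ref{hypav:7}).
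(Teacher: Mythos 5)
Your proof is correct and follows essentially the same route as the paper's: a backward induction in $p$ whose engine is the one-step regularization (H\ref{hypav:7}), combined with the uniform bound $\|h_{p+1,n}^s\|_{v^{\xi}}\leq C$ from \cite[Proposition 2]{whiteley} and the uniform lower bound $\inf_p\min_s\eta_p^s(G_p)>0$ (which the paper simply cites as \cite[Proposition 2 (2)]{whiteley}, whereas you rederive it from (H\ref{hypav:3}) and tightness of the predictors). Your separation of the qualitative Lipschitz-membership induction from the single quantitative application of (H\ref{hypav:7}), together with the cancellation via $\eta_p^s(Q_{p,n}^s(1))=\eta_p^s(G_p)\,\eta_{p+1}^s(Q_{p+1,n}^s(1))$ and your explicit restriction to $p<n$, are organizational refinements of the same argument rather than a different one.
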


\begin{proof}
We proceed by backward induction, starting with the case $p=n-1$ (the case $n=p$ is trivial). We have
$$
\frac{Q_{n-1,n}^s(\varphi)(x)}{\eta_{n-1}^s(Q_{n-1,n}^s(1))}
-\frac{Q_{n-1,n}^s(\varphi)(y)}{\eta_{n-1}^s(Q_{n-1,n}^s(1))} = 
\frac{Q_{n}^s(\varphi)(x)}{\eta_{n-1}^s(G_{n-1})}
-\frac{Q_{n}^s(\varphi)(y)}{\eta_{n-1}^s(G_{n-1})} .
$$
By \cite[Proposition 2 (2)]{whiteley}, 
\begin{equation}\label{eq_av:10}
\inf_{p\geq 0}\min_{s\in\{f,c\}}\eta_p^s(G_p) \geq C>0
\end{equation}
for some constant $C$. So applying \eqref{eq_av:10} along with (H\ref{hypav:7}) we have
$$
\Big|\frac{Q_{n-1,n}^s(\varphi)(x)}{\eta_{n-1}^s(Q_{n-1,n}^s(1))}
-\frac{Q_{n-1,n}^s(\varphi)(y)}{\eta_{n-1}^s(Q_{n-1,n}^s(1))}\Big| \leq C\|\varphi\|\mathsf{d}(x,y)v(x)^{\xi}v(y)^{\xi}.
$$
Now assuming the result for some $p+1<n$, we have
$$
\frac{Q_{p,n}^s(\varphi)(x)}{\eta_p^s(Q_{p,n}^s(1))}
-\frac{Q_{p,n}^s(\varphi)(y)}{\eta_p^s(Q_{p,n}^s(1))} = \frac{1}{\eta_p^s(G_p)}\Big(
\frac{Q_{p+1}^s(Q_{p+1,n}^s(\varphi))(x)}{\eta_{p+1}^s(Q_{p+1,n}^s(1))} - 
\frac{Q_{p+1}^s(Q_{p+1,n}^s(\varphi))(y)}{\eta_{p+1}^s(Q_{p+1,n}^s(1))}.
\Big)
$$
Now we have, for any $x\in\mathsf{X}$
$$
\Big|\frac{Q_{p+1,n}^s(\varphi))(x)}{\eta_{p+1}^s(Q_{p+1,n}^s(1))}\Big|\frac{1}{v(x)^{\xi}} \leq \|\varphi\|\|h_{p+1,n}^s\|_{v^{\xi}} \leq C\|\varphi\|
$$
where we have used \cite[Proposition 2 (3)]{whiteley} to get to the second inequality.
Then applying \eqref{eq_av:10}, the induction hypothesis and  (H\ref{hypav:7}) we have
\begin{eqnarray*}
\Big|
\frac{Q_{p,n}^s(\varphi)(x)}{\eta_p^s(Q_{p,n}^s(1))}
-\frac{Q_{p,n}^s(\varphi)(y)}{\eta_p^s(Q_{p,n}^s(1))}\Big| & \leq & C\Big\|\frac{Q_{p+1,n}^s(\varphi))}{\eta_{p+1}^s(Q_{p+1,n}^s(1))}\Big\|_{v^{\xi}}\mathsf{d}(x,y)v(x)^{\xi}v(y)^{\xi} \\
& \leq & C\|\varphi\|\mathsf{d}(x,y)v(x)^{\xi}v(y)^{\xi}.
\end{eqnarray*}
\end{proof}

\begin{lem}\label{lem:av_4}
Assume (H\ref{hypav:1}-\ref{hypav:7}). Then for any $\xi\in(0,1/2)$  there exists a 
$C<+\infty$ depending on the constants in  
(H\ref{hypav:1}-\ref{hypav:7}) such that for any $\varphi\in\mathcal{L}_{v^{\xi}}(\mathsf{X})\cap\textrm{\emph{Lip}}_{v^{\xi},\mathsf{d}}(\mathsf{X})$, 
$n\geq 1$ and $0\leq p <n$ $(x,y)\in\mathsf{X}\times \mathsf{X}$, $s\in\{f,c\}$ we have
$$
\Big|h_{p,n}^s(x)\Big(R_{p+1}^{(n),s}(\varphi)(x)-R_{p+1}^{(n),s}(\varphi)(y)\Big)\Big| \leq C\|\varphi\|_{v^{\xi}}
\mathsf{d}(x,y)v(x)^{2\xi}v(y)^{4\xi}.
$$
\end{lem}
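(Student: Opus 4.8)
The plan is to reduce the difference of $R$-kernel averages to quantities already controlled by the preceding lemmas, using a ratio decomposition together with the fact that the weight $h_{p,n}^s(x)$ cancels exactly one of the two denominators produced. First I would record, from \eqref{eq:av_rdef}, the identity $R_{p+1}^{(n),s}(\varphi)(z) = Q_{p+1}^s(\varphi\, Q_{p+1,n}^s(1))(z)/Q_{p,n}^s(1)(z)$, and abbreviate $A(z):=Q_{p+1}^s(\varphi\, Q_{p+1,n}^s(1))(z)$ and $B(z):=Q_{p,n}^s(1)(z)=Q_{p+1}^s(Q_{p+1,n}^s(1))(z)$, so that $R_{p+1}^{(n),s}(\varphi)=A/B$ and $h_{p,n}^s=B/\eta_p^s(Q_{p,n}^s(1))$. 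Starting from the elementary identity
$$
\frac{A(x)}{B(x)}-\frac{A(y)}{B(y)} = \frac{A(x)-A(y)}{B(x)} - \frac{A(y)}{B(y)}\cdot\frac{B(x)-B(y)}{B(x)},
$$
multiplication by $h_{p,n}^s(x)$ gives
$$
h_{p,n}^s(x)\big(R_{p+1}^{(n),s}(\varphi)(x)-R_{p+1}^{(n),s}(\varphi)(y)\big) = \underbrace{\frac{A(x)-A(y)}{\eta_p^s(Q_{p,n}^s(1))}}_{T_1} - \underbrace{R_{p+1}^{(n),s}(\varphi)(y)\big(h_{p,n}^s(x)-h_{p,n}^s(y)\big)}_{T_2},
$$
and I would bound $|T_1|$ and $|T_2|$ separately.

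For $T_1$ I would write $Q_{p+1,n}^s(1)=\eta_{p+1}^s(Q_{p+1,n}^s(1))\,h_{p+1,n}^s$, so that $A=\eta_{p+1}^s(Q_{p+1,n}^s(1))\,Q_{p+1}^s(\varphi\, h_{p+1,n}^s)$, and use the normalization identity $\eta_p^s(Q_{p,n}^s(1))=\eta_p^s(G_p)\,\eta_{p+1}^s(Q_{p+1,n}^s(1))$ (which follows from $\eta_{p+1}^s=\eta_p^sQ_{p+1}^s/\eta_p^s(G_p)$) to collapse the scalar prefactor of $T_1$ to $1/\eta_p^s(G_p)$, bounded by \eqref{eq_av:10}. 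The function $\varphi\, h_{p+1,n}^s$ lies in $\mathcal{L}_{v^{2\xi}}(\mathsf{X})\cap\textrm{Lip}_{v^{2\xi},\mathsf{d}}(\mathsf{X})$ with norm $\leq C\|\varphi\|_{v^{\xi}}$: indeed $\|h_{p+1,n}^s\|_{v^{\xi}}\leq C$ by \cite[Proposition 2]{whiteley}, while $h_{p+1,n}^s\in\textrm{Lip}_{v^{\xi},\mathsf{d}}(\mathsf{X})$ uniformly in $p,n,s$ by Lemma \ref{lem:new_av_1} applied with test function $1$, and the product of two elements of $\mathcal{L}_{v^{\xi}}\cap\textrm{Lip}_{v^{\xi},\mathsf{d}}$ lands in $\mathcal{L}_{v^{2\xi}}\cap\textrm{Lip}_{v^{2\xi},\mathsf{d}}$ (here $2\xi\in(0,1)$ since $\xi<1/2$). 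Applying (H\ref{hypav:7}) with exponent $2\xi$ to $Q_{p+1}^s(\varphi\, h_{p+1,n}^s)$ then yields $|T_1|\leq C\|\varphi\|_{v^{\xi}}\mathsf{d}(x,y)[v(x)v(y)]^{2\xi}\leq C\|\varphi\|_{v^{\xi}}\mathsf{d}(x,y)v(x)^{2\xi}v(y)^{4\xi}$.

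For $T_2$ the factor $h_{p,n}^s(x)-h_{p,n}^s(y)$ is controlled directly by Lemma \ref{lem:new_av_1} (test function $1$), giving $|h_{p,n}^s(x)-h_{p,n}^s(y)|\leq C\mathsf{d}(x,y)v(x)^{\xi}v(y)^{\xi}$. For the remaining factor I would use $R_{p+1}^{(n),s}(\varphi)(y)=Q_{p+1}^s(\varphi\, h_{p+1,n}^s)(y)/(\eta_p^s(G_p)\,h_{p,n}^s(y))$, bound $|Q_{p+1}^s(\varphi\, h_{p+1,n}^s)(y)|\leq C\|\varphi\|_{v^{\xi}}Q_{p+1}^s(v^{2\xi})(y)\leq C\|\varphi\|_{v^{\xi}}v(y)^{2\xi}$ (the last step being the drift bound from (H\ref{hypav:1}) in its $v^{2\xi}$ form supplied by \cite[Lemma 3]{whiteley}), and then use $\|1/h_{p,n}^s\|_{v^{\xi}}\leq C$ from Lemma \ref{lem:av_2} together with $1/\eta_p^s(G_p)\leq C$. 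This gives $|R_{p+1}^{(n),s}(\varphi)(y)|\leq C\|\varphi\|_{v^{\xi}}v(y)^{3\xi}$, hence $|T_2|\leq C\|\varphi\|_{v^{\xi}}\mathsf{d}(x,y)v(x)^{\xi}v(y)^{4\xi}\leq C\|\varphi\|_{v^{\xi}}\mathsf{d}(x,y)v(x)^{2\xi}v(y)^{4\xi}$, and summing the two bounds closes the proof.

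The obstacle here is bookkeeping rather than conceptual: the asymmetric growth exponents in $x$ and $y$ must be tracked carefully through the ratio and product manipulations, and the crucial structural observation is that multiplying by $h_{p,n}^s(x)$ (not $h_{p,n}^s(y)$) is precisely what strips the denominator from the $A(x)-A(y)$ piece, leaving a single application of $Q_{p+1}^s$ to which (H\ref{hypav:7}) can be applied. The step demanding the most care is verifying the uniform (in $p,n,s$) membership of $\varphi\, h_{p+1,n}^s$ in the weighted Lipschitz class $\mathcal{L}_{v^{2\xi}}\cap\textrm{Lip}_{v^{2\xi},\mathsf{d}}$ with the correct constant, since this is what legitimizes the invocation of (H\ref{hypav:7}) at exponent $2\xi$ and fixes the final exponents $v(x)^{2\xi}v(y)^{4\xi}$.
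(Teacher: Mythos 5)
Your proposal is correct and follows essentially the same route as the paper: the same ratio decomposition (your $T_1-T_2$ is the paper's $T_1+T_2$ up to a sign), the same collapse of the prefactor to $1/\eta_p^s(G_p)$, and the same application of (H\ref{hypav:7}) to $Q_{p+1}^s(\varphi h_{p+1,n}^s)$ after verifying $\varphi h_{p+1,n}^s\in\mathcal{L}_{v^{2\xi}}(\mathsf{X})\cap\textrm{Lip}_{v^{2\xi},\mathsf{d}}(\mathsf{X})$ via Lemma \ref{lem:new_av_1} and \cite[Proposition 2 (3)]{whiteley}. The only cosmetic difference is that for the second term you invoke Lemma \ref{lem:new_av_1} with test function $1$ to bound $|h_{p,n}^s(x)-h_{p,n}^s(y)|$, whereas the paper applies (H\ref{hypav:7}) directly to $Q_{p+1}^s(h_{p+1,n}^s)(y)-Q_{p+1}^s(h_{p+1,n}^s)(x)$; both yield the same $v(x)^{\xi}v(y)^{4\xi}$ bound.
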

\begin{proof}
Define
\begin{eqnarray}
T_1 & := & \frac{h_{p,n}^s(x)}{Q_{p,n}^s(1)(x)}\Big(Q_{p+1}^s(\varphi Q_{p+1,n}^s(1))(x)-Q_{p+1}^s(\varphi Q_{p+1,n}^s(1))(y)\Big) \label{eq_av:6}\\
T_2 & := & \frac{h_{p,n}^s(x)Q_{p+1}^s(\varphi Q_{p+1,n}^s(1))(y)}{Q_{p,n}^s(1)(x)Q_{p,n}^s(1)(y)}\Big(Q_{p,n}^s(1)(y)-Q_{p,n}^s(1)(x)\Big)\label{eq_av:7}
\end{eqnarray}
Then we have 
\begin{equation}\label{eq_av:8}
T_1 + T_2 =h_{p,n}^s(x)\Big(R_{p+1}^{(n),s}(\varphi)(x)-R_{p+1}^{(n),s}(\varphi)(y)\Big).
\end{equation}
We will bound $|T_1|$ and $|T_2|$ and then sum the bounds to conclude.

{\bf Term} \eqref{eq_av:6}: We have
$$
\frac{h_{p,n}^s(x)}{Q_{p,n}^s(1)(x)}\Big(Q_{p+1}^s(\varphi Q_{p+1,n}^s(1))(x)-Q_{p+1}^s(\varphi Q_{p+1,n}^s(1))(y)\Big)
= 
$$
$$
\frac{1}{\eta_p^s(G_p)}\Big(Q_{p+1}^s(\varphi h_{p+1,n}^s)(x)-Q_{p+1}^s(\varphi h_{p+1,n}^s)(y)\Big).
$$
Then by Lemma \ref{lem:new_av_1} and \cite[Proposition 2 (3)]{whiteley}, 
\begin{equation}\label{eq:av_new1}
h_{p+1,n}^s\in\mathcal{L}_{v^{\tilde{\xi}}}(\mathsf{X})\cap\textrm{Lip}_{v^{\tilde{\xi}},\mathsf{d}}(\mathsf{X})~\quad\forall\tilde{\xi}\in(0,1]
\end{equation}
so, applying \eqref{eq_av:10} and (H\ref{hypav:7})
we have
$$
|T_1| \leq C \|\varphi h_{p+1,n}^s\|_{v^{2\xi}}\mathsf{d}(x,y)[v(x)v(y)]^{2\xi}
$$
By \cite[Proposition 2 (3)]{whiteley}
$$
\|\varphi h_{p+1,n}^s\|_{v^{2\xi}} \leq C\|\varphi\|_{v^{\xi}}
$$
so
\begin{equation}\label{eq_av:9}
|T_1| \leq C \|\varphi\|_{v^{\xi}}\mathsf{d}(x,y)[v(x)v(y)]^{2\xi}.
\end{equation}

{\bf Term} \eqref{eq_av:7}:
We have
$$
\frac{h_{p,n}^s(x)Q_{p+1}^s(\varphi Q_{p+1,n}^s(1))(y)}{Q_{p,n}^s(1)(x)Q_{p,n}^s(1)(y)}\Big(Q_{p,n}^s(1)(y)-Q_{p,n}^s(1)(x)\Big) = 
$$
$$
\frac{Q_{p+1}^s(\varphi Q_{p+1,n}^s(1))(y)}{Q_{p,n}^s(1)(y)}\Big(Q_{p+1}^s(h_{p+1,n}^s)(y)-Q_{p+1}^s(h_{p+1,n}^s)(x)\Big) = 
$$
$$
\frac{Q_{p+1}^s(\varphi h_{p+1,n}^s)(y)v(y)^{\xi}}{\eta_p^s(G_p) h_{p,n}^s(y)v(y)^{\xi}}\Big(Q_{p+1}^s(h_{p+1,n}^s)(y)-Q_{p+1}^s(h_{p+1,n}^s)(x)\Big).
$$
Applying \eqref{eq_av:10} and Lemma \ref{lem:av_2} gives the upper-bound
$$
|T_2| \leq C Q_{p+1}^s(|\varphi| h_{p+1,n}^s)(y)v(y)^{\xi}\Big|Q_{p+1}^s(h_{p+1,n}^s)(y)-Q_{p+1}^s(h_{p+1,n}^s)(x)\Big|.
$$
Now $\varphi\in\mathcal{L}_{v^{\xi}}(\mathsf{X})$ and by \cite[Proposition 2 (3)]{whiteley} $\sup_{n\geq 1}\sup_{0\leq p \leq n}\max_{s\in\{f,c\}}\|h_{p+1,n}^s\|_{v^{\xi}}<+\infty$,
then, noting \eqref{eq:av_new1} and thus applying (H\ref{hypav:7})
$$
|T_2| \leq C \|\varphi\|_{v^{\xi}} Q_{p+1}^s(v^{2\xi})(y)v(y)^{\xi}\|h_{p+1,n}^s\|_{v^{\xi}}|x-y|[v(x)v(y)]^{\xi}.
$$
Again applying \cite[Proposition 2 (3)]{whiteley} and (H\ref{hypav:1}) we have finally that
\begin{equation}\label{eq_av:11}
|T_2| \leq C\|\varphi\|_{v^{\xi}}\mathsf{d}(x,y)v(x)^{\xi}v(y)^{4\xi}.
\end{equation}
Noting \eqref{eq_av:8}, \eqref{eq_av:9} and \eqref{eq_av:11} the proof can be concluded.
\end{proof}

Recall $A=\{(x,y)\in\mathsf{X}\times\mathsf{X}:x\neq y\}$.

\begin{lem}\label{lem:av_5}
Assume (H\ref{hypav:1}-\ref{hypav:6}). Then for any $\xi\in(0,1]$ there exists a $\rho<1$ and  $C<+\infty$ depending on the constants in  
(H\ref{hypav:1}-\ref{hypav:6}) such that for any $\varphi\in\mathcal{B}_b(\mathsf{X})$, $n\geq 1$ and $0\leq p <n$, $(x,y)\in\mathsf{X}\times\mathsf{X}$ we have
$$
| D_{p,n}^c(\varphi)(x)- D_{p,n}^c(\varphi)(y)| \leq 
C\mathbb{I}_A(x,y)\|\varphi\|\Big(\rho^{n-p}+1\Big)v(x)^{\xi}v(y)^{2\xi}.
$$
\end{lem}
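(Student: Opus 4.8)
The plan is to bound the two evaluations $D_{p,n}^c(\varphi)(x)$ and $D_{p,n}^c(\varphi)(y)$ \emph{separately} by Lemma \ref{lem:av_1} and to combine them with the triangle inequality, rather than to exploit any cancellation between the two points. Since $\varphi$ is only assumed bounded and measurable here (not Lipschitz), there is no smoothness available to contract the difference across the two arguments, as one does under (H\ref{hypav:7}) in Lemma \ref{lem:av_6}; the point is that the claimed estimate is deliberately weak --- it carries the factor $\rho^{n-p}+1$ and an extra weight $v(y)^{\xi}$ --- so no such contraction is needed.

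First I would record that any $\varphi\in\mathcal{B}_b(\mathsf{X})$ lies in $\mathcal{L}_{v^{\xi}}(\mathsf{X})$ with $\|\varphi\|_{v^{\xi}}\leq\|\varphi\|$, because $v\geq 1$ forces $v^{\xi}\geq 1$. Applying Lemma \ref{lem:av_1} at $x$ and at $y$, with the $\rho<1$ it supplies, then gives
$$
|D_{p,n}^c(\varphi)(x)|\leq C\|\varphi\|\rho^{n-p}v(x)^{\xi},\qquad |D_{p,n}^c(\varphi)(y)|\leq C\|\varphi\|\rho^{n-p}v(y)^{\xi},
$$
and hence, by the triangle inequality, $|D_{p,n}^c(\varphi)(x)-D_{p,n}^c(\varphi)(y)|\leq C\|\varphi\|\rho^{n-p}(v(x)^{\xi}+v(y)^{\xi})$.

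The remaining work is only the weight bookkeeping needed to match the stated form. Using $v\geq 1$ one has $v(x)^{\xi}+v(y)^{\xi}\leq 2\,v(x)^{\xi}v(y)^{2\xi}$ (each summand being dominated by $v(x)^{\xi}v(y)^{2\xi}$), so after absorbing the $2$ into $C$ and bounding $\rho^{n-p}\leq\rho^{n-p}+1$ I obtain
$$
|D_{p,n}^c(\varphi)(x)-D_{p,n}^c(\varphi)(y)|\leq C\|\varphi\|(\rho^{n-p}+1)v(x)^{\xi}v(y)^{2\xi}.
$$
Finally, since the left-hand side vanishes when $x=y$, the inequality persists after inserting the indicator $\mathbb{I}_A(x,y)$, giving exactly the claim; keeping $\rho$ equal to the constant produced by Lemma \ref{lem:av_1} ensures the asserted $\rho<1$ is genuinely that object. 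The only thing worth flagging is conceptual rather than technical: one must recognise that, in contrast to the Lipschitz estimate of Lemma \ref{lem:av_6}, the weak target here makes a crude triangle-inequality bound sufficient, so there is no real analytic obstacle to overcome.
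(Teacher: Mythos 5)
Your proof is correct, but it is not the route the paper takes. You bound the two evaluations separately via Lemma \ref{lem:av_1} (noting $\|\varphi\|_{v^{\xi}}\leq\|\varphi\|$ since $v\geq 1$) and combine with the triangle inequality; since the difference vanishes on the diagonal, the indicator $\mathbb{I}_A$ can be inserted for free, and the weight bookkeeping $v(x)^{\xi}+v(y)^{\xi}\leq 2v(x)^{\xi}v(y)^{2\xi}$ is fine. This in fact yields the slightly sharper bound $C\|\varphi\|\rho^{n-p}v(x)^{\xi}v(y)^{2\xi}$, which trivially implies the stated one with the factor $\rho^{n-p}+1$. The paper instead writes $D_{p,n}^c(\varphi)(x)-D_{p,n}^c(\varphi)(y)=T_1+T_2$ with $T_1=\big(Q_{p,n}^c(\varphi)(x)/Q_{p,n}^c(1)(x)-Q_{p,n}^c(\varphi)(y)/Q_{p,n}^c(1)(y)\big)h_{p,n}^c(x)$ and $T_2=D_{p,n}^c(\varphi)(y)\big(h_{p,n}^c(x)/h_{p,n}^c(y)-1\big)$, bounding $T_1$ crudely without any decay (this is the source of the additive $1$) and $T_2$ with the $\rho^{n-p}$ factor from Lemmata \ref{lem:av_1}--\ref{lem:av_2}. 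The reason for that heavier decomposition is structural rather than necessary here: it is exactly the skeleton reused in Lemma \ref{lem:av_6}, where under (H\ref{hypav:7}) the term $T_1$ can be contracted in the metric $\mathsf{d}(x,y)$, so the paper treats the two lemmata in parallel. Your argument is more elementary and entirely adequate for the statement as given; the only thing worth noting is that the improvement from $\rho^{n-p}+1$ to $\rho^{n-p}$ is immaterial downstream, since in Theorem \ref{theo:av_thm} the bound is multiplied by a further $\rho^{n-p}$ before summation over $p$.
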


\begin{proof}
Throughout, we assume that $x\neq y$. Define
\begin{eqnarray}
T_1 & := & \Big(\frac{Q_{p,n}^c(\varphi)(x)}{Q_{p,n}^c(1)(x)}-\frac{Q_{p,n}^c(\varphi)(y)}{Q_{p,n}^c(1)(y)}\Big)h_{p,n}^c(x)\label{eq_av:12}\\
T_2 & := & D_{p,n}^c(\varphi)(y)\Big(\frac{h_{p.n}^c(x)}{h_{p.n}^c(y)}-1\Big)\label{eq_av:13}
\end{eqnarray}
then we have 
\begin{equation}\label{eq_av:14}
T_1 + T_2 = D_{p,n}^c(\varphi)(x)- D_{p,n}^c(\varphi)(y).
\end{equation}
We will bound $|T_1|$ and $|T_2|$ and then sum the bounds to conclude.

{\bf Term} \eqref{eq_av:12}: We have
$$
 \Big(\frac{Q_{p,n}^c(\varphi)(x)}{Q_{p,n}^c(1)(x)}-\frac{Q_{p,n}^c(\varphi)(y)}{Q_{p,n}^c(1)(y)}\Big)h_{p,n}^c(x) = 
$$
$$
\frac{1}{\eta_p^c(G_p)}\Big(Q_{p+1}^c\Big(
\frac{Q_{p+1,n}^c(\varphi)}{\eta_{p+1}^c(Q_{p+1,n}^c(1))}
\Big))(x)-
Q_{p+1}^c\Big(\frac{Q_{p+1,n}^c(\varphi)}{\eta_{p+1}^c(Q_{p+1,n}^c(1))}
\Big)(y)\Big) +
$$
$$
\frac{Q_{p,n}^c(\varphi)(y)}{Q_{p,n}^c(1)(y)}\frac{1}{\eta_p^c(G_p)}\Big(Q_{p+1}^c(h_{p+1,n}^c)(y)-Q_{p+1}^c(h_{p+1,n}^c)(x)\Big).
$$
Applying \eqref{eq_av:10} and $\varphi\in\mathcal{B}_b(\mathsf{X})$ we have
$$
|T_1|  \leq C\|\varphi\|(|Q_{p+1}^c(h_{p+1,n}^c)(x)|+|Q_{p+1}^c(h_{p+1,n}^c)(y)|).
$$
By \cite[Proposition 2 (3)]{whiteley} $\sup_{n\geq 1}\sup_{0\leq p \leq n}\max_{s\in\{f,c\}}\|h_{p+1,n}^s\|_{v^{\xi}}<+\infty$,
then in addition applying (H\ref{hypav:1}) we have
\begin{equation}\label{eq_av:15}
|T_1|  \leq C\|\varphi\|(v(x)^{\xi}+v(y)^{\xi}).
\end{equation}

{\bf Term} \eqref{eq_av:13}: We have
$$
D_{p,n}^c(\varphi)(y)\Big(\frac{h_{p.n}^c(x)}{h_{p.n}^c(y)}-1\Big) = D_{p,n}^c(\varphi)(y)\frac{1}{h_{p.n}^c(y)v(y)^{\xi}}
\frac{h_{p.n}^c(x)-h_{p.n}^c(y)}{v(x)^{\xi}v(y)^{\xi}}v(x)^{\xi}v(y)^{2\xi}.
$$
Then applying Lemma \ref{lem:av_1} for $D_{p,n}^c(\varphi)(y)$, Lemma \ref{lem:av_2} for $1/(h_{p.n}^c(y)v(y)^{\xi})$
and \cite[Proposition 2 (3)]{whiteley} as above, we have
\begin{equation}\label{eq_av:16}
|T_2| \leq C\|\varphi\|\rho^{n-p}v(x)^{\xi}v(y)^{3\xi}.
\end{equation}
Noting \eqref{eq_av:14}, \eqref{eq_av:15} and \eqref{eq_av:16} the proof can be concluded.
\end{proof}

Define for $B\in\mathcal{X}$, $x\in\mathsf{X}$, $s\in\{f,c\}$
$$
P_{p,n}^s(B)(x) = \frac{Q_{p,n}^s(B)(x)}{Q_{p,n}^s(1)(x)}.
$$

\begin{lem}\label{lem:new_av_2}
Assume (H\ref{hypav:1}-\ref{hypav:7}). Then for any $\xi\in(0,1)$ there exists a $C<+\infty$
depending on the constants in  (H\ref{hypav:1}-\ref{hypav:7}) such that for any $\varphi\in\mathcal{B}_b(\mathsf{X})\cap\textrm{\emph{Lip}}_{\mathsf{d}}(\mathsf{X})$, 
$(x,y)\in\mathsf{X}\times\mathsf{X}$, $s\in\{f,c\}$ we have
$$
\sup_{n\geq 1}\sup_{0\leq p\leq n}|P_{p,n}^s(\varphi)(x)-P_{p,n}^s(\varphi)(y)| \leq C\|\varphi\|\mathsf{d}(x,y)v(x)^{\xi}v(y)^{\xi}.
$$
\end{lem}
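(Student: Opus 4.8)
The plan is to reduce the statement to the Lipschitz estimates already available for the \emph{globally} normalised quantities, by writing $P_{p,n}^s$ as a ratio whose numerator and denominator are both controlled by Lemma \ref{lem:new_av_1}. Introduce the shorthand $\Psi_{p,n}^s(\varphi)(x) := Q_{p,n}^s(\varphi)(x)/\eta_p^s(Q_{p,n}^s(1))$, so that $P_{p,n}^s(\varphi)=\Psi_{p,n}^s(\varphi)/h_{p,n}^s$ and, crucially, $h_{p,n}^s=\Psi_{p,n}^s(1)$. Hence Lemma \ref{lem:new_av_1}, applied once to $\varphi$ and once to the constant function $1\in\mathcal{B}_b(\mathsf{X})\cap\textrm{Lip}_{\mathsf{d}}(\mathsf{X})$, furnishes for every $\xi'\in(0,1]$, uniformly in $n$ and $0\le p<n$, the two bounds $|\Psi_{p,n}^s(\varphi)(x)-\Psi_{p,n}^s(\varphi)(y)|\le C\|\varphi\|\mathsf{d}(x,y)[v(x)v(y)]^{\xi'}$ and $|h_{p,n}^s(x)-h_{p,n}^s(y)|\le C\mathsf{d}(x,y)[v(x)v(y)]^{\xi'}$. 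The remaining case $p=n$ is immediate, being the Lipschitz property of $\varphi$ itself, exactly as in Lemma \ref{lem:new_av_1}, so from here I focus on $0\le p<n$.

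Next I would expand the difference of ratios as
$$
P_{p,n}^s(\varphi)(x)-P_{p,n}^s(\varphi)(y) = \frac{\Psi_{p,n}^s(\varphi)(x)-\Psi_{p,n}^s(\varphi)(y)}{h_{p,n}^s(x)} + \Psi_{p,n}^s(\varphi)(y)\,\frac{h_{p,n}^s(y)-h_{p,n}^s(x)}{h_{p,n}^s(x)\,h_{p,n}^s(y)}
$$
and bound the two summands separately. For both I would use the lower bound on the denominator from Lemma \ref{lem:av_2}, namely $1/h_{p,n}^s(x)\le C v(x)^{\xi'}$, together with the upper bound $h_{p,n}^s(x)\le C v(x)^{\xi'}$ and the elementary estimate $\|P_{p,n}^s(\varphi)\|\le\|\varphi\|$ (so that $|\Psi_{p,n}^s(\varphi)(y)|=|P_{p,n}^s(\varphi)(y)|\,h_{p,n}^s(y)\le C\|\varphi\|v(y)^{\xi'}$), all uniform in $(n,p)$ by \cite[Proposition 2 (3)]{whiteley}. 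Combining these with the two Lipschitz bounds and absorbing constants yields, for every $\xi'\in(0,1]$ and uniformly in $n$ and $0\le p<n$,
$$
|P_{p,n}^s(\varphi)(x)-P_{p,n}^s(\varphi)(y)| \le C\|\varphi\|\,\mathsf{d}(x,y)\,v(x)^{2\xi'}v(y)^{3\xi'}.
$$

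To finish, given the target exponent $\xi\in(0,1)$ I would apply the last display with $\xi'=\xi/3\in(0,1/3)$, which is admissible since all invoked results hold on the full range $\xi'\in(0,1]$, and then use $v\ge1$ (so $v(x)^{2\xi/3}\le v(x)^{\xi}$ and $v(y)^{\xi}=v(y)^{\xi}$) to obtain $|P_{p,n}^s(\varphi)(x)-P_{p,n}^s(\varphi)(y)|\le C\|\varphi\|\mathsf{d}(x,y)v(x)^{\xi}v(y)^{\xi}$, as required.

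The main obstacle is precisely the pointwise, $x$-dependent normalisation $Q_{p,n}^s(1)(x)$ (equivalently $h_{p,n}^s(x)$) in the denominator of $P_{p,n}^s$, which, unlike the constant normalisation $\eta_p^s(Q_{p,n}^s(1))$ that appears in Lemma \ref{lem:new_av_1}, admits no uniform lower bound in $x$; it can only be controlled through $1/h_{p,n}^s\in\mathcal{L}_{v^{\xi'}}$ (Lemma \ref{lem:av_2}). This is exactly what produces the surplus powers of $v$ in the intermediate estimate, and the purpose of the rescaling $\xi'=\xi/3$ together with $v\ge1$ is to trade these surplus powers for the clean symmetric weight $v(x)^{\xi}v(y)^{\xi}$. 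The only point requiring care is to verify that the exponents coming out of the quotient estimate are bounded by a fixed multiple of $\xi'$ (here $3\xi'$), so that a single rescaling suffices; no further induction is needed, since the Lipschitz control of both $\Psi_{p,n}^s(\varphi)$ and $h_{p,n}^s$ is already uniform in $(n,p)$ via Lemma \ref{lem:new_av_1}.
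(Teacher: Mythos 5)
Your proposal is correct and follows essentially the same route as the paper: the identical difference-of-ratios decomposition (Lipschitz increment of the globally normalised numerator divided by $h_{p,n}^s(x)$, plus the bounded quantity $P_{p,n}^s(\varphi)(y)$ times the increment of $h_{p,n}^s$), controlled by Lemma \ref{lem:new_av_1} (applied to $\varphi$ and to $1$), Lemma \ref{lem:av_2} for $1/h_{p,n}^s$, and a final rescaling of the exponent using $v\geq 1$. The only cosmetic difference is that your slightly looser handling of the second term forces $\xi'=\xi/3$ where the paper gets away with $\xi/2$; both are valid.
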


\begin{proof}
We assume $p<n$ as the case $p=n$ is trivial. Define
\begin{eqnarray}
T_1 & = & \frac{1}{h_{p,n}^s(x)}\Big(\frac{Q_{p,n}^s(\varphi)(x)}{\eta_p^s(Q_{p,n}^s(1))}
-\frac{Q_{p,n}^s(\varphi)(y)}{\eta_p^s(Q_{p,n}^s(1))}\Big) \label{eq:av_new_11}\\
T_2 & = & \frac{Q_{p,n}^s(\varphi)(y)}{Q_{p,n}^s(1)(y)}\frac{1}{h_{p,n}^s(x)}
\Big(\frac{Q_{p,n}^s(1)(y)}{\eta_p^s(Q_{p,n}^s(1))}
-\frac{Q_{p,n}^s(1)(x)}{\eta_p^s(Q_{p,n}^s(1))}\Big) \label{eq:av_new_12}
\end{eqnarray}
then we have 
\begin{equation}\label{eq:av_new_13}
T_1 + T_2 = P_{p,n}^s(\varphi)(x)-P_{p,n}^s(\varphi)(y) 
\end{equation}
We will bound $|T_1|$ and $|T_2|$ and then sum the bounds to conclude.

{\bf Term} \eqref{eq:av_new_11}: We have, for any $\tilde{\xi}\in(0,1]$
$$
\frac{1}{h_{p,n}^s(x)}\Big(\frac{Q_{p,n}^s(\varphi)(x)}{\eta_p^s(Q_{p,n}^s(1))}
-\frac{Q_{p,n}^s(\varphi)(y)}{\eta_p^s(Q_{p,n}^s(1))}\Big)
=
\frac{v(x)^{\tilde{\xi}}}{h_{p,n}^s(x)v(x)^{\tilde{\xi}}}\Big(\frac{Q_{p,n}^s(\varphi)(x)}{\eta_p^s(Q_{p,n}^s(1))}
-\frac{Q_{p,n}^s(\varphi)(y)}{\eta_p^s(Q_{p,n}^s(1))}\Big).
$$
Then applying Lemmata \ref{lem:av_2} and \ref{lem:new_av_2}, we have the upper-bound for any $\tilde{\xi}\in(0,1]$
\begin{equation}\label{eq:av_new_14}
|T_1| \leq C \|\varphi\|\mathsf{d}(x,y)v(x)^{2\tilde{\xi}}v(y)^{\tilde{\xi}}.
\end{equation}

{\bf Term} \eqref{eq:av_new_12}: We have, for any $\tilde{\xi}\in(0,1]$
$$
\frac{Q_{p,n}^s(\varphi)(y)}{Q_{p,n}^s(1)(y)}\frac{1}{h_{p,n}^s(x)}
\Big(\frac{Q_{p,n}^s(1)(y)}{\eta_p^s(Q_{p,n}^s(1))}
-\frac{Q_{p,n}^s(1)(x)}{\eta_p^s(Q_{p,n}^s(1))}\Big) = 
$$
$$
\frac{Q_{p,n}^s(\varphi)(y)}{Q_{p,n}^s(1)(y)}\frac{v(x)^{\tilde{\xi}}}{h_{p,n}^s(x)v(x)^{\tilde{\xi}}}
\Big(\frac{Q_{p,n}^s(1)(y)}{\eta_p^s(Q_{p,n}^s(1))}
-\frac{Q_{p,n}^s(1)(x)}{\eta_p^s(Q_{p,n}^s(1))}\Big) 
$$
Then applying, $\varphi\in\mathcal{B}_b(\mathsf{X})$, Lemmata \ref{lem:av_2} and \ref{lem:new_av_2}, we have the upper-bound for any $\tilde{\xi}\in(0,1]$
\begin{equation}\label{eq:av_new_15}
|T_2| \leq C \|\varphi\|\mathsf{d}(x,y)v(x)^{2\tilde{\xi}}v(y)^{\tilde{\xi}}.
\end{equation}
Noting \eqref{eq:av_new_13}, \eqref{eq:av_new_14} and \eqref{eq:av_new_15} (the latter two with $\tilde{\xi}=\xi/2$) the proof can be concluded.
\end{proof}

\begin{lem}\label{lem:av_6}
Assume (H\ref{hypav:1}-\ref{hypav:7}). Then for any $\xi\in(0,1/2)$ there exists a $\rho<1$ and  $C<+\infty$ depending on the constants in  
(H\ref{hypav:1}-\ref{hypav:7}) such that for any $\varphi\in\mathcal{B}_b(\mathsf{X})\cap\textrm{\emph{Lip}}_{\mathsf{d}}(\mathsf{X})$, $n\geq 1$ and $0\leq p <n$, $(x,y)\in\mathsf{X}\times\mathsf{X}$ we have
$$
| D_{p,n}^c(\varphi)(x)- D_{p,n}^c(\varphi)(y)| \leq C\|\varphi\|\rho^{n-p}\mathsf{d}(x,y)v(x)^{2\xi}v(y)^{4\xi}.
$$
\end{lem}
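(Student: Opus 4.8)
The plan is to reuse the two-term decomposition from the proof of Lemma~\ref{lem:av_5}, but to manufacture the geometric factor $\rho^{n-p}$ in \emph{both} terms by exploiting the extra smoothing assumption (H\ref{hypav:7}) through Lemmata~\ref{lem:av_4} and~\ref{lem:new_av_1}. The starting point is the factorisation $D_{p,n}^c(\varphi)=h_{p,n}^c\,\psi_p$ with $\psi_p:=P_{p,n}^c(\varphi)-\eta_n^c(\varphi)$, together with the semigroup identity $\psi_p=R_{p+1}^{(n),c}(\psi_{p+1})$, which follows from $P_{p,n}^c(\varphi)=R_{p+1}^{(n),c}(P_{p+1,n}^c(\varphi))$ and the fact that $R_{p+1}^{(n),c}$ is Markov (so it fixes the constant $\eta_n^c(\varphi)$). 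Writing
$$
D_{p,n}^c(\varphi)(x)-D_{p,n}^c(\varphi)(y) = T_1 + T_2,\qquad T_1:=h_{p,n}^c(x)\big(\psi_p(x)-\psi_p(y)\big),\quad T_2:=\psi_p(y)\big(h_{p,n}^c(x)-h_{p,n}^c(y)\big),
$$
one checks algebraically that the right-hand side telescopes back to the difference; these are precisely the two pieces used in the proof of Lemma~\ref{lem:av_5}.

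For $T_1$ I would rewrite $\psi_p(x)-\psi_p(y)=R_{p+1}^{(n),c}(\psi_{p+1})(x)-R_{p+1}^{(n),c}(\psi_{p+1})(y)$, so that $T_1$ is exactly the quantity bounded by Lemma~\ref{lem:av_4}, applied with the function $\psi_{p+1}$. The crucial gain over Lemma~\ref{lem:av_5} (which bounds the same $T_1$ via Lemma~\ref{lem:new_av_2} and obtains no decay) is that Lemma~\ref{lem:av_4} delivers a bound purely in terms of $\|\psi_{p+1}\|_{v^{\xi}}$, and this norm is small. Indeed, applying Lemma~\ref{lem:av_1} and Lemma~\ref{lem:av_2} with a parameter $\eta$ gives $|\psi_{p+1}(x)|=|D_{p+1,n}^c(\varphi)(x)|/h_{p+1,n}^c(x)\leq C\|\varphi\|\rho^{n-p-1}v(x)^{2\eta}$, hence $\|\psi_{p+1}\|_{v^{2\eta}}\leq C\|\varphi\|\rho^{n-p-1}$. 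Taking $\eta=\xi/2$ places $\psi_{p+1}\in\mathcal{L}_{v^{\xi}}$ with $\|\psi_{p+1}\|_{v^{\xi}}\leq C\|\varphi\|\rho^{n-p-1}$ (the borderline exponent $2\eta-\xi=0$), and $\psi_{p+1}$ is Lipschitz by Lemma~\ref{lem:new_av_2}, so Lemma~\ref{lem:av_4} applies and yields $|T_1|\leq C\|\varphi\|\rho^{n-p-1}\mathsf{d}(x,y)v(x)^{2\xi}v(y)^{4\xi}$.

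For $T_2$ I would bound the two factors separately. Again by Lemma~\ref{lem:av_1} and Lemma~\ref{lem:av_2} with $\eta=\xi/2$, $|\psi_p(y)|=|D_{p,n}^c(\varphi)(y)|/h_{p,n}^c(y)\leq C\|\varphi\|\rho^{n-p}v(y)^{\xi}$, which carries the decay; the Lipschitz increment of $h_{p,n}^c$ is controlled by Lemma~\ref{lem:new_av_1} taken with $\varphi\equiv 1$ (so that the ratio there becomes $h_{p,n}^c$), giving $|h_{p,n}^c(x)-h_{p,n}^c(y)|\leq C\mathsf{d}(x,y)v(x)^{\xi}v(y)^{\xi}$. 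Multiplying and using $v\geq 1$ gives $|T_2|\leq C\|\varphi\|\rho^{n-p}\mathsf{d}(x,y)v(x)^{2\xi}v(y)^{4\xi}$. Summing the bounds for $T_1$ and $T_2$ and absorbing the harmless factor $\rho^{n-p-1}=\rho^{-1}\rho^{n-p}$ into $C$ produces the claimed estimate; the degenerate case $p=n-1$ is handled by noting that $\psi_n=\varphi-\eta_n^c(\varphi)$ is bounded and Lipschitz directly (using $\varphi\in\textrm{Lip}_{\mathsf{d}}(\mathsf{X})$).

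The main obstacle is conceptual rather than computational: one must recognise that the decay has to be generated by feeding the \emph{small} function $\psi_{p+1}$ into the smoothing estimate of Lemma~\ref{lem:av_4}, rather than applying the non-decaying Lipschitz bound of Lemma~\ref{lem:new_av_2} to the $P$-difference as in Lemma~\ref{lem:av_5}. The only genuinely delicate bookkeeping is the choice of exponents, namely $\eta=\xi/2$, which simultaneously forces $\psi_{p+1}\in\mathcal{L}_{v^{\xi}}$ at the borderline (so that $\sup_x v(x)^{2\eta-\xi}=1$) and makes the weights $v(x)^{2\xi}v(y)^{4\xi}$ produced by Lemma~\ref{lem:av_4} match the target exactly; throughout one must check that all invoked results are used within their admissible parameter ranges ($\xi\in(0,1/2)$ for Lemma~\ref{lem:av_4}, $\eta\in(0,1/4)$ for Lemmata~\ref{lem:av_1}--\ref{lem:av_2}).
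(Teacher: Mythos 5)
Your proof is correct and follows essentially the same route as the paper: the same $T_1+T_2$ decomposition, with the geometric decay generated by feeding $P_{p+1,n}^c(\varphi-\eta_n^c(\varphi))$ (whose $v^{\xi}$-norm is $\mathcal{O}(\rho^{n-p})$ by Lemmata \ref{lem:av_1}--\ref{lem:av_2}) into the smoothing estimate of Lemma \ref{lem:av_4}, exactly as the paper does. Your handling of $T_2$ is in fact slightly more careful than the paper's, which simply cites the bound \eqref{eq_av:16} from Lemma \ref{lem:av_5} even though that bound carries no $\mathsf{d}(x,y)$ factor; your use of the Lipschitz continuity of $h_{p,n}^c$ (via Lemma \ref{lem:new_av_1} with $\varphi\equiv 1$) supplies the missing factor.
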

\begin{proof}
The proof uses the same decomposition \eqref{eq_av:14} as in the proof of Lemma \ref{lem:av_5}. We use the same bound \eqref{eq_av:16} as 
in the proof of Lemma \ref{lem:av_5} and focus on the term $T_1$ as defined in \eqref{eq_av:12}. 
We note by \cite[pp.~133]{delm:04}
$$
P_{p,n}^c(\varphi)(x) = R_{p+1}^{(n),c}(P_{p+1,n}^c(\varphi))(x).
$$
where $R_{p+1}^{(n),c}$ is defined in \eqref{eq:av_rdef}.

Now $T_1$ as in the proof of Lemma \ref{lem:av_5}, can be written as
\begin{eqnarray*}
T_1 & = & h_{p,n}^c(x)\Big(P_{p,n}^c(\varphi)(x)-P_{p,n}^c(\varphi)(y)\Big) \\
& = & h_{p,n}^c(x)\Big(P_{p,n}^c(\varphi-\eta_n^c(\varphi))(x)-P_{p,n}^c(\varphi-\eta_n^c(\varphi))(y)\Big) \\
& = & h_{p,n}^c(x)\Big(R_{p+1}^{(n),s}(P_{p+1,n}^c(\varphi-\eta_n^c(\varphi)))(x)-
R_{p+1}^{(n),s}(P_{p+1,n}^c(\varphi-\eta_n^c(\varphi)))(y)
\Big).
\end{eqnarray*}
Now, noting Lemma \ref{lem:new_av_2}, we apply Lemma \ref{lem:av_4} to yield the upper-bound
\begin{equation}\label{eq_av:17}
|T_1| \leq C\Big\|(P_{p+1,n}^c(\varphi-\eta_n^c(\varphi))\Big\|_{v^{\xi}}
\mathsf{d}(x,y)v(x)^{2\xi}v(y)^{4\xi}.
\end{equation}
Now
$$
\frac{(P_{p+1,n}^c(\varphi-\eta_n^c(\varphi))(x)}{v(x)^{\xi}} = \frac{D_{p+1,n}^c(\varphi)(x)}{v(x)^{\xi/2}}\frac{1}{h_{p+1,n}^c(x)v(x)^{\xi/2}}.
$$
Applying  Lemma \ref{lem:av_1} and Lemma \ref{lem:av_2} yields that
\begin{equation}\label{eq:diff_need}
\Big\|(P_{p+1,n}^c(\varphi-\eta_n^c(\varphi))\Big\|_{v^{\xi}} \leq
C\|\varphi\|\rho^{n-p}.
\end{equation}
Then using the above inequality in \eqref{eq_av:17} and noting \eqref{eq_av:15} and \eqref{eq_av:16} the proof can be concluded.
\end{proof}

\section{Proofs for the Diffusion Case}\label{app:diff_case}

The statements at the beginning of Appendix \ref{app:av_proofs} also apply here. This appendix should be read after 
Appendix \ref{app:av_proofs}.

\subsection{Proofs for Proposition \ref{prop:diff_general_b_bound}}\label{app:diff_case_b}

\begin{proof}[Proof of Proposition \ref{prop:diff_general_b_bound}]
Follows easily from Propositions \ref{prop:diff_1}, \ref{prop:diff_2} and \ref{prop:diff_3}.
\end{proof}

\begin{lem}\label{lem:diff_1}
Assume (H\ref{hypav:8}). Then for any $\xi\in(0,1/2)$ there exists a $C<+\infty$ depending on the constants in (H\ref{hypav:8}) such that
for any $\varphi\in\mathcal{L}_{v^{\xi}}(\mathsf{X})$, $1\leq l \leq L$, $x\in\mathsf{X}$ we have
$$
|M^l(\varphi)(x)-M^{l-1}(\varphi)(x)|  \leq  C\|\varphi\|_{v^{\xi}}\Delta_lv(x)^{2\xi}.
$$
\end{lem}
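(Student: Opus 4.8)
The plan is to write the difference as a single integral against the kernel difference and to split the state space into a ``near'' ball and a ``far'' region, chosen so that the quoted heat-kernel estimates \eqref{eq:diff_1}--\eqref{eq:diff_2} can be applied simultaneously at levels $l$ and $l-1$. Concretely, since $M^l(\varphi)(x)-M^{l-1}(\varphi)(x) = \int_{\mathsf{X}}\varphi(y)[M^l(x,y)-M^{l-1}(x,y)]\,dy$, I would decompose $\mathsf{X} = \mathsf{B}_{l-1}(x)\cup\mathsf{B}_{l-1}(x)^c$. The essential bookkeeping point is that $\Delta_{l-1}=2\Delta_l$, so that $\mathsf{B}_{l-1}(x)=\{y:\|y-x\|>4\Delta_l\}\subset\mathsf{B}_l(x)$; I deliberately split at the larger radius $2\Delta_{l-1}=4\Delta_l$ rather than $2\Delta_l$, precisely so that \eqref{eq:diff_2} is available at \emph{both} discretization levels on the far region $\mathsf{B}_{l-1}(x)$.

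On the far region I would insert the exact transition density $M$ and use the triangle inequality $|M^l(x,y)-M^{l-1}(x,y)| \leq |M^l(x,y)-M(x,y)| + |M(x,y)-M^{l-1}(x,y)|$. Since $\mathsf{B}_{l-1}(x)\subset\mathsf{B}_l(x)$, estimate \eqref{eq:diff_2} applies at level $l$ on this set, while it applies at level $l-1$ by the very definition of $\mathsf{B}_{l-1}(x)$; combined with $\Delta_{l-1}=2\Delta_l$ this gives $|M^l(x,y)-M^{l-1}(x,y)| \leq C\Delta_l\exp\{-C'\|y-x\|^2\}$. Bounding $|\varphi(y)| \leq \|\varphi\|_{v^{\xi}}v(y)^{\xi}$ and integrating, the far contribution is at most $C\Delta_l\|\varphi\|_{v^{\xi}}\int_{\mathsf{B}_{l-1}(x)}v(y)^{\xi}\exp\{-C'\|y-x\|^2\}\,dy$, which is $\leq C\Delta_l\|\varphi\|_{v^{\xi}}v(x)^{2\xi}$ by part (2) of (H\ref{hypav:8}) applied at level $l-1$.

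On the near ball $\mathsf{B}_{l-1}(x)^c=\{y:\|y-x\|\leq 4\Delta_l\}$, where \eqref{eq:diff_2} is unavailable, I would simply bound $|M^l(x,y)-M^{l-1}(x,y)| \leq M^l(x,y)+M^{l-1}(x,y) \leq C$ using \eqref{eq:diff_1} at both levels (the Gaussian factor being $\leq 1$). The near contribution is then at most $C\|\varphi\|_{v^{\xi}}\int_{\mathsf{B}_{l-1}(x)^c}v(y)^{\xi}\,dy$, and part (1) of (H\ref{hypav:8}) at level $l-1$ converts this into $C\|\varphi\|_{v^{\xi}}v(x)^{2\xi}\int_{\mathsf{B}_{l-1}(x)^c}dy$. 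The remaining volume integral is the Lebesgue measure of a ball of radius $4\Delta_l$ in $\mathsf{X}=\mathbb{R}^d$, hence $\mathcal{O}(\Delta_l^d)\leq C\Delta_l$ since $d\geq 1$ and $\Delta_l<1$; this is exactly where the factor $\Delta_l$ arises on the near region, compensating for the loss of the $\Delta_l$ prefactor that \eqref{eq:diff_2} would otherwise have supplied.

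Summing the two regional bounds yields $|M^l(\varphi)(x)-M^{l-1}(\varphi)(x)| \leq C\|\varphi\|_{v^{\xi}}v(x)^{2\xi}\Delta_l$, as required, with $C$ depending only on the constants $C,C'$ of \eqref{eq:diff_1}--\eqref{eq:diff_2} and those in (H\ref{hypav:8}). I do not anticipate any genuine analytic difficulty: the sole real obstacle is the radius bookkeeping, i.e.\ arranging a single split at radius $4\Delta_l$ so that \eqref{eq:diff_2} can be invoked at both levels on the far region, while the region where only \eqref{eq:diff_1} is available is a ball whose volume already carries the needed factor $\Delta_l$. Everything else is a direct application of the two quoted heat-kernel bounds together with the two parts of (H\ref{hypav:8}).
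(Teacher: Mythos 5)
Your proof is correct and follows essentially the same route as the paper: split the integral into a far region (where \eqref{eq:diff_2} applies after inserting the exact density $M$ and using the triangle inequality) and a near ball (where \eqref{eq:diff_1} together with the $\mathcal{O}(\Delta_l)$ volume supplies the factor $\Delta_l$), then invoke the two parts of (H\ref{hypav:8}). The only difference is that you split at radius $4\Delta_l$ rather than the paper's $2\Delta_l$, which is a sensible refinement since it makes \eqref{eq:diff_2} directly applicable at level $l-1$ on the whole far region, whereas the paper's split leaves a thin annulus where that estimate is, strictly speaking, being used outside its stated range.
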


\begin{proof}
Define
\begin{eqnarray}
T_1 & := & \int_{\mathsf{B}_l(x)}\varphi(y)[M^l(x,y)-M(x,y)-\{M^{l-1}(x,y)-M(x,y)\}]dy \label{eq:diff_3} \\
T_2 & := & \int_{\mathsf{B}_l(x)^c}\varphi(y)[M^l(x,y)+M(x,y)-\{M^{l-1}(x,y)+M(x,y)\}]dy. \label{eq:diff_4}
\end{eqnarray}
Then we have
\begin{equation}\label{eq:diff_5}
T_1 + T_2 = M^l(\varphi)(x)-M^{l-1}(\varphi)(x)
\end{equation}
We will bound $|T_1|$ and $|T_2|$ and then sum the bounds to conclude.

{\bf Term} \eqref{eq:diff_3}: We have, by applying \eqref{eq:diff_2},
$$
|\int_{\mathsf{B}_l(x)}\varphi(y)[M^l(x,y)-M(x,y)-\{M^{l-1}(x,y)-M(x,y)\}]dy| \leq 
$$
$$
C\Delta_l \int_{\mathsf{B}_l(x)} |\varphi(y)|\exp\{-C'\|y-x\|^2\}dy.
$$
Using $\varphi\in\mathcal{L}_{v^{\xi}}(\mathsf{X})$ and (H\ref{hypav:8}) 2.~we have
\begin{equation}\label{eq:diff_6}
|T_1| \leq C\|\varphi\|_{v^{\xi}}\Delta_lv(x)^{2\xi}.
\end{equation}

{\bf Term} \eqref{eq:diff_4}: We have, by applying \eqref{eq:diff_1}
$$
|\int_{\mathsf{B}_l(x)^c}\varphi(y)[M^l(x,y)+M(x,y)-\{M^{l-1}(x,y)+M(x,y)\}]dy| \leq 
$$
$$
C \int_{\mathsf{B}_l(x)^c} |\varphi(y)| \exp\{-C'\|y-x\|^2\}dy
$$
Using $\varphi\in\mathcal{L}_{v^{\xi}}(\mathsf{X})$, the fact 
$$
\int_{\mathsf{B}_l(x)^c}v(y)^{\xi}\exp\{-C'\|y-x\|^2\}dy \leq C \int_{B_l(x)^c}v(y)^{\xi}dy
$$
along with (H\ref{hypav:8}) and that $\int_{\mathsf{B}_l(x)^c}dy\leq C\Delta_l$, we have
\begin{equation}\label{eq:diff_7}
|T_2| \leq C\|\varphi\|_{v^{\xi}}\Delta_lv(x)^{2\xi}.
\end{equation}
Noting \eqref{eq:diff_5}, \eqref{eq:diff_6} and \eqref{eq:diff_7} the proof can be concluded.
\end{proof}

Let $\xi\in(0,1]$ and denote $\mathscr{P}_{v^{\xi}}(\mathsf{X})$ as the collection of probabiity measures for which $\mu(v^{\xi})<+\infty$.

\begin{lem}\label{lem:diff_2}
Assume (H\ref{hypav:5}) 1.~(H\ref{hypav:8}). Then for any $\xi\in(0,1/2)$ there exists a $C<+\infty$ depending on the constants in (H\ref{hypav:5}) 1.~(H\ref{hypav:8})
such that for any $\mu\in\mathscr{P}_{v^{\xi}}(\mathsf{X})$, $\varphi\in\mathcal{L}_{v^{\xi}}(\mathsf{X})$, $p\geq 1$, $1\leq l \leq L$ we have
$$
|[\Phi_p^l(\mu)-\Phi_p^{l-1}(\mu)](\varphi)| \leq \frac{C\|\varphi\|_{v^{\xi}}\Delta_l\mu(v^{2\xi})}{\mu(G_{p-1})}.
$$
\end{lem}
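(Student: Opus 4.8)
The plan is to reduce the statement to the pointwise kernel estimate already supplied by Lemma \ref{lem:diff_1}, so that the only genuinely new work is bookkeeping. First I would unfold the definitions. Recalling that $\Phi_p^s(\mu)(\varphi) = \mu(G_{p-1}M_p^s(\varphi))/\mu(G_{p-1})$ and that in the PODDP model $M_p^f = M^l$, $M_p^c = M^{l-1}$ while the potential $G_{p-1}$ does \emph{not} depend on the level, the two normalizing constants $\mu(G_{p-1})$ coincide. This is the key simplification: the difference collapses to a single expectation against $\mu$,
$$
[\Phi_p^l(\mu)-\Phi_p^{l-1}(\mu)](\varphi) = \frac{\mu\big(G_{p-1}\,[M^l(\varphi)-M^{l-1}(\varphi)]\big)}{\mu(G_{p-1})},
$$
with no cross terms from the denominators.

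Next I would bound the numerator. Since $G_{p-1}\geq 0$ and $\mu$ is a positive measure, I can pass to absolute values inside and apply Lemma \ref{lem:diff_1}, which for $\varphi\in\mathcal{L}_{v^{\xi}}(\mathsf{X})$ gives $|M^l(\varphi)(x)-M^{l-1}(\varphi)(x)| \leq C\|\varphi\|_{v^{\xi}}\Delta_l\, v(x)^{2\xi}$ uniformly in $x\in\mathsf{X}$ and $1\leq l\leq L$. This yields
$$
\big|\mu\big(G_{p-1}\,[M^l(\varphi)-M^{l-1}(\varphi)]\big)\big| \leq C\|\varphi\|_{v^{\xi}}\Delta_l\,\mu\big(G_{p-1}v^{2\xi}\big).
$$
Finally I would invoke (H\ref{hypav:5})~1., namely $\sup_{n\geq 0}\sup_{x\in\mathsf{X}}G_n(x)<+\infty$, to replace $G_{p-1}$ by its supremum and obtain $\mu(G_{p-1}v^{2\xi})\leq C\,\mu(v^{2\xi})$; dividing through by $\mu(G_{p-1})$ then produces exactly the asserted bound $C\|\varphi\|_{v^{\xi}}\Delta_l\mu(v^{2\xi})/\mu(G_{p-1})$.

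I do not anticipate a real obstacle, as all the analytic content has been isolated in Lemma \ref{lem:diff_1}; the mild subtleties are purely notational. One must correctly match the level indices $f\leftrightarrow l$, $c\leftrightarrow l-1$ and observe the cancellation of denominators, and one should note that the exponent ranges are compatible: Lemma \ref{lem:diff_1} is stated for $\xi\in(0,1/2)$ and produces the weight $v^{2\xi}$, which matches the range and the right-hand side of the present lemma. The assumption $\mu\in\mathscr{P}_{v^{\xi}}(\mathsf{X})$ guarantees $\Phi_p^s(\mu)(\varphi)$ is well defined; the inequality itself holds for every $\mu$, being trivially true when $\mu(v^{2\xi})=+\infty$ and reducing to the chain above otherwise.
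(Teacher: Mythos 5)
Your proposal is correct and follows exactly the paper's own argument: both rewrite the difference as $\mu(G_{p-1}\{M^l(\varphi)-M^{l-1}(\varphi)\})/\mu(G_{p-1})$ using the level-independence of $G_{p-1}$, then apply Lemma \ref{lem:diff_1} pointwise together with (H\ref{hypav:5}) 1. No further comment is needed.
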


\begin{proof}
We have for $\varphi\in\mathcal{L}_{v^{\xi}}(\mathsf{X})$
$$
[\Phi_p^l(\mu)-\Phi_p^{l-1}(\mu)](\varphi) = \frac{1}{\mu(G_{p-1})}\mu(G_{p-1}\{M^l(\varphi)-M^{l-1}(\varphi)\}).
$$
Applying (H\ref{hypav:5}) 1.~along with Lemma \ref{lem:diff_1} allows one to conclude.
\end{proof}

\begin{lem}\label{lem:diff_3}
Assume (H\ref{hypav:1}-\ref{hypav:5}). Then for any $\xi\in(0,1]$ there exists a $C<+\infty$ depending on the constants in (H\ref{hypav:1}-\ref{hypav:5}) such that
for any $1\leq l \leq L$ we have
\begin{eqnarray}
\sup_{n\geq 1}\sup_{0\leq p \leq n}\Big\|\frac{Q_{p,n}^{l-1}(1)}{\eta_p^{l}(Q_{p,n}^{l-1}(1))}\Big\|_{v^{\xi}}  & \leq & C \label{eq:diff_8}\\
\sup_{n\geq 1}\sup_{0\leq p \leq n}\Big\|\frac{Q_{p,n}^{l-1}(1)}{\Phi^{l-1}_p(\eta_{p-1}^l)(Q_{p,n}^{l-1}(1))}\Big\|_{v^{\xi}}  & \leq & C \label{eq:diff_9}\\
\sup_{n\geq 1}\sup_{0\leq p \leq n}\Big\|\frac{Q_{p,n}^{l-1}(1)}{\Phi^{l}_p(\eta_{p-1}^l)(Q_{p,n}^{l-1}(1))}\Big\|_{v^{\xi}}  & \leq & C \label{eq:diff_17}
\end{eqnarray}
and if additionally $\xi\in(0,1/2)$ there exists a $C<+\infty$ depending on the constants in (H\ref{hypav:1}-\ref{hypav:5}) such that
for any $\varphi\in \mathcal{L}_{v^{\xi}}(\mathsf{X})$, $l\geq 1$ we have
\begin{equation}
\sup_{n\geq 1}\sup_{0\leq p \leq n}\Big\|\frac{Q_{p,n}^{l-1}(\varphi)}{\eta_p^{l}(Q_{p,n}^{l-1}(1))}\Big\|_{v^{2\xi}}  \leq  C \|\varphi\|_{v^{\xi}}.\label{eq:diff_10}
\end{equation}
\end{lem}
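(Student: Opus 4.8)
The plan is to reduce all four estimates to two ingredients already available from \cite{whiteley}: that the standard normalised semigroup $h_{p,n}^{l-1}:=Q_{p,n}^{l-1}(1)/\eta_p^{l-1}(Q_{p,n}^{l-1}(1))$ has uniformly bounded $v^{\xi}$-norm (\cite[Proposition 2 (3)]{whiteley}), and that replacing the normalising measure $\eta_p^{l-1}$ by any other probability measure $\nu$ merely multiplies $h_{p,n}^{l-1}$ by the positive scalar $1/\nu(h_{p,n}^{l-1})$. Concretely, writing $\psi:=Q_{p,n}^{l-1}(1)$ and using $\eta_p^{l-1}(h_{p,n}^{l-1})=1$, for any $\nu$ with $\nu(\psi)<\infty$ one has $\psi/\nu(\psi)=h_{p,n}^{l-1}/\nu(h_{p,n}^{l-1})$. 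The three displays \eqref{eq:diff_8}, \eqref{eq:diff_9} and \eqref{eq:diff_17} are exactly $\psi/\nu(\psi)$ with $\nu=\eta_p^l$, $\nu=\Phi_p^{l-1}(\eta_{p-1}^l)$ and $\nu=\Phi_p^l(\eta_{p-1}^l)$ respectively; since $\Phi_p^l(\eta_{p-1}^l)=\eta_p^l$ the last is identical to the first. Thus it suffices to prove the uniform lower bound $\inf_{n,p,l}\nu(h_{p,n}^{l-1})\ge c>0$ for $\nu\in\{\eta_p^l,\ \Phi_p^{l-1}(\eta_{p-1}^l)\}$.

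First I would extract a compact-set lower bound on $h_{p,n}^{l-1}$ from Lemma \ref{lem:av_2}: since $\|1/h_{p,n}^{l-1}\|_{v^{\xi}}\le C$ uniformly, $h_{p,n}^{l-1}(x)\ge (Cv(x)^{\xi})^{-1}\ge (Ce^{d\xi})^{-1}$ on $C_d=\{x:v(x)\le e^d\}$, uniformly in $(p,n,l)$. Hence $\nu(h_{p,n}^{l-1})\ge (Ce^{d\xi})^{-1}\nu(C_d)$, and I only need $\nu(C_d)$ bounded below for a single large $d$. For $\nu=\eta_p^l$ this is immediate from $\sup_p\eta_p^l(v)\le C$ (\cite[Proposition 1]{whiteley}) and Markov's inequality, giving $\eta_p^l(C_d)\ge 1-Ce^{-d}\ge 1/2$ for $d$ large. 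For $\nu=\Phi_p^{l-1}(\eta_{p-1}^l)$ I would first bound $\nu(v)$: by (H\ref{hypav:1}) one has $Q_p^{l-1}(v)\le e^{b_d}v$, whence $\nu(v)=\eta_{p-1}^l(Q_p^{l-1}(v))/\eta_{p-1}^l(G_{p-1})\le e^{b_d}\,\eta_{p-1}^l(v)/\eta_{p-1}^l(G_{p-1})\le C$, using $\sup_p\eta_{p-1}^l(v)\le C$ and the Feynman--Kac lower bound \eqref{eq_av:10}; Markov again gives $\nu(C_d)\ge 1/2$. Fixing such a $d$ bounds all scalar ratios uniformly and, combined with \cite[Proposition 2 (3)]{whiteley}, proves \eqref{eq:diff_8}, \eqref{eq:diff_9} and \eqref{eq:diff_17}.

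For \eqref{eq:diff_10} I would factor $Q_{p,n}^{l-1}(\varphi)/\eta_p^l(Q_{p,n}^{l-1}(1))=\big(Q_{p,n}^{l-1}(\varphi)/\eta_p^{l-1}(Q_{p,n}^{l-1}(1))\big)\cdot\big(1/\eta_p^l(h_{p,n}^{l-1})\big)$, the second factor being bounded by the previous step. For the first factor I use the centring identity $Q_{p,n}^{l-1}(\varphi)/\eta_p^{l-1}(Q_{p,n}^{l-1}(1))=D_{p,n}^{l-1}(\varphi)+\eta_n^{l-1}(\varphi)h_{p,n}^{l-1}$: Lemma \ref{lem:av_1}, applied to $\varphi-\eta_n^{l-1}(\varphi)$ (whose $v^{\xi}$-norm is $\le C\|\varphi\|_{v^{\xi}}$ because $\eta_n^{l-1}(v^{\xi})\le C$), gives $|D_{p,n}^{l-1}(\varphi)|\le C\|\varphi\|_{v^{\xi}}\rho^{n-p}v^{\xi}$, while $|\eta_n^{l-1}(\varphi)|\,|h_{p,n}^{l-1}|\le C\|\varphi\|_{v^{\xi}}v^{\xi}$ by \cite[Proposition 1]{whiteley} and \cite[Proposition 2 (3)]{whiteley}. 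The first factor therefore has $v^{\xi}$-norm $\le C\|\varphi\|_{v^{\xi}}$; since $v\ge 1$ gives $\|\cdot\|_{v^{2\xi}}\le\|\cdot\|_{v^{\xi}}$, the product obeys the stated $v^{2\xi}$-bound for every $\xi\in(0,1/2)$.

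The main obstacle is the uniform lower bound $\inf_{n,p,l}\nu(h_{p,n}^{l-1})\ge c>0$ for the mismatched normalising measures $\nu\neq\eta_p^{l-1}$: although $h_{p,n}^{l-1}$ integrates to one against $\eta_p^{l-1}$, there is no a priori reason its integral against $\eta_p^l$ or $\Phi_p^{l-1}(\eta_{p-1}^l)$ stays away from zero, and the bound must hold simultaneously in $p$, $n$ and the discretisation level $l$. This is precisely what the level-shared compact minorisation behind Lemma \ref{lem:av_2}, together with the level-uniform moment and Feynman--Kac bounds of \cite{whiteley}, is used to secure.
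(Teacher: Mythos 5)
Your argument is essentially sound and shares the paper's overall skeleton -- rewrite each ratio as $h_{p,n}^{l-1}/\nu(h_{p,n}^{l-1})$ and reduce everything to a uniform lower bound on $\nu(h_{p,n}^{l-1})$ -- but the two steps where you diverge are worth comparing. For the lower bound on $\nu(h_{p,n}^{l-1})$ the paper unwinds $\Phi_p^{l-1}(\eta_{p-1}^l)(h_{p,n}^{l-1})=\eta_{p-1}^l(Q_p^{l-1}(h_{p,n}^{l-1}))/\eta_{p-1}^l(G_{p-1})$ and invokes the minorization (H\ref{hypav:3}) on $C_d$ together with the compact-set lower bound $\inf_{x\in C_d}h_{p,n}^{l-1}(x)>0$ from \cite[Lemma 10]{whiteley}; you instead keep $\nu$ as it is, restrict to $C_d$, and control $\nu(C_d)$ by Markov's inequality via the drift-derived moment bound $\nu(v)\leq C$. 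Your route is more elementary at that step (no minorization needed beyond what already underlies the quoted Whiteley estimates) and is correct. For \eqref{eq:diff_10} the paper factors through $P_{p,n}^{l-1}(\varphi)\cdot h_{p,n}^{l-1}\cdot(1/\eta_p^l(h_{p,n}^{l-1}))$, each of the first two factors costing a $v^{\xi}$, which is where the exponent $2\xi$ and the restriction $\xi\in(0,1/2)$ come from; your centring identity $Q_{p,n}^{l-1}(\varphi)/\eta_p^{l-1}(Q_{p,n}^{l-1}(1))=D_{p,n}^{l-1}(\varphi)+\eta_n^{l-1}(\varphi)h_{p,n}^{l-1}$ combined with Lemma \ref{lem:av_1} yields a $v^{\xi}$-bound outright, which is slightly sharper than what is claimed and makes the $v^{2\xi}$-norm statement trivial. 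Your observation that $\Phi_p^l(\eta_{p-1}^l)=\eta_p^l$, so that \eqref{eq:diff_17} coincides with \eqref{eq:diff_8}, is also correct and streamlines the bookkeeping.

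The one point that needs repair is your appeal to Lemma \ref{lem:av_2} to get $h_{p,n}^{l-1}\geq(Cv^{\xi})^{-1}$: that lemma is stated under (H\ref{hypav:1}--\ref{hypav:6}), whereas the present statement assumes only (H\ref{hypav:1}--\ref{hypav:5}), so as written you prove the result under a strictly stronger hypothesis set. You only need the lower bound on the compact set $C_d$, not globally, and that is exactly what \cite[Lemma 10]{whiteley} supplies under the drift and minorization conditions (H\ref{hypav:1}--\ref{hypav:5}) alone -- this is the reference the paper uses. Substituting that citation for Lemma \ref{lem:av_2} closes the gap; in the applications of this lemma (H\ref{hypav:6}) is assumed anyway, so the issue is one of hypothesis hygiene rather than of substance.
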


\begin{proof}
The proof of \eqref{eq:diff_8} (resp.~\eqref{eq:diff_9}) and \eqref{eq:diff_10} (resp.~\eqref{eq:diff_17}) are very similar, so we only give the proofs of \eqref{eq:diff_9} and \eqref{eq:diff_10}.

{\bf Proof of} \eqref{eq:diff_9}: We have for any $x\in\mathsf{X}$
$$
\frac{Q_{p,n}^{l-1}(1)(x)}{\Phi^{l-1}_p(\eta_{p-1}^l)(Q_{p,n}^{l-1}(1))} = \frac{h_{p,n}^{l-1}(x)}{\Phi^{l-1}_p(\eta_{p-1}^l)(h_{p,n}^{l-1})}.
$$
Now
\begin{eqnarray}
\Phi^{l-1}_p(\eta_{p-1}^l)(h_{p,n}^{l-1}) & = & \frac{\eta_{p-1}^l(Q_p^{l-1}(h_{p,n}^{l-1}))}{\eta_{p-1}^l(G_{p-1})} \nonumber\\
& \geq & \frac{\eta_{p-1}^l(\mathbb{I}_{C_d}Q_p^{l-1}(\mathbb{I}_{C_d}h_{p,n}^{l-1}))}{\sup_{n\geq 0}\|G_n\|} \nonumber\\
& \geq & C\eta_{p-1}^l(C_d)\tilde{\epsilon}_d^{-}\nu(\mathbb{I}_{C_d}h_{p,n}^{l-1}) \nonumber\\
& \geq & C \nu(C_d)\inf_{n\geq 1}\inf_{0\leq p \leq n}\inf_{x\in C_d}h_{p,n}^{l-1} \nonumber\\
& \geq & C\label{eq:diff_11}
\end{eqnarray}
where we have applied (H\ref{hypav:5}) 1.~and (H\ref{hypav:3}) to go to line 3, \cite[pp.~2527]{whiteley} (last displayed equation) to go line 4 and
then \cite[Lemma 10]{whiteley} to go to the final line. Then using \cite[Proposition 2]{whiteley}, it thus follows that for any $x\in\mathsf{X}$
$$
\Big|\frac{Q_{p,n}^{l-1}(1)(x)}{\Phi^{l-1}_p(\eta_{p-1}^l)(Q_{p,n}^{l-1}(1))}\Big| \leq C v(x)^{\xi}
$$
which completes the proof of \eqref{eq:diff_9}.

{\bf Proof of} \eqref{eq:diff_10}: We have for any $x\in\mathsf{X}$
$$
\frac{Q_{p,n}^{l-1}(\varphi)(x)}{\eta_p^{l}(Q_{p,n}^{l-1}(1))} = \frac{Q_{p,n}^{l-1}(\varphi)(x)}{Q_{p,n}^{l-1}(1)(x)}\frac{Q_{p,n}^{l-1}(1)(x)}{\eta_p^{l-1}(Q_{p,n}^{l-1}(1))}
\frac{1}{\eta_p^{l}(h_{p,n}^{l-1})}
$$
By using a very similar proof to \eqref{eq:diff_11}, one can deduce that $\eta_p^{l}(h_{p,n}^{l-1})\geq C$, as frequently noted $\|h_{p,n}^{l-1}\|_{v^{\xi}}\leq C$ and by a similar proof to \cite[Proposition 1]{whiteley} $|Q_{p,n}^{l-1}(\varphi)(x)|/Q_{p,n}^{l-1}(1)(x) \leq C \|\varphi\|_{v^{\xi}}v(x)^{\xi}$. Hence one can complete the proof of \eqref{eq:diff_10}.
\end{proof}

Let $\mu\in\mathscr{P}(\mathsf{X})$, $s\in\{l,l-1\}$, $l\geq 1$, $n\geq 1$, $0\leq p \leq n$, and denote $\Phi_{(p,n)}^s(\mu) = \Phi_n^s\circ\cdots\circ \Phi_{p+1}^s(\mu)$, with the convention that if $n=p$, $\Phi_{(p,n)}^s(\mu)=\mu$ and $\Phi_0^s(\mu)=\mu$.

\begin{lem}\label{lem:diff_4}
Assume (H\ref{hypav:1}-\ref{hypav:5}), (H\ref{hypav:8}). Then for any $\xi\in(0,1/4)$ there exists a $C<+\infty$ depending on the constants in (H\ref{hypav:1}-\ref{hypav:5}) (H\ref{hypav:8}) such that for any $\varphi\in \mathcal{L}_{v^{\xi}}(\mathsf{X})$, $n\geq 1$, $1\leq l \leq L$ we have
$$
|[\Phi_{(0,n)}^{l-1}(\eta_0^{l})-\Phi_{(0,n)}^{l-1}(\eta_0^{l-1})](\varphi)| \leq C\|\varphi\|_{v^{\xi}}\Delta_l v(x^*)^{4\xi}.
$$
\end{lem}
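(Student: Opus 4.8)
The plan is to use the multiplicative (ratio) representation of the Feynman--Kac flow together with the stability estimates already established in Lemmata \ref{lem:diff_1} and \ref{lem:diff_3}. Since $\Phi_k^{l-1}(\mu)(\cdot)=\mu(Q_k^{l-1}(\cdot))/\mu(Q_k^{l-1}(1))$, the semigroup property gives, for any $\mu\in\mathscr{P}(\mathsf{X})$, $\Phi_{(0,n)}^{l-1}(\mu)(\varphi)=\mu(Q_{0,n}^{l-1}(\varphi))/\mu(Q_{0,n}^{l-1}(1))$. Writing the difference of the two ratios (one with $\mu=\eta_0^l$, one with $\mu=\eta_0^{l-1}$) in the standard form $a_1/b_1-a_2/b_2=b_1^{-1}\{(a_1-a_2)-(a_2/b_2)(b_1-b_2)\}$ and using that $\Phi_{(0,n)}^{l-1}(\eta_0^{l-1})=\eta_n^{l-1}$, so that $a_2/b_2=\eta_n^{l-1}(\varphi)$, I would obtain the key identity
\[
[\Phi_{(0,n)}^{l-1}(\eta_0^{l})-\Phi_{(0,n)}^{l-1}(\eta_0^{l-1})](\varphi)
=[\eta_0^{l}-\eta_0^{l-1}]\Big(\psi\Big),\qquad
\psi:=\frac{Q_{0,n}^{l-1}(\varphi-\eta_n^{l-1}(\varphi))}{\eta_0^{l}(Q_{0,n}^{l-1}(1))}.
\]
The purpose of centering by $\eta_n^{l-1}(\varphi)$ is exactly that it annihilates the $\eta_0^{l-1}$-contribution, so that only the difference of the two initial laws survives.

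Next I would control $\psi$ in a $v$-norm uniformly in $(n,l)$. Setting $\bar\varphi:=\varphi-\eta_n^{l-1}(\varphi)$, one has $\|\bar\varphi\|_{v^{\xi}}\le\|\varphi\|_{v^{\xi}}+|\eta_n^{l-1}(\varphi)|\le C\|\varphi\|_{v^{\xi}}$, where the bound $|\eta_n^{l-1}(\varphi)|\le\|\varphi\|_{v^{\xi}}\,\eta_n^{l-1}(v^{\xi})\le C\|\varphi\|_{v^{\xi}}$ uses \cite[Proposition 1]{whiteley} (which bounds $\eta_n^{l-1}(v^\xi)$ uniformly in $n$). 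Then \eqref{eq:diff_10} of Lemma \ref{lem:diff_3}, applied with $p=0$, yields $\psi\in\mathcal{L}_{v^{2\xi}}(\mathsf{X})$ with $\|\psi\|_{v^{2\xi}}\le C\|\bar\varphi\|_{v^{\xi}}\le C\|\varphi\|_{v^{\xi}}$, the constant being independent of $n$ and $l$. This is the origin of the range $\xi\in(0,1/4)$: \eqref{eq:diff_10} requires its exponent $\xi<1/2$, and we will presently apply Lemma \ref{lem:diff_1} at exponent $2\xi$, which forces $2\xi<1/2$.

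Finally, since in the PODDP model $\eta_0^{s}=M^{s}(x^*,\cdot)$ for $s\in\{l,l-1\}$, the right-hand side of the identity is exactly $M^{l}(\psi)(x^*)-M^{l-1}(\psi)(x^*)$. Applying Lemma \ref{lem:diff_1} to $\psi\in\mathcal{L}_{v^{2\xi}}(\mathsf{X})$ (that is, with its exponent taken to be $2\xi$, admissible precisely because $2\xi<1/2$) gives
\[
|[\eta_0^{l}-\eta_0^{l-1}](\psi)|
=|M^{l}(\psi)(x^*)-M^{l-1}(\psi)(x^*)|
\le C\|\psi\|_{v^{2\xi}}\Delta_l v(x^*)^{4\xi}
\le C\|\varphi\|_{v^{\xi}}\Delta_l v(x^*)^{4\xi},
\]
which is the claim.

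I expect the only substantive step to be the uniform-in-time control $\|\psi\|_{v^{2\xi}}\le C\|\varphi\|_{v^{\xi}}$, i.e.\ the invocation of \eqref{eq:diff_10}; the remainder is either elementary algebra or a direct appeal to the one-step kernel estimate in Lemma \ref{lem:diff_1}. The subtlety concealed in \eqref{eq:diff_10} is that the normalising denominator is $\eta_0^{l}(Q_{0,n}^{l-1}(1))$, the $l$-initial law tested against the $(l-1)$-kernel, so its validity hinges on the minorisation-based lower bound $\eta_0^{l}(h_{0,n}^{l-1})\ge C>0$ of the type derived in \eqref{eq:diff_11}, uniform in $n$ and $l$. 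It is this uniform lower bound that keeps the displayed $v^{2\xi}$-norm finite as $n\to\infty$, and without it the estimate would degenerate in time.
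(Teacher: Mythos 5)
Your proof is correct and follows essentially the same route as the paper's: the paper splits the difference of ratios into the two standard terms $T_1+T_2$ and bounds each via Lemma \ref{lem:diff_3} (\eqref{eq:diff_10} resp.~\eqref{eq:diff_8}) together with Lemma \ref{lem:diff_1}, whereas you merge those two terms into a single integral against $\eta_0^{l}-\eta_0^{l-1}$ by centering $\varphi$ at $\eta_n^{l-1}(\varphi)$ --- the same identity, packaged slightly more compactly. The essential ingredients (the uniform-in-$n$ control of the test function from \eqref{eq:diff_10}, the one-step kernel estimate of Lemma \ref{lem:diff_1} applied at exponent $2\xi$, and the resulting constraint $\xi<1/4$) coincide exactly with the paper's.
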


\begin{proof}
Define
\begin{eqnarray}
T_1 & := & [\eta_0^{l}-\eta_0^{l-1}]\Big(\frac{Q_{0,n}^{l-1}(\varphi)}{\eta_0^l(Q_{0,n}^{l-1}(1))}\Big) \label{eq:diff_12} \\
T_2 & := &  \Big(\frac{\eta_0^{l-1}(Q_{0,n}^{l-1}(\varphi))}{\eta_0^{l-1}(Q_{0,n}^{l-1}(1))}\Big)[\eta_0^{l-1}-\eta_0^{l}]\Big(\frac{Q_{0,n}^{l-1}(1)}{\eta_0^l(Q_{0,n}^{l-1}(1))}\Big). \label{eq:diff_13}
\end{eqnarray}
Then we have
\begin{equation}\label{eq:diff_14}
T_1 + T_2 = \Phi_{(0,n)}^{l-1}(\eta_0^{l})-\Phi_{(0,n)}^{l-1}(\eta_0^{l-1})](\varphi).
\end{equation}
We will bound $|T_1|$ and $|T_2|$ and then sum the bounds to conclude.

{\bf Term} \eqref{eq:diff_12}: We have by Lemma \ref{lem:diff_3} \eqref{eq:diff_10} and Lemma \ref{lem:diff_1} that
\begin{equation}\label{eq:diff_15}
|T_1| \leq C\|\varphi\|_{v^{\xi}}\Delta_lv(x^*)^{4\xi}.
\end{equation}

{\bf Term} \eqref{eq:diff_13}: We have
$$
\Big(\frac{\eta_0^{l-1}(Q_{0,n}^{l-1}(\varphi))}{\eta_0^{l-1}(Q_{0,n}^{l-1}(1))}\Big)[\eta_0^{l-1}-\eta_0^{l}]\Big(\frac{Q_{0,n}^l(1)}{\eta_0^l(Q_{0,n}^l(1))}\Big) = 
\eta_n^{l-1}(\varphi)[\eta_0^{l-1}-\eta_0^{l}]\Big(\frac{Q_{0,n}^l(1)}{\eta_0^l(Q_{0,n}^l(1))}\Big).
$$
Then using $\varphi\in \mathcal{L}_{v^{\xi}}(\mathsf{X})$ along with \cite[Proposition 1]{whiteley} for $\eta_n^{l-1}(\varphi)$ and Lemma \ref{lem:diff_3} \eqref{eq:diff_8} and Lemma \ref{lem:diff_1} for the other term on the R.H.S.~we have
\begin{equation}\label{eq:diff_16}
|T_2| \leq C\|\varphi\|_{v^{\xi}}\Delta_lv(x^*)^{2\xi}.
\end{equation}
Noting \eqref{eq:diff_14}, \eqref{eq:diff_15} and \eqref{eq:diff_16} the proof can be concluded.
\end{proof}

\begin{lem}\label{lem:diff_5}
Assume (H\ref{hypav:1}-\ref{hypav:6}), (H\ref{hypav:8}). Then for any $(\xi,\hat{\xi})\in(0,1/8)\times(0,1/2)$ there exists a $\rho<1$ and $C<+\infty$ depending on the constants in (H\ref{hypav:1}-\ref{hypav:6}) (H\ref{hypav:8}) such that for any $\varphi\in \mathcal{L}_{v^{\xi}}(\mathsf{X})$, $n\geq 1$, $0\leq p \leq n$, $1\leq l \leq L$ we have
$$
|\Phi_{(p,n)}^{l-1}(\Phi_p^l(\eta_{p-1}^l))-\Phi_{(p,n)}^{l-1}(\Phi_p^{l-1}(\eta_{p-1}^l))](\varphi)| \leq C\|\varphi\|_{v^{\xi}}\Delta_l\rho^{n-p}v(x^*)^{8\xi+2\hat{\xi}}.
$$
\end{lem}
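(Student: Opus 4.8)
The plan is to follow the architecture of the proof of Lemma~\ref{lem:diff_4}, but to insert a centering step that is what produces the contraction factor $\rho^{n-p}$ absent from the $p=0$ statement. I first record the Feynman--Kac recursion $\Phi_p^l(\eta_{p-1}^l)=\eta_p^l$, so that, writing $\nu:=\Phi_p^{l-1}(\eta_{p-1}^l)$ and $\mu:=\eta_p^l$, the object to be bounded is $[\Phi_{(p,n)}^{l-1}(\mu)-\Phi_{(p,n)}^{l-1}(\nu)](\varphi)$. Since each normalized flow $\Phi_{(p,n)}^{l-1}(\cdot)$ is a probability measure, this difference is unchanged if $\varphi$ is replaced by $\bar\varphi:=\varphi-\eta_n^{l-1}(\varphi)$; the point of this replacement is that the otherwise non-decaying potential $Q_{p,n}^{l-1}(\varphi)$ turns into $Q_{p,n}^{l-1}(\bar\varphi)=\eta_p^{l-1}(Q_{p,n}^{l-1}(1))\,D_{p,n}^{l-1}(\varphi)$, which contracts geometrically by Lemma~\ref{lem:av_1}.

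With this centering I use the decomposition of Lemma~\ref{lem:diff_4}, i.e.
\[
[\Phi_{(p,n)}^{l-1}(\mu)-\Phi_{(p,n)}^{l-1}(\nu)](\bar\varphi)=T_1+T_2,
\]
\[
T_1:=[\mu-\nu]\Big(\frac{Q_{p,n}^{l-1}(\bar\varphi)}{\mu(Q_{p,n}^{l-1}(1))}\Big),\qquad
T_2:=\Phi_{(p,n)}^{l-1}(\nu)(\bar\varphi)\;[\nu-\mu]\Big(\frac{Q_{p,n}^{l-1}(1)}{\mu(Q_{p,n}^{l-1}(1))}\Big).
\]
For $T_1$ I note that the test function equals $D_{p,n}^{l-1}(\varphi)/\mu(h_{p,n}^{l-1})$, whose $v^{\xi}$-norm is $\leq C\|\varphi\|_{v^{\xi}}\rho^{n-p}$ by Lemma~\ref{lem:av_1} together with the lower bound $\mu(h_{p,n}^{l-1})=\eta_p^l(h_{p,n}^{l-1})\geq C$ behind \eqref{eq:diff_17}/\eqref{eq:diff_11}. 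Because $\mu-\nu=\Phi_p^l(\eta_{p-1}^l)-\Phi_p^{l-1}(\eta_{p-1}^l)$ is precisely a fine-minus-coarse one-step increment, Lemma~\ref{lem:diff_2} (base measure $\eta_{p-1}^l$) gives $|T_1|\leq C\|\varphi\|_{v^{\xi}}\rho^{n-p}\Delta_l\,\eta_{p-1}^l(v^{2\xi})/\eta_{p-1}^l(G_{p-1})$.

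For $T_2$ the scalar $\Phi_{(p,n)}^{l-1}(\nu)(\bar\varphi)=\nu(Q_{p,n}^{l-1}(\bar\varphi))/\nu(Q_{p,n}^{l-1}(1))$ is controlled, using Lemma~\ref{lem:av_1} for the centered numerator and the lower bound $\nu(h_{p,n}^{l-1})\geq C$ from \eqref{eq:diff_11}, by $C\|\varphi\|_{v^{\xi}}\rho^{n-p}\nu(v^{\xi})$; the remaining factor is again a one-step increment and is bounded by Lemma~\ref{lem:diff_2} by $C\Delta_l\,\eta_{p-1}^l(v^{2\xi})/\eta_{p-1}^l(G_{p-1})$. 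It remains to collect moments: $\eta_{p-1}^l(G_{p-1})^{-1}\leq C$ by \eqref{eq_av:10}, and the $v$-moments $\eta_{p-1}^l(v^{2\xi})$ and $\nu(v^{\xi})=\Phi_p^{l-1}(\eta_{p-1}^l)(v^{\xi})\leq C\eta_{p-1}^l(v^{2\xi})$ are each bounded by $Cv(x^*)^{4\xi+\hat\xi}$ through the Lyapunov inequality (H\ref{hypav:1}) and the initialization $\eta_0^l=M^l(x^*,\cdot)$ (the Gaussian one-step kernel doubles the exponent, and the arbitrarily small slack $\hat\xi$ absorbs the constant in the moment bound, which is also where the restriction $\xi\in(0,1/8)$ keeps all invoked moments finite). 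Adding the estimates, and noting that $T_2$ is a product of two factors each of size $\mathcal{O}(v(x^*)^{4\xi+\hat\xi})$, yields $|T_1|+|T_2|\leq C\|\varphi\|_{v^{\xi}}\Delta_l\rho^{n-p}v(x^*)^{8\xi+2\hat\xi}$, as claimed.

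The main obstacle is the geometric factor $\rho^{n-p}$: applied verbatim to $\varphi$, the Lemma~\ref{lem:diff_4} decomposition gives only $\mathcal{O}(\Delta_l)$ with no decay, because $Q_{p,n}^{l-1}(\varphi)/\mu(Q_{p,n}^{l-1}(1))$ relaxes to the nonzero constant $\eta_n^{l-1}(\varphi)$ rather than to zero. The device of centering $\varphi$ --- legitimate precisely because the difference of two normalized flows annihilates constants --- rewrites both $T_1$ and $T_2$ in terms of the contracting operators $D_{p,n}^{l-1}$ and unlocks the decay; the secondary difficulty is the bookkeeping of the $v(x^*)$-exponents, where the product structure of $T_2$ forces the doubling to $8\xi+2\hat\xi$.
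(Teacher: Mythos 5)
Your proof is correct and follows essentially the same route as the paper's: the same centering $\bar\varphi_n=\varphi-\eta_n^{l-1}(\varphi)$ to extract the $\rho^{n-p}$ factor, the same two-term decomposition (your $T_1,T_2$ coincide with the paper's \eqref{eq:diff_18}--\eqref{eq:diff_19}), and the same reliance on Lemma \ref{lem:diff_2} for the one-step increment, Lemma \ref{lem:diff_3} for the normalized potentials, and Whiteley's moment bounds for the $v(x^*)$ bookkeeping. The only (harmless) difference is that you norm the test function in $T_1$ directly as $D_{p,n}^{l-1}(\varphi)/\mu(h_{p,n}^{l-1})$ via Lemma \ref{lem:av_1} and a scalar lower bound, whereas the paper factors it as $\bigl(Q_{p,n}^{l-1}(1)/\Phi_p^l(\eta_{p-1}^l)(Q_{p,n}^{l-1}(1))\bigr)\cdot P_{p,n}^{l-1}(\bar\varphi_n)$ and multiplies $v^{2\xi}$-norms, which is why its intermediate exponent is $8\xi$ rather than your slightly tighter $2\xi$; both land on the stated bound.
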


\begin{proof}
Define
\begin{eqnarray}
T_1 & := & [\Phi_p^l(\eta_{p-1}^l)-\Phi_p^{l-1}(\eta_{p-1}^l)]\Big(
\frac{Q_{p.n}^{l-1}(1)P_{p.n}^{l-1}(\bar{\varphi}_n)}
{\Phi_p^l(\eta_{p-1}^l)(Q_{p.n}^{l-1}(1))}
\Big) \label{eq:diff_18} \\
T_2 & := &   
\Big(
\frac{\Phi_p^{l-1}(\eta_{p-1}^l)(Q_{p.n}^{l-1}(1)P_{p.n}^{l-1}(\bar{\varphi}_n))}{\Phi_p^{l-1}(\eta_{p-1}^l)(Q_{p.n}^{l-1}(1))}
\Big)\times \nonumber \\ & & 
[\Phi_p^{l-1}(\eta_{p-1}^l)-\Phi_p^l(\eta_{p-1}^l)]\Big(
\frac{Q_{p,n}^{l-1}(1)}{\Phi^{l}_p(\eta_{p-1}^l)(Q_{p,n}^{l-1}(1))}
\Big)
\label{eq:diff_19}
\end{eqnarray}
where $\bar{\varphi}_n=\varphi-\eta_n^{l-1}(\varphi)$.
Then we have
\begin{equation}\label{eq:diff_20}
T_1 + T_2 = [\Phi_{(p,n)}^{l-1}(\Phi_p^l(\eta_{p-1}^l))-\Phi_{(p,n)}^{l-1}(\Phi_p^{l-1}(\eta_{p-1}^l))](\varphi).
\end{equation}
We will bound $|T_1|$ and $|T_2|$ and then sum the bounds to conclude.

{\bf Term} \eqref{eq:diff_18}: We have by (a similar proof to) \eqref{eq:diff_need} that 
\begin{equation}\label{eq:diff_21}
\|P_{p.n}^{l-1}(\bar{\varphi}_n)\|_{v^{2\xi}}\leq C\|\varphi\|_{v^{\xi}}\rho^{n-p}
\end{equation}
and Lemma \ref{lem:diff_3} \eqref{eq:diff_17} $\|Q_{p.n}^{l-1}(1)/\Phi_p^l(\eta_{p-1}^l)(Q_{p.n}^{l-1}(1))\|_{2\xi}\leq C$, so by Lemma \ref{lem:diff_2}
$$
\Big| [\Phi_p^l(\eta_{p-1}^l)-\Phi_p^{l-1}(\eta_{p-1}^l)]\Big(
\frac{Q_{p.n}^{l-1}(1)P_{p.n}^{l-1}(\bar{\varphi}_n)}
{\Phi_p^l(\eta_{p-1}^l)(Q_{p.n}^{l-1}(1))}
\Big)\Big| \leq C \|\varphi\|_{v^{\xi}}\Delta_l\rho^{n-p} \frac{\eta_{p-1}^l(v^{8\xi})}{\eta_{p-1}^l(G_{p-1})}.
$$
Applying \cite[Proposition 1, Proposition 2 (2)]{whiteley} yields
\begin{equation}\label{eq:diff_22}
|T_1| \leq C \|\varphi\|_{v^{\xi}}\Delta_l\rho^{n-p}v(x^*)^{8\xi}.
\end{equation}

{\bf Term} \eqref{eq:diff_19}: We have by Lemma \ref{lem:diff_3} \eqref{eq:diff_9} $\|Q_{p.n}^{l-1}(1)/\Phi_p^{l-1}(\eta_{p-1}^l)(Q_{p.n}^{l-1}(1))\|_{2\xi}\leq C$, so by
\eqref{eq:diff_21} 
$$
|T_2| \leq C \|\varphi\|_{v^{\xi}}\rho^{n-p}
\Big(\frac{\eta_{p-1}^l(Q_p^{l-1}(v^{4\xi}))}{\eta_{p-1}^l(G_{p-1})}\Big)
\Big|[\Phi_p^{l-1}(\eta_{p-1}^l)-\Phi_p^l(\eta_{p-1}^l)]\Big(
\frac{Q_{p,n}^{l-1}(1)}{\Phi^{l}_p(\eta_{p-1}^l)(Q_{p,n}^{l-1}(1))}
\Big)\Big|
$$
Applying (H\ref{hypav:1}), \cite[Proposition 1, Proposition 2 (2)]{whiteley} to $\eta_{p-1}^l(Q_p^{l-1}(v^{4\xi}))/\eta_{p-1}^l(G_{p-1})$ and 
Lemma \ref{lem:diff_2} along with Lemma \ref{lem:diff_3} \eqref{eq:diff_17} and \cite[Proposition 1, Proposition 2 (2)]{whiteley} to the remaining term gives
\begin{equation}\label{eq:diff_23}
|T_2| \leq C \|\varphi\|_{v^{\xi}}\Delta_l\rho^{n-p}v(x^*)^{4\xi+2\hat{\xi}}.
\end{equation}
Noting \eqref{eq:diff_20}, \eqref{eq:diff_22} and \eqref{eq:diff_23} the proof can be concluded.
\end{proof}

\begin{prop}\label{prop:diff_1}
Assume (H\ref{hypav:1}-\ref{hypav:6}), (H\ref{hypav:8}). Then for any $(\xi,\hat{\xi})\in(0,1/8)\times(0,1/2)$ there exists a $C<+\infty$ depending on the constants in (H\ref{hypav:1}-\ref{hypav:6}) (H\ref{hypav:8}) such that for any $\varphi\in \mathcal{L}_{v^{\xi}}(\mathsf{X})$, $n\geq 0$, $1\leq l \leq L$ we have
$$
|[\eta_n^l-\eta_n^{l-1}](\varphi)| \leq C\|\varphi\|_{v^{\xi}}\Delta_lv(x^*)^{8\xi+2\hat{\xi}}.
$$
\end{prop}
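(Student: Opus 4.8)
The plan is to obtain the bound by a telescoping decomposition in which the fine semigroup $\Phi^l$ is swapped for the coarse one $\Phi^{l-1}$ one time-step at a time, and then to invoke the two estimates already proved in Lemmata \ref{lem:diff_4} and \ref{lem:diff_5}. Throughout I would use the semigroup identity $\eta_n^s=\Phi_{(0,n)}^s(\eta_0^s)$, $s\in\{l,l-1\}$, together with the elementary composition property $\Phi_{(p-1,n)}^{l-1}=\Phi_{(p,n)}^{l-1}\circ\Phi_p^{l-1}$, both immediate from the definitions.

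First I would assume $n\geq 1$ (the case $n=0$ is treated at the end) and introduce, for $0\leq p\leq n$, the interpolating measures
$$\nu_p:=\Phi_{(p,n)}^{l-1}\big(\Phi_{(0,p)}^{l}(\eta_0^l)\big)=\Phi_{(p,n)}^{l-1}(\eta_p^l).$$
These satisfy $\nu_n=\eta_n^l$ and $\nu_0=\Phi_{(0,n)}^{l-1}(\eta_0^l)$, so that, using $\eta_n^{l-1}=\Phi_{(0,n)}^{l-1}(\eta_0^{l-1})$,
$$[\eta_n^l-\eta_n^{l-1}](\varphi)=\sum_{p=1}^n[\nu_p-\nu_{p-1}](\varphi)+\big[\Phi_{(0,n)}^{l-1}(\eta_0^l)-\Phi_{(0,n)}^{l-1}(\eta_0^{l-1})\big](\varphi).$$
The key observation is that each increment can be rewritten, via the composition property and $\eta_p^l=\Phi_p^l(\eta_{p-1}^l)$, as
$$\nu_p-\nu_{p-1}=\Phi_{(p,n)}^{l-1}\big(\Phi_p^l(\eta_{p-1}^l)\big)-\Phi_{(p,n)}^{l-1}\big(\Phi_p^{l-1}(\eta_{p-1}^l)\big),$$
which is exactly the object controlled by Lemma \ref{lem:diff_5}, while the remaining term is exactly the object controlled by Lemma \ref{lem:diff_4}.

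Then I would simply apply these two lemmas. Lemma \ref{lem:diff_5} gives $|[\nu_p-\nu_{p-1}](\varphi)|\leq C\|\varphi\|_{v^\xi}\Delta_l\rho^{n-p}v(x^*)^{8\xi+2\hat\xi}$ and Lemma \ref{lem:diff_4} gives $|[\Phi_{(0,n)}^{l-1}(\eta_0^l)-\Phi_{(0,n)}^{l-1}(\eta_0^{l-1})](\varphi)|\leq C\|\varphi\|_{v^\xi}\Delta_l v(x^*)^{4\xi}$. Summing the geometric series $\sum_{p=1}^n\rho^{n-p}\leq(1-\rho)^{-1}$ and using $v(x^*)\geq 1$ (so $v(x^*)^{4\xi}\leq v(x^*)^{8\xi+2\hat\xi}$), the two contributions combine into the claimed bound $C\|\varphi\|_{v^\xi}\Delta_l v(x^*)^{8\xi+2\hat\xi}$. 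For the remaining case $n=0$ the sum is empty and the statement reduces to $[\eta_0^l-\eta_0^{l-1}](\varphi)=M^l(\varphi)(x^*)-M^{l-1}(\varphi)(x^*)$, which is bounded by $C\|\varphi\|_{v^\xi}\Delta_l v(x^*)^{2\xi}$ directly from Lemma \ref{lem:diff_1}.

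Since the two ingredient lemmas carry all of the analytic work (the single-step propagation estimates and the geometric contraction in $\rho$), the proposition itself is essentially bookkeeping; the only point requiring care is the correct alignment of the telescoping increments with the precise statements of Lemmata \ref{lem:diff_4} and \ref{lem:diff_5}, in particular verifying that the coarse flow $\Phi_{(p,n)}^{l-1}$ is applied to the two inputs $\Phi_p^l(\eta_{p-1}^l)$ and $\Phi_p^{l-1}(\eta_{p-1}^l)$ exactly as demanded by Lemma \ref{lem:diff_5}. I do not anticipate a genuine obstacle beyond this matching.
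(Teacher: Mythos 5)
Your proposal is correct and follows essentially the same route as the paper: the interpolating measures $\nu_p=\Phi_{(p,n)}^{l-1}(\eta_p^l)$ produce exactly the paper's decomposition into the telescoping sum $[\Phi_{(0,n)}^l(\eta_0^l)-\Phi_{(0,n)}^{l-1}(\eta_0^l)](\varphi)=\sum_{p=1}^n[\Phi_{(p,n)}^{l-1}(\Phi_p^l(\eta_{p-1}^l))-\Phi_{(p,n)}^{l-1}(\Phi_p^{l-1}(\eta_{p-1}^l))](\varphi)$ (controlled by Lemma \ref{lem:diff_5}) plus the initial-condition term $[\Phi_{(0,n)}^{l-1}(\eta_0^{l})-\Phi_{(0,n)}^{l-1}(\eta_0^{l-1})](\varphi)$ (controlled by Lemma \ref{lem:diff_4}), with the geometric series summed and the $n=0$ case handled by Lemma \ref{lem:diff_1}.
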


\begin{proof}
The case $n=0$ follows by Lemma \ref{lem:diff_1}, so we suppose $n\geq 1$.
Define
\begin{eqnarray}
T_1 & := & [\Phi_{(0,n)}^l(\eta_0^l)-\Phi_{(0,n)}^{l-1}(\eta_0^l)](\varphi) \label{eq:diff_24} \\
T_2 & := & [\Phi_{(0,n)}^{l-1}(\eta_0^{l})-\Phi_{(0,n)}^{l-1}(\eta_0^{l-1})](\varphi). \label{eq:diff_25}
\end{eqnarray}
Then we have
\begin{equation}\label{eq:diff_26}
T_1 + T_2 = [\eta_n^l-\eta_n^{l-1}](\varphi).
\end{equation}
We will bound $|T_1|$ and $|T_2|$ and then sum the bounds to conclude.

{\bf Term} \eqref{eq:diff_24}: We have the standard telescoping sum identity
\begin{equation}\label{eq:diff_29}
[\Phi_{(0,n)}^l(\eta_0^l)-\Phi_{(0,n)}^{l-1}(\eta_0^l)](\varphi) = \sum_{p=1}^n\Big\{[\Phi_{(p,n)}^{l-1}(\Phi_p^l(\eta_{p-1}^l))-\Phi_{(p,n)}^{l-1}(\Phi_p^{l-1}(\eta_{p-1}^l))](\varphi)\Big\}.
\end{equation}
Applying Lemma \ref{lem:diff_5} gives
\begin{equation}\label{eq:diff_27}
|T_1| \leq C\|\varphi\|_{v^{\xi}}\Delta_lv(x^*)^{8\xi+2\hat{\xi}}.
\end{equation}

{\bf Term} \eqref{eq:diff_25}: We have  by Lemma \ref{lem:diff_4}
\begin{equation}\label{eq:diff_28}
|T_2| \leq C\|\varphi\|_{v^{\xi}}\Delta_lv(x^*)^{4\xi}.
\end{equation}
Noting \eqref{eq:diff_26}, \eqref{eq:diff_27} and \eqref{eq:diff_28} the proof can be concluded.
\end{proof}

\begin{rem}\label{rem:diff_1}
If $\varphi\in\mathcal{B}_b(\mathsf{X})$ in Lemma \ref{lem:diff_5} and Proposition \ref{prop:diff_1}, then one can take $(\xi,\hat{\xi})\in(0,1/2)\times(0,1]$ and the upper-bound
in Lemma \ref{lem:diff_5} is $C\|\varphi\|\Delta_l\rho^{n-p}v(x^*)^{2\xi+\hat{\xi}}$ and in Proposition \ref{prop:diff_1} it is $C\|\varphi\|\Delta_lv(x^*)^{2\xi+\hat{\xi}}$.
\end{rem}

\begin{prop}\label{prop:diff_2}
Assume (H\ref{hypav:1}-\ref{hypav:6}), (H\ref{hypav:8}). Then for any $\xi\in(0,1/4)$  there exists a $C<+\infty$ depending on the constants in (H\ref{hypav:1}-\ref{hypav:6}) (H\ref{hypav:8}) such that for any $\varphi\in \mathcal{B}_{b}(\mathsf{X})$, $n\geq 1$, $0\leq p \leq n$, $1\leq l \leq L$ we have
$$
\|h_{p,n}^lS_{p,n}^{l,l-1}(\varphi)\|_{v^{\xi}} \leq C\|\varphi\|\Delta_l.
$$
\end{prop}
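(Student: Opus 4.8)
The plan is to centre the test function, peel off the two distinct normalisers, and then telescope each unnormalised semigroup difference one time-step at a time: Lemma \ref{lem:diff_1} produces a factor $\Delta_l$ at each step, while the stability estimates of Appendix \ref{app:av_proofs} let the resulting series be summed uniformly in $n-p$. Recall (from just before Lemma \ref{lem:new_av_2}) that $P_{p,n}^s(\varphi)=Q_{p,n}^s(\varphi)/Q_{p,n}^s(1)$, so $S_{p,n}^{l,l-1}(\varphi)=P_{p,n}^l(\varphi)-P_{p,n}^{l-1}(\varphi)$. Since both $P_{p,n}^l$ and $P_{p,n}^{l-1}$ fix constants, for $\bar\varphi:=\varphi-\eta_n^{l-1}(\varphi)$ one has $S_{p,n}^{l,l-1}(\varphi)=P_{p,n}^l(\bar\varphi)-P_{p,n}^{l-1}(\bar\varphi)$. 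The essential gain is that, exactly as in the derivation of \eqref{eq:diff_need} (via Lemma \ref{lem:av_1} and Lemma \ref{lem:av_2}, writing $P_{q,n}^{l-1}(\bar\varphi)=D_{q,n}^{l-1}(\varphi)/h_{q,n}^{l-1}$), for every $\tilde\xi\in(0,1]$ we have $\|P_{q,n}^{l-1}(\bar\varphi)\|_{v^{\tilde\xi}}\le C\|\varphi\|\rho^{n-q}$: the future functional decays geometrically and may be taken in an arbitrarily small power of $v$.

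I would then use the algebraic identity $h_{p,n}^lS_{p,n}^{l,l-1}(\varphi)=T_1+T_2$ with
\[
T_1=\frac{Q_{p,n}^l(\bar\varphi)-Q_{p,n}^{l-1}(\bar\varphi)}{\eta_p^l(Q_{p,n}^l(1))},\qquad
T_2=P_{p,n}^{l-1}(\bar\varphi)\,\frac{Q_{p,n}^{l-1}(1)-Q_{p,n}^l(1)}{\eta_p^l(Q_{p,n}^l(1))},
\]
obtained by adding and subtracting $Q_{p,n}^{l-1}(\bar\varphi)/\eta_p^l(Q_{p,n}^l(1))$ and recalling $h_{p,n}^l=Q_{p,n}^l(1)/\eta_p^l(Q_{p,n}^l(1))$. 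For each numerator I would apply the telescoping identity
\[
Q_{p,n}^a-Q_{p,n}^b=\sum_{q=p}^{n-1}Q_{p,q}^a\big(G_q\,[M_{q+1}^a-M_{q+1}^b]\big)Q_{q+1,n}^b,
\]
so that Lemma \ref{lem:diff_1}, applied with a small exponent to $Q_{q+1,n}^b(\cdot)=\eta_{q+1}^b(Q_{q+1,n}^b(1))\,h_{q+1,n}^b\,P_{q+1,n}^b(\cdot)$, contributes one factor $\Delta_l$ per step. After dividing by $\eta_p^l(Q_{p,n}^l(1))$ the leftover normalising constants are reorganised through $\eta_p^l(Q_{p,n}^l(1))=\eta_p^l(Q_{p,q+1}^l(1))\,\eta_{q+1}^l(Q_{q+1,n}^l(1))$, and the cross-level ratios such as $Q_{p,q}^{l-1}(1)/\eta_p^l(Q_{p,q+1}^l(1))$ are bounded uniformly in $(p,q,n)$ by Lemma \ref{lem:diff_3} together with \cite[Proposition 1, Proposition 2]{whiteley}.

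The decisive point is the decay budget. In $T_1$ the future factor is $P_{q+1,n}^{l-1}(\bar\varphi)$, so each summand carries the weight $\Delta_l\rho^{n-q-1}$ and $\|T_1\|_{v^{\xi}}\le C\|\varphi\|\Delta_l\sum_{q}\rho^{n-q-1}\le C\|\varphi\|\Delta_l$. In $T_2$ the scalar difference $(Q_{p,n}^{l-1}(1)-Q_{p,n}^l(1))/\eta_p^l(Q_{p,n}^l(1))$ carries no decay and is only $O((n-p)\Delta_l)$, but it multiplies $P_{p,n}^{l-1}(\bar\varphi)=O(\rho^{n-p})$, and $\sup_m m\rho^m<\infty$ again gives $\|T_2\|_{v^{\xi}}\le C\|\varphi\|\Delta_l$. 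This uniform $O(\Delta_l)$ (with no residual $\rho^{n-p}$) is exactly what one expects, since as $n-p\to\infty$ the quantity $S_{p,n}^{l,l-1}(\varphi)$ tends to the predictor difference $[\eta_n^l-\eta_n^{l-1}](\varphi)$, itself $O(\Delta_l)$ by Proposition \ref{prop:diff_1}.

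The exponents must be chosen so that the doubling $v^{\tilde\xi}\mapsto v^{2\tilde\xi}$ in Lemma \ref{lem:diff_1}, together with the further powers produced by the Whiteley bounds on $h^s$ and $P^s$, accumulate to at most $v^{\xi}$; because the geometric estimate for $P_{q,n}^{l-1}(\bar\varphi)$ is available in every small power $v^{\tilde\xi}$, all intermediate exponents can be taken arbitrarily small, and $\xi<1/4$ leaves the room needed to keep every application of Lemma \ref{lem:diff_1} inside its admissible range $(0,1/2)$. The step I expect to be the main obstacle is precisely this bookkeeping of the normalising-constant ratios: one must verify that they remain bounded uniformly as the window length $n-p\to\infty$ (otherwise the telescoped sums would blow up), which is exactly where Lemma \ref{lem:diff_3} and the time-uniform minorisation and Lyapunov estimates of \cite{whiteley} are indispensable.
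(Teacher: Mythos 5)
Your argument is correct and rests on the same three pillars as the paper's proof: a time-telescoping that extracts one factor of $\Delta_l$ per step via Lemma \ref{lem:diff_1}, the geometric decay of the centred future functional (exactly \eqref{eq:diff_need}, since $h_{q,n}^{l-1}P_{q,n}^{l-1}(\bar\varphi)=D_{q,n}^{l-1}(\varphi)$) to make the telescoped series summable uniformly in $n-p$, and the Whiteley-type stability estimates for the normalising ratios. The organisation differs, however. The paper does not perform your top-level split $h_{p,n}^lS_{p,n}^{l,l-1}(\varphi)=T_1+T_2$ over the common denominator $\eta_p^l(Q_{p,n}^l(1))$ with a telescoping of the \emph{unnormalised} kernels; instead it writes $h_{p,n}^l(x)S_{p,n}^{l,l-1}(\varphi)(x)/v(x)^{\xi}=(h_{p,n}^l(x)/v(x)^{\kappa})\,v(x)^{-(\xi-\kappa)}\big(P_{p,n}^l(\varphi)(x)-P_{p,n}^{l-1}(\varphi)(x)\big)$ and telescopes the \emph{normalised} semigroup difference as $\sum_{q=p+1}^n[\Phi_{(q,n)}^{l-1}(\Phi_{(p,q)}^l(\delta_x))-\Phi_{(q,n)}^{l-1}(\Phi_q^{l-1}\{\Phi_{(p,q-1)}^l(\delta_x)\})](\varphi)$, so that each summand is precisely the object of Lemma \ref{lem:diff_5} with initial measure $\delta_x$; the centring/normalisation split that you perform once at the top is carried out there inside each summand. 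A practical payoff of the paper's route is that all initial-measure moments become powers of $v(x)$ (the initial law being $\delta_x$) and are absorbed directly into the $v^{\xi}$-weight, which is why no $v(x^*)$ factor appears in the statement; in your route the cross-level ratios such as $\eta_{q+1}^{l-1}\big(Q_{q+1,n}^{l-1}(1)/\eta_{q+1}^l(Q_{q+1,n}^l(1))\big)$ leave behind moments of $\eta_{q+1}^{l-1}$, which are harmless constants under (H\ref{hypav:2}) and \cite[Proposition 1]{whiteley} but must be tracked. Your handling of $T_2$ --- pairing the $O((n-p)\Delta_l)$ normalisation difference (the same quantity bounded in the first term of the proof of Proposition \ref{prop:diff_3}) with the $O(\rho^{n-p})$ decay of $P_{p,n}^{l-1}(\bar\varphi)$ and invoking $\sup_m m\rho^m<+\infty$ --- is a correct and clean way to recover uniformity in $n-p$, and the exponent bookkeeping you describe closes for $\xi\in(0,1/4)$ just as in the paper.
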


\begin{proof}
For any $x\in\mathsf{X}$, $0<\kappa<\xi$ one has
$$
\frac{h_{p,n}^l(x)S_{p,n}^{l,l-1}(\varphi)(x)}{v(x)^{\xi}} = \frac{h_{p,n}^l(x)}{v(x)^{\kappa}}\frac{1}{v(x)^{\xi-\kappa}}\Big(P_{p,n}^l(\varphi)(x)-P_{p,n}^{l-1}(\varphi)(x)\Big)
$$
and
$$
P_{p,n}^l(\varphi)(x)-P_{p,n}^{l-1}(\varphi)(x) = \sum_{q=p+1}^n
\Big\{[\Phi_{(q,n)}^{l-1}(\Phi_{(p,q)}^l(\delta_x))-\Phi_{(q,n)}^{l-1}(\Phi_q^{l-1}\{\Phi_{(p,q-1)}^l(\delta_x)\})](\varphi)\Big\}.
$$
Noting \eqref{eq:diff_29}, one can repeat the proofs of Lemmata \ref{lem:diff_3} and \ref{lem:diff_5} in almost the same way for this case, to deduce that
for any $(\tilde{\kappa},\hat{\kappa})\in(0,1/2)\times(0,1]$
$$
\Big|\frac{1}{v(x)^{\xi-\kappa}}\Big(P_{p,n}^l(\varphi)(x)-P_{p,n}^{l-1}(\varphi)(x)\Big)\Big| 
\leq C\|\varphi\|\Delta_l\frac{v(x)^{2\tilde{\kappa}+\hat{\kappa}}}{v(x)^{\xi-\kappa}}
$$
where we have noted Remark \ref{rem:diff_1}. Choose $0<\tilde{\kappa}=\hat{\kappa}<1/(24)$,
$\kappa=\xi/2$, then one can set $\xi=6\tilde{\kappa}$ and $0<\xi<1/4$. Hence
\begin{eqnarray*}
\Big|\frac{1}{v(x)^{\xi-\kappa}}\Big(P_{p,n}^l(\varphi)(x)-P_{p,n}^{l-1}(\varphi)(x)\Big)\Big| & \leq & C\|\varphi\|\Delta_l \\
\frac{h_{p,n}^l(x)}{v(x)^{\kappa}} & \leq & C
\end{eqnarray*}
and the proof is concluded.
\end{proof}

\begin{lem}\label{lem:diff_6}
Assume (H\ref{hypav:1}-\ref{hypav:5}). Then for any $\xi\in(0,1]$ there exists a $C<+\infty$ depending on the constants in (H\ref{hypav:1}-\ref{hypav:5}) such that
for any $1\leq l \leq L$ we have
$$
\sup_{n\geq 1}\sup_{0\leq p \leq n}\Big\|\frac{Q_{p,n}^{l-1}(1)}{\eta_p^{l}(Q_{p,n}^{l}(1))}\Big\|_{v^{\xi}}  \leq C.
$$
\end{lem}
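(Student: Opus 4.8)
The plan is to reduce the claim to \eqref{eq:diff_8} of Lemma \ref{lem:diff_3} together with a comparison of normalising constants across the two neighbouring levels. First I would write, for $x\in\mathsf{X}$ and $0\le p<n$ (the case $p=n$ being trivial),
$$
\frac{Q_{p,n}^{l-1}(1)(x)}{\eta_p^{l}(Q_{p,n}^{l}(1))}
= \frac{Q_{p,n}^{l-1}(1)(x)}{\eta_p^{l}(Q_{p,n}^{l-1}(1))}\cdot
\frac{\eta_p^{l}(Q_{p,n}^{l-1}(1))}{\eta_p^{l}(Q_{p,n}^{l}(1))}.
$$
The first factor is, by \eqref{eq:diff_8}, bounded in absolute value by $Cv(x)^{\xi}$ uniformly in $n,p,l$, so that it suffices to bound the scalar second factor $B_{p,n}^{l}:=\eta_p^{l}(Q_{p,n}^{l-1}(1))/\eta_p^{l}(Q_{p,n}^{l}(1))$ by a constant, uniformly in $n,p$ and in $l\in\{1,\dots,L\}$.

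For the numerator of $B_{p,n}^{l}$ I would use $Q_{p,n}^{l-1}(1)=h_{p,n}^{l-1}\,\eta_p^{l-1}(Q_{p,n}^{l-1}(1))$ together with $\|h_{p,n}^{l-1}\|_{v^{\xi}}\le C$ (\cite[Proposition 2]{whiteley}) and $\eta_p^{l}(v^{\xi})\le C$ (\cite[Proposition 1]{whiteley}), giving $\eta_p^{l}(Q_{p,n}^{l-1}(1))\le C\,\eta_p^{l-1}(Q_{p,n}^{l-1}(1))$. Using the semigroup identities $\eta_p^{s}(Q_{p,n}^{s}(1))=\prod_{q=p}^{n-1}\eta_q^{s}(G_q)$ for $s\in\{l,l-1\}$, the problem is thereby reduced to showing that the ratio of normalising constants
$$
\frac{\eta_p^{l-1}(Q_{p,n}^{l-1}(1))}{\eta_p^{l}(Q_{p,n}^{l}(1))}=\prod_{q=p}^{n-1}\frac{\eta_q^{l-1}(G_q)}{\eta_q^{l}(G_q)}
$$
is $O(1)$ uniformly in $n,p$. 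Each factor lies in a fixed interval $[\underline{G}/\overline{G},\overline{G}/\underline{G}]$ by (H\ref{hypav:5}) and \cite[Proposition 2 (2)]{whiteley}, so the naive per-factor bound is useless; the genuine content is that the accumulated discrepancy between the level-$l$ and level-$(l-1)$ flows stays controlled as the number of factors $n-p$ grows.

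The main obstacle is precisely this cross-level comparison of normalising constants. Unlike \eqref{eq:diff_8}--\eqref{eq:diff_9}, where numerator and denominator involve the \emph{same} kernel and the bound follows from a single-step minorisation together with $\inf_{x\in C_d}h_{p,n}^{s}(x)\ge c$ (\cite[Lemma 10]{whiteley}), here the two kernels $Q^{l}$ and $Q^{l-1}$ differ, and the ratio must be controlled by propagating the per-step differences $\eta_q^{l}(G_q)-\eta_q^{l-1}(G_q)$. I would attack this with the telescoping/stability decomposition already used in Lemma \ref{lem:diff_5} and Proposition \ref{prop:diff_1}: expressing $\log B_{p,n}^{l}$ as a sum of such per-step differences, each damped geometrically by the forgetting property of the semigroups under (H\ref{hypav:1}--\ref{hypav:5}), and exploiting the closeness of the Euler kernels of neighbouring levels. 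Since $l$ ranges over the finite set $\{1,\dots,L\}$, the resulting constant may be taken to depend on $L$, which is admissible. This accumulation/stability argument is the delicate step and is where the real work of the proof lies; the remainder is bookkeeping inherited from Lemma \ref{lem:diff_3} and the cited Whiteley estimates.
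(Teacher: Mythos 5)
Your route is not the paper's: the paper's entire proof of Lemma \ref{lem:diff_6} is the observation that the argument of \cite[Lemma 8]{whiteley} applies to the mixed quantity unchanged, because the constants in (H\ref{hypav:1}-\ref{hypav:5}) --- in particular the single pair $\tilde{\epsilon}_d^{\pm},\nu_d$ in (H\ref{hypav:3})--(H\ref{hypav:4}) which sandwiches \emph{both} $Q^{l}$ and $Q^{l-1}$ against the same reference measure on $C_d$ --- do not depend on the level. It never forms the cross-level ratio of normalising constants that your factorisation produces. Your first step is sound (\eqref{eq:diff_8} does bound $Q_{p,n}^{l-1}(1)(x)/\eta_p^l(Q_{p,n}^{l-1}(1))$ by $Cv(x)^{\xi}$), and the resulting identity
$$
\frac{Q_{p,n}^{l-1}(1)(x)}{\eta_p^{l}(Q_{p,n}^{l}(1))}=h_{p,n}^{l-1}(x)\prod_{q=p}^{n-1}\frac{\eta_q^{l-1}(G_q)}{\eta_q^{l}(G_q)}
$$
correctly isolates where all the difficulty sits. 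But you stop exactly there, and the mechanism you propose for the remaining product cannot close the gap.

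Concretely: the telescoping/stability machinery of Lemma \ref{lem:diff_5} and Proposition \ref{prop:diff_1} gives, at best, $|\eta_q^{l}(G_q)-\eta_q^{l-1}(G_q)|\leq C\Delta_l$ for each $q$ --- and even that requires (H\ref{hypav:8}) and the Euler estimates of Lemma \ref{lem:diff_1}, which are not among the hypotheses of Lemma \ref{lem:diff_6}; under (H\ref{hypav:1}-\ref{hypav:5}) alone the per-factor discrepancy is merely $O(1)$. More importantly, the geometric factor $\rho^{n-q}$ in those results damps the influence of a single perturbation at time $q$ on the marginal at the \emph{fixed} terminal time $n$; it does not damp the sum $\sum_{q=p}^{n-1}\log\big(\eta_q^{l-1}(G_q)/\eta_q^{l}(G_q)\big)$, each of whose terms is a time-$q$ discrepancy carrying no decay in $q$. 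The best your route yields is a bound of order $\exp\{C(n-p)\Delta_l\}$, which is not uniform in $n$ for fixed $l$ (and letting the constant depend on $L$ does not help, since the failure of uniformity is in $n$, not $l$). Indeed, in a time-homogeneous ergodic setting $\eta_q^{s}(G)$ converges to the top eigenvalue of $Q^{s}$, and the two eigenvalues differ in general, so the product behaves like $e^{c(n-p)}$ for some $c\neq 0$: your second factor is genuinely unbounded as a function of $n-p$ under the stated assumptions, so this decomposition is a dead end rather than a reduction. If you want a proof in the spirit of the paper, you must run the \cite[Lemma 8]{whiteley} sandwich argument directly on the mixed ratio, using that (H\ref{hypav:3})--(H\ref{hypav:4}) compare both kernels to the \emph{same} $\nu_d$ on $C_d$, so that numerator and denominator are controlled by a common quantity that cancels --- rather than routing the comparison through the normalising constants, where no such cancellation is available.
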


\begin{proof}
The proof is the same as \cite[Lemma 8]{whiteley} as the constants in (H\ref{hypav:1}-\ref{hypav:5}) do not depend upon $l$.
\end{proof}

\begin{prop}\label{prop:diff_3}
Assume (H\ref{hypav:1}-\ref{hypav:6}), (H\ref{hypav:8}). Then for any $(\xi,\hat{\xi})\in(0,1/8)\times(0,1/2)$  there exists a $C<+\infty$ depending on the constants in (H\ref{hypav:1}-\ref{hypav:6}) (H\ref{hypav:8}) such that for any $n\geq 1$, $0\leq p \leq n$, $1\leq l \leq L$, we have
$$
\|h_{p,n}^l-h_{p,n}^{l-1}\|_{v^{\xi}} \leq C(n-p)\Delta_lv(x^*)^{\hat{\xi}}.
$$
\end{prop}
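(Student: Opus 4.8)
The plan is to compare the two un-normalised semigroups one time-step at a time, and to separate the effect of changing the kernel level from that of changing the initial measure. Writing $h_{p,n}^s=Q_{p,n}^s(1)/\eta_p^s(Q_{p,n}^s(1))$, I would introduce the hybrid function
$$
\tilde{h}_{p,n}(x):=\frac{Q_{p,n}^{l-1}(1)(x)}{\eta_p^{l}(Q_{p,n}^{l}(1))},
$$
which by Lemma~\ref{lem:diff_6} obeys $\|\tilde h_{p,n}\|_{v^{\xi}}\le C$ uniformly in $n,p,l$. Then I split
$$
h_{p,n}^{l}-h_{p,n}^{l-1}=\big(h_{p,n}^{l}-\tilde h_{p,n}\big)+\big(\tilde h_{p,n}-h_{p,n}^{l-1}\big)=:T_1+T_2,
$$
where $T_1=[Q_{p,n}^{l}(1)-Q_{p,n}^{l-1}(1)]/\eta_p^{l}(Q_{p,n}^{l}(1))$ is a pure change of kernel level and $T_2=h_{p,n}^{l-1}\cdot\big(\eta_p^{l-1}(Q_{p,n}^{l-1}(1))-\eta_p^{l}(Q_{p,n}^{l}(1))\big)/\eta_p^{l}(Q_{p,n}^{l}(1))$ is a pure change of initial measure and normalisation. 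I would bound $T_1$ and $T_2$ separately and add.

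For $T_1$ I use the telescoping identity
$$
Q_{p,n}^{l}(1)-Q_{p,n}^{l-1}(1)=\sum_{q=p+1}^{n}Q_{p,q-1}^{l-1}\Big[(Q_q^{l}-Q_q^{l-1})\big(Q_{q,n}^{l}(1)\big)\Big].
$$
Since $(Q_q^{l}-Q_q^{l-1})(\psi)=G_{q-1}[M^{l}(\psi)-M^{l-1}(\psi)]$, applying Lemma~\ref{lem:diff_1} to $\psi=Q_{q,n}^{l}(1)=h_{q,n}^{l}\,\eta_q^{l}(Q_{q,n}^{l}(1))$ (so that $\|\psi\|_{v^{\kappa}}\le C\,\eta_q^{l}(Q_{q,n}^{l}(1))$ by \cite[Proposition~2 (3)]{whiteley}) together with (H\ref{hypav:5})~1.\ bounds each summand, after dividing by $\eta_p^{l}(Q_{p,n}^{l}(1))$, by $C\Delta_l\,\tfrac{\eta_q^{l}(Q_{q,n}^{l}(1))}{\eta_p^{l}(Q_{p,n}^{l}(1))}\,Q_{p,q-1}^{l-1}(v^{2\kappa})(x)$. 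Using the multiplicative identity $\eta_q^{l}(Q_{q,n}^{l}(1))/\eta_p^{l}(Q_{p,n}^{l}(1))=1/\eta_p^{l}(Q_{p,q}^{l}(1))$ and $\inf_r\eta_r^{l}(G_r)>0$ (\cite[Proposition~2 (2)]{whiteley}), this reduces to the cross-level ratio $Q_{p,q-1}^{l-1}(v^{2\kappa})(x)/\eta_p^{l}(Q_{p,q-1}^{l}(1))$, which I factor as $P_{p,q-1}^{l-1}(v^{2\kappa})(x)\cdot\big(Q_{p,q-1}^{l-1}(1)(x)/\eta_p^{l}(Q_{p,q-1}^{l}(1))\big)$ and bound by $C\,v(x)^{2\kappa}\cdot C\,v(x)^{\xi_0}$ using \cite[Proposition~1]{whiteley} and Lemma~\ref{lem:diff_6}. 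The decisive point is that this per-term bound is \emph{uniform in the time-lag} $q-p$; summing the $n-p$ terms and choosing $\kappa$ (in Lemma~\ref{lem:diff_1}) and $\xi_0$ (in Lemma~\ref{lem:diff_6}) smaller than $\xi/4$ gives $\|T_1\|_{v^{\xi}}\le C(n-p)\Delta_l$, with no dependence on $x^*$.

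For $T_2$, since $\|h_{p,n}^{l-1}\|_{v^{\xi}}\le C$ it suffices to control the scalar factor, which equals $\eta_p^{l-1}(\tilde h_{p,n})-1=[\eta_p^{l-1}-\eta_p^{l}](\tilde h_{p,n})-\eta_p^{l}(T_1)$ upon inserting $\eta_p^{l}(h_{p,n}^{l})=1$. The first piece is controlled by Proposition~\ref{prop:diff_1} applied to the fixed test function $\tilde h_{p,n}\in\mathcal{L}_{v^{\xi}}$, giving $C\Delta_l v(x^*)^{8\xi+2\hat\xi}$ (this is where the restriction $\xi\in(0,1/8)$ is inherited); the second is $\le\|T_1\|_{v^{\xi}}\,\eta_p^{l}(v^{\xi})\le C(n-p)\Delta_l v(x^*)^{\hat\xi}$, where $\eta_p^{l}(v^{\xi})\le Cv(x^*)^{\hat\xi}$ follows from the drift condition (H\ref{hypav:1}) at the initial step $\eta_0^{l}=M^{l}(x^*,\cdot)$ and \cite[Proposition~1]{whiteley}. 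Adding the $T_1$ and $T_2$ estimates and absorbing the (small) powers of $v(x^*)$ into a single exponent $\hat\xi\in(0,1/2)$ yields the claim. The main obstacle is precisely the third paragraph: one must guarantee that each of the $n-p$ telescoping increments costs only $O(\Delta_l)$ \emph{uniformly in the lag}, which is possible only because the cross-level normalisations are tamed uniformly by Lemma~\ref{lem:diff_6} and the regularity bounds of \cite{whiteley}. Unlike Lemma~\ref{lem:diff_5}, no contraction factor $\rho^{n-p}$ is available here — $h_{p,n}$ being an un-centred object — which is exactly why the final rate is linear in $n-p$ rather than uniform in time.
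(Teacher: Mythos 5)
Your proposal is correct and follows essentially the same route as the paper's proof: the same split into the un-normalised numerator difference (handled by telescoping the semigroup one step at a time and applying Lemma \ref{lem:diff_1}, with the cross-level normalisation tamed by Lemma \ref{lem:diff_6} and \cite[Propositions 1 and 2]{whiteley}) plus the change of initial measure and normalising constant (handled by Proposition \ref{prop:diff_1} and the integrated numerator term). The only differences are cosmetic: your two-term split with the hybrid $\tilde h_{p,n}$ repackages the paper's three terms $T_1, T_2, T_3$, and your telescoping puts the level-$(l-1)$ prefix on the left rather than the right.
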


\begin{proof}
Define for any $x\in\mathsf{X}$
\begin{eqnarray}
T_1 & := & \frac{1}{\eta_{p}^l(Q_{p,n}^{l}(1))}(Q_{p,n}^{l}(1)(x)-Q_{p,n}^{l-1}(1)(x)) \label{eq:diff_29} \\
T_2 & := & \frac{Q_{p,n}^{l-1}(1)(x)}{\eta_{p}^l(Q_{p,n}^{l}(1))} [\eta_p^{l-1}-\eta_p^l](h_{p,n}^{l-1}) \label{eq:diff_30} \\
T_3 & := &  h_{p,n}^{l-1}(x) \eta_p^l\Big(\frac{1}{\eta_{p}^l(Q_{p,n}^{l}(1))}(Q_{p,n}^{l}(1)-Q_{p,n}^{l-1}(1))\Big). \label{eq:diff_31} 
\end{eqnarray}
Then we have
\begin{equation}\label{eq:diff_32}
T_1 + T_2 -T_3 = h_{p,n}^l(x)-h_{p,n}^{l-1}(x).
\end{equation}
We will bound $|T_1|$, $|T_2|$ and $|T_3|$ and then sum the bounds to conclude.

{\bf Term} \eqref{eq:diff_29}: We have
$$
T_1 = 
\frac{1}{\eta_{p}^l(Q_{p,n}^{l}(1))}
\sum_{q=p+1}^n\Big\{
Q_{p,q}^l(Q_{q,n}^{l-1}(1))(x) - Q_{p,q-1}^l(Q_{q-1,n}^{l-1}(1))(x)
\Big\}
$$
Considering the summand we have
$$
Q_{p,q}^l(Q_{q,n}^{l-1}(1))(x) - Q_{p,q-1}^l(Q_{q-1,n}^{l-1}(1))(x) = \eta_{q+1}^{l-1}(Q_{q+1,n}^{l-1}(1))\Big(Q_{p,q-1}^l(G_{q-1}[M_q^l-M_q^{l-1}](h_{q+1,n}^{l-1}))\Big).
$$
Now applying Lemma \ref{lem:diff_1}, \cite[Proposition 2 (3)]{whiteley} (with $v^{\xi/4}$) and (H\ref{hypav:5}) 1.~we have
$$
|Q_{p,q}^l(Q_{q,n}^{l-1}(1))(x) - Q_{p,q-1}^l(Q_{q-1,n}^{l-1}(1))(x)| \leq C\Delta_l \eta_{q+1}^{l-1}(Q_{q+1,n}^{l-1}(1))Q_{p,q-1}^l(v^{\xi/2})(x).
$$
Thus
\begin{equation}\label{eq:diff_33}
|T_1| \leq \frac{C\Delta_l}{\eta_{p}^l(Q_{p,n}^{l}(1))}\sum_{q=p+1}^n\Big\{\eta_{q+1}^{l-1}(Q_{q+1,n}^{l-1}(1))Q_{p,q-1}^l(v^{\xi/2})(x)\Big\}.
\end{equation}
Now, 
\begin{equation}\label{eq:diff_34}
\frac{\eta_{q+1}^{l-1}(Q_{q+1,n}^{l-1}(1))Q_{p,q-1}^l(v^{\xi/2})(x)}{\eta_{p}^l(Q_{p,n}^{l}(1))}
= 
\eta_{q+1}^{l-1}\Big(\frac{Q_{q+1,n}^{l-1}(1)}{\eta_q^l(G_q)\eta_{q+1}^l(Q_{q+1,n}^l(1))}\Big) 
\frac{Q_{p,q-1}^l(v^{\xi/2})(x)}{\eta_{p}^l(Q_{p,q-1}^{l}(1))}\frac{1}{\eta^l_{q-1}(G_{q-1})}.
\end{equation}
Applying Lemma \ref{lem:diff_6}, \cite[Proposition 2 (3)]{whiteley} and then \cite[Proposition 1]{whiteley} gives for any $\hat{\xi}\in(0,1]$
\begin{equation}\label{eq:diff_35}
\eta_{q+1}^{l-1}\Big(\frac{Q_{q+1,n}^{l-1}(1)}{\eta_q^l(G_q)\eta_{q+1}^l(Q_{q+1,n}^l(1))}\Big) \leq C v(x^*)^{\hat{\xi}}.
\end{equation}
In addition
$$
\frac{Q_{p,q-1}^l(v^{\xi/2})(x)}{\eta_{p}^l(Q_{p,q-1}^{l}(1))} = 
\frac{Q_{p,q-1}^l(v^{\xi/2})(x)}{Q_{p,q-1}^l(1)(x)}
h_{p,q-1}^l(x)
$$
By a similar proof to \cite[Proposition 1]{whiteley} $Q_{p,q-1}^l(v^{\xi/2})(x)/Q_{p,q-1}^l(1)(x)\leq Cv(x)^{\xi/2}$ and thus by \cite[Proposition 2 (3)]{whiteley}
$h_{p,q-1}^l(x)\leq Cv(x)^{\xi/2}$, so 
\begin{equation}\label{eq:diff_36}
\frac{Q_{p,q-1}^l(v^{\xi/2})(x)}{\eta_{p}^l(Q_{p,q-1}^{l}(1))} \leq Cv(x)^{\xi}.
\end{equation}
Noting \eqref{eq:diff_33}-\eqref{eq:diff_36}, we have shown that for any $(\xi,\hat{\xi})\in(0,1/8)\times(0,1/2)$
\begin{equation}\label{eq:diff_37}
|T_1| \leq C(n-p)\Delta_lv(x)^{\xi}v(x^*)^{\hat{\xi}}.
\end{equation}

{\bf Term} \eqref{eq:diff_30}: We have, by Lemma \ref{lem:diff_6} and Proposition \ref{prop:diff_1}, for any $(\xi,\kappa,\hat{\kappa})\in(0,1/8)^2\times(0,1/2)$
$$
|T_2| \leq Cv(x)^{\xi}\|h_{p,n}^{l-1}\|_{v^{\kappa}}\Delta_lv(x^*)^{8\kappa+2\hat{\kappa}}.
$$
One can choose $0<\kappa=\hat{\kappa}<1/20$, $\hat{\xi}=10\kappa$ and applying \cite[Proposition 2 (3)]{whiteley}, we have
\begin{equation}\label{eq:diff_38}
|T_2| \leq C\Delta_lv(x)^{\xi}v(x^*)^{\hat{\xi}}.
\end{equation}

{\bf Term} \eqref{eq:diff_31}: We have by \cite[Proposition 2 (3)]{whiteley} and \eqref{eq:diff_37}, for any $(\kappa,\hat{\kappa})\in(0,1/8)\times(0,1/2)$
$$
|T_3| \leq C v(x)^{\xi}\times(n-p)\Delta_l\eta_p^l(v^{\kappa})v(x^*)^{\hat{\kappa}}.
$$
Applying \cite[Proposition 1]{whiteley} and choosing $(\kappa,\hat{\kappa})$, so that $\hat{\xi}=\kappa+\hat{\kappa}<1/2$, we have
\begin{equation}\label{eq:diff_39}
|T_3| \leq C(n-p)\Delta_lv(x)^{\xi}v(x^*)^{\hat{\xi}}.
\end{equation}
Noting \eqref{eq:diff_32}, \eqref{eq:diff_37}, \eqref{eq:diff_38} and \eqref{eq:diff_39} the proof can be concluded.
\end{proof}

\subsection{Proofs for Proposition \ref{prop:diff_max_coup}}\label{app:diff_max_coup}

\begin{proof}[Proof of Proposition \ref{prop:diff_max_coup}]
We focus on the first bound in Theorem \ref{theo:av_thm}. Clearly
$$
\check{\eta}^C_n\Big((\varphi\otimes 1-1\otimes\varphi - ([\eta_n^l-\eta_n^{l-1}](\varphi)))^2\Big) \leq 2\|\varphi\|^2\check{\eta}^C_n(\mathbb{I}_A).
$$
As $\check{\eta}^C_n$ is the maximal coupling of $(\eta_n^{l},\eta_n^{l-1})$ we have $\check{\eta}^C_n(\mathbb{I}_A)=\|\eta_n^{l}-\eta_n^{l-1}\|_{\textrm{tv}}$. Hence on applying 
Proposition \ref{prop:diff_1} (noting Remark \ref{rem:diff_1}), we have that
$$
\check{\eta}^C_n\Big((\varphi\otimes 1-1\otimes\varphi - ([\eta_n^l-\eta_n^{l-1}](\varphi)))^2\Big) \leq C\|\varphi\|^2\Delta_lv(x^*)^{2\xi+\hat{\xi}}.
$$
The proof is completed by using Proposition \ref{prop:diff_general_b_bound} and Proposition \ref{prop:diff_4}.
\end{proof}

\begin{prop}\label{prop:diff_4}
Assume (H\ref{hypav:1}-\ref{hypav:6}), (H\ref{hypav:8}). Then for any $(\xi,\hat{\xi})\in(0,1/32)\times(0,1/2)$ there exists a $C<+\infty$ depending on the constants in (H\ref{hypav:1}-\ref{hypav:6}) (H\ref{hypav:8}) such that for any $n\geq 0$, $1\leq l \leq L$ we have
$$
\check{\eta}^C_n(\mathbb{I}_A(v\otimes v)^{2\xi}) \leq C\Delta_lv(x^*)^{32\xi+2\hat{\xi}}.
$$
\end{prop}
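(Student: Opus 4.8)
The plan is to exploit the explicit structure of the maximal coupling $\check{\eta}_n^C$ of $(\eta_n^l,\eta_n^{l-1})$. Writing $p$ and $q$ for the Lebesgue densities of $\eta_n^l$ and $\eta_n^{l-1}$, the maximal coupling places its diagonal mass on $\{x=y\}$ through $p\wedge q$ and its remaining mass as the normalized product of the two residuals. Since $\mathbb{I}_A$ vanishes on the diagonal, while the residual component is absolutely continuous on $\mathsf{X}\times\mathsf{X}$ and hence charges the diagonal with zero mass, only the off-diagonal part survives. First I would record the resulting identity
$$
\check{\eta}_n^C(\mathbb{I}_A (v\otimes v)^{2\xi}) = \frac{1}{T}\,a\,b,
$$
where $T=1-\int_{\mathsf{X}} p\wedge q\,dx=\|\eta_n^l-\eta_n^{l-1}\|_{\textrm{tv}}$, $a=\int_{\mathsf{X}} v^{2\xi}(p-q)_+\,dx$ and $b=\int_{\mathsf{X}} v^{2\xi}(q-p)_+\,dx$.

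Next I would deal with the normalizing constant $T$, which sits in the denominator and is itself $\mathcal{O}(\Delta_l)$, hence small. The key device is Cauchy--Schwarz applied to each factor: splitting the integrand as $v^{2\xi}(p-q)_+=\bigl(v^{2\xi}(p-q)_+^{1/2}\bigr)\bigl((p-q)_+^{1/2}\bigr)$ and using $\int_{\mathsf{X}}(p-q)_+\,dx=\int_{\mathsf{X}}(q-p)_+\,dx=T$ gives $a\le (a')^{1/2}T^{1/2}$ and $b\le (b')^{1/2}T^{1/2}$, with $a'=\int_{\mathsf{X}} v^{4\xi}(p-q)_+\,dx$ and $b'=\int_{\mathsf{X}} v^{4\xi}(q-p)_+\,dx$. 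The two factors $T^{1/2}$ then cancel the $T$ in the denominator, leaving
$$
\check{\eta}_n^C(\mathbb{I}_A (v\otimes v)^{2\xi}) \le (a'b')^{1/2}.
$$

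It then remains to bound $a'$ and $b'$. Since $a'=[\eta_n^l-\eta_n^{l-1}]\bigl(v^{4\xi}\mathbb{I}_{\{p\ge q\}}\bigr)$ with $v^{4\xi}\mathbb{I}_{\{p\ge q\}}\in\mathcal{L}_{v^{4\xi}}(\mathsf{X})$ of $\|\cdot\|_{v^{4\xi}}$-norm at most one, and symmetrically for $b'$, I would invoke Proposition \ref{prop:diff_1} with its weight exponent taken to be $4\xi$. This is exactly where the stated range enters: the requirement $4\xi\in(0,1/8)$ forces $\xi\in(0,1/32)$, and the proposition then yields $a',b'\le C\Delta_l v(x^*)^{8(4\xi)+2\hat{\xi}}=C\Delta_l v(x^*)^{32\xi+2\hat{\xi}}$. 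Taking the geometric mean of these two bounds of equal order gives $(a'b')^{1/2}\le C\Delta_l v(x^*)^{32\xi+2\hat{\xi}}$, which is the claim.

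The main obstacle, and the crux of the whole argument, is the cancellation of the small denominator $T$. A naive estimate $a,b\le\|\eta_n^l-\eta_n^{l-1}\|_{v^{2\xi}}$ followed by division by $T$ would fail, since $a/T$ and $b/T$ are $v$-weighted averages of the residual mass and need not be controlled uniformly in the tails; the coupling mass could be small yet sit where $v$ is large. The Cauchy--Schwarz splitting is precisely what turns the heuristic ``$\mathcal{O}(\Delta_l^2)/\mathcal{O}(\Delta_l)$'' into a genuinely $\mathcal{O}(\Delta_l)$ quantity, and it simultaneously upgrades the relevant weight from $v^{2\xi}$ to $v^{4\xi}$, which is what ultimately determines the threshold $\xi<1/32$ through Proposition \ref{prop:diff_1}.
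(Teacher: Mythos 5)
Your argument is correct and follows essentially the same route as the paper's proof: write the off-diagonal (product-residual) part of the maximal coupling explicitly, use Cauchy--Schwarz on each marginal residual integral so that the two factors of $T^{1/2}$ cancel the small normalizing constant $T=\|\eta_n^l-\eta_n^{l-1}\|_{\textrm{tv}}$ in the denominator, and then control $\int_{\mathsf{X}}v^{4\xi}(\eta_n^l-\eta_n^l\wedge\eta_n^{l-1})\,dx$ via Proposition \ref{prop:diff_1} applied to functions in $\mathcal{L}_{v^{4\xi}}(\mathsf{X})$, which is exactly where the restriction $\xi<1/32$ arises. The paper reaches the same bound on $T_1^2$ through the identity $(p-q)_+=\tfrac12((p-q)+|p-q|)$ and a three-term split rather than your single indicator-weighted difference, but this is a cosmetic difference only.
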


\begin{proof}
We will use $(\eta_n^l,\eta_n^{l-1})$ to denote both the probability measure and density associated to $(\eta_n^l,\eta_n^{l-1})$.
We have for any $n\geq 0$
\begin{eqnarray*}
\check{\eta}^C_n(\mathbb{I}_A(v\otimes v)^{2\xi}) & = & \Big(1-\int_{\mathsf{X}}\eta_n^l(x)\wedge\eta_n^{l-1}(x)dx\Big)\int_{\mathsf{X}\times\mathsf{X}}\mathbb{I}_A(x,y) v(x)^{2\xi}v(y)^{2\xi}\times \\
& &  \frac{(\eta_n^l(x)-\eta_n^l(x)\wedge\eta_n^{l-1}(x))}
{1-\int_{\mathsf{X}}\eta_n^l(x)\wedge\eta_n^{l-1}(x)dx} \frac{(\eta_n^{l-1}(y)-\eta_n^l(y)\wedge\eta_n^{l-1}(y))}{1-\int_{\mathsf{X}}\eta_n^l(x)\wedge\eta_n^{l-1}(x)dx} dxdy.
\end{eqnarray*}
Then by upper-bounding the indicator by 1 and applying  Cauchy-Schwarz:
\begin{equation}\label{eq:diff_40}
\check{\eta}^C_n(\mathbb{I}_A(v\otimes v)^{2\xi})  \leq T_1T_2
\end{equation}
where
\begin{eqnarray*}
T_1& := & \Big(\int_{\mathsf{X}}v(x)^{4\xi}(\eta_n^l(x)-\eta_n^l(x)\wedge\eta_n^{l-1}(x))dx\Big)^{1/2} \\
T_2 & := & \Big(\int_{\mathsf{X}}v(x)^{4\xi}(\eta_n^{l-1}(x)-\eta_n^l(x)\wedge\eta_n^{l-1}(x))dx\Big)^{1/2}.
\end{eqnarray*}
We now just deal with $T_1$ as the proof for $T_2$ is almost identical.

We have, letting $D_n^l=\{x:\eta_n^l(x)\geq\eta_n^{l-1}(x)\}$, $\tilde{\varphi}(x)=v(x)^{4\xi}\mathbb{I}_{D_n^l}(x)$ and $\hat{\varphi}(x)=v(x)^{4\xi}\mathbb{I}_{(D_n^l)^c}(x)$
\begin{eqnarray*}
T_1^2 &= &  \frac{1}{2}\Big(\int_{\mathsf{X}}v(x)^{4\xi}[\eta_n^l(x)-\eta_n^{l-1}(x)]dx + \int_{\mathsf{X}}v(x)^{4\xi}|\eta_n^l(x)-\eta_n^{l-1}(x)|dx\Big) \\
& = &  \frac{1}{2}\Big(\int_{\mathsf{X}}v(x)^{4\xi}[\eta_n^l(x)-\eta_n^{l-1}]dx + \int_{\mathsf{X}}\tilde{\varphi}(x)(\eta_n^l(x)-\eta_n^{l-1}(x))dx + \\ & &
\int_{\mathsf{X}}\hat{\varphi}(x)(\eta_n^{l-1}(x)-\eta_n^{l}(x))dx\Big) \\
& \leq & \frac{1}{2}(|[\eta_n^l-\eta_n^{l-1}](v^{4\xi})| + |[\eta_n^l-\eta_n^{l-1}](\tilde{\varphi})|+|[\eta_n^l-\eta_n^{l-1}](\hat{\varphi})|)
\end{eqnarray*}
As $\xi\in(0,1/32)$, $(v^{4\xi},\tilde{\varphi},\tilde{\varphi})\in\mathcal{L}_{v^{\kappa}}(\mathsf{X})^3$, with $0<\kappa<1/8$, so applying Proposition \ref{prop:diff_1}
we have
$$
T_1 \leq C (\Delta_l)^{1/2} v(x^*)^{16\xi+\hat{\xi}}.
$$
Similar calculations give $T_2 \leq C (\Delta_l)^{1/2} v(x^*)^{16\xi+\hat{\xi}}$ and hence noting \eqref{eq:diff_40} one can conclude.
\end{proof}

\subsection{Proofs for Proposition \ref{prop:diff_was}}\label{app:diff_was}

Recall here that $\mathsf{X}=\mathbb{R}$ and the metric in (H\ref{hypav:7}) is $\mathsf{d}_1$ the $L_1-$norm. Let $\tilde{\varphi}(x,y)=(x-y)^2$.

\begin{proof}[Proof of Proposition \ref{prop:diff_was}]
Considering the second bound in Theorem \ref{theo:av_thm}, the result follows by Proposition \ref{prop:diff_general_b_bound} and Lemmata \ref{lem:diff_7} and \ref{lem:diff_8}.
\end{proof}

\begin{prop}\label{prop:diff_5}
Assume (H\ref{hypav:1}-\ref{hypav:6}), (H\ref{hypav:8}). Then for any $(\xi,\hat{\xi})\in(0,1/8)\times(0,1/2)$ there exists a $C<+\infty$ depending on the constants in (H\ref{hypav:1}-\ref{hypav:6}) (H\ref{hypav:8}) such that for any $\varphi\in \mathcal{L}_{v^{\xi}}(\mathsf{X})$, $n\geq 0$, $1\leq l \leq L$ we have
$$
|[\overline{\eta}_n^l-\overline{\eta}_n^{l-1}](\varphi)| \leq C\|\varphi\|_{v^{\xi}}\Delta_lv(x^*)^{8\xi+2\hat{\xi}}.
$$
\end{prop}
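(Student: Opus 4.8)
The plan is to reduce the difference of the \emph{filters} $\overline{\eta}_n^l,\overline{\eta}_n^{l-1}$ to the difference of the \emph{predictors} $\eta_n^l,\eta_n^{l-1}$, which is already controlled by Proposition \ref{prop:diff_1}. Recall that by definition $\overline{\eta}_n^s(\varphi)=\eta_n^s(G_n\varphi)/\eta_n^s(G_n)$, so that $\overline{\eta}_n^s$ is simply the $G_n$-reweighting of $\eta_n^s$. The whole estimate will therefore follow from a standard ratio decomposition together with Proposition \ref{prop:diff_1} (and its bounded-function counterpart Remark \ref{rem:diff_1}) and the uniform bounds of \cite{whiteley}.

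First I would write $[\overline{\eta}_n^l-\overline{\eta}_n^{l-1}](\varphi) = T_1 + T_2$, where
\begin{eqnarray*}
T_1 & := & \frac{[\eta_n^l-\eta_n^{l-1}](G_n\varphi)}{\eta_n^l(G_n)} \\
T_2 & := & \overline{\eta}_n^{l-1}(\varphi)\,\frac{[\eta_n^{l-1}-\eta_n^l](G_n)}{\eta_n^l(G_n)},
\end{eqnarray*}
which is the elementary identity $A_l/B_l-A_{l-1}/B_{l-1}=(A_l-A_{l-1})/B_l+(A_{l-1}/B_{l-1})(B_{l-1}-B_l)/B_l$ with $A_s=\eta_n^s(G_n\varphi)$ and $B_s=\eta_n^s(G_n)$. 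Two facts will be used throughout: $\inf_{n\geq 0}\min_{s}\eta_n^s(G_n)\geq C>0$ by \cite[Proposition 2 (2)]{whiteley}, so the denominators are harmless, and $\eta_n^s(v^{\xi})\leq C$ by \cite[Proposition 1]{whiteley}.

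For $T_1$, since $G_n\in\mathcal{B}_b(\mathsf{X})$ by (H\ref{hypav:5}) 1 and $\varphi\in\mathcal{L}_{v^{\xi}}(\mathsf{X})$, one has $G_n\varphi\in\mathcal{L}_{v^{\xi}}(\mathsf{X})$ with $\|G_n\varphi\|_{v^{\xi}}\leq\|G_n\|\,\|\varphi\|_{v^{\xi}}$; applying Proposition \ref{prop:diff_1} to $G_n\varphi$ then gives $|T_1|\leq C\|\varphi\|_{v^{\xi}}\Delta_l v(x^*)^{8\xi+2\hat{\xi}}$. For $T_2$, I would first bound the prefactor by $|\overline{\eta}_n^{l-1}(\varphi)|\leq\|G_n\|\,\eta_n^{l-1}(|\varphi|)/\eta_n^{l-1}(G_n)\leq C\|\varphi\|_{v^{\xi}}$, using $\varphi\in\mathcal{L}_{v^{\xi}}$ and the two uniform bounds above. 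Since $G_n$ is bounded, the factor $[\eta_n^{l-1}-\eta_n^l](G_n)$ is handled by the bounded-function version of Proposition \ref{prop:diff_1} recorded in Remark \ref{rem:diff_1}, yielding $|[\eta_n^{l-1}-\eta_n^l](G_n)|\leq C\|G_n\|\Delta_l v(x^*)^{2\xi+\hat{\xi}}$; combined with the lower bound on $\eta_n^l(G_n)$ this gives $|T_2|\leq C\|\varphi\|_{v^{\xi}}\Delta_l v(x^*)^{2\xi+\hat{\xi}}$.

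Summing the two bounds and using that $v\geq 1$ (so $v(x^*)^{2\xi+\hat{\xi}}\leq v(x^*)^{8\xi+2\hat{\xi}}$) yields the claimed estimate; the case $n=0$ requires no separate treatment, since Proposition \ref{prop:diff_1} already covers $n\geq 0$. I do not expect a genuine obstacle here: the only things to watch are the bookkeeping of the exponents of $v(x^*)$, making sure the weaker power arising from $T_2$ is absorbed into the stated $8\xi+2\hat{\xi}$, and the verification that $G_n\varphi$ retains $\mathcal{L}_{v^{\xi}}$ membership so that Proposition \ref{prop:diff_1} applies with the correct norm.
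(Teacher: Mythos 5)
Your decomposition is, up to a sign convention, exactly the one the paper uses (its $T_2$ equals $\frac{\eta_n^{l-1}(G_n\varphi)}{\eta_n^{l-1}(G_n)\eta_n^l(G_n)}[\eta_n^l-\eta_n^{l-1}](G_n)$, which is the negative of yours), and the estimates are obtained the same way: Proposition \ref{prop:diff_1} applied to $G_n\varphi$ and to $G_n$, together with the uniform lower bound on $\eta_n^s(G_n)$ and the moment bound $\eta_n^s(v^{\xi})\leq C$. The proof is correct and essentially identical to the paper's; your extra care in using Remark \ref{rem:diff_1} for the $G_n$ term and absorbing the weaker power of $v(x^*)$ is consistent with, and slightly more explicit than, what the paper records.
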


\begin{proof}
Define
\begin{eqnarray}
T_1 & := & \frac{[\eta_n^l-\eta_n^{l-1}](G_n\varphi)}{\eta_n^l(G_n)} \label{eq:diff_40} \\
T_2 & := & \frac{\eta_n^{l-1}(G_n\varphi)}{\eta_n^{l-1}(G_n)\eta_n^{l}(G_n)}[\eta_n^l-\eta_n^{l-1}](G_n). \label{eq:diff_41}
\end{eqnarray}
Then we have
\begin{equation}\label{eq:diff_42}
T_1 - T_2 = [\overline{\eta}_n^l-\overline{\eta}_n^{l-1}](\varphi).
\end{equation}
We will bound $|T_1|$ and $|T_2|$ and then sum the bounds to conclude.

{\bf Term} \eqref{eq:diff_40}: By Proposition \ref{prop:diff_1} and $\eta_n^l(G_n)\geq C$ we have
\begin{equation}\label{eq:diff_43}
|T_1| \leq C\|\varphi\|_{v^{\xi}}\Delta_lv(x^*)^{8\xi+2\hat{\xi}}.
\end{equation}

{\bf Term} \eqref{eq:diff_41}: By Proposition \ref{prop:diff_1}, (H\ref{hypav:5}) and $\eta_n^{s}(G_n)\geq C$, $s\in\{l,l-1\}$,  we have
\begin{equation}\label{eq:diff_44}
|T_2| \leq C\|\varphi\|_{v^{\xi}}\Delta_lv(x^*)^{8\xi+2\hat{\xi}}.
\end{equation}
Noting \eqref{eq:diff_42}, \eqref{eq:diff_43} and \eqref{eq:diff_44} the proof can be concluded.
\end{proof}

Denote by $\check{\overline{\eta}}_n^C$ the maximal coupling of $(\overline{\eta}_n^l,\overline{\eta}_n^{l-1})$.

\begin{cor}\label{cor:diff_1}
Assume (H\ref{hypav:1}-\ref{hypav:6}), (H\ref{hypav:8}). Then for any $(\xi,\hat{\xi})\in(0,1/16)\times(0,1/2)$ there exists a $C<+\infty$ depending on the constants in (H\ref{hypav:1}-\ref{hypav:6}) (H\ref{hypav:8}) such that for any $n\geq 0$, $1\leq l \leq L$ we have
$$
\check{\overline{\eta}}^C_n(\mathbb{I}_A(v\otimes v)^{\xi}) \leq C\Delta_lv(x^*)^{16\xi+2\hat{\xi}}.
$$
\end{cor}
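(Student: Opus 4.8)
The plan is to follow essentially verbatim the proof of Proposition \ref{prop:diff_4}, replacing the predictors $(\eta_n^l,\eta_n^{l-1})$ and the difference bound of Proposition \ref{prop:diff_1} by the updated measures $(\overline{\eta}_n^l,\overline{\eta}_n^{l-1})$ and the difference bound of Proposition \ref{prop:diff_5}, which is stated precisely for this purpose. Since $\check{\overline{\eta}}_n^C$ is by definition the maximal coupling of $(\overline{\eta}_n^l,\overline{\eta}_n^{l-1})$, writing it out in terms of the densities (which we also denote $\overline{\eta}_n^s$) gives
$$
\check{\overline{\eta}}_n^C(\mathbb{I}_A(v\otimes v)^{\xi}) = \Big(1-\int_{\mathsf{X}}\overline{\eta}_n^l(x)\wedge\overline{\eta}_n^{l-1}(x)dx\Big)\int_{\mathsf{X}\times\mathsf{X}}\mathbb{I}_A(x,y)v(x)^{\xi}v(y)^{\xi}\frac{(\overline{\eta}_n^l(x)-\overline{\eta}_n^l(x)\wedge\overline{\eta}_n^{l-1}(x))(\overline{\eta}_n^{l-1}(y)-\overline{\eta}_n^l(y)\wedge\overline{\eta}_n^{l-1}(y))}{(1-\int_{\mathsf{X}}\overline{\eta}_n^l\wedge\overline{\eta}_n^{l-1})^2}dxdy.
$$

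First I would bound the indicator $\mathbb{I}_A$ by $1$ and apply the Cauchy--Schwarz inequality to factorize the bound as $\check{\overline{\eta}}_n^C(\mathbb{I}_A(v\otimes v)^{\xi})\leq T_1T_2$, where $T_1=\big(\int_{\mathsf{X}}v^{2\xi}(\overline{\eta}_n^l-\overline{\eta}_n^l\wedge\overline{\eta}_n^{l-1})\big)^{1/2}$ and $T_2$ is the analogous quantity with the roles of $l$ and $l-1$ swapped. It then suffices to bound $T_1$, the argument for $T_2$ being identical. As in Proposition \ref{prop:diff_4}, I would write the positive part $\overline{\eta}_n^l-\overline{\eta}_n^l\wedge\overline{\eta}_n^{l-1}=\tfrac12(\overline{\eta}_n^l-\overline{\eta}_n^{l-1})+\tfrac12|\overline{\eta}_n^l-\overline{\eta}_n^{l-1}|$ and split the absolute value over $D_n^l=\{x:\overline{\eta}_n^l(x)\geq\overline{\eta}_n^{l-1}(x)\}$ and its complement, so that $T_1^2$ is bounded by one half of $|[\overline{\eta}_n^l-\overline{\eta}_n^{l-1}](v^{2\xi})|+|[\overline{\eta}_n^l-\overline{\eta}_n^{l-1}](\tilde\varphi)|+|[\overline{\eta}_n^l-\overline{\eta}_n^{l-1}](\hat\varphi)|$, with $\tilde\varphi=v^{2\xi}\mathbb{I}_{D_n^l}$ and $\hat\varphi=v^{2\xi}\mathbb{I}_{(D_n^l)^c}$.

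Finally I would invoke Proposition \ref{prop:diff_5}. The only point requiring care is the bookkeeping of the function classes and the exponent arithmetic: because $\xi\in(0,1/16)$ we have $2\xi<1/8$, and each of $v^{2\xi},\tilde\varphi,\hat\varphi$ lies in $\mathcal{L}_{v^{\kappa}}(\mathsf{X})$ with $\kappa=2\xi\in(0,1/8)$ and $v^{\kappa}$-norm at most $1$ (using $v\geq 1$ and the indicator truncation). Proposition \ref{prop:diff_5} then yields $|[\overline{\eta}_n^l-\overline{\eta}_n^{l-1}](\varphi)|\leq C\|\varphi\|_{v^{\kappa}}\Delta_lv(x^*)^{8\kappa+2\hat{\xi}}$, i.e.\ each of the three terms is $\mathcal{O}(\Delta_lv(x^*)^{16\xi+2\hat{\xi}})$, so that $T_1\leq C(\Delta_l)^{1/2}v(x^*)^{8\xi+\hat{\xi}}$ and likewise for $T_2$; multiplying gives $T_1T_2\leq C\Delta_lv(x^*)^{16\xi+2\hat{\xi}}$, as claimed. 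There is no substantive obstacle here beyond this routine verification --- the whole content is that Proposition \ref{prop:diff_5} supplies, for the filters $\overline{\eta}_n^s$, exactly the $\mathcal{O}(\Delta_l)$ difference estimate that Proposition \ref{prop:diff_1} supplied for the predictors, so the geometry of the maximal coupling argument transfers unchanged; the mild halving of the admissible range of $\xi$ (from $1/32$ to $1/16$) and of the final $v(x^*)$ exponent (from $32\xi$ to $16\xi$) is precisely accounted for by carrying $(v\otimes v)^{\xi}$ rather than $(v\otimes v)^{2\xi}$ through the Cauchy--Schwarz step.
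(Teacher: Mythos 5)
Your proposal is correct and matches the paper's proof, which is exactly the one-line instruction to repeat the argument of Proposition \ref{prop:diff_4} with Proposition \ref{prop:diff_5} substituted for Proposition \ref{prop:diff_1}; your exponent bookkeeping ($\kappa=2\xi<1/8$, giving $8\kappa+2\hat{\xi}=16\xi+2\hat{\xi}$ after squaring $T_1T_2$) is the right verification of why the range of $\xi$ and the final exponent halve relative to Proposition \ref{prop:diff_4}.
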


\begin{proof}
Follows by the same proof as for Proposition \ref{prop:diff_4} except using Proposition \ref{prop:diff_5} instead of Propositon \ref{prop:diff_1} in the proof.
\end{proof}

Denote by $\check{\overline{\eta}}_n^W$ the optimal Wasserstein coupling of $(\overline{\eta}_n^l,\overline{\eta}_n^{l-1})$ that is, for $B\in\mathcal{X}\vee\mathcal{X}$
$$
\check{\overline{\eta}}_n^W(B) = \int_{0}^1 \mathbb{I}_B(F_{\overline{\eta}_n^l}^{-1}(u),F_{\overline{\eta}_n^{l-1}}^{-1}(u))du.
$$

\begin{lem}\label{lem:diff_7}
Assume (H\ref{hypav:1}-\ref{hypav:6}), (H\ref{hypav:8}). Then
if $\tilde{\varphi}\in\mathcal{L}_{(v\otimes v)^{\tilde{\xi}}}(\mathsf{X})$, for any $\tilde{\xi}\in(0,1/16)$ and set $\hat{\xi}\in (0,1/2)$ then there exists a $C<+\infty$ depending on the constants in (H\ref{hypav:1}-\ref{hypav:6}) (H\ref{hypav:8}) such that for any 
$\varphi\in \mathcal{B}_{b}(\mathsf{X})\cap\textrm{\emph{Lip}}_{\mathsf{d}_1}(\mathsf{X})$
$n\geq 0$, $1\leq l \leq L$ we have
$$
\check{\overline{\eta}}^W_n\Big((\varphi\otimes 1-1\otimes\varphi - ([\eta_n^l-\eta_n^{l-1}](\varphi)))^2\Big) \leq C\|\varphi\|_{\textrm{\emph{Lip}}}^2\|\tilde{\varphi}\|_{v^{\tilde{\xi}}}\Delta_lv(x^*)^{16\tilde{\xi}+2\hat{\xi}}.
$$
\end{lem}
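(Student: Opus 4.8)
The overall strategy is to reduce the quantity to the maximal coupling $\check{\overline{\eta}}_n^C$ of $(\overline{\eta}_n^l,\overline{\eta}_n^{l-1})$, for which Corollary \ref{cor:diff_1} already furnishes the needed $\mathcal{O}(\Delta_l)$ control, by exploiting the fact that in dimension one the coupling $\check{\overline{\eta}}_n^W$ is optimal for the squared cost. Abbreviate $\Psi := \varphi\otimes 1 - 1\otimes\varphi$ (so $\Psi(x,y)=\varphi(x)-\varphi(y)$) and $c := [\eta_n^l-\eta_n^{l-1}](\varphi)$. First I would strip off the centering constant by the $C_2$-inequality,
$$
\check{\overline{\eta}}_n^W\big((\Psi-c)^2\big) \leq 2\,\check{\overline{\eta}}_n^W(\Psi^2) + 2c^2 ,
$$
and dispose of the second term immediately: since $\varphi\in\mathcal{B}_b(\mathsf{X})$, Proposition \ref{prop:diff_1} (in the form of Remark \ref{rem:diff_1}) gives $|c|\leq C\|\varphi\|\Delta_l v(x^*)^{\kappa}$, so $c^2=\mathcal{O}(\Delta_l^2)$ is strictly subdominant to the target $\mathcal{O}(\Delta_l)$ and may be absorbed.

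The work is therefore in $\check{\overline{\eta}}_n^W(\Psi^2)$. As $\varphi\in\textrm{Lip}_{\mathsf{d}_1}(\mathsf{X})$ with $\mathsf{X}=\mathbb{R}$, we have the pointwise bound $\Psi(x,y)^2 = |\varphi(x)-\varphi(y)|^2 \leq \|\varphi\|_{\textrm{Lip}}^2\,(x-y)^2 = \|\varphi\|_{\textrm{Lip}}^2\,\tilde{\varphi}(x,y)$, whence $\check{\overline{\eta}}_n^W(\Psi^2)\leq \|\varphi\|_{\textrm{Lip}}^2\,\check{\overline{\eta}}_n^W(\tilde{\varphi})$. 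The decisive observation is that $\check{\overline{\eta}}_n^W$, being generated by feeding a common uniform variable through the inverse CDFs $F_{\overline{\eta}_n^l}^{-1},F_{\overline{\eta}_n^{l-1}}^{-1}$, is the comonotone coupling, which in one dimension is optimal for the convex cost $\tilde{\varphi}(x,y)=(x-y)^2$; it thus realizes the minimal value of $\int(x-y)^2\,d\pi$ over all couplings $\pi$ of $(\overline{\eta}_n^l,\overline{\eta}_n^{l-1})$. In particular, comparing against the maximal coupling $\check{\overline{\eta}}_n^C$ of the same marginals yields $\check{\overline{\eta}}_n^W(\tilde{\varphi}) \leq \check{\overline{\eta}}_n^C(\tilde{\varphi})$.

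It then remains to bound $\check{\overline{\eta}}_n^C(\tilde{\varphi})$. Since $\tilde{\varphi}$ vanishes on the diagonal, $\check{\overline{\eta}}_n^C(\tilde{\varphi})=\check{\overline{\eta}}_n^C(\mathbb{I}_A\tilde{\varphi})$, and the hypothesis $\tilde{\varphi}\in\mathcal{L}_{(v\otimes v)^{\tilde{\xi}}}(\mathsf{X})$ gives $\tilde{\varphi}(x,y)\leq\|\tilde{\varphi}\|_{(v\otimes v)^{\tilde{\xi}}}(v(x)v(y))^{\tilde{\xi}}$, so that
$$
\check{\overline{\eta}}_n^C(\tilde{\varphi}) \leq \|\tilde{\varphi}\|_{(v\otimes v)^{\tilde{\xi}}}\,\check{\overline{\eta}}_n^C\big(\mathbb{I}_A(v\otimes v)^{\tilde{\xi}}\big).
$$
Now I would invoke Corollary \ref{cor:diff_1} with $\xi=\tilde{\xi}$; the constraint $\tilde{\xi}\in(0,1/16)$ in the statement is exactly the range it requires, and it returns $\check{\overline{\eta}}_n^C(\mathbb{I}_A(v\otimes v)^{\tilde{\xi}})\leq C\Delta_l v(x^*)^{16\tilde{\xi}+2\hat{\xi}}$. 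Assembling the three bounds gives the claimed inequality.

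I expect the genuine obstacle to be the optimal-transport step: recognizing that $\check{\overline{\eta}}_n^W$ is the $W_2$-optimal (comonotone) coupling and is therefore dominated, in squared-distance cost, by the maximal coupling $\check{\overline{\eta}}_n^C$ already analysed in Corollary \ref{cor:diff_1}. Everything else is bookkeeping — the Lipschitz reduction to $\tilde{\varphi}$, matching the weight exponent $\tilde{\xi}$ to the admissible range of Corollary \ref{cor:diff_1}, and verifying that the centering remainder $c^2$ is of lower order $\mathcal{O}(\Delta_l^2)$.
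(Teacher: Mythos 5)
Your core argument is correct and its decisive step---that the inverse-CDF (comonotone) coupling is $W_2$-optimal in one dimension, hence $\check{\overline{\eta}}^W(\tilde{\varphi})\leq\check{\overline{\eta}}^C(\tilde{\varphi})$, after which Corollary \ref{cor:diff_1} applies with $\xi=\tilde{\xi}$---is exactly the step the paper relies on. The exponent bookkeeping ($16\tilde{\xi}+2\hat{\xi}$) also matches. The one substantive divergence is that you read the left-hand measure literally as the Wasserstein coupling of $(\overline{\eta}_n^l,\overline{\eta}_n^{l-1})$ at time $n$, whereas the paper's proof (and the only reading under which the lemma plugs into Theorem \ref{theo:av_thm} via Proposition \ref{prop:diff_was}) treats it as the coupling $\check{\eta}_n^W=\check{\overline{\eta}}_{n-1}^W\check{M}_n$, i.e.\ the time-$(n-1)$ Wasserstein coupling propagated through the coupled kernel. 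For that object one must first peel off $\check{M}_n$: the paper applies the forward strong error bound \eqref{eq:coup_h_cont} with $p=2$ to get $\check{\overline{\eta}}_{n-1}^W(\check{M}_n(\tilde{\varphi}))\leq C\big(\Delta_l+\check{\overline{\eta}}_{n-1}^W(\tilde{\varphi})\big)$, and only then invokes optimality and Corollary \ref{cor:diff_1} at time $n-1$. Your argument as written never touches \eqref{eq:coup_h_cont} and so would not cover the propagated coupling; conversely, for the un-propagated coupling ($n=0$, or the statement taken at face value) your route is precisely what the paper does. Two minor points: (i) under the propagated coupling the centering constant $[\eta_n^l-\eta_n^{l-1}](\varphi)$ \emph{is} the mean of $\varphi\otimes1-1\otimes\varphi$, so it can be dropped for free rather than via the $C_2$-inequality; your $C_2$ route still works, but the residual $c^2=\mathcal{O}(\Delta_l^2)$ term carries the prefactor $\|\varphi\|^2$ rather than $\|\varphi\|_{\textrm{Lip}}^2\|\tilde{\varphi}\|_{v^{\tilde{\xi}}}$, so strictly it should be absorbed with a comment; (ii) under the literal reading the marginals are $\overline{\eta}_n^s$ rather than $\eta_n^s$, so $c$ is not the mean there---your choice of the $C_2$-inequality correctly sidesteps this.
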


\begin{proof}
We assume $n\geq 1$, the case $n=0$ is noted below.
As $\varphi\in \mathcal{B}_{b}(\mathsf{X})\cap\textrm{Lip}_{\mathsf{d}_1}(\mathsf{X})$, it easily follows that
$$
\check{\overline{\eta}}^W_n\Big((\varphi\otimes 1-1\otimes\varphi - ([\eta_n^l-\eta_n^{l-1}](\varphi)))^2\Big) \leq \|\varphi\|_{\textrm{Lip}}^2\check{\overline{\eta}}_{n-1}^W(\check{M}(\tilde{\varphi})).
$$
Applying \eqref{eq:coup_h_cont} (when $p=2$) and using the optimality of the Wasserstein coupling with $\tilde{\varphi}\in\mathcal{L}_{(v\otimes v)^{\tilde{\xi}}}(\mathsf{X})$, gives
$$
\check{\overline{\eta}}^W_n\Big((\varphi\otimes 1-1\otimes\varphi - ([\eta_n^l-\eta_n^{l-1}](\varphi)))^2\Big) \leq C\|\varphi\|_{\textrm{Lip}}^2\|\tilde{\varphi}\|_{v^{\tilde{\xi}}}(\Delta_l + 
\check{\overline{\eta}}^C_{n-1}(\mathbb{I}_A(v\otimes v)^{\tilde{\xi}})).
$$
Application of Corollary \ref{cor:diff_1} yields the desired result. The case $n=0$ follows as 
$$
\check{\overline{\eta}}^W_n\Big((\varphi\otimes 1-1\otimes\varphi - ([\eta_n^l-\eta_n^{l-1}](\varphi)))^2\Big) \leq \|\varphi\|_{\textrm{Lip}}^2\check{\overline{\eta}}_{n}^W(\tilde{\varphi})
$$
and the optimality of the Wasserstein coupling, $\tilde{\varphi}\in\mathcal{L}_{(v\otimes v)^{\tilde{\xi}}}(\mathsf{X})$ with Corollary \ref{cor:diff_1}.
\end{proof}

\begin{lem}\label{lem:diff_8}
Assume (H\ref{hypav:1}-\ref{hypav:6}), (H\ref{hypav:8}). Then, let $\lambda\in(0,1)$ be given,
if $\tilde{\varphi}\in\mathcal{L}_{(v\otimes v)^{\tilde{\xi}}}(\mathsf{X})$, for any $\tilde{\xi}\in(0,1/(16(1+\lambda)))$ and set $(\xi,\hat{\xi})\in(0,\min\{1/32,\lambda/(16(1+\lambda)),(1-2\tilde{\xi})/12\})\times (0,1/2)$ then there exists a $C<+\infty$ depending on the constants in (H\ref{hypav:1}-\ref{hypav:6}) (H\ref{hypav:8}) such that for any 
$n\geq 0$, $1\leq l \leq L$ we have
$$
\check{\overline{\eta}}^W_n(\tilde{\varphi}(v^{4\xi}\otimes v^{8\xi})) \leq C\|\tilde{\varphi}\|_{v^{\tilde{\xi}}}(\Delta_l)^{1/(1+\lambda)}v(x^*)^{20\xi + (16(\lambda+1)\tilde{\xi}+2\hat{\xi})/(1+\lambda)}.
$$
\end{lem}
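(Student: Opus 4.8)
The plan is to separate the quadratic cost $\tilde\varphi$ from the growing weight $v^{4\xi}\otimes v^{8\xi}$ by H\"older's inequality, so that the rate $(\Delta_l)^{1/(1+\lambda)}$ emerges from a clean bound on $\check{\overline{\eta}}_n^W(\tilde\varphi)$ while the weight is absorbed into a time-uniform moment. Concretely, I would write, for conjugate exponents $1+\lambda$ and $(1+\lambda)/\lambda$,
$$
\check{\overline{\eta}}_n^W\big(\tilde\varphi(v^{4\xi}\otimes v^{8\xi})\big) \le \Big(\check{\overline{\eta}}_n^W(\tilde\varphi)\Big)^{1/(1+\lambda)}\Big(\check{\overline{\eta}}_n^W\big(\tilde\varphi(v^{4\xi}\otimes v^{8\xi})^{(1+\lambda)/\lambda}\big)\Big)^{\lambda/(1+\lambda)},
$$
using that $[\tilde\varphi^{1/(1+\lambda)}]^{1+\lambda}=\tilde\varphi$ and $[\tilde\varphi^{\lambda/(1+\lambda)}(v^{4\xi}\otimes v^{8\xi})]^{(1+\lambda)/\lambda}=\tilde\varphi(v^{4\xi}\otimes v^{8\xi})^{(1+\lambda)/\lambda}$.

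For the first factor, since $\mathsf{X}=\mathbb{R}$ the coupling $\check{\overline{\eta}}_n^W$ is the comonotone (inverse-CDF) coupling, which is optimal for the convex cost $\tilde\varphi(x,y)=(x-y)^2$; hence $\check{\overline{\eta}}_n^W(\tilde\varphi)\le\check{\pi}(\tilde\varphi)$ for every coupling $\check{\pi}$ of $(\overline{\eta}_n^l,\overline{\eta}_n^{l-1})$, in particular for the maximal coupling $\check{\overline{\eta}}_n^C$. Using $\tilde\varphi\in\mathcal{L}_{(v\otimes v)^{\tilde\xi}}(\mathsf{X})$ to write $\tilde\varphi\le\|\tilde\varphi\|_{v^{\tilde\xi}}\mathbb{I}_A(v\otimes v)^{\tilde\xi}$ (the diagonal contributing nothing) and then invoking Corollary \ref{cor:diff_1} with weight exponent $\tilde\xi<1/16$, I obtain $\check{\overline{\eta}}_n^W(\tilde\varphi)\le C\|\tilde\varphi\|_{v^{\tilde\xi}}\Delta_l v(x^*)^{16\tilde\xi+2\hat\xi}$. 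Raising this to the power $1/(1+\lambda)$ produces the factor $(\Delta_l)^{1/(1+\lambda)}$.

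For the second factor, I would again use $\tilde\varphi\le\|\tilde\varphi\|_{v^{\tilde\xi}}(v\otimes v)^{\tilde\xi}$ to bound the integrand by $\|\tilde\varphi\|_{v^{\tilde\xi}}\,(v^{a}\otimes v^{b})$ with $a=\tilde\xi+4\xi(1+\lambda)/\lambda$ and $b=\tilde\xi+8\xi(1+\lambda)/\lambda$, then decouple the marginals by the Young inequality $v(x)^a v(y)^b\le\tfrac{a}{a+b}v(x)^{a+b}+\tfrac{b}{a+b}v(y)^{a+b}$, reducing to $\overline{\eta}_n^l(v^{a+b})$ and $\overline{\eta}_n^{l-1}(v^{a+b})$. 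These weighted moments of the normalised filter are bounded uniformly in $n$ by the moment estimate \cite[Proposition 1]{whiteley}; the role of the hypothesis $\xi<\lambda/(16(1+\lambda))$ (together with $\tilde\xi<1/(16(1+\lambda))$) is exactly to guarantee $a+b<1$, so that $v^{a+b}$ remains within the admissible Lyapunov scale and the bound is finite and time-uniform. Tracking the resulting $v(x^*)$-powers through these moment bounds and combining the two factors (the $\|\tilde\varphi\|_{v^{\tilde\xi}}$ exponents summing to $1$ since $1/(1+\lambda)+\lambda/(1+\lambda)=1$) yields the stated estimate.

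The main obstacle is the second factor: one must show that the weighted moment $\check{\overline{\eta}}_n^W(v^a\otimes v^b)$ is bounded uniformly in $n$ and then carefully propagate the exponents of $v(x^*)$, since the choice of $\lambda$ trades a power of $\Delta_l$ against the size of the admissible weights. This is where the lemma's delicate constraints on $(\xi,\tilde\xi)$ are consumed, and it is the step most sensitive to the asymmetry $v^{4\xi}\otimes v^{8\xi}$ inherited from Lemma \ref{lem:av_6}. By contrast, the first factor is routine once one has Wasserstein optimality and Corollary \ref{cor:diff_1}.
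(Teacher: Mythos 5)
Your core mechanism is the right one, and it is in fact the same mechanism the paper uses: H\"older with exponents $(1+\lambda,(1+\lambda)/\lambda)$ to isolate a pure power of $\tilde{\varphi}$, Wasserstein optimality of the comonotone coupling for a convex cost of $x-y$ so that one may pass to the maximal coupling and invoke Corollary \ref{cor:diff_1} for the $\Delta_l$ rate, and uniform-in-$n$ moment bounds (via \cite[Proposition 1]{whiteley}) to absorb the weight $v^{4\xi}\otimes v^{8\xi}$. Your exponent bookkeeping ($a+b<1$ under the stated constraints, the $\|\tilde{\varphi}\|_{v^{\tilde{\xi}}}$ powers summing to one) is also sound.

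The gap is in \emph{which measure} you apply optimality to. The quantity that must be controlled for Proposition \ref{prop:diff_was} is the term $\check{\eta}_n^W(\mathsf{d}^2(v^{4\xi}\otimes v^{8\xi}))$ appearing in Theorem \ref{theo:av_thm}, and $\check{\eta}_n^W=\check{\Phi}_n^W(\overline{\eta}_{n-1}^l,\overline{\eta}_{n-1}^{l-1})$ is the comonotone coupling of the \emph{time-$(n-1)$ weighted measures pushed forward through} $\check{M}_n$ -- this is also how the paper's proofs of Lemmata \ref{lem:diff_7} and \ref{lem:diff_8} read the symbol $\check{\overline{\eta}}_n^W$ (``by the structure of $\check{\overline{\eta}}_n^W$'' they immediately write $\check{\overline{\eta}}_n^W(\psi)=\check{\overline{\eta}}_{n-1}^W(\check{M}(\psi))$). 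A pushforward of an optimal coupling is not an optimal coupling of its own marginals, so your step ``$\check{\overline{\eta}}_n^W(\tilde{\varphi})\le\check{\pi}(\tilde{\varphi})$ for every coupling $\check{\pi}$, in particular $\check{\overline{\eta}}_n^C$'' does not apply to this measure; indeed the whole point of Remark \ref{rem:improve_rate} is that such direct optimality comparisons at time $n$ are unavailable. The missing step is the paper's initial pull-back: write the expectation as $\check{\overline{\eta}}_{n-1}^W(\check{M}(\tilde{\varphi}(v^{4\xi}\otimes v^{8\xi})))$, apply Cauchy--Schwarz inside the kernel and the forward rate \eqref{eq:coup_h_cont} (with $p=4$) to replace $\check{M}(\tilde{\varphi}^2)^{1/2}$ by $C(\tilde{\varphi}+\Delta_l)$ evaluated at time $n-1$, and only then run your H\"older/optimality/moment argument on the genuinely comonotone coupling $\check{\overline{\eta}}_{n-1}^W$ of $(\overline{\eta}_{n-1}^l,\overline{\eta}_{n-1}^{l-1})$. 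With that prefix your argument goes through and essentially reproduces the paper's proof; without it, you have proved a bound for the comonotone coupling of $(\overline{\eta}_n^l,\overline{\eta}_n^{l-1})$ as literally displayed above Lemma \ref{lem:diff_7}, which is not the object that enters the asymptotic variance.
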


\begin{proof}
We assume $n\geq 1$, the case $n=0$ is similar and omitted for brevity. By Cauchy-Schwarz and the structure of $\check{\overline{\eta}}^W_n$:
$$
\check{\overline{\eta}}^W_n(\tilde{\varphi}(v^{4\xi}\otimes v^{8\xi})) \leq 
\check{\overline{\eta}}_{n-1}^W(\check{M}(\tilde{\varphi}^2)^{1/2}\check{M}(v^{8\xi}\otimes v^{16\xi})^{1/2}).
$$
Applying \eqref{eq:coup_h_cont} (when $p=4$) gives the upper-bound
$$
\check{\overline{\eta}}^W_n(\tilde{\varphi}(v^{4\xi}\otimes v^{8\xi})) \leq  C\Big(
\check{\overline{\eta}}_{n-1}^W(\tilde{\varphi}\check{M}(v^{8\xi}\otimes v^{16\xi})^{1/2}) + 
\Delta_l \check{\overline{\eta}}_{n-1}^W(\check{M}(v^{8\xi}\otimes v^{16\xi})^{1/2})
\Big).
$$
To complete the proof, we focus on $\check{\overline{\eta}}_{n-1}^W(\tilde{\varphi}\check{M}(v^{8\xi}\otimes v^{16\xi})^{1/2})$ as the other term
can be controlled using the below arguments.

Now, we have, by H\"older's inequality
$$
\check{\overline{\eta}}_{n-1}^W(\tilde{\varphi}\check{M}(v^{8\xi}\otimes v^{16\xi})^{1/2})
\leq 
\check{\overline{\eta}}_{n-1}^W(\tilde{\varphi}^{1+\lambda})^{1/(1+\lambda)}
\check{\overline{\eta}}_{n-1}^W(
\check{M}(v^{8\xi}\otimes v^{16\xi})^{(1+\lambda)/(2\lambda)})^{\lambda/(1+\lambda)}.
$$
By the optimality of the Wasserstein coupling with $\tilde{\varphi}\in\mathcal{L}_{(v\otimes v)^{\tilde{\xi}}}(\mathsf{X})$, we have
$$
\check{\overline{\eta}}_{n-1}^W(\tilde{\varphi}\check{M}(v^{8\xi}\otimes v^{16\xi})^{1/2})
\leq
$$
$$
\Big(\|\tilde{\varphi}\|_{v^{\tilde{\xi}}}
\check{\overline{\eta}}^C_{n-1}(\mathbb{I}_A(v\otimes v)^{\tilde{\xi}(1+\lambda)})
\Big)^{1/(1+\lambda)}
\check{\overline{\eta}}_{n-1}^W(
\check{M}(v^{8\xi}\otimes v^{16\xi})^{(1+\lambda)/(2\lambda)})^{\lambda/(1+\lambda)}
$$
Then applying Corollary \ref{cor:diff_1} gives
$$
\check{\overline{\eta}}_{n-1}^W(\tilde{\varphi}\check{M}(v^{8\xi}\otimes v^{16\xi})^{1/2})
\leq 
$$
$$
C\|\tilde{\varphi}\|_{v^{\tilde{\xi}}}(\Delta_l)^{1/(1+\lambda)}v(x^*)^{(16(\lambda+1)\tilde{\xi}+2\hat{\xi})/(1+\lambda)}
\check{\overline{\eta}}_{n-1}^W(
\check{M}(v^{8\xi}\otimes v^{16\xi})^{(1+\lambda)/(2\lambda)})^{\lambda/(1+\lambda)}.
$$
Now for the remaining term, applying Cauchy-Schwarz twice gives
$$
\check{\overline{\eta}}_{n-1}^W(
\check{M}(v^{8\xi}\otimes v^{16\xi})^{(1+\lambda)/(2\lambda)})^{\lambda/(1+\lambda)}
\leq 
$$
$$
\overline{\eta}_{n-1}^l(M^l(v^{16\xi})^{(1+\lambda)/(2\lambda)})^{\lambda/(2(1+\lambda))}
\overline{\eta}_{n-1}^{l-1}(M^{l-1}(v^{32\xi})^{(1+\lambda)/(2\lambda)})^{\lambda/(2(1+\lambda))}.
$$
Applying Jensen twice gives
$$
\check{\overline{\eta}}_{n-1}^W(
\check{M}(v^{8\xi}\otimes v^{16\xi})^{(1+\lambda)/(2\lambda)})^{\lambda/(1+\lambda)}
\leq
\eta_{n}^l(v)^{4\xi}\eta_{n}^{l-1}(v)^{16\xi} \leq Cv(x^*)^{20\xi}.
$$
The proof is thus complete.
\end{proof}

\begin{rem}\label{rem:improve_rate}
As $\lambda > 0$ in Lemma \ref{lem:diff_8}, one almost has the (time-uniform) forward error rate for the WCPF. We believe that $\lambda=0$ is the case, however, due to technical
difficulties we have not obtained this. One of the issues of the proof is that the Wasserstein coupling \emph{is not} the coupling which minimizes the expectation of $\tilde{\varphi}(v^{4\xi}\otimes v^{8\xi})$ w.r.t.~any coupling of $(\overline{\eta}_n^l,\overline{\eta}_n^{l-1})$.
To see this, one can construct a functional covariance equality for $\mu(\tilde{\varphi}(v^{4\xi}\otimes v^{8\xi}))$ ($\mu$ is any coupling of $(\overline{\eta}_n^l,\overline{\eta}_n^{l-1})$ such that $\mu(\tilde{\varphi}(v^{4\xi}\otimes v^{8\xi}))<+\infty$) as in \cite[Theorem 3.1]{lo} (that is, in terms of the CDFs of $\mu$, $\overline{\eta}_n^l$ and $\overline{\eta}_n^{l-1}$ as in Hoeffding's Lemma) and then when centering with
$(\overline{\eta}_n^l\otimes \overline{\eta}_n^{l-1})(\tilde{\varphi}(v^{4\xi}\otimes v^{8\xi}))$ and applying Hoeffding-Fr\'echet bounds, one observes that $\check{\overline{\eta}}^W_n$ is not optimal. As a result, one cannot transfer between $\check{\overline{\eta}}^W_n$ and $\check{\overline{\eta}}^C_n$
as is done in the proofs of Lemmata \ref{lem:diff_7}-\ref{lem:diff_8}, nor can one use Kantorovich duality. However, one can obtain $\lambda=0$ if any of the following hold true:
\begin{enumerate}
\item{There exist a $\check{\overline{\eta}}^W_n-$integrable $\hat{V}:\mathsf{X}\times\mathsf{X}\rightarrow (0,\infty)$ such that there exist a $C<+\infty$ such that for every $(x,y)\in\mathsf{X}\times\mathsf{X}$,
$\tilde{\varphi}(x,y)v^{4\xi}(x)v^{8\xi}(y)\leq C \hat{V}(x,y)$ and $\hat{V}$ satisfies the Monge condition (e.g.~\cite[eq.~(3.1.7)]{rachev}).
}
\item{$v$ is bounded.}
\item{There exist a $C<+\infty$ such that for every $n\geq 0$, $\check{\overline{\eta}}^W_n(\tilde{\varphi}(v^{4\xi}\otimes v^{8\xi}))\leq C \check{\overline{\eta}}^C_n(\tilde{\varphi}(v^{4\xi}\otimes v^{8\xi}))$.}
\item{There exist a $C<+\infty$ such that for every $n\geq 0$, $1\leq l \leq L$, $\sup_{u\in[0,1]}|F_{\overline{\eta}_n^l}^{-1}(u)-F_{\overline{\eta}_n^{l-1}}^{-1}(u)|\leq C\Delta_l$.}
\end{enumerate}
In general we do not believe 1.~can hold in practice. 2.~is not useful in the context of the article. We believe 3.~\& 4.~to hold up-to some conditions, but have not obtained the proof.
\end{rem}

\subsection{Proof of Lemma \ref{lem:verify}}\label{app:diff_verify}

\begin{proof}[Proof of Lemma \ref{lem:verify}]
We begin by noting that for any $B\in\mathcal{X}$, $0\leq l \leq L$, $y\in\mathsf{X}$ we have
\begin{eqnarray}
M^l(\mathbb{I}_Bv)(y) & = & \sqrt\Big(\frac{(1+\delta_0)}{(1+\delta_0-\beta_l)}\Big)\exp\Big\{\frac{\alpha_l^2y^2}{2(1+\delta_0-\beta_l)}+1\Big\}\times \nonumber\\ & & \int_B \frac{1}{\sqrt{2\pi\beta_l}}\exp\Big\{-\frac{1+\delta_0-\beta_l}{2\beta_l(1+\delta_0)}\Big(x-\frac{\alpha_ly(1+\delta_0)}{1+\delta_0-\beta_l}\Big)^2\Big\}dx.\label{eq:verify:1}
\end{eqnarray}
This will be used below.

We give the proof for $\xi=1$ as it is similar in other cases. We have for any $0\leq l \leq L$, $n\geq 1$, $\varphi\in\mathcal{L}_{v}(\mathsf{X})\cap\textrm{Lip}_{v,\mathsf{d}_1}$
$$
|Q_n^l(\varphi)(x)-Q_n^l(\varphi)(y)| \leq 
$$
$$
\|\varphi\|_{v} M^l(v)(x)|G_{n-1}(x)-G_{n-1}(y)| + \|G_{n-1}\|\|\varphi\|_{v}\int_{\mathsf{X}} v(u) | M^l(x,u)-M^l(y,u)|du.
$$
As noted in Section \ref{sec:verify}, $[\alpha_l^2(1+\delta_0)]/[1+\delta_0-\beta_l]<1$ for any $0\leq l\leq L$, $\delta_0>0$, so using \eqref{eq:verify:1} one can determine that $M^l(v)(x)\in\mathcal{L}_v(\mathsf{X})$ for any $\delta_0>0$, with $\|M^l(v)\|_{v}$ independent of $l$.
In addition, for every $y\in\mathsf{Y}$, $G_{n-1}\in\textrm{Lip}_{\mathsf{d}_1}(\mathsf{X})$ with Lipschitz constant independent of $n$ so we need only consider the right-hand term in the above displayed equation.

Set $D_l(x,y):=\{u\in\mathsf{X}:M^l(x,u)-M^l(y,u)\}$, we consider only $\int_{D_l(x,y)} v(u) | M^l(x,u)-M^l(y,u)|du$ as the calculations on $D_l(x,y)^c$ are very similar. We suppose $x>y$ as the proof with $x<y$ is analogous, so we have by \eqref{eq:verify:1} and $x>y$
$$
\int_{D_l(x,y)} v(u) | M^l(x,u)-M^l(y,u)|du  =  \sqrt\Big(\frac{(1+\delta_0)}{(1+\delta_0-\beta_l)}\Big)\times
$$
$$
\Big\{\exp\Big\{\frac{\alpha_l^2x^2}{2(1+\delta_0-\beta_l)}+1\Big\}\int_{-\infty}^{c_l(x,y)} \frac{1}{\sqrt{2\pi\beta_l}}\exp\Big\{-\frac{1+\delta_0-\beta_l}{2\beta_l(1+\delta_0)}\Big(u-\frac{\alpha_lx(1+\delta_0)}{1+\delta_0-\beta_l}\Big)^2\Big\}du -
$$
$$
\exp\Big\{\frac{\alpha_l^2y^2}{2(1+\delta_0-\beta_l)}+1\Big\}\int_{-\infty}^{c_l(x,y)} \frac{1}{\sqrt{2\pi\beta_l}}\exp\Big\{-\frac{1+\delta_0-\beta_l}{2\beta_l(1+\delta_0)}\Big(u-\frac{\alpha_ly(1+\delta_0)}{1+\delta_0-\beta_l}\Big)^2\Big\}du\Big\}
$$
where $c_l(x,y)=(\alpha_l^2 x^2-\alpha_l^2y^2)/(x-y)$. Define
\begin{eqnarray*}
\overline{c}_{l,1}(x,y) & := & \sqrt\Big(\frac{1+\delta_0-\beta_l}{\beta_l(1+\delta_0)}\Big)\Big(c_l(x,y) - \frac{\alpha_lx(1+\delta_0)}{1+\delta_0-\beta_l}\Big) \\
\overline{c}_{l,2}(x,y) & := & \sqrt\Big(\frac{1+\delta_0-\beta_l}{\beta_l(1+\delta_0)}\Big)\Big(c_l(x,y) - \frac{\alpha_ly(1+\delta_0)}{1+\delta_0-\beta_l}\Big) \\
\tilde{v}(x) & := & \exp\Big\{\frac{\alpha_l^2y^2}{2(1+\delta_0-\beta_l)}+1\Big\}
\end{eqnarray*}
and $\Theta(x)$ as the CDF of a standard normal distribution,  then we have 
$$
\frac{\int_{D_l(x,y)} v(u) | M^l(x,u)-M^l(y,u)|du}{v(x)v(y)}  = 
$$
$$
\sqrt\Big(\frac{(1+\delta_0)}{(1+\delta_0-\beta_l)}\Big)
\Big(
\Big\{\frac{\tilde{v}(x)}{v(x)v(y)}\{\Theta(\overline{c}_{l,1}(x,y))-\Theta(\overline{c}_{l,2}(x,y))\Big\} + 
$$
$$
\Theta(\overline{c}_{l,2}(x,y))\Big[
\frac{\tilde{v}(x)}{v(x)}\Big\{\frac{1}{v(y)}-\frac{1}{v(x)}\Big\} + 
\frac{1}{v(x)}\Big\{\frac{\tilde{v}(x)}{v(x)}-\frac{\tilde{v}(y)}{v(y)}\Big\}\Big]
\Big).
$$
From here, it is straightforward to establish that all the functions in the differences are in the set $\textrm{Lip}_{\mathsf{d}_1}(\mathsf{X})$ with Lipschitz constants that do not depend on $l$ and moreover that the other terms are uniformly bounded in $x,y,l$ (where relevant) - the proofs
are omitted as they are standard.
\end{proof}


\begin{thebibliography}{9}

\bibitem{bally}
{\sc Bally}, V. \& {\sc Talay} D.~(1996). The law of the Euler scheme for stochastic differential equations II: Approximation of the density. \emph{Monte Carlo Meth. Appl.}, {\bf 2}, 93--128.

\bibitem{mlsmc}
{\sc Beskos}, A., {\sc Jasra} A., {\sc Law}, K. J. H., {\sc Tempone}, R. \& {\sc Zhou}, Y.~(2017). Multilevel sequential Monte Carlo samplers. \emph{Stoch. Proc. Appl.}, {\bf 127}, 1417--1440.

\bibitem{bill}
{\sc Billingsley}, P.~(1995). \emph{Probability and Measure}. Wiley: New York.

\bibitem{cappe}
{\sc Cappe}, O., {\sc Moulines}, E. \& {\sc Ryden}, T.~(2005).
\emph{Inference in Hidden Markov models}. Springer: New York.

\bibitem{chopin2}
{\sc Chopin}, N.~(2004). 
Central limit theorem for sequential Monte Carlo methods and its application to Bayesian inference. \emph{Ann. Statist.},
{\bf 32}, 2385--2411.

\bibitem{chopin3}
{\sc Chopin}, N. \& {\sc Singh}, S. S.~(2015). On particle Gibbs sampling. \emph{Bernoulli}, {\bf 21}, 1855-1883.

\bibitem{crisan}
{\sc Crisan}, D. \& {\sc Bain}, A.~(2008). \emph{Fundamentals of
Stochastic Filtering}. Springer: New York.

\bibitem{delm:04}
{\sc Del Moral}, P.~(2004). \textit{Feynman-Kac Formulae: Genealogical and
Interacting Particle Systems with Applications}. Springer: New York.

\bibitem{dm2000}
{\sc Del Moral}, P.~\& {\sc Miclo}, L.~(2000). Branching and interacting particle systems approximation of Feynman-Kac formulae with applications to non-linear filtering.
In Azema, J., Emery, M., Ledoux , M., Yor, M. (Eds) \emph{Seminaire de Probabilites XXIV Lecture Notes in Mathematics}, {\bf 1729}, 1--145, Springer: Berlin.

\bibitem{ddj2012}
{\sc Del Moral}, P., {\sc Doucet}, A. \& {\sc Jasra}, A.~(2012).
On adaptive resampling procedures for sequential Monte Carlo methods. {\it Bernoulli}, {\bf 18}, 252--272.

\bibitem{delm:01}
{\sc Del Moral}, P., {\sc Jacod}, J. \& {\sc Protter}, P.~(2001).
The Monte Carlo method for filtering with discrete-time observations.
\emph{Probab. Theory Rel. Fields}, {\bf 120}, 346--368.


\bibitem{delm_sa}
{\sc Del Moral}, P., {\sc Jasra}, A. \& {\sc Law}, K. J. H.~(2017). 
Multilevel sequential Monte Carlo: Mean square error bounds under verifiable conditions.
\emph{Stoch. Anal.} {\bf 35}, 478--498.

\bibitem{giles}
{\sc Giles}, M. B.~(2008). Multilevel Monte Carlo path simulation. \emph{Op. Res.}, {\bf 56}, 607-617.

\bibitem{gregory}
{\sc Gregory}, A., {\sc Cotter}, C., \& {\sc Reich}, S.~(2016). 
Multilevel ensemble transform particle filtering.
\emph{SIAM J, Sci. Comp.}, {\bf 38}, A1317-A1338. 

\bibitem{hein}
{\sc Heinrich}, S.~(2001).
{Multilevel {Mo}nte {C}arlo methods}.
In \emph{Large-Scale Scientific Computing}, (eds.~S. Margenov, J. Wasniewski \&
P. Yalamov), Springer: Berlin.

\bibitem{jacob1}
{\sc Jacob}, P., {\sc Lindsten}, F. \& {\sc Sch\"on}, T.~(2016). Coupling of particle filters.
arXiv preprint.

\bibitem{jacob2}
{\sc Jacob}, P., {\sc Lindsten}, F. \& {\sc Sch\"on}, T.~(2018). Smoothing with couplings of conditional particle filters.
\emph{J. Amer. Statist. Assoc.} (to appear).

\bibitem{jasra}
{\sc Jasra}, A., (2015). On the behaviour of the backward interpretation of Feynman-Kac formulae under verifiable conditions.
\emph{J. Appl. Probab.}, {\bf 52}, 339--359.


\bibitem{mlpf_new}
{\sc Jasra}, A., {\sc Ballesio}, M., {\sc Von Schwerin}, E., \& {\sc Tempone}, R.~(2018).
A coupled particle filter for multilevel estimation. Technical Report.


\bibitem{mlpf1}
{\sc Jasra}, A., {\sc Kamatani}, K., {\sc Osei}, P. P., \& {\sc Zhou}, Y. (2018). Multilevel particle filters: Normalizing Constant Estimation. \emph{Statist. Comp.},
{\bf 28}, 47-60.


\bibitem{mlpf}
{\sc Jasra}, A., {\sc Kamatani}, K., {\sc Law} K. J. H. \& {\sc Zhou}, Y.~(2017). 
Multilevel particle filters. \emph{SIAM J. Numer. Anal.}, {\bf 55}, 3068-3096.

\bibitem{lee}
{\sc Lee}, A., {\sc Singh}, S. S.~\& {\sc Vihola}, M.~(2018).
Coupled conditional backward sampling particle filter. arXiv preprint.

\bibitem{lo}
{\sc Lo}, A.~(2017).  Functional generalizations of Hoeffding's covariance lemma and a formula for Kendall's tau.
\emph{Stat. Probab. Lett.}, {\bf 122}, 218--226.

\bibitem{mao}
{\sc Mao}, X.~(2007). \emph{Stochastic Differential Equations and Applications}. 2nd Edition, Woodhead: Cambridge.

\bibitem{bound_diff}
{\sc McDiarmid}, C.~(1989). On the method of bounded differences. In J. Siemons
(Ed.), \emph{Surveys in Combinatorics}, Volume 141 of London Mathematical
Society Lecture Notes, pp.~148--188. CUP: Cambridge.

\bibitem{rachev}
{\sc Rachev}, T. \& {\sc R\"uschendorf}, S.~(1998). \emph{Mass Transportation Problems, Volume 1: Theory}. Springer: New York.

\bibitem{sen}
{\sc Sen}, D., {\sc Thiery}, A. \& {\sc Jasra}, A.~(2018). On coupling particle filters. \emph{Stat. Comp.}, {\bf 28}, 461-475.

\bibitem{thor}
{\sc Thorisson}, H.~(2000). \emph{Coupling, stationarity, and regeneration}. Springer:New York.

\bibitem{whiteley}
{\sc Whiteley}, N. P.~(2013). Stability properties of some particle filters.
\emph{Ann. Appl. Probab.}, {\bf 23}, 2500--2537.

\bibitem{whiteley1}
{\sc Whiteley}, N. P.. {\sc Kantas}, N., {\sc Jasra}, A.~(2012). Linear variance bounds for particle approximations of time homogenous Feynman-Kac formulae.
\emph{Stoch. Proc. Appl.}, {\bf 122}, 1840--1865.


\end{thebibliography}
\end{document}